\theoremstyle{plain}                    
\newtheorem{theorem}{Theorem}[section]
\newtheorem*{theorem*}{Theorem}
\newtheorem{lem}[theorem]{Lemma}    
\newtheorem{prop}[theorem]{Proposition}
\newtheorem{cor}[theorem]{Corollary}
\theoremstyle{definition}
\newtheorem{remark}{Remark}[section]   
\newcommand{\N}{\mathbb{N}}
\newcommand{\Z}{\mathbb{Z}}
\newcommand{\Q}{\mathbb{Q}}
\newcommand{\R}{\mathbb{R}}
\newcommand{\C}{\mathbb{C}}
\newcommand{\bianco}{\textcolor{white}{.}}
\newcommand{\ha}{\frac{1}{2}}
\newcommand{\tha}{\tfrac{1}{2}}
\newcommand{\qua}{\frac{1}{4}}
\newcommand{\be}{\begin{equation}}
\newcommand{\bey}{\begin{eqnarray}}
\newcommand{\ee}{\end{equation}}
\newcommand{\eey}{\end{eqnarray}}
\newcommand{\ba}{\begin{array}}
\newcommand{\ea}{\end{array}}
\newcommand{\de}{\mathrm{d}}
\newcommand{\h}{\frak{H}}
\newcommand{\sltr}{\mathrm{SL}(2,\R)}
\newcommand{\tsltr}{\widetilde{\mathrm{SL}}(2,\R)}
\newcommand{\psltr}{\mathrm{PSL}(2,\R)}
\newcommand{\sltz}{\mathrm{SL}(2,\Z)}
\newcommand{\Hei}{\mathbb{H}(\mathbb{R})}
\newcommand{\ve}[2]{\left(\ba{c}\!#1\!\\ \!#2\!\ea\right)}
\newcommand{\sve}[2]{\left(\begin{smallmatrix}\!#1\!\\ \!#2\!\end{smallmatrix}\right)}
\newcommand{\veH}[3]{\left(\!\begin{pmatrix}#1\\ #2\end{pmatrix},#3\right)}
\newcommand{\sgn}{\mathrm{sgn}}
\newcommand{\bm}[1]{\mbox{\boldmath{$#1$}}}
\newcommand{\LtR}{\mathrm L^2(\R)}
\newcommand{\SR}{\mathcal{S}(\R)}
\newcommand{\ma}[4]{
\begin{pmatrix}#1&#2\\#3&#4\end{pmatrix}
}
\newcommand{\sma}[4]{\left(\begin{smallmatrix} #1&#2\\#3&#4\end{smallmatrix}\right)}
\newcommand{\e}[1]{e\!\left(#1\right)}
\newcommand{\ti}{\to\infty}
\def\CC{{\mathbb C}}
\def\HH{{\mathbb H}}
\def\NN{{\mathbb N}}
\def\QQ{{\mathbb Q}}
\def\RR{{\mathbb R}}
\def\ZZ{{\mathbb Z}}
\def\vecxi{{\text{\boldmath$\xi$}}}
\def\vecnull{{\text{\boldmath$0$}}}
\def\scrE{{\mathcal E}}
\def\scrF{{\mathcal F}}
\def\scrK{{\mathcal K}}
\def\scrQ{{\mathcal Q}}
\def\scrS{{\mathcal S}}
\def\Re{\operatorname{Re}}
\def\Im{\operatorname{Im}}
\def\e{\mathrm{e}}
\def\i{\mathrm{i}}
\def\dist{\operatorname{dist}}
\def\SL{\operatorname{SL}}
\def\ASL{\operatorname{ASL}}
\def\PSL{\operatorname{PSL}}
\def\sgn{\operatorname{sgn}}
\def\GamG{\Gamma\backslash G}
\def\gene{\gamma}
\def\Onder#1#2#3#4#5{#1 \setbox0=\hbox{$#1$}\setbox1=\hbox{$#2$}
       \dimen0=.5\wd0 \dimen1=\dimen0 \dimen2=\dp0 \dimen3=\dimen2
       \advance\dimen0 by .5\wd1 \advance\dimen0 by -#4
       \advance\dimen1 by -.5\wd1 \advance\dimen1 by -#4
       \advance\dimen2 by -#3 \advance\dimen2 by \ht1
       \advance\dimen2 by 0.3ex \advance\dimen3 by #5
        \kern-\dimen0\raisebox{-\dimen2}[0ex][\dimen3]{\box1}
       \kern\dimen1}
 \numberwithin{equation}{section}
\begin{document}
\clearpage{\pagestyle{empty}\cleardoublepage}

\title{Quadratic Weyl Sums, Automorphic Functions,\\ and Invariance Principles}
\author{Francesco Cellarosi\footnote{University of Illinois Urbana-Champaign, 1409 W Green Street, Urbana, IL 61801, U.S.A., \texttt{fcellaro@illinois.edu}}, Jens Marklof\footnote{School of Mathematics, University of Bristol, Bristol BS8 1TW, U.K., \texttt{j.marklof@bristol.ac.uk}}}


\date{}

\maketitle

\begin{center}
\today
\end{center}

\begin{abstract}
Hardy and Littlewood's approximate functional equation for quadratic Weyl sums (theta sums) provides, by iterative application, a powerful tool for the asymptotic analysis of such sums. The classical Jacobi theta function, on the other hand, satisfies an exact functional equation, and extends to an automorphic function on the Jacobi group. In the present study we construct a related, almost everywhere non-differentiable automorphic function, which approximates quadratic Weyl sums up to an error of order one, uniformly in the summation range. This not only implies the approximate functional equation, but allows us to replace Hardy and Littlewood's renormalization approach by the dynamics of a certain homogeneous flow. The great advantage of this construction is that the approximation is global, i.e., there is no need to keep track of the error terms accumulating in an iterative procedure. Our main application is a new functional limit theorem, or  invariance principle, for theta sums. The interesting observation here is that the paths of the limiting process share a number of key features with Brownian motion (scale invariance, invariance under time inversion, non-differentiability), although time increments are not independent and the value distribution at each fixed time is distinctly different from a normal distribution.
\end{abstract}

\newpage
\tableofcontents

\newpage

\section{Introduction}\label{section:introduction}

In their classic 1914 paper \cite{Hardy-Littlewood1914partII}, Hardy and Littlewood investigate exponential sums of the form
\be
S_N(x,\alpha)=\sum_{n=1}^N e \left( \tha n^2x+n\alpha\right),\label{theta-sum-intro0} 
\ee
where $N$ is a positive integer, $x$ and $\alpha$ are real, and $e(x):=\e^{2\pi\i x}$.
In today's literature these sums are commonly refered to as {\em quadratic Weyl sums}, {\em finite theta series} or {\em theta sums}.
Hardy and Littlewood estimate the size of $|S_N(x,\alpha)|$ in terms of the continued fraction expansion of $x$. At the heart of their argument is the approximate functional equation, valid for $0<x<2$, $0\leq \alpha\leq 1$,
\be\label{ApproxFeq}
S_N(x,\alpha) = \sqrt{\frac{\i}{x}}\; e\left(-\frac{\alpha^2}{2x}\right) S_{\lfloor x N \rfloor}\bigg(-\frac1x,\frac{\alpha}{x}\bigg) + O\left(\frac{1}{\sqrt x}\right),
\ee
stated here in the slightly more general form due to Mordell \cite{Mordell1926}.
This reduces the length of the sum from $N$ to the smaller $N'=\lfloor x N \rfloor$, the integer part of $xN$ (note that we may always assume that $0<x\leq 1$, replacing $S_N(x,\alpha)$ with its complex conjugate if necessary).
Asymptotic expansions of $S_N(x,\alpha)$ are thus obtained by iterating \eqref{ApproxFeq}, where after each application the new $x'$ is $-1/x\bmod 2$. The challenge in this renormalization approach is to keep track of the error terms that accummulate after each step, cf.~Berry and Goldberg \cite{Berry-Goldberg-1988}, Coutsias and Kazarinoff \cite{Coutsias-Kazarinoff-1998} and Fedotov and Klopp \cite{Fedotov-Klopp-2012}. The best asymptotic expansion of $S_N(x,\alpha)$ we are aware of is due to Fiedler, Jurkat and K\"orner  \cite{Fiedler-Jurkat-Korner77}, who avoid \eqref{ApproxFeq} and the above inductive argument by directly estimating $S_N(x,\alpha)$ for $x$ near a rational point.

Hardy and Littlewood motivate \eqref{ApproxFeq} by the {\em exact} functional equation for Jacobi's elliptic theta functions
\begin{equation}\label{Jacobi-theta}
\vartheta(z,\alpha)= \sum_{n\in\ZZ} e \left( \tha n^2 z +n\alpha\right), 
\end{equation}
where $z$ is in the complex upper half-plane $\frak H=\{z\in\CC:\Im z>0\}$, and $\alpha\in\CC$. In this case \be\label{ExactFeq}
\vartheta(z,\alpha) = \sqrt{\frac{\i}{z}}\; e\left(-\frac{\alpha^2}{2z}\right) \vartheta\bigg(-\frac1z,\frac{\alpha}{z}\bigg) .
\ee
The theta function $\vartheta(z,\alpha)$ is a Jacobi form of half-integral weight, and can thus be identified with an automorphic function on the Jacobi group $G$ which is invariant under a certain discrete subgroup $\Gamma$, the {\em theta group}. (Formula \eqref{ExactFeq} corresponds to one of the generators of $\Gamma$.) In the present study, we develop a unified geometric approach to both functional equations, exact and approximate. The plan is to construct an automorphic function $\Theta:\GamG\to\CC$ that yields $S_N(x,\alpha)$ for all $x$ and $\alpha$, up to a uniformly bounded error. This in turn enables us not only to re-derive \eqref{ApproxFeq}, but to furthermore obtain an asymptotic expansion without the need for an inductive argument. The value of $S_N(x,\alpha)$ for large $N$ is simply obtained by evaluating $\Theta$ along an orbit of a certain homogeneous flow at large times. (This flow is an extension of the geodesic flow on the modular surface.) As an application of our geometric approach we present a new functional limit theorem, or invariance principle, for $S_N(x,\alpha)$ for random $x$.

To explain the principal ideas and results of our investigation, define the generalized theta sum
\be\label{theta-sum-intro01} 
S_N(x,\alpha;f)=\sum_{n\in\ZZ} f\!\left(\frac{n}{N}\right) e\!\left( \tha n^2x+n\alpha\right),
\ee
where $f:\RR\to\RR$ is bounded and of sufficient decay at $\pm\infty$ so that \eqref{theta-sum-intro01} is absolutely convergent.
Thus $S_N(x,\alpha)=S_N(x,\alpha;f)$ if $f$
is the indicator function of $(0,1]$, and $\vartheta(z,\alpha)=S_N(x,\alpha;f)$ if $f(t)=\e^{-\pi t^2}$ and $y=N^{-2}$. (We assume here, for the sake of argument, that $\alpha$ is real. Complex $\alpha$ can also be used, but lead to a shift in the argument of $f$ by the imaginary part of $\alpha$, cf.~Section \ref{section:jacobi-theta-sums}.)

A key role in our analysis is played by the one- resp.\ two-parameter subgroups $\{\Phi^s:s\in\RR\}<G$ and $H_+=\{n_+(x,\alpha): (x,\alpha)\in\RR^2\} < G$. The dynamical interpretation of $H_+$ under the action of $\Phi^s$ ($s>0$) is that of an unstable horospherical subgroup, since (as we will show)
\begin{equation}
H_+=\{ g\in G :  \Phi^s g \Phi^{-s} \to  e \text{ for } s\to\infty \} .
\end{equation}
(Here $e\in G$ denotes the identity element.) The corresponding stable horospherical subgroup is defined by
\begin{equation}
H_-=\{ g\in G :  \Phi^{-s} g \Phi^s \to  e \text{ for }  s\to\infty \}.
\end{equation}
There is a completely explicit description of these groups, which we will defer to later sections.

The following two theorems describe the connection between theta sums and automorphic functions on $\GamG$. The proof of Theorem \ref{thm:1} (for smooth cut-off functions $f$) follows the strategy of \cite{Marklof-1999}. Theorem \ref{thm:2} below extends this to non-smooth cut-offs by a geometric regularization, and is the first main result of this paper. 

\begin{theorem}\label{thm:1}
Let $f:\RR\to\RR$ be of Schwartz class. Then there is a square-integrable, infinitely differentiable  function $\Theta_f:\GamG\to\CC$ and a continuous function $E_f:H_-\to [0,\infty)$ with $E_f(e)=0$, such that for all $s\in[0,\infty)$, $x,\alpha\in\RR$ and $h\in H_-$,
\begin{equation}
\left| S_N(x,\alpha;f) - \e^{s/4} \, \Theta_f(\Gamma n_+(x,\alpha) h \Phi^s) \right| \leq E_f(h),\label{statement-thm1-intro}
\end{equation}
where $N=\e^{s/2}$.
\end{theorem}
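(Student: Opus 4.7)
The plan is to construct $\Theta_f$ as a theta kernel built from the Schr\"odinger-Weil representation $R$ of the Jacobi group (or rather its metaplectic cover) on $\LtR$, by setting
$$\Theta_f(g) := \sum_{n\in\ZZ} \bigl(R(g)f\bigr)(n).$$
The standard theta-kernel properties then follow: absolute convergence and smoothness, from the fact that $R(g)f \in \SR$ depends smoothly on $g$; $\Gamma$-invariance, from Poisson summation combined with the explicit action of the generators of the theta group; and square-integrability on $\GamG$, from a Siegel-type bound on a fundamental domain exploiting Schwartz decay of $f$ (essentially the strategy of Marklof's earlier work cited in the introduction).

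Next, one writes out the action of the two relevant coordinate subgroups. The metaplectic diagonal acts by dilation, $R(\Phi^s)\phi(t) = \e^{-s/4}\phi(\e^{-s/2}t)$, and $n_+(x,\alpha)$ acts by the quadratic character $R(n_+(x,\alpha))\phi(t) = \e(\tha t^2 x + t\alpha)\phi(t)$. Composing, setting $N = \e^{s/2}$, restricting to $t = n \in \ZZ$ and summing yields at once
$$\e^{s/4}\,\Theta_f\bigl(n_+(x,\alpha)\Phi^s\bigr) = S_N(x,\alpha;f),$$
which settles the $h = e$ case with $E_f(e) = 0$.

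For general $h \in H_-$, the key identity is the conjugation $n_+(x,\alpha)\,h\,\Phi^s = n_+(x,\alpha)\,\Phi^s\,\tilde h_s$ with $\tilde h_s := \Phi^{-s} h \Phi^s$. Since $H_-$ is the stable horospherical subgroup of $\Phi^s$, the curve $\{\tilde h_s : s \ge 0\}$ starts at $h$, contracts exponentially in $s$, and converges to $e$ as $s \to \infty$. By the representation property of $R$,
$$\e^{s/4}\,\Theta_f\bigl(\Gamma\, n_+(x,\alpha)\,h\,\Phi^s\bigr) = S_N\bigl(x,\alpha;\,R(\tilde h_s)f\bigr),$$
so the quantity to be bounded is the perturbation $S_N(x,\alpha;\, f - R(\tilde h_s)f)$. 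I would define
$$E_f(h) := \sup_{s\ge 0,\; x,\alpha\in\RR} \bigl|\,S_N\bigl(x,\alpha;\, f - R(\tilde h_s)f\bigr)\,\bigr|,$$
for which $E_f(e) = 0$ holds automatically.

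The main obstacle is verifying finiteness and continuity of $E_f$ with $E_f(h) \to 0$ as $h \to e$. The decisive point is a compatibility of two scales. First, the trivial bound $|S_N(x,\alpha;\phi)| \le \sum_{n \in \ZZ} |\phi(n/N)| \le C\,N\,\nu(\phi)$ holds with the Schwartz seminorm $\nu(\phi) = \sup_t (1+t^2)|\phi(t)|$, uniformly in $x, \alpha$. Second, by strong differentiability of $R$ on $\SR$, for $\tilde h_s$ close to $e$ one has $\nu\bigl(f - R(\tilde h_s)f\bigr) \le C_\nu(f)\,d(\tilde h_s, e)$ for any left-invariant distance $d$ on $H_-$, while $\Phi^s$-conjugation contracts the $\mathrm{SL}(2,\RR)$-component of $H_-$ at rate $\e^{-s}$ and the Heisenberg component at rate $\e^{-s/2}$, giving $d(\tilde h_s, e) \le C\,\e^{-s/2}\,d(h, e)$ in the worst case. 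Consequently
$$\bigl|\,S_N\bigl(x,\alpha;\, f - R(\tilde h_s)f\bigr)\,\bigr| \le C\,\e^{s/2} \cdot \e^{-s/2}\,d(h, e) = C\,d(h, e)$$
uniformly in the infinitesimal regime. For $h$ outside a neighborhood of $e$, the same conclusion follows by splitting the $s$-range at the time $s_0(h)$ where $\tilde h_s$ first enters this neighborhood, combining the infinitesimal bound with the uniform boundedness of $R(\tilde h_s)f$ in $\SR$ on $[0, s_0(h)]$ (ensured by joint continuity of $R\colon H_- \times \SR \to \SR$). This yields $E_f(h) < \infty$, continuous in $h$, with $E_f(h) \to 0$ as $h \to e$.
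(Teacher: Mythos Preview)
Your argument is correct and reaches the same conclusion as the paper, but the packaging is genuinely different. The paper works in Iwasawa coordinates: it writes out $n_+(x,\alpha)n_-(u,\beta)\Phi^s$ explicitly as $(x_s+iy_s,\phi_s;\vecxi_s,\zeta_s)$, expands the theta sum, and controls the perturbation term by term, the crucial input being the Hermite-based estimate $|f_\phi(t)-f(t)|\ll_\sigma |\phi|(1+|t|^\sigma)^{-1}$ (their Lemma on $\scrE_f$). Your conjugation identity $n_+(x,\alpha)\,h\,\Phi^s = n_+(x,\alpha)\,\Phi^s\,\tilde h_s$ with $\tilde h_s=\Phi^{-s}h\Phi^s=n_-(u\e^{-s},\beta\e^{-s/2})$ bypasses all of this: it reduces the problem to $S_N(x,\alpha;f-R(\tilde h_s)f)$, and then the single abstract fact that the Schr\"odinger--Weil representation is $C^1$ on $\SR$ (so that $\nu(f-R(\tilde h_s)f)\ll |u|\e^{-s}+|\beta|\e^{-s/2}$ in any fixed seminorm $\nu$) does the work that the paper's Hermite lemma does by hand. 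Both routes produce the same bound $E_f(n_-(u,\beta))\ll |u|+|\beta|$; yours is more conceptual, the paper's more explicit.

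Two small points worth tightening. First, your ``split at $s_0(h)$'' is unnecessary: for any fixed $h=n_-(u,\beta)$ the entire contracted orbit $\{\tilde h_s:s\ge 0\}$ lies in the compact box $\{n_-(u',\beta'):|u'|\le|u|,\,|\beta'|\le|\beta|\}$, so the Lipschitz constant for $h'\mapsto R(h')f$ in the seminorm $\nu$ is uniform over the whole range $s\ge 0$ and the estimate $CN\cdot(|u|\e^{-s}+|\beta|\e^{-s/2})\le C(|u|+|\beta|)$ holds directly. Second, your definition of $E_f$ as a supremum is not obviously continuous away from $e$; it is cleaner to simply \emph{take} $E_f(n_-(u,\beta)):=C(|u|+|\beta|)$ with $C$ the constant just obtained (this is exactly what the paper does), which is manifestly continuous with $E_f(e)=0$.
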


Of special interest is the choice $h=e$, since then $S_N(x,\alpha;f) = \e^{s/4} \, \Theta_f(\Gamma n(x,\alpha)\Phi^s)$. As we will see, Theorem \ref{thm:1} holds for a more general class of functions, e.g., for $C^1$ functions with compact support (in which case $\Theta_f$ is continuous but no longer smooth).
For more singular functions, such as piecewise constant, the situation is more complicated and we can only approximate $S_N(x,\alpha)$ for almost every $h$. We will make the assumptions on $h$ explicit in terms of natural Diophantine conditions, which exclude in particular $h=e$. 

\begin{theorem}\label{thm:2}
Let $\chi$ be the indicator function of the open interval $(0,1)$. Then there is a square-integrable function $\Theta_\chi:\GamG\to\CC$ and, for every $x\in\RR$, a measurable function $E_\chi^x:H_-\to [0,\infty)$ and a set $P^x\subset H_-$ of full measure, such that for all $s\in[0,\infty)$, $x,\alpha\in\RR$ and $h\in P^x$,
\begin{equation}
\left| S_N(x,\alpha) - \e^{s/4} \, \Theta_\chi(\Gamma n_+(x,\alpha) h \Phi^s) \right| \leq E_\chi^x(h),\label{statement-thm2-intro}
\end{equation}
where $N=\lfloor \e^{s/2} \rfloor$. 
\end{theorem}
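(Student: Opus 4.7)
The plan is to construct $\Theta_\chi$ as an $L^2(\GamG)$-limit of the smooth approximations built in Theorem~\ref{thm:1}, and to extract the Diophantine full-measure set $P^x$ from a Borel--Cantelli argument that controls the short theta sums attached to the boundary of $\chi=\mathbf 1_{(0,1)}$.

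Sandwich $\chi$ dyadically: pick Schwartz cutoffs $\chi^-_k\le\chi\le\chi^+_k$ that agree with $\chi$ outside the $2^{-k}$-neighbourhood of $\{0,1\}$, so that $D_k:=\chi^+_k-\chi^-_k$ is supported there and bounded by $1$. Theorem~\ref{thm:1} produces smooth automorphic $\Theta_{\chi^\pm_k}$ with continuous error majorants $E_{\chi^\pm_k}$. A Siegel/Parseval-type mean-value identity on $\GamG$ — of the kind implicit in the proof of Theorem~\ref{thm:1} — is expected to bound $\|\Theta_{\chi^+_k}-\Theta_{\chi^-_k}\|_{L^2(\GamG)}$ by $\|D_k\|_{L^2(\RR)}\to 0$, making the sequence Cauchy; its common limit is the desired $\Theta_\chi$, which automatically inherits $\Gamma$-invariance and square-integrability.

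Combining the sandwich with Theorem~\ref{thm:1} yields, for every $s\ge 0$, $x,\alpha\in\RR$, $h\in H_-$, $k\in\NN$, and $N=\lfloor\e^{s/2}\rfloor$,
\begin{equation*}
\bigl|S_N(x,\alpha)-\e^{s/4}\Theta_{\chi^\pm_k}(\Gamma n_+(x,\alpha)h\Phi^s)\bigr|\le E_{\chi^\pm_k}(h)+\bigl|S_{\e^{s/2}}(x,\alpha;D_k)\bigr|+O(1),
\end{equation*}
where the $O(1)$ absorbs the integer-part rounding and the single-term discrepancy between $\mathbf 1_{(0,1)}$ and $\mathbf 1_{(0,1]}$. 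Passing to an a.e.\ convergent subsequence $\Theta_{\chi^\pm_k}\to\Theta_\chi$ by Fubini along orbits, the bound \eqref{statement-thm2-intro} reduces to a uniform (in $s,\alpha$) control of the short theta sum $|S_{\e^{s/2}}(x,\alpha;D_k)|$ for $h$ in a full-measure subset of $H_-$.

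\textbf{Main obstacle.} The central difficulty is this uniform short-sum bound, since the length $\asymp 2^{-k}\e^{s/2}$ grows with $s$ and no trivial estimate suffices. The plan is a Borel--Cantelli argument on $H_-$: dyadically decompose $s$ and use Sobolev embedding in $\alpha$ to replace the supremum by a countable maximum of $L^2_\alpha$ norms; then, exploiting that $\alpha$-translation is realized inside $H_-$ and using the stable/unstable duality underpinning Theorem~\ref{thm:1}, express a second moment in $h$ as a theta mean value to obtain
\begin{equation*}
\meas\bigl\{h\in K\col\sup_{s,\alpha}|S_{\e^{s/2}}(x,\alpha;D_k)|>k^2\bigr\}=O(k^{-2})
\end{equation*}
on every compact $K\subset H_-$. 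Summability in $k$ then produces the full-measure $P^x$ via Borel--Cantelli, and the measurable $E_\chi^x(h)$ is the pointwise tail supremum of the surviving estimates. The delicate part is making the dependence on $h$ and $x$ fully explicit and uniform in $s$, so that the flow $\Phi^s$ cannot steer the orbit into a worsening Diophantine regime as $s\to\infty$ — this is exactly where the Diophantine exclusion, which rules out $h=e$, becomes indispensable.
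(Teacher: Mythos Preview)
Your sandwich reduction contains a structural gap. The short-sum term $|S_{\e^{s/2}}(x,\alpha;D_k)|$ that you isolate is a function of $x,\alpha,s,k$ only --- it does \emph{not} depend on $h\in H_-$. Consequently no Borel--Cantelli argument over $H_-$, and no second moment in $h$, can say anything about it; you are integrating a constant. (Your remark that ``$\alpha$-translation is realized inside $H_-$'' is also off: $\alpha$ parametrises the \emph{unstable} subgroup $H_+$, while $H_-$ is parametrised by $(u,\beta)$.) For a fixed but arbitrary $x$ --- which is the hypothesis of the theorem --- there is no uniform-in-$s$ bound on this short sum: along times $s_j$ for which $\e^{s_j/2}x$ is extremely close to an integer the $\asymp 2^{-k}\e^{s_j/2}$ terms in $S_{\e^{s_j/2}}(x,\alpha;D_k)$ exhibit no cancellation and the sum is of order $2^{-k}\e^{s_j/2}\to\infty$. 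So the reduction fails before the Diophantine/$H_-$ machinery even enters.

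The paper avoids this by never passing through such a short sum. It writes $\chi$ as a dyadic superposition of a single $C^1$ bump $\Delta$, so that
\[
\Theta_\chi(g)=\sum_{j\ge 0}2^{-j/2}\Theta_\Delta\bigl(g\,\Phi^{-(2\log 2)j}\bigr)+\text{(mirror term)},
\]
and the tail $j>J\asymp\log_2\e^{s/2}$ involves $\Theta_\Delta$ evaluated along the \emph{backward} geodesic through $n_+(x,\alpha)n_-(u,\beta)$. The backward endpoint of that geodesic is $x+\tfrac1u$, and a Diophantine condition on $x+\tfrac1u$ (full measure in $u$, hence in $h$) controls the cusp excursion of the backward orbit via an explicit height bound. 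That is what makes the tail converge \emph{uniformly in $s\ge 0$}, and it is exactly where the $h$-dependence enters. The finite head $j\le J$ is a single smooth trapezoid, handled by a Theorem~\ref{thm:1}--type argument. If you want to repair your approach, the $h$-dependence has to be introduced \emph{before} you compare $\chi$ to its smooth approximants, not after.
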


This theorem in particular implies the approximate functional equation \eqref{ApproxFeq}, see Section \ref{sec:HL}.

The central part of our analysis is to understand the continuity properties of $\Theta_\chi$ and its growth in the cusps of $\GamG$, which, together with well known results on the dynamics of the flow $\GamG\to\GamG$, $\Gamma g \mapsto \Gamma g \Phi^s$, can be used to obtain both classical and new results on the value distribution of $S_N(x,\alpha)$ for large $N$. 
The main new application that we will focus on is an invariance principle for $S_N(x,\alpha)$ at random argument. A natural setting would be to take $x\in[0,2]$, $\alpha\in[0,1]$ uniformly distributed according to Lebesgue measure. We will in fact study a more general setting where $\alpha$ is fixed, and $\frac12 n^2$ is replaced by an arbitrary quadratic polynomial $P(n)=\frac12 n^2 + c_1 n +c_0$, with real coeffcients $c_0$, $c_1$. The resulting theta sum
\be
S_N(x)=S_N(x;P,\alpha)=\sum_{n=1}^N e \left( P(n) x+\alpha n\right),\label{theta-sum-intro}
\ee
is not necessarily periodic in $x$. We thus assume in the following that $x$ is distributed according to a given Borel probability measure $\lambda$ on $\RR$ which is absolutely continuous with respect to Lebesgue measure.

Let us consider the complex-valued curve $[0,1]\to\C$ defined by
\be
t\mapsto X_N(t)=\frac{1}{\sqrt N}\, S_{\lfloor t N\rfloor}(x)
+\frac{\{tN\}}{\sqrt N}
(S_{\lfloor tN\rfloor+1}(x)
-S_{\lfloor tN\rfloor}(x)).
\label{definition-X_N(x;t)}
\ee 
Figure \ref{fig:fivecurlicues} 
shows examples of $X_N(t)$ for five randomly generated values of $x$.

\begin{sidewaysfigure}
    \hspace{-3cm}\includegraphics[width=26cm]{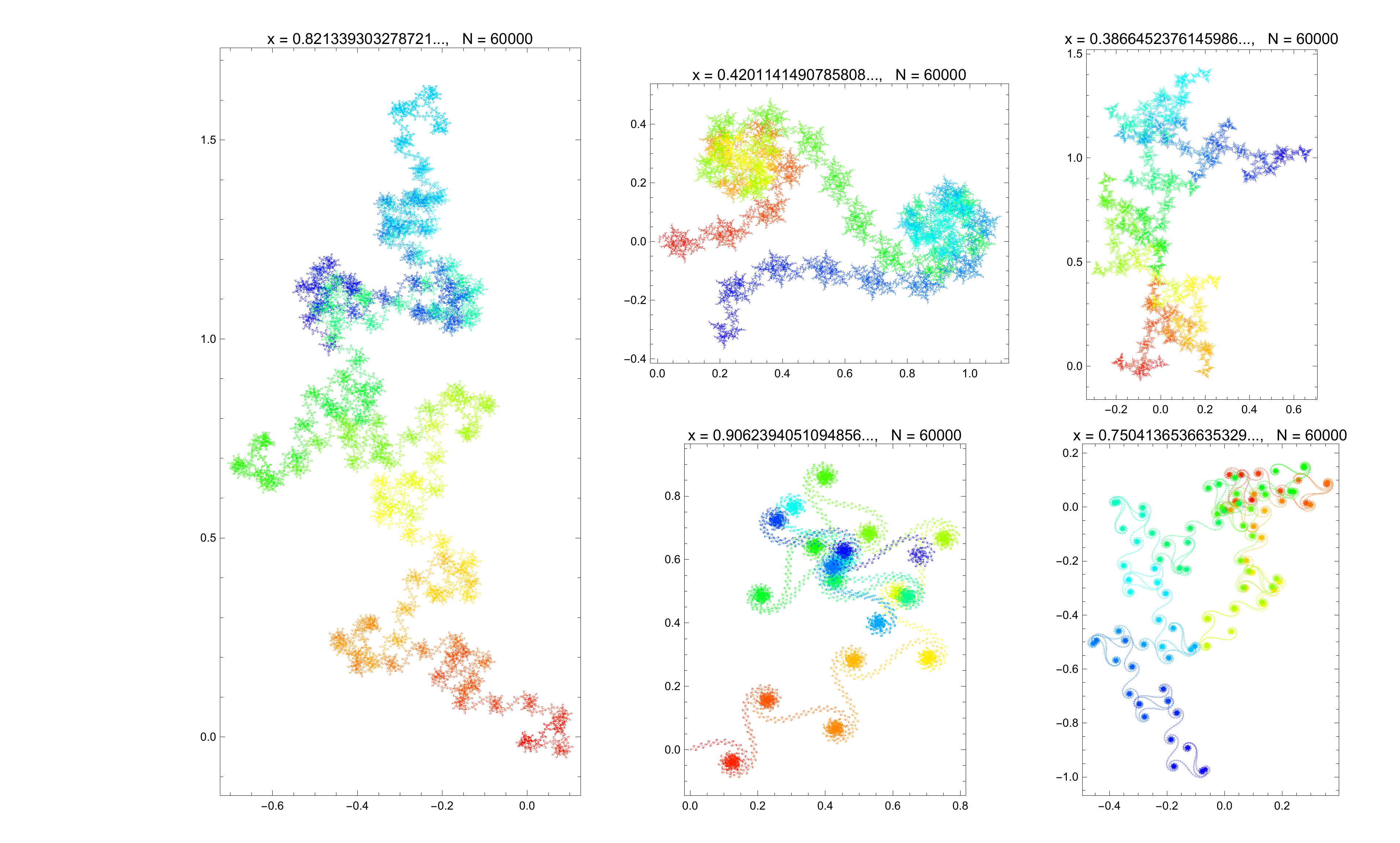}
        \caption{Curlicues $\{X_N(t)\}_{0<t\leq 1}$ for five randomly chosen $x$, and $\alpha=c_0=0$, $c_1=\sqrt2$. The color ranges from red at $t=0$ to blue at $t=1$. }
    \label{fig:fivecurlicues}
\end{sidewaysfigure}

Consider the space $\mathcal{C}_0=\mathcal{C}_0([0,1],\C)$ of complex-valued, continuous functions on $[0,1]$, taking value $0$ at $0$. Let us equip $\mathcal{C}_0$ with the uniform topology, i.e. the topology induced by the metric $d(f,g):=\|f-g\|$, where $\|f\|=\sup_{t\in[0,1]}|f(t)|$. The space $(\mathcal{C}_0,d)$ is separable and complete (hence Polish) and is called the \emph{classical Wiener space}.
The probability measure $\lambda$ on $\R$ induces, for every $N$, a probability measure on the space $\mathcal{C}_0$ of random curves $t\mapsto X_N(t)
$. For fixed $t\in[0,1]$, $X_N(t)$ is a random variable on $\C$.
The second principal result of this paper is the following.

\begin{theorem}[Invariance principle for quadratic Weyl sums]\label{thm-1} Let $\lambda$ be a Borel probability measure on $\RR$ which is absolutely continuous with respect to Lebesgue measure. Let $c_1,c_0,\alpha\in\R$ be fixed with $\sve{c_1}{\alpha}\notin\Q^2$. Then
\bianco
\begin{itemize}
\item[(i)] for every 
$t\in[0,1]$, we have 
\be\lim_{N\ti}\mathrm{Var}(X_N(t))=t;\ee
\item[(ii)]
there exist a random process $t\mapsto X(t)$ on $\C$ such that 
\be
X_N(t)\Longrightarrow X(t)\hspace{.5cm}\mbox{as $N\to\infty$},
\ee
where ``$\Rightarrow$'' denotes weak convergence of the induced probability measures on $\mathcal{C}_0$. The process $t\mapsto X(t)$ does not depend on the choice of $\lambda$, $P$ or $\alpha$.
\end{itemize}
\end{theorem}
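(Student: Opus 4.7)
The proof splits into the variance computation (i) and the functional weak convergence (ii). For (i), expanding $|S_{\lfloor tN\rfloor}(x)|^2$ into a double sum and integrating in $x$ against $\lambda$ produces diagonal terms ($n=m$) that contribute $\lfloor tN\rfloor/N\to t$ and off-diagonal terms of the form $\hat\lambda\bigl((n-m)(\tfrac{n+m}{2}+c_1)\bigr)\e((n-m)\alpha)$, which decay by Riemann--Lebesgue and average to zero after summing. A parallel argument, using $\sve{c_1}{\alpha}\notin\Q^2$ to exclude arithmetic resonances in $\e(n\alpha)\hat\lambda(P(n))$, shows that $\mathbb{E}[X_N(t)]\to 0$. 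Combining the two gives $\mathrm{Var}(X_N(t))\to t$, and the interpolation correction $\{tN\}(S_{\lfloor tN\rfloor+1}-S_{\lfloor tN\rfloor})/\sqrt N$ is of order $N^{-1/2}$ and negligible.

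For (ii) I use Billingsley's criterion: weak convergence in the Polish space $(\mathcal{C}_0,d)$ reduces to (a) convergence of finite-dimensional distributions and (b) tightness. The core of the argument is (a). Fix $0<t_1<\dots<t_k\le 1$ and choose $s_j=s_j(N)$ so that $\lfloor\e^{s_j/2}\rfloor=\lfloor t_jN\rfloor$. Theorem \ref{thm:2}, applied with a single $h\in H_-$ chosen so that $h\in P^x$ for $\lambda$-a.e.\ $x$ and $E_\chi^x(h)$ is $\lambda$-integrable, yields
\begin{equation*}
X_N(t_j) = \sqrt{t_j}\,\Theta_\chi\bigl(\Gamma n_+(x,\alpha)\,h\,\Phi^{s_j}\bigr) + O\!\left(\frac{1+E_\chi^x(h)}{\sqrt N}\right).
\end{equation*}
Writing $s_j=s_1+\sigma_j+o(1)$ with $\sigma_j=2\log(t_j/t_1)$ deterministic, the random vector is asymptotic to $\bigl(\sqrt{t_j}\,\Theta_\chi(g_N\Phi^{\sigma_j})\bigr)_j$, where $g_N=\Gamma n_+(x,\alpha)h\Phi^{s_1}$. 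As $N\to\infty$ the pushforward of $\lambda$ under $x\mapsto g_N$ equidistributes to Haar measure $\mu$ on $\GamG$, a horocycle equidistribution theorem for the Jacobi group in the spirit of \cite{Marklof-1999}. This identifies the joint limit as the law of $\bigl(\sqrt{t_j}\,\Theta_\chi(g\Phi^{\sigma_j})\bigr)_{j=1}^k$ with $g\sim\mu$; the $\Phi^s$-invariance of $\mu$ removes the $t_1$-dependence and yields the limit process $X(t)=\sqrt{t}\,\Theta_\chi(g\Phi^{2\log t})$, manifestly independent of $\lambda$, $P$ and $\alpha$.

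For (b), Kolmogorov--Chentsov reduces tightness to a bound $\mathbb{E}|X_N(t)-X_N(s)|^4\le C(t-s)^2$. The increment $X_N(t)-X_N(s)$ is, up to a unimodular phase and boundary corrections of order $N^{-1/2}$, itself a theta sum of length $M=\lfloor tN\rfloor-\lfloor sN\rfloor\asymp(t-s)N$ with a shifted linear frequency. Expanding the fourth moment produces a quadruple sum whose coefficients are $\hat\lambda$ at the quadratic argument $\tfrac12(n_1^2-n_2^2+n_3^2-n_4^2)+c_1(n_1-n_2+n_3-n_4)$; the diagonal quadruples where this argument vanishes identically contribute $O(M^2)$ by the elementary count of representations $n_1^2-n_2^2+n_3^2-n_4^2=0$ on $[1,M]^4$, and the off-diagonal quadruples are negligible by $\hat\lambda\to 0$. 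Dividing by $N^2$ gives the required $O((t-s)^2)$.

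The main obstacle lies in (a): one must reconcile the discontinuity of $\Theta_\chi$ (which is only $L^2$ since $\chi$ is an indicator) with the requirement that the limit process have continuous sample paths. Concretely, one has to choose the Diophantine parameter $h\in H_-$ with care, obtain cusp-growth estimates for $\Theta_\chi$, control the sojourn of $\Phi^s$-orbits in deep cusps, and show that $t\mapsto \Theta_\chi(g\Phi^{2\log t})$ admits a continuous modification for $\mu$-almost every $g$; only then does the finite-dimensional limit identified above extend to a bona fide random element of $\mathcal{C}_0$.
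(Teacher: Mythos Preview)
Your overall strategy---reduce (ii) to convergence of finite-dimensional distributions plus tightness---is the paper's, and your treatment of (i) by expanding the double sum is essentially what Lemma~\ref{lem:variance} does. But the execution of both halves of (ii) departs from the paper's and contains gaps.

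\textbf{Finite-dimensional distributions.} The paper does \emph{not} route through Theorem~\ref{thm:2}. Since $\phi=0$ the series defining $\Theta_\chi(z,0;\vecxi,\zeta)$ is a \emph{finite} sum, so one has the exact identity
\[
\tilde X_N(x;t)=\e^{s/4}\,\Theta_\chi\!\bigl((1;\sve{\alpha+c_1x}{0},c_0x)\Psi^x\Phi^\tau\Phi^s\bigr),
\]
with no error term and no auxiliary $h\in H_-$. The passage from $\chi$ to a smooth $f\in\mathcal S_2(\R)$ is then made \emph{inside} the equidistribution argument via an $L^2$-approximation (Lemmata~\ref{lem:variance}--\ref{eps-delta-f}), and Corollary~\ref{cor-thmJEQ} is applied only to the continuous function $\Theta_f$. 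Your route through Theorem~\ref{thm:2} is not absurd, but the claim that $E_\chi^x(h)$ is $\lambda$-integrable in $x$ is unjustified: by \eqref{upperEchi} the bound is uniform only on $P_A^x$ for each fixed $A$, and the explicit $F_A(u)$ in the proof blows up like $A^{-1/(2\kappa)}$. For fixed $h=n_-(u,\beta)$ the Diophantine constant $A=A(x)$ of $x+1/u$ is arbitrarily small on a set of positive measure, and $\int A(x)^{-1/(2\kappa)}\lambda(\de x)<\infty$ would need a separate metric argument you have not given.

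\textbf{Tightness.} This is the more serious gap. Your Kolmogorov--Chentsov bound rests on the assertion that the off-diagonal quadruples are ``negligible by $\hat\lambda\to 0$''. Riemann--Lebesgue gives no decay \emph{rate} for $\hat\lambda$ when $\lambda$ is merely absolutely continuous, so summing $O(M^4)$ terms each bounded by $\sup_{|\xi|\ge 1}|\hat\lambda(\xi)|$ yields nothing---and the bound must hold \emph{uniformly in $N$}, not just in the limit. Moreover your diagonal count $O(M^2)$ presupposes that both $n_1^2-n_2^2+n_3^2-n_4^2=0$ and $n_1-n_2+n_3-n_4=0$; this is forced only when $c_1\notin\Q$. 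For $c_1=0$ (allowed, provided $\alpha\notin\Q$) the diagonal is $\{n_1^2+n_3^2=n_2^2+n_4^2\}$, which has $\asymp M^2\log M$ solutions, and one must exploit the $\alpha$-oscillation separately. The paper avoids moment computations entirely: it proves a \emph{uniform} tail estimate
\[
\lambda\bigl(\{x\in\R:|\Theta_\chi(x+iy,0;\vecxi,\zeta)|>R\}\bigr)\ll(1+R)^{-4}
\]
(Proposition~\ref{lem-uniform-tail-estimate-chi}), obtained from the dyadic decomposition of $\Theta_\chi$ together with a height-function bound on the modular surface (Lemmata~\ref{lem-compare-|Theta_f|-and-H(z)}--\ref{lem-sum-r}), and then a direct dyadic argument (Proposition~\ref{thm-tightness}) gives $\lambda(\mathcal M_{K,N})\ll K^{-4}$ uniformly in $N$.
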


The process $X(t)$ can be extended to arbitrary values of $t\geq 0$. We will refer to it as the \emph{theta process}. The distribution of $X(t)$ is a probability measure on $\mathcal{C}_0([0,\infty),\CC)$, and by ``almost surely'' we mean ``outside a null set with respect to this measure''. Moreover, by $X\sim Y$ we mean that the two random variables $X$ and $Y$ have the same distribution. 

Throughout the paper, we will use Landau's ``$O$'' notation and Vinogradov's ``$\ll$'' notation. By ``$f(x)=O(g(x))$'' and ``$f(x)\ll g(x)$'' we mean that there exists a constant $c>0$ such that 
$|f(x)|\leq c |g(x)|$. If $a$ is a  parameter, then by ``$O_{a}$'' and ``$\ll_{a}$'' we mean that $c$ may depend on $a$.

The properties of the theta process are summarized as follows.

\begin{theorem}[Properties of the theta process]\label{thm-2}\bianco
\begin{itemize}
\item[(i)] \textbf{Tail asymptotics}. 
For $R\geq 1$,
\be
\mathbb P\{|X(1)|\geq R\}=
\frac{6}{\pi^2}
R^{-6}\!\left(1+O(R^{-\frac{12}{31}})\right) . \label{tail-asymptotics-intro}
\ee
\item[(ii)] \textbf{Increments}. For every $k\geq2$ and every $t_0<t_1<t_2<\cdots< t_k$ the increments \be X(t_1)-X(t_0),\: X(t_2)-X(t_1),\: \ldots,\: X(t_k)-X(t_{k-1}) \ee are not independent.
\item[(iii)] \textbf{Scaling}. For $a>0$ let $Y(t)=\frac{1}{a}X(a^2t)$. Then $Y\sim X$.
\item[(iv)] \textbf{Time inversion}. Let 
\be Y(t):=\begin{cases}0&\mbox{if $t=0$};
\\ tX(1/t)& \mbox{if $t>0$}.
\end{cases} 
\ee Then $Y\sim X$.
\item[(v)] \textbf{Law of large numbers}. Almost surely, $\lim_{t\ti}\frac{X(t)}{t}=0$.
\item[(vi)] \textbf{Stationarity}. For $t_0\geq 0$ let $Y(t)=X(t_0+t)-X(t_0)$. Then $Y\sim X$.
\item[(vii)] \textbf{Rotational invariance}. For $\theta\in\R$ let $Y(t)=\e^{2\pi i\theta}X(t)$. Then $Y\sim X$.
\item[(viii)] \textbf{Modulus of continuity}. For every $\varepsilon>0$ there exists a constant $C_\varepsilon>0$ such that 
\be\limsup_{h\downarrow0}\sup_{0\leq t\leq 1-h}\frac{|X(t+h)-X(t)|}{\sqrt{h}(\log(1/h))^{1/4+\varepsilon}} \leq C_\varepsilon
\ee
almost surely.
\item[(ix)] \textbf{H\"older continuity}. Fix $\theta<\ha$. Then, almost surely, the curve $t\mapsto X(t)$ is everywhere locally $\theta$-H\"older continuous.
\item[(x)] \textbf{Nondifferentiability}. Fix $t_0\geq 0$. Then, almost surely, the curve $t\mapsto X(t)$ is not differentiable at $t_0$.
\end{itemize} 
\end{theorem}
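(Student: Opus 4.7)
The plan is to reduce each item to a question about the automorphic observable $\Theta_\chi$ on $\GaG$ via Theorem~\ref{thm:2}, and then use mixing of the flow $\Phi^s$ together with explicit cusp estimates. With $s=s(N)=2\log N$, Theorem~\ref{thm:2} gives
\[
X_N(t) = \sqrt{t}\;\Theta_\chi\!\left(\Gamma n_+(x,\alpha) h \Phi^{s+2\log t}\right) + O_h\!\left(N^{-1/2}\right),
\]
modulo the piecewise-linear interpolation built into \eqref{definition-X_N(x;t)}. The finite-dimensional distributions of $(X_N(t_1),\dots,X_N(t_k))$ then converge to those of a limit process $X(t)$ by pushing $\lambda$ under the flow $\Phi^s$ and invoking equidistribution of unstable horocycle arcs as in \cite{Marklof-1999}. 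With this representation in hand, items (iii), (iv), (vi), (vii) are symmetries visible at the automorphic level, (i) is a cusp estimate on $\Theta_\chi$, and the remaining items follow from (i) by standard probabilistic machinery.

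The symmetries are group-theoretic. Scaling (iii) amounts to the shift $s\mapsto s+2\log a$ in the limit, which identifies the laws of $a^{-1}(X(a^2 t_1),\dots,X(a^2 t_k))$ and $(X(t_1),\dots,X(t_k))$ because the limiting measure on $\GaG$ is $\Phi^s$-invariant. Time inversion (iv) reflects the exact functional equation \eqref{ExactFeq}: the Weyl element $w\in\Gamma$ realising $x\mapsto -1/x$ conjugates $\Phi^s$ to $\Phi^{-s}$, and the unimodular prefactor on the right-hand side of \eqref{ExactFeq} contributes only a global phase, absorbed by (vii). Stationarity (vi) is immediate from $\Phi^s$-invariance of Haar measure on $\GaG$, while rotational invariance (vii) follows because the central direction of the Jacobi group acts on $\Theta_\chi$ by a global phase while preserving the Haar measure on $\GaG$.

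The hardest step will be the tail asymptotic (i) with its sharp leading constant and effective error. After Theorem~\ref{thm:2} together with equidistribution, $X(1)$ is distributed as $\Theta_\chi(\Gamma g)$ for $g$ drawn from Haar measure on $\GaG$. The tail event $\{|\Theta_\chi(\Gamma g)|\geq R\}$ localises in the cusp of $\GaG$, which fibres over the unique cusp of $\SL(2,\ZZ)\backslash\HH$; an explicit Siegel-domain calculation for the indicator cut-off $\chi$ produces an unnormalised contribution $\asymp R^{-6}$, and the sharp constant $6/\pi^2 = 1/\zeta(2)$ arises from normalising by the covolume of $\Gamma$. The error term $O(R^{-12/31})$ is the most delicate point: it requires an effective equidistribution rate for expanding horospheres on $\GaG$, governed by the spectral gap of $\SL(2,\ZZ)\backslash\SL(2,\RR)$, and the exponent $12/31$ emerges from optimising the truncation parameter that balances the main cusp term against the equidistribution error.

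With (i) in hand, the remaining items follow by standard probabilistic arguments. For the modulus of continuity (viii), moment bounds on increments $X(t+h)-X(t)$ coming from (i), (iii) and (vi), combined with a Garsia--Rodemich--Rumsey or chaining argument, yield the stated $\sqrt{h}(\log(1/h))^{1/4+\varepsilon}$ scale; H\"older continuity (ix) is then an immediate consequence of (viii). Non-differentiability at fixed $t_0$ (x) uses (vi) and (iii) to identify $(X(t_0+h)-X(t_0))/h$ in distribution with $X(1)/\sqrt{h}$, which diverges in probability as $h\downarrow 0$ because $X(1)$ is non-degenerate. The law of large numbers (v) follows from (iii) applied along a geometric sequence of times together with Borel--Cantelli. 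Finally, for non-independence of increments (ii), the combination of (iii), (vi) and the finite variance provided by Theorem~\ref{thm-1}(i) would force any process with independent stationary increments to be a complex Brownian motion --- the unique self-similar L\'evy process of Hurst index $\tfrac{1}{2}$ with finite variance --- whose Gaussian tail contradicts the power-law tail \eqref{tail-asymptotics-intro}.
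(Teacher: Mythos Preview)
Your overall architecture is right, but three of the individual justifications have genuine gaps.

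\textbf{Stationarity (vi).} You write that stationarity is ``immediate from $\Phi^s$-invariance of Haar measure on $\GamG$.'' That is scaling (iii), not stationarity. The increment $X(t_0+t)-X(t_0)$ corresponds to the cut-off $\bm 1_{(t_0,t_0+t)}$, and to recover $\Theta_\chi$ you must pull the shift $w\mapsto w-t_0$ through the Schr\"odinger--Weil representation. Concretely one computes
\[
\bm 1_{(t_0,t_0+t)}(w)=\e^{s/4}\bigl[R\bigl(i,0;\sve{0}{t_0},0\bigr)R(\Phi^s)\chi\bigr](w),\qquad s=2\log t,
\]
which yields $X(t_0+t)-X(t_0)=\e^{s/4}\Theta_\chi(\Gamma g'\Phi^s)$ with $g'=g\,(i,0;\sve{0}{t_0},0)$. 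The relevant invariance is therefore right-translation by a \emph{Heisenberg} element, not by $\Phi^{s}$.

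\textbf{Tail asymptotic (i).} The error term $O(R^{-12/31})$ does \emph{not} come from effective equidistribution or a spectral gap. Once the limit is identified as $\Theta_\chi(\Gamma g)$ under normalised Haar measure, the tail is a static computation on $\GamG$ with no dynamics involved. The paper's method is: approximate $\chi$ by the trapezoidal truncation $\chi^{(J)}$ of the dyadic decomposition, use the cusp expansion (Lemma~\ref{lem-expansion-at-infinity}) to get an exact asymptotic $\tfrac{2}{3}D(\chi^{(J)})R^{-6}(1+O(\kappa_2(\chi^{(J)})^4 R^{-4}))$ for the smooth piece, bound the remainder series by union bounds, and optimise $J$ and an auxiliary parameter $\delta$. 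The constants $D(\chi)=3$, $\mu(\GamG)=\pi^2/3$ produce $6/\pi^2$, and the optimisation gives $12/31$. Your appeal to the spectral gap would be needed for an \emph{effective} limit theorem in $N$, which is a different (and open) question.

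\textbf{Modulus of continuity (viii).} A Garsia--Rodemich--Rumsey or chaining argument fed only by the $R^{-6}$ tail will give H\"older exponents below $\tfrac12$ but will not recover the precise power $(\log(1/h))^{1/4+\varepsilon}$. That exponent has a specific dynamical origin: writing $|X(t+h)-X(t)|/\sqrt h=|\Theta_\chi(g'\Phi^r)|$ with $r=2\log h$, one bounds $|\Theta_\chi|\ll \tilde y^{1/4}$ by the cusp expansion, and then invokes the Sullivan/Kleinbock--Margulis logarithm law, which says the cusp height along a generic geodesic satisfies $\log\tilde y_s\leq (1+\varepsilon)\log|s|$ for all large $|s|$. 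This is what produces $\tilde y^{1/4}\ll |r|^{(1+\varepsilon)/4}=(\log(1/h))^{1/4+\varepsilon'}$; the dyadic tail of $\Theta_\chi$ is summed termwise with the same bound. Your moment approach would at best give a larger logarithmic power.

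Your treatments of (iii), (iv), (vii), (ix) are fine, and your argument for (ii) (independent stationary self-similar finite-variance increments force Gaussianity, contradicting \eqref{tail-asymptotics-intro}) is in fact cleaner than what the paper sketches. For (v) and (x) your routes are different from the paper's (which uses time inversion plus continuity at $0$, respectively Birkhoff ergodicity of $\Phi^s$), but yours can be made to work with a little extra care (control between geometric times for (v); pass to an a.s.\ convergent subsequence for (x)).
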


\begin{remark}
Properties (i, ii, vii) allow us to predict the distribution of $|X_N(t)|$, $\Re(X_N(t))$, and $\Im(X_N(t))$ for large $N$. See Figure \ref{fig:histogram}.
Our approach in principle permits a generalization of Theorems \ref{thm-1} and \ref{thm-2} to the case of rational $\sve{c_1}{\alpha}\in\Q^2$, however, with some crucial differences. In particular, the tail asymptotics would now be of order $R^{-4}$, and stationarity and rotation-invariance of the process fail. In the special case $c_1=\alpha=0$, a limiting theorem for the absolute value $|X_N(1)|=N^{-1/2}|S_N(x)|$  was previously obtained by Jurkat and van Horne \cite{Jurkat-vanHorne1981-proofCLT}, \cite{Jurkat-vanHorne1982-CLT}, \cite{Jurkat-VanHorne1983} with tail asymptotics $\frac{4\log2}{\pi^2} R^{-4}$ (see also \cite[Example 75]{Cellarosi-thesis}), while the distribution for the complex random variable $X_N(1)=N^{-1/2}S_N(x)$ for was found by Marklof \cite{Marklof-1999}; the existence of finite-dimensional distribution of of the process $t\mapsto S_{\lfloor tN\rfloor}(x)$ was proven by Cellarosi \cite{Cellarosi-curlicue}, \cite{Cellarosi-thesis}. Demirci-Akarsu and Marklof \cite{DAM2013}, \cite{DA2014} have established analogous limit laws for incomplete Gauss sums, and Kowalski and Sawin \cite{KS2014} limit laws and  invariance principles for incomplete Kloosterman sums and Birch sums.
\end{remark}

\begin{center}
\begin{figure}[h!]
\includegraphics[width=8cm]{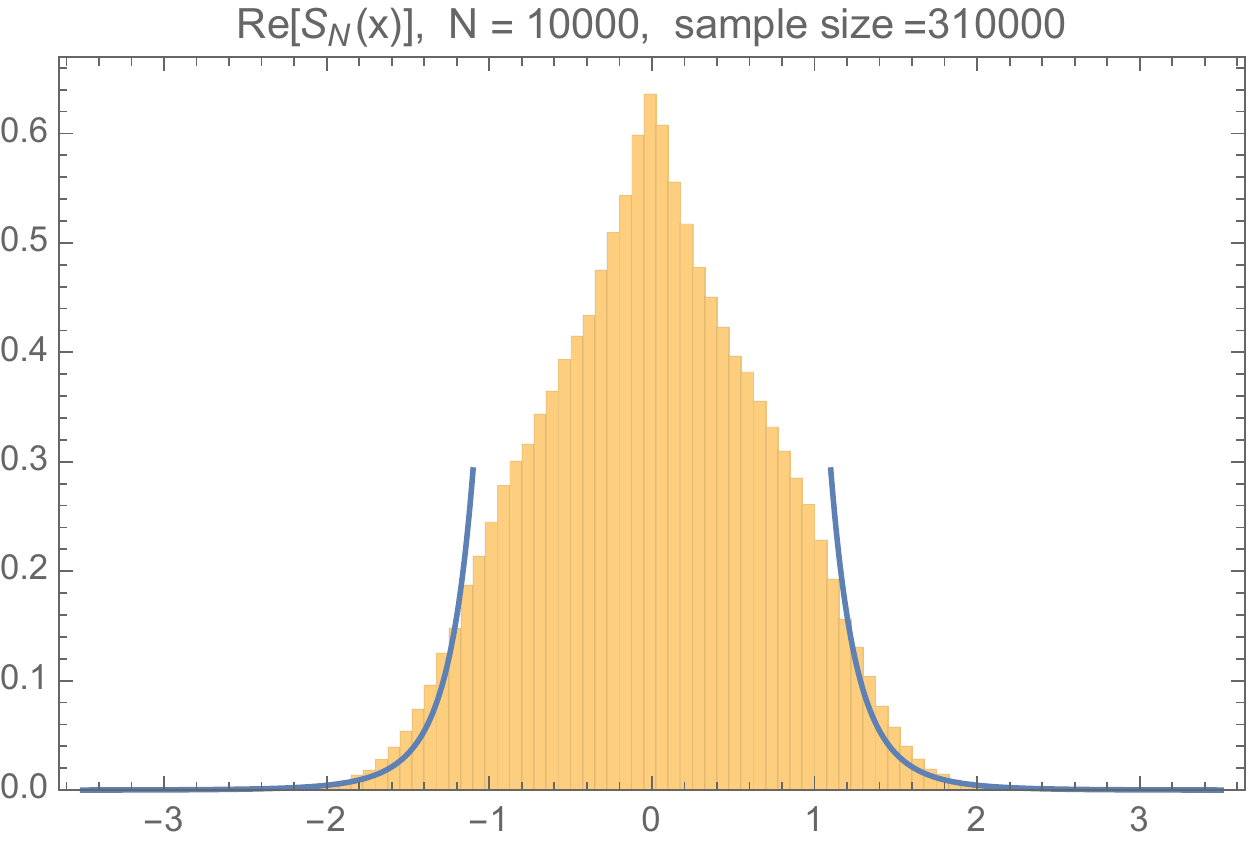}\hspace{.2cm}\includegraphics[width=8cm]{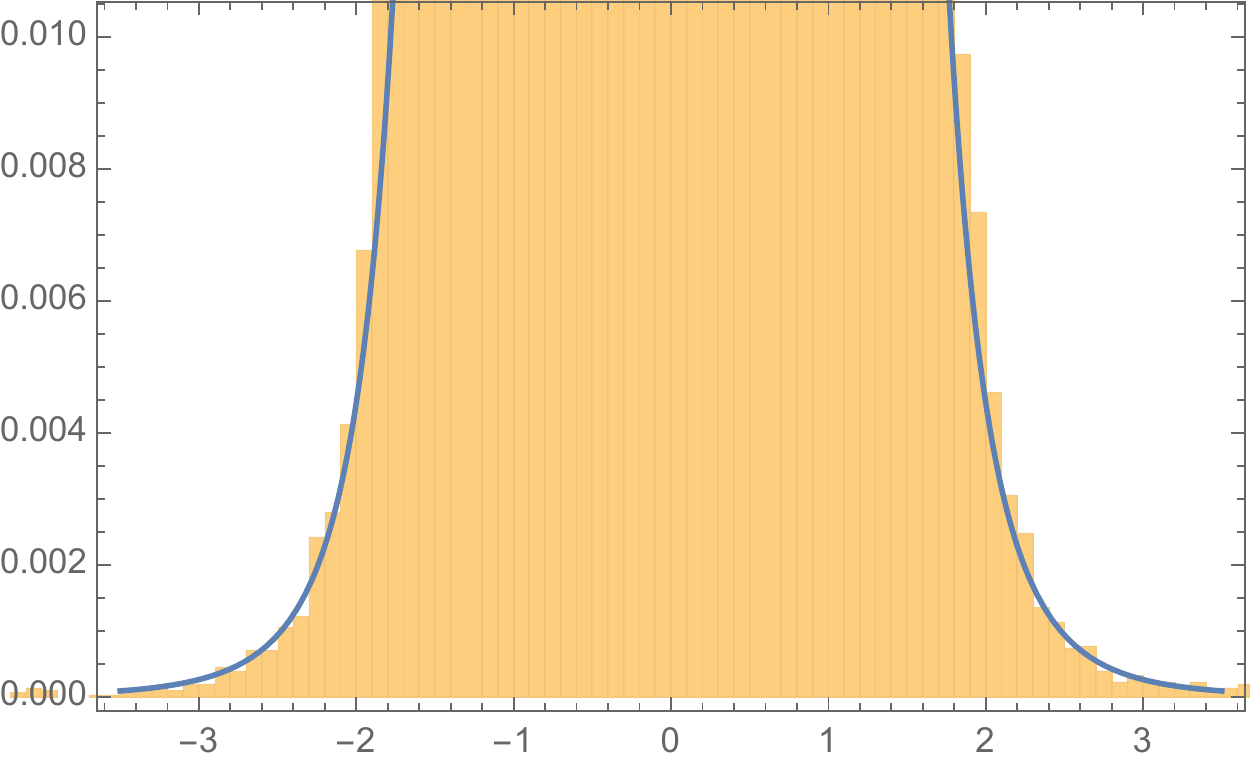}
\caption{The value distribution for the real part of $X_N(1)$, $N=10000$. The continuous curve is the tail estimate for the limit density $-\frac{\de}{\de x}\mathbb P\{\Re X(1)\geq x\}\sim \frac{45}{8\pi^2} |x|^{-7}$ as $|x|\to\infty$. This formula follows from the tail estimate for $|X(1)|$ in \eqref{tail-asymptotics-intro} by the rotation invariance of the limit distribution.}\label{fig:histogram}
\end{figure}
\end{center}

\begin{remark}
If we replace the quadratic polynomial $P(n)$ by a higher-degree polynomial, no analogues of the above theorems are known. If, however, $P(n)$ is replaced by a lacunary sequence $P(n)$ (e.g., $P(n)=2^n$), then $X_N(t)$ is well known to converge to a Wiener process (Brownian motion). In this case we even have an almost sure invariance principle; see Berkes \cite{Berkes-1975} as well as Philipp and Stout \cite{PS1975}. Similar invariance principles (both weak and almost sure) are known for sequences generated by rapidly mixing dynamical systems; see 
Melbourbe and Nicol \cite{Melbourne-Nicol-2009}, Gou\"{e}zel \cite{Gouezel2010} and references therein. The results of the present paper may be interpreted as  invariance principles for random skew translations. Griffin and Marklof \cite{Griffin-Marklof} have shown that a fixed, non-random skew translation does not satisfy a standard limit theorem (and hence no  invariance principle); convergence occurs only along subsequences. A similar phenomenon holds for other entropy-zero dynamical systems, such as translations (Dolgopyat and Fayad \cite{Dolgopyat-Fayad2014}), translation flows (Bufetov \cite{Bufetov2014}), tiling flows (Bufetov and Solomyak \cite{Bufetov-Solomyak2013}) and horocycle flows (Bufetov and Forni \cite{Bufetov-Forni}).
\end{remark}

\begin{remark}
Properties (i) and (ii) are the most striking differences between the theta process and the Wiener process. 
Furthermore, compare property (viii) with the following result by L\'evy for the Wiener process $W(t)$ \cite{levy1937theorie}: almost surely
\begin{align}
\limsup_{h\downarrow0}\sup_{0\leq t\leq 1-h}\frac{|W(t+h)-W(t)|}{\sqrt{2 h\log(1/h)}}=1.\end{align}
All the other properties are the same for sample paths of the Wiener process.
This means that typical realizations of the theta process are slightly more regular than those of the Wiener process, but this difference in regularity cannot be seen in H\"older norm (property (ix)).
Figure \ref{fig:five-real-curlicues} compares the real parts of the five curlicues in Figure \ref{fig:fivecurlicues} with five realization of a standard Wiener process.
\end{remark}

\begin{figure}
\begin{center}
\includegraphics[width=15cm]{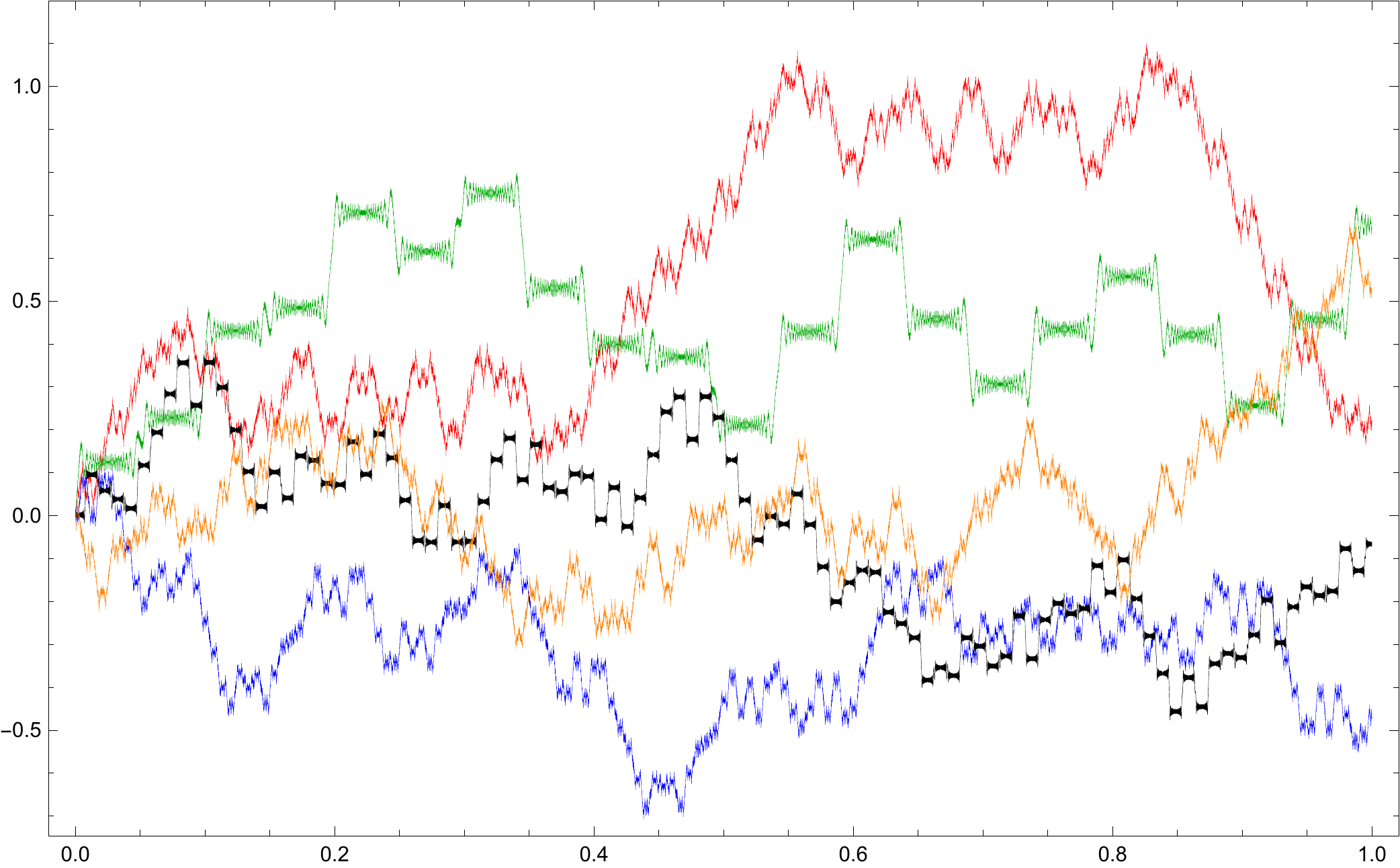}
\includegraphics[width=15cm]{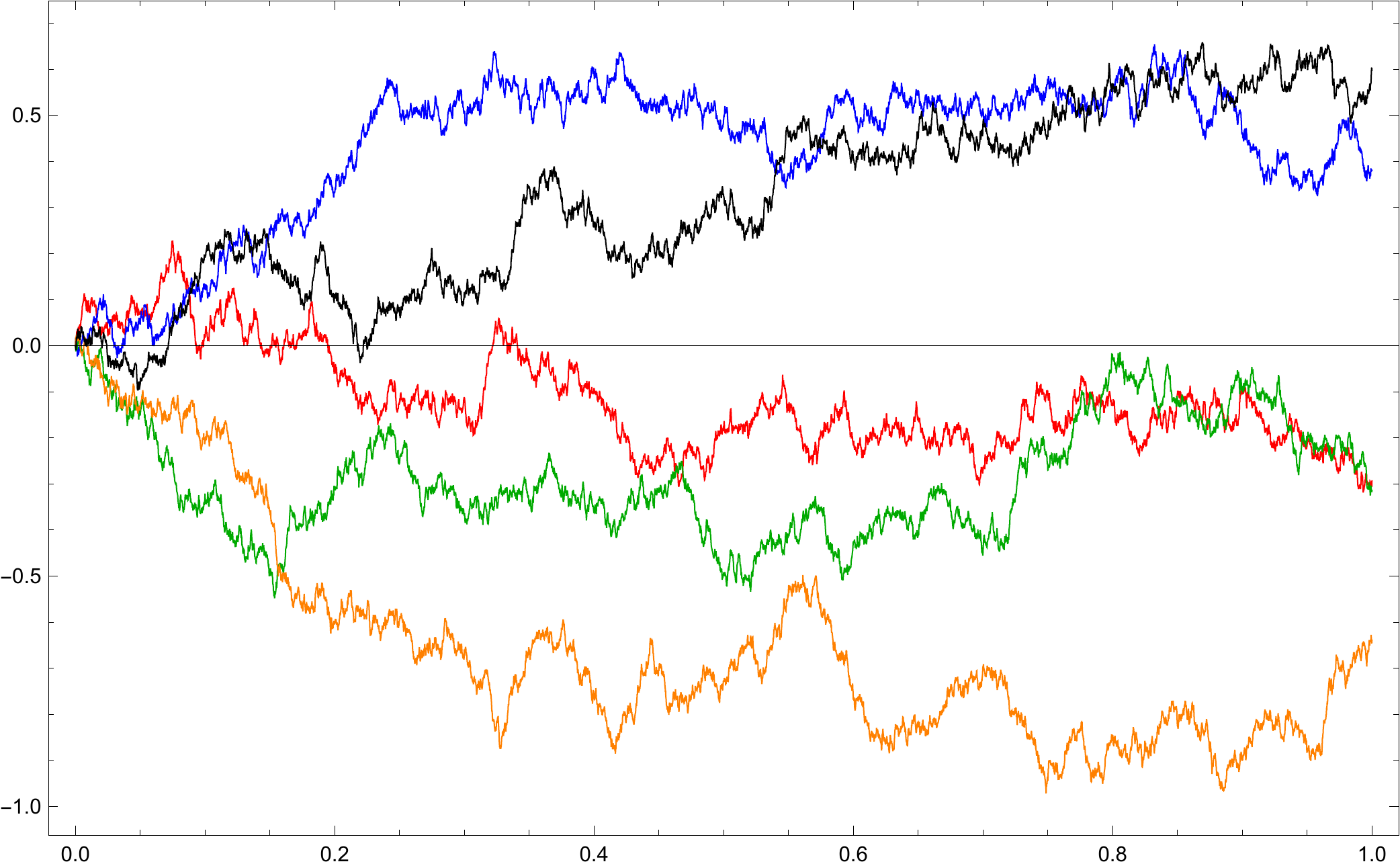}
\end{center}
\caption{Top: $t\mapsto \mathrm{Re}(X_N(t))$ for the five curlicues $\{X_N(t)\}_{0<t\leq 1}$ shown in Figure \ref{fig:fivecurlicues}. Bottom: five sample paths for the Wiener process.}\label{fig:five-real-curlicues}
\end{figure}

\begin{remark}
The tail asymptotics \eqref{tail-asymptotics-intro}  shows that the sixth moment of the limiting distribution of $X_N(1)=N^{-1/2}S_N(x)$ does not exist. In the special case $P(n)=\frac12 n^2$, with $x\in[0,2]$ and $\alpha\in[0,1]$ uniformly distributed, the sixth moment $\int_0^1\int_0^1 |S_N(\alpha;x)|^6 \de x\, \de\alpha$ yields the number $\scrQ(N)$ of solutions of the Diophantine system
\begin{equation}
\begin{split}
& x_1^2+x_2^2+x_3^2=y_1^2+y_2^2+y_3^2\\
& x_1+x_2+x_3=y_1+y_2+y_3
\end{split}
\end{equation}

with $1\leq x_i,y_i\leq N$ ($i=1,2,3$). Bykovskii \cite{Bykovskii} showed that $\scrQ(N)=\frac{12}{\pi^2} \rho_0 N^3\log N +O(N^3)$ with 
\begin{align}\label{theint}
\rho_0=\int_{-\infty}^\infty\int_{-\infty}^\infty \bigg|\int_0^1 e(u w^2-zw) dw\bigg|^6 dzdu .
\end{align}
Using a different method, N.N. Rogovskaya \cite{Rogovskaya} proved $\scrQ(N)=\frac{18}{\pi^2}N^3\log N +O(N^3)$, which yields (without having to compute the integral \eqref{theint} directly) $\rho_0=\frac32$. As we will see, the integral in \eqref{theint} also appears in the calculation of the tail asymptotics \eqref{tail-asymptotics-intro}. The currently best asymptotics for $\scrQ(N)$ is, to the best of our knowledge, due to V. Blomer and J. Br\"udern \cite{Blomer-Brudern}. 
\end{remark}

\begin{remark}
A different dynamical approach to quadratic Weyl sums has been developed by Flaminio and Forni \cite{Flaminio-Forni2006}. It employs nilflows and yields effective error estimates in the question of uniform distribution modulo one. Their current work generalizes this to higher-order polynomials \cite{Flaminio-Forni-2014}, and complements Wooley's recent breakthrough \cite{Wooley-2012}, \cite{Wooley-2014}. It would be interesting to see whether Flaminio and Forni's techniques could provide an alternative to those presented here, with the prospect of establishing invariance principles for cubic and other higher-order Weyl sums.
\end{remark}

 This paper is organized as follows. In Section \ref{section:jacobi-theta-sums} we define complex-valued Jacobi theta functions, and we construct a probability space  on which they are defined. The probability space  is realized as a 6-dimensional homogeneous space $\GamG$.
 Theorem \ref{thm:1} is proven in Section \ref{sec:pf-thm-1} 
 and is used in the proof of Theorem \ref{thm:2}, which is carried out in Section \ref{section:dyadic-representation-Theta-sharp-cutoff}. This section also includes a new proof of Hardy and Littlewood's approximate functional equation (Section \ref{sec:HL}) and discusses several properties of the automorphic function $\Theta_\chi$. In Section  \ref{sec:limit-theorems} we first prove the existence of finite-dimensional limiting distribution for quadratic Weyl sums (Section \ref{section:convergence-fdd}) using  equidistribution  of closed horoycles in $\GamG$ under the action of the geodesic flow, then we prove that the finite dimensional distributions are tight (Section \ref{sec:tightness}). 
As a consequence, the finite dimensional distributions define a random process (a probability measure on $\mathcal C_0$), whose explicit formula is given in Section \ref{sec:the-limiting-process}. This formula is the key to derive all the properties of the process listed in Theorem \ref{thm-2}. Invariance properties are proved in Section \ref{section: invariance properties}, continuity properties in Section \ref{section:Holder-continuity}.

\section*{Acknowledgments}
 We would like to thank Trevor Wooley for telling us about references 
 \cite{Bykovskii}, \cite{Rogovskaya}, \cite{Blomer-Brudern}, which helped in the explicit evaluation of the leading-order constant of the tail asymptotics \eqref{tail-asymptotics-intro}. We are furthermore grateful to the Max Planck Institute for Mathematics in Bonn for providing a very congenial working environment during the program \emph{Dynamics and Numbers} in June 2014, where part of this research was carried out. 

The first author gratefully acknowledges the financial support of the AMS-Simons travel grant (2013-2014) and the NSF grant DMS-1363227. The second author thanks the Isaac Newton Institute, Cambridge for its support and hospitality during the semester \emph{Periodic and Ergodic Spectral Problems}. 
 The research leading to the results presented here has received funding from the European Research Council under the European Union's Seventh Framework Programme (FP/2007-2013) / ERC Grant Agreement n. 291147.

\section{Jacobi theta functions}\label{section:jacobi-theta-sums}

This section explains how to identify the theta sums $S_N(x,\alpha;f)$ in \eqref{theta-sum-intro01} with automorphic functions $\Theta_f$ on the Jacobi group $G$, provided $f$ is sufficiently regular and of rapid decay. These automorphic functions arise naturally in the representation theory of $\SL(2,\R)$ and the Heisenberg group, which we recall in Sections \ref{sec:2.1}--\ref{subsection-Jacobi-group-Schrodinger-Weil-representation}. The variable $\log N$ has a natural dynamical interpretation as the time parameter of the ``geodesic'' flow on $G$, whereas $(x,\alpha)$ parametrises the expanding directions of the geodesic flow (Section \ref{section:geodesic+horocycle+flows}). Section \ref{sec:JTF} states the transformation formulas for $\Theta_f$, which allows us to represent them as functions on $\GamG$, where $\Gamma$ is a suitable discrete subgroup. Sections \ref{sec:growth}--\ref{subsection:Hermite-f} provide more detailed analytic properties of $\Theta_f$, such as growth in the cusp and square-integrability. The proof of Theorem \ref{thm:1} is based on the exponential convergence of nearby points in the stable direction of the geodesic flow (Section \ref{sec:pf-thm-1}).

\subsection{The Heisenberg group and its Schr\"odinger representation}\label{sec:2.1}
Let $\omega$ be the standard symplectic form on $\R^2$, $\omega(\vecxi,\bm{\xi'})=xy'-yx'$, where $\vecxi=\sve{x}{y},\bm{\xi'}=\sve{x'}{y'}$. The Heisenberg group $\Hei$ is defined as $\R^2\times\R$ with the multiplication law
\be(\vecxi,t)(\bm{\xi'},t')=\left(\vecxi+\bm{\xi'},t+t'+\tfrac{1}{2}\omega(\vecxi,\bm{\xi'})\right).\ee
The group $\Hei$ defined above is isomorphic to the group of upper-triangular matrices 
\be\left\{\begin{pmatrix}1&x&z\\0&1&y\\0&0&1\end{pmatrix},\:\:x,y,z\in\R\right\}, \ee
with the usual matrix multiplication law. The isomorphism is given by \be \veH{x}{y}{t}\mapsto\begin{pmatrix}1&x&t+\tha xy\\0&1&y\\0&0&1\end{pmatrix}.\label{isomorphism-Heisenberg}\ee 

The following decomposition holds:
\be\veH{x}{y}{t}=\veH{x}{0}{0}\veH{0}{y}{0}\veH{0}{0}{t-\frac{xy}{2}}.\ee

The Schr\"{o}dinger representation $W$ of $\Hei$ on $\mathrm L^2(\R)$ is defined by 
\bey
\left[W\!\veH{x}{0}{0}f\right]\!(w)&=&e(xw)f(w),\\
\left[W\!\veH{0}{y}{0}f\right]\!(w)&=&f(w-y),\\
\left[W\!\veH{0}{0}{t}f\right]\!(w)&=&e(t)\mathrm{id},
\eey
with $x,y,t,w\in \R$.

For every $M\in\sltr$ we can define a new representation of $\Hei$ by setting
$W_M(\vecxi,t)=W(M\vecxi,t)$. All such representations are irreducible and unitarily equivalent. Thus for each $M\in\sltr$ there is a unitary operator $R(M)$ s.t.
\be R(M)W(\vecxi,t)R(M)^{-1}=W(M\vecxi,t).\label{RWR^-1=W}\ee

$R(M)$ is determined up to a unitary phase cocycle, i.e.
\be R(MM')=c(M,M')R(M)R(M'),\ee
with $c(M.M')\in\C$, $|c(M,M')|=1$.
If \be M_1=\ma{a_1}{b_1}{c_1}{d_1},\:\:M_2=\ma{a_2}{b_2}{c_2}{d_2},\:\:M_3=\ma{a_3}{b_3}{c_3}{d_3},\ee
with $M_1M_2=M_3$, then
\be c(M_1,M_2)=\e^{-i \pi\,\sgn(c_1c_2c_3)/4}.\label{cocycle-c}\ee
$R$ is the so-called \emph{projective Shale-Weil representation of $\sltr$}, and lifts to a true representation of its universal cover $\tsltr$. 

 \subsection{Definition of $\tsltr$}\label{section: tsltr}
Let $\h:=\{w\in\C:\:\Im(w)>0\}$ denote the upper half plane.
The group $\sltr$ acts on $\h$ by M\"{o}bius transformations $z\mapsto gz:=\frac{az+b}{cz+d}$, where $g=\sma{a}{b}{c}{d}\in\sltr$. Every $g\in\sltr$ can be written uniquely as the Iwasawa decomposition
\be g=n_xa_yk_\phi ,\label{Iwasawa-dec}\ee
where 
\be n_x=\ma{1}{x}{0}{1},\:\:a_y=\ma{y^{1/2}}{0}{0}{y^{-1/2}},\:\:k_\phi=\ma{\cos\phi}{-\sin\phi}{\sin\phi}{\cos\phi},\ee
and $z=x+i y\in\h$, $\phi\in[0,2\pi)$.
This allows us to parametrize $\mathrm{SL}(2,\R)$ with $\h\times[0,2\pi)$; we will use the shorthand $(z,\phi):=n_xa_yk_\phi$.
Set $\epsilon_g(z)=(cz+ d)/|cz+ d|$. The universal cover of $\sltr$ is defined as 
\be\tsltr:=\{[g,\beta_g]:\:g\in\sltr,\:\beta_g \mbox{ a continuous function on $\h$ s.t. }\e^{i\beta_g(z)}=\epsilon_g(z)\},\ee 
and has the group structure given by
\be[g,\beta_g^1][h,\beta_h^2]=[gh,\beta_{gh}^3],\qquad\beta_{gh}^3(z)=\beta_g^1(hz)+\beta_h^2(z),\label{mult-tsltr}
\ee
\be
[g,\beta_g]^{-1}=[g^{-1},\beta_{g^{-1}}'],\qquad \beta_{g^{-1}}'(z)=-\beta_{g}(g^{-1}z).\ee
$\tsltr$ is identified with $\h\times\R$ via $[g,\beta_g]\mapsto(z,\phi)=(g i,\beta_g(i))$ and it acts on $\h\times\R$ via
\be [g,\beta_g](z,\phi)=(gz,\phi+\beta_g(z)).\label{act-tsltr-on-hxR}\ee 

We can extend the Iwasawa decomposition \eqref{Iwasawa-dec} of $\sltr$ to a decomposition of $\tsltr$ (identified with $\h\times\R$): for every $\tilde g=[g,\beta_g]\in\tsltr$ we have
\be\tilde g=[g,\beta_g]=\tilde n_x\tilde a_y\tilde k_\phi=[n_x,0][a_y,0][k_\phi,\beta_{k_\phi}].\label{Iwasawa-tlstr}\ee

For $m\in\N$ consider the cyclic subgroup $Z_m=\langle(-1,\beta_{-1})^m\rangle$, where $\beta_{-1}(z)=\pi$. In particular, we can recover the classical groups $\psltr=\tsltr/Z_1$ and $\sltr=\tsltr/Z_2$.

\subsection{Shale-Weil representation of $\tsltr$}\label{section-Schrodinger-Shale-Weil-reps}

The Shale-Weil representation $R$ of defined above as a projective representation of $\sltr$ lifts to a true representation of $\tsltr$ as follows.
Using the decomposition \eqref{Iwasawa-tlstr}, it is enough to define the representation on each of the three factors as follows (see \cite{Lion-Vergne}):
for $f\in\LtR$ let
\be
\left[R( \tilde n_x)f\right](w)=\left[R(n_x)f\right](w):=e\!\left(\tfrac{1}{2}w^2 x\right)f(w), 
\ee
\be
\left[R ( \tilde a_y)f\right](w)=\left[R (a_y)f\right](w):=y^{1/4}f(y^{1/2}w), 
\ee
\be
[R(k_\phi )f ](w)=\begin{cases} 
f(w),&\mbox{if $\phi\equiv0$ mod $2\pi$,}\\
f(-w),&\mbox{if $\phi\equiv\pi$ mod $2\pi$,}\\ \\\displaystyle
|\sin\phi|^{-1/2}\!\int_\R e\!\left(\frac{\ha(w^2+w'^2)\cos\phi-w w'}{\sin\phi}\right)f(w')\de w',&\mbox{if $\phi\equiv\hspace{-.38cm}/\hspace{.2cm}0$ mod $\pi$,}
\end{cases}\:\:\label{def-R-tilde-k-f}
\ee
and $R(\tilde k_\phi)=e(-\sigma_\phi/8)R(k_\phi)$. The function $\phi\mapsto\sigma_\phi$ is given by
\be
\sigma_\phi:=\begin{cases}2\nu,&\mbox{if $\phi=\nu\pi$, $\nu\in\Z$;}\\
2\nu+1,&\mbox{if $\nu\pi<\phi<(\nu+1)\pi$, $\nu\in\Z$.}\end{cases}\label{def-sigma-phi}
\ee
and the reason for the factor $e(-\sigma_\phi/8)$ in the definition of $R(\tilde k_\phi)$ is that for $f\in\SR$ \be\lim_{\phi\to0\pm}\left[R(k_\phi)f\right](w)=e(\pm\tfrac{1}{8})f(w).\ee
Throughout the paper, we will use the notation $f_\phi(w)=[R(\tilde k_\phi)f](w)$. 
More explicitly, the Shale-Weil representation of $\tsltr$ on $\LtR$ reads as
\be  [R(z,\phi)f](w)=[R(\tilde n_x)R(\tilde a_y)R(\tilde k_\phi)f](w)=y^{1/4}e(\tha w^2 x)f_\phi(y^{1/2}w),
\ee
where $z=x+i y\in\h$ and $\phi\in\R$.

\subsection{The Jacobi group and its Schr\"{o}dinger-Weil representation}\label{subsection-Jacobi-group-Schrodinger-Weil-representation}
The \emph{Jacobi group} is defined as the semidirect product \be\sltr\ltimes\Hei\ee
with multiplication law
\be(g;\vecxi,\zeta)(g';\bm{\xi'},\zeta')=\left(gg';\vecxi+g\bm{\xi'},\zeta+\zeta'+\tha\omega(\vecxi,g\bm{\xi'})\right).\label{mult-Jacobi}\ee
The special affine group $\ASL(2,\R)=\sltr\ltimes\R^2$ is isomorphic to the subgroup $\sltr\ltimes (\R^2\times\{0\})$ of the Jacobi group and has the multiplication law
\be(g;\vecxi)(g';\bm{\xi'})=\left(gg';\vecxi+g\bm{\xi'})\right)\label{mult-ASL}.\ee
For $\sltr\ni g=n_x a_y k_\phi=(x+i y,\phi)\in\h\times[0,2\pi)$ let $R(g)f:=R(n_x)R(a_y)R(k_\phi)f$.
If we rewrite \eqref{RWR^-1=W} as
\be R(g)W(\vecxi,t)=W(g\vecxi,t)R(g),\label{RWWR}\ee
then \be R(g;\vecxi,t)=W(\vecxi,t)R(g)\label{defR-projective}\ee defines a projective representation of the Jacobi group with cocycle $c$ as in \eqref{cocycle-c}. It is called the \emph{Schr\"{o}dinger-Weil representation}.
For $\tsltr\ni[g,\beta_g]=\tilde n_x\tilde a_y\tilde k_\phi=(x+i y,\phi)\in\h\times\R$, we define
\be R(z,\phi;\vecxi,t)=W(\vecxi,t) R(z,\phi),\label{def-hat-R}\ee
 and we get a genuine representation of the universal Jacobi group 
 \be G=\tsltr\ltimes\Hei=(\h\times\R)\ltimes\Hei,\ee
having the multiplication law
\be([g,\beta_g];\vecxi,\zeta)([g',\beta'_{g'}];\bm{\xi'},\zeta')=\left([gg',\beta''_{gg'}];\vecxi+g\bm{\xi'},\zeta+\zeta'+\tha\omega(\vecxi,g\bm{\xi'})\right)\label{mult--univ-Jacobi},\ee
where $\beta''_{gg'}(z)=\beta_g(g'z)+\beta'_{g'}(z)$.
The Haar measure on $G$ is given in coordinates $(x+i y,\phi;\scriptsize{\sve{\xi_1}{\xi_2}},\zeta)$ by \be\de \mu(g)=\frac{\de x\,\de y\,\de\phi\,\de\xi_1\,\de\xi_2\,\de\zeta}{y^2}.\label{Haar-measure-on-G}\ee

\subsection{Geodesic and horocycle flows on $G$}\label{section:geodesic+horocycle+flows}
The group $G$ naturally acts on itself by multiplication. 
Let us consider the action by right-multiplication  by elements of the 1-parameter group
$\{\Phi^t:\:t\in\RR\}$ (the \emph{geodesic flow}), where 
\be \Phi^t=\left(\left[\left(\ba{cc}\e^{-t/2}&0\\0&\e^{t/2}\ea\right),0\right];\bm0,0\right).\ee
Let $H_+$ and $H_-$ be the unstable and stable manifold for $\{\Phi^s\}_{s\in\R}$, respectively. That is
\begin{align}
H_+&=\{g\in G:\: \Phi^s g \Phi^{-s}\to e\hspace{.3cm}\mbox{as $s\to\infty$}\},\\
H_-&=\{g\in G:\: \Phi^{-s} g \Phi^s\to e\hspace{.3cm}\mbox{as $s\to\infty$}\}.
\end{align}
A simple computation using \eqref{mult--univ-Jacobi} yields
\begin{align}
H_+&=\left\{\left(\left[\left(\ba{cc}1&x\\0&1\ea\right),0\right];\ve{\alpha}{0},0\right):\: x,\alpha\in\RR\right\},\\
H_-&=\left\{\left(\left[\left(\ba{cc}1&0\\u&1\ea\right),\arg(u\cdot+1)\right];\ve{0}{\beta},0\right):\: u,\beta\in\RR\right\}.
\end{align}
We will denote the elements of $H_+$ by $n_+(x,\alpha)$ (see the Introduction) and those of $H_-$ by $n_-(u,\beta)$.
We will also denote by 
\be\Psi^{x}=n_+(x,0)=\left(\left[\left(\ba{cc}1&x\\0&1\ea\right),0\right];\bm 0,0\right)\ee 
the \emph{horocycle flow} corresponding to the unstable $x$-direction only.

\subsection{Jacobi theta functions as functions on $G$}\label{subsection-Jacobi-theta-functions-universal-Jacobi-group}
Let us consider the space of functions $f:\R\to\R$ for which $f_\phi$ has some decay at infinity, uniformly in $\phi$: let us denote
\begin{align}
\kappa_\eta(f)=\sup_{w,\phi}|f_\phi(w)|(1+|w|)^\eta.\label{def-kappa_eta(f)}
\end{align} and define 
\be\mathcal S_{\eta}(\mathbb{R}):=\left\{f:\R\to\R\:\big|\:\kappa_\eta(f)<\infty\right\},\ee
see \cite{Marklof2007b}. It generalizes the Schwartz space, 
since $\SR\subset\mathcal S_\eta(\mathbb{R})$ for every $\eta$.
For $g\in G$ and $f\in\mathcal S_\eta(\R)$, $\eta>1$, define the \emph{Jacobi theta function} as
\be
\Theta_f(g):=\sum_{n\in\Z}[R(g)f](n).\label{def-Jacobi-Theta-function-on-G}
\ee
More explicitely, for $g=(z,\phi;\vecxi,\zeta)$,
\be
\Theta_f(z,\phi;\vecxi,\zeta)=y^{1/4}e(\zeta-\tha \xi_1\xi_2)\sum_{n\in\Z}f_{\phi}\left((n-\xi_2)y^{1/2}\right)e\!\left(\tha(n-\xi_2)^2x+n\xi_1\right)\label{Jacobi-theta-sum-2},\ee
where $z=x+ i y$, $\vecxi=\sve{\xi_1}{\xi_2}$ and $f_\phi= R(i,\phi)f$.
In the next section we will show that there is a discrete subgroup $\Gamma< G$, so that $\Theta_f(\gamma g)=\Theta_f(g)$ for all $\gamma\in\Gamma$, $g\in G$. The theta function $\Theta_f$ is thus well defined as a function on $\GamG$.

For the original theta sum \eqref{theta-sum-intro} we have
\be
S_N(x)=y^{-1/4}\Theta_f(x+i y,0;\sve{\alpha+c_1 x}{0},c_0x),\label{rewriting-SNalpha}
\ee
where $y=N^{-2}$ and $f=\bm{1}_{(0,1]}$ is the indicator function of $(0,1]$. Here
$\Theta_f(z,0;\vecxi,\zeta)$ is well defined 
because the series in \eqref{Jacobi-theta-sum-2} is a finite sum. The same is true for $\Theta_f(z,\phi;\vecxi,\zeta)$ when $\phi\equiv 0 \bmod \pi$ by \eqref{def-R-tilde-k-f}. However, for other values of $\phi$, the function $f_\phi(w)$ decays too slow as $|w|\to\infty$ and we have $f\notin \mathcal S_\eta(\R)$ for any $\eta>1$. For example, for $\phi=\pi/2$,
\be 
f_{\pi/2}(w)=\e^{-\frac{\pi i}{4}}\int_0^1\e^{-2\pi i w w'}\de w'=\e^{\frac{\pi i}{4}}\;\frac{\e^{-2\pi i w}-1}{2\pi w} ,
\ee
and the series \eqref{Jacobi-theta-sum-2} defining $\Theta_\chi(z,\pi/2;\vecxi,\zeta)$ does not converge absolutely. This illustrates that 
$\Theta_f(\gamma(z,0;\vecxi,\zeta))$ may not be well-defined for general $(z,0;\vecxi,\zeta)$ and $\gamma\in\Gamma$. We shall show in Section \ref{section:dyadic-representation-Theta-sharp-cutoff} how to overcome this problem---the key step in the proof of Theorem \ref{thm:2}.

\subsection{Transformation formul\ae}\label{sec:JTF}
The purpose of this section is to determine a subgroup $\Gamma$ of $G$ under which the Jacobi theta function $\Theta_f(z,\phi;\vecxi,\zeta)$ is invariant. Fix $f\in\mathcal{S}_\eta$, $\eta>1$. We have the 
 following transformation formul\ae\: (cf. \cite{Marklof2003ann}):
\begin{align}
&\Theta_f\!\left(-\frac{1}{z},\phi+\arg z;\ve{-\xi_2}{\xi_1},\zeta\right)=\e^{-i\frac{\pi}{4}}\Theta_f\left(z,\phi,\vecxi,\zeta\right)\label{Jacobi1}\\
&\Theta_f\!\left(z+1,\phi,\ve{\frac{1}{2}}{0}+\ma{1}{1}{0}{1}\ve{\xi_1}{\xi_2},\zeta+\frac{\xi_2}{4}\right)=\Theta_f\left(z,\phi,\vecxi,\zeta\right)\label{Jacobi2}\\
&\Theta_f\!\left(z,\phi,\bm m+\vecxi,r+\zeta+\tha\omega\left(\bm m,\bm \xi\right)\right)=(-1)^{m_1 m_2}\Theta_f\left(z,\phi,\vecxi,\zeta\right),\hspace{.3cm}\bm m\in\Z^2, r\in\Z\label{Jacobi3}
\end{align}
Notice that
\begin{equation}
\begin{split}
\left(-\frac{1}{z},\phi+\arg z;\ve{-\xi_2}{\xi_1},\zeta\right)&=\left(\left[\ma{0}{-1}{1}{0},\arg\right];\bm 0,0\right)(z,\phi;\vecxi,\zeta) \\
&=\left(i,\frac{\pi}{2};\bm 0,0\right)(z,\phi;\vecxi,\zeta). 
\end{split}
\end{equation}

In other words, \eqref{Jacobi1} describes how the Jacobi theta function $\Theta_f$ transforms under left multiplication by $\left(i,\frac{\pi}{2};\bm 0,0\right)$. Define
\begin{align}
\gene_1&=\left(\left[\ma{0}{-1}{1}{0},\arg\right];\bm 0,\frac{1}{8}\right)=\left(i,\frac{\pi}{2};\bm0,\frac{1}{8}\right),\label{invariance-by-h1}\\
\gene_2&=\left(\left[\ma{1}{1}{0}{1},0\right];\ve{1/2}{0},0\right)=\left(1+i,0;\ve{1/2}{0},0\right),\\
\gene_3&=\left(\left[\ma{1}{0}{0}{1},0\right];\ve{1}{0},0\right)=\left(i,0;\ve{1}{0},0\right),\\
\gene_4&=\left(\left[\ma{1}{0}{0}{1},0\right];\ve{0}{1},0\right)=\left(i,0;\ve{0}{1},0\right),\\
\gene_5&=\left(\left[\ma{1}{0}{0}{1},0\right];\ve{0}{0},1\right)=\left(i,0;\bm 0,1\right).
\end{align}
Then (\ref{Jacobi1}, \ref{Jacobi2},
\ref{Jacobi3}) imply that for $i=1,\ldots,5$ we have
$\Theta_f(\gene_i g)=\Theta_f(g)$ for every $g\in G$.
The Jacobi theta function $\Theta_f$ is therefore invariant under the left action by the group 
\be \Gamma=\langle \gene_1,\gene_2,\gene_3,\gene_4,\gene_5\rangle
<G,\label{def-Gamma}.\ee 
This means that $\Theta_f$ is well defined on the quotient $\Gamma\backslash G$. 
Let $\Gamma_0$ be the image of $\Gamma$ under the natural homomorphism $\varphi:G \to G/Z \simeq \ASL(2,\RR)$, with
\begin{equation}
Z=\{ (1,2\pi m;\vecnull,\zeta) : m\in\ZZ,\; \zeta\in\RR\} .\label{def-Z}
\end{equation}
Notice that $\Gamma_0$ is commensurable to $\ASL(2,\Z)=\sltz\ltimes\Z^2$. Moreover, for fixed $(g,\vecxi)\in\Gamma_0$ we have that $\{([g,\beta_g];\bm \xi,\zeta)\in\Gamma:\: (g,\vecxi)\in\Gamma_0\}$ projects via $([g,\beta_g];\vecxi,\zeta)\mapsto (\beta_g(i),\zeta)\in\R\times\R$ onto $\{(\beta_g(i)+k \pi,\frac{k}{4}+l):\: k,l\in\Z\}$ since $\gene_1^2$ fixes the point $(g,\bm \xi)$. This means that $\Gamma$ is discrete and that $\Gamma\backslash G$ is a 4-torus bundle over the modular surface $\sltz\backslash\frak H$. This implies that $\GamG$ is non-compact. A fundamental domain for the action of $\Gamma$ on $G$ is 
\be \mathcal F_{\Gamma}=\left\{(z,\phi;\vecxi,\zeta)\in\mathcal F_{\SL(2,\Z)} \times [0,\pi) \times [-\tfrac12,\tfrac12)^2\times [-\tfrac12,\tfrac12) 
\right\},\label{fund-dom-H}\ee
where $\mathcal F_{\SL(2,\Z)}$ is a fundamental domain of the modular group in $\h$. The hypebolic area of $\mathcal F_{\SL(2,\Z)}$ is $\frac{\pi}{3}$, and hence,
by \eqref{Haar-measure-on-G}, we find $\mu(\GamG)=\mu(\mathcal F_\Gamma)=\frac{\pi^2}{3}$.

\subsection{Growth in the cusp}\label{sec:growth}

We saw that if $f\in \mathcal S_\eta(\R)$ with $\eta>1$, then $\Theta_f$ is a function on $\GamG$. We now observe that it is unbounded in the cusp $y>1$ and provide the precise asymptotic. Recall \eqref{def-kappa_eta(f)}.

\begin{lem}\label{lem-expansion-at-infinity}
Given $\xi_2\in\RR$, write $\xi_2=m+\theta$, with $m\in\Z$ and $-\tha\leq\theta<\tha$. Let $\eta>1$. Then there exists a constant $C_\eta$ such that for $f\in\mathcal{S}_\eta(\R)$, $y\geq\ha$ and all $x,\phi,\vecxi,\zeta$,
\be\left|\Theta_f(x+iy,\phi;\vecxi,\zeta)-y^{1/4}e\!\left(\zeta+\frac{(m-\theta)\xi_1+\theta^2x}{2}\right)f_\phi(-\theta y^{\ha})\right|\leq C_\eta \kappa_\eta(f)\,  y^{-(2\eta-1)/4}.\label{exp-at-infty}\ee
\end{lem}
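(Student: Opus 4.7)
The plan is to isolate the single dominant term in the theta series~\eqref{Jacobi-theta-sum-2}, namely the one whose argument of $f_\phi$ is smallest, and to estimate everything else via the uniform pointwise bound $|f_\phi(w)|\le\kappa_\eta(f)(1+|w|)^{-\eta}$ built into the definition of $\mathcal S_\eta(\R)$.

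First, I would start from the explicit formula
\begin{equation*}
\Theta_f(x+\i y,\phi;\vecxi,\zeta)=y^{1/4}\,e\bigl(\zeta-\tfrac12\xi_1\xi_2\bigr)\sum_{n\in\Z}f_\phi\bigl((n-\xi_2)y^{1/2}\bigr)\,e\bigl(\tfrac12(n-\xi_2)^2x+n\xi_1\bigr),
\end{equation*}
write $\xi_2=m+\theta$ with $m\in\Z$, $\theta\in[-\tfrac12,\tfrac12)$, and single out the index $n=m$. For this term $(n-\xi_2)=-\theta$ and $(n-\xi_2)^2=\theta^2$, so the argument of $f_\phi$ is $-\theta y^{1/2}$; the accompanying phase combines, using $\xi_1\xi_2=m\xi_1+\theta\xi_1$, to
\begin{equation*}
e\bigl(\zeta-\tfrac12\xi_1\xi_2+\tfrac12\theta^2x+m\xi_1\bigr)=e\bigl(\zeta+\tfrac12[(m-\theta)\xi_1+\theta^2 x]\bigr),
\end{equation*}
which is precisely the main term appearing in \eqref{exp-at-infty}. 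So the left-hand side of \eqref{exp-at-infty} equals
\begin{equation*}
y^{1/4}\biggl|\sum_{k\in\Z,\,k\neq 0}f_\phi\bigl((k-\theta)y^{1/2}\bigr)\,e\bigl(\tfrac12(k-\theta)^2 x+(m+k)\xi_1\bigr)\biggr|,
\end{equation*}
where I have substituted $k=n-m$.

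Next I would estimate this tail by the triangle inequality, using $|f_\phi(w)|\leq\kappa_\eta(f)(1+|w|)^{-\eta}$:
\begin{equation*}
\bigl|\text{tail}\bigr|\leq y^{1/4}\,\kappa_\eta(f)\sum_{k\neq 0}\bigl(1+|k-\theta|\,y^{1/2}\bigr)^{-\eta}.
\end{equation*}
For $k\neq 0$ and $\theta\in[-\tfrac12,\tfrac12)$ one has $|k-\theta|\geq\tfrac12|k|$, hence $(1+|k-\theta|y^{1/2})^{-\eta}\leq(|k-\theta|y^{1/2})^{-\eta}\leq 2^\eta|k|^{-\eta}y^{-\eta/2}$. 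Summing yields
\begin{equation*}
\sum_{k\neq 0}\bigl(1+|k-\theta|y^{1/2}\bigr)^{-\eta}\leq 2^{\eta+1}\zeta(\eta)\,y^{-\eta/2},
\end{equation*}
which converges since $\eta>1$. Collecting, the total is bounded by $C_\eta\kappa_\eta(f)\,y^{1/4-\eta/2}=C_\eta\kappa_\eta(f)\,y^{-(2\eta-1)/4}$, giving exactly \eqref{exp-at-infty}.

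The only subtlety is that the constant in $|k-\theta|\geq\tfrac12|k|$ requires $|k|\geq 1$, which is forced by $k\neq 0$ since $k\in\Z$; the condition $y\geq\tfrac12$ is needed only to guarantee that the argument $-\theta y^{1/2}$ of the isolated term $f_\phi$ stays in a bounded region, but the tail bound itself is in fact uniform in $y\geq y_0$ for any fixed $y_0>0$, up to adjusting $C_\eta$. There is no real obstacle here beyond bookkeeping of the phase; the calculation is purely a separation of the dominant lattice point from its translates.
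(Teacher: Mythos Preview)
Your proof is correct and follows essentially the same route as the paper: isolate the $n=m$ term, verify its phase, and bound the tail $\sum_{k\neq 0}(1+|k-\theta|y^{1/2})^{-\eta}$ by pulling out $y^{-\eta/2}$ and using $|k-\theta|\geq\tfrac12|k|$. The only cosmetic difference is that the paper leaves the constant implicit as $\sum_{n\neq 0}(y^{-1/2}+|n-\theta|)^{-\eta}\leq C_\eta$, whereas you make it explicit as $2^{\eta+1}\zeta(\eta)$; your aside about the role of $y\geq\tfrac12$ is harmless, though in fact neither argument actually uses it.
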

\begin{proof}
Since the term $y^{1/4}e\!\left(\zeta+\frac{(m-\theta)\xi_1+\theta^2x}{2}\right)f_\phi(-\theta y^{\ha})$ in the left hand side of \eqref{exp-at-infty} comes from the index $n=m$, it is enough to show that
\be\left|\sum_{n\neq m}f_\phi\!\left((n-\xi_2)y^{\ha}\right)e\!\left(\tha(n-\xi_2)^2x+n\xi_1\right)\right|\leq C_\eta\kappa_\eta(f)\,y^{-\eta/2}. \ee
Indeed,
\bey
&&\left|\sum_{n\neq m}f_\phi\!\left((n-\xi_2)y^{\ha}\right)e\!\left(\tha(n-\xi_2)^2x+n\xi_1\right)\right|\leq\sum_{n\neq m}\left|f_\phi\!\left((n-\xi_2)y^{\ha}\right)\right| \\
&&\leq\sum_{n\neq m}\frac{\kappa_\eta(f)\,}{\left(1+|n-\xi_2|y^{\ha}\right)^\eta}=\kappa_\eta(f)\,y^{-\eta/2}\sum_{n\neq m}\frac{1}{\left(y^{-1/2}+|n-m-\theta|\right)^\eta} \\
&&=\kappa_\eta(f)\,y^{-\eta/2}\sum_{n\neq 0}\frac{1}{\left(y^{-1/2}+|n-\theta|\right)^\eta} \leq C_\eta \kappa_\eta(f)\,y^{-\eta/2} .
\eey
\end{proof}

Lemma \ref{lem-expansion-at-infinity} allows us derive an asymptotic for the measure of the region of $\GamG$ where the theta function $\Theta_f$ is large. 
Let us define
\begin{align}
D(f):=\int_{-\infty}^\infty\int_0^{\pi}|f_{\phi}(w)|^6\de\phi\,\de w.\label{def-D(f)}
\end{align}

\begin{lem}\label{mu(tail>R)}
Given $\eta>1$ there exists a constant $K_\eta\geq 1$ such that, for all $f\in\mathcal S_\eta(\R)$, $R\geq K_\eta \kappa_\eta(f)$,
\be\label{mu(tail>R)-formula}
\mu(\{g\in\GamG:\:|\Theta_f(g)|>R\})= \frac{2}{3}D(f) R^{-6} \big(1+O_\eta(\kappa_\eta(f)^{2\eta} R^{-{2\eta}}) \big)
\ee
where the implied constant depends only on $\eta$. 
\end{lem}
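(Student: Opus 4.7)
The strategy is to localize the set $\{|\Theta_f|>R\}$ to the deep cusp and to replace $|\Theta_f|$ by the leading term from Lemma \ref{lem-expansion-at-infinity}. On the fundamental domain $\mathcal F_\Gamma$ of \eqref{fund-dom-H}, the constraint $\xi_2\in[-\tha,\tha)$ forces $m=0$ and $\theta=\xi_2$, so the modulus of the leading term is $|L(g)| := y^{1/4}|f_\phi(-\xi_2 y^{1/2})|$, and
\begin{equation*}
\bigl|\,|\Theta_f(g)|-|L(g)|\,\bigr| \leq C_\eta\,\kappa_\eta(f)\,y^{-(2\eta-1)/4}, \qquad y\geq\tha.
\end{equation*}
Since $|L(g)|\leq y^{1/4}\kappa_\eta(f)$, the inequality $|\Theta_f|>R$ forces $y \geq y_0 := (R/(2\kappa_\eta(f)))^4$, which for $R\geq K_\eta\kappa_\eta(f)$ with $K_\eta$ chosen sufficiently large places us deep in the cusp, where $\mathcal F_{\SL(2,\Z)}$ reduces to the strip $x\in[-\tha,\tha)$ with no further constraint.

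\textbf{Main term.} Since $|L|$ depends only on $(y,\phi,\xi_2)$, the $x$, $\xi_1$, $\zeta$ integrations over their unit intervals each contribute a factor $1$. Substituting $w=-\xi_2 y^{1/2}$ (so $d\xi_2 = y^{-1/2}\,dw$, $w\in[-y^{1/2}/2,y^{1/2}/2]$), the measure of $\{g\in\mathcal F_\Gamma:|L(g)|>R\}$ equals
\begin{equation*}
\int_0^\pi d\phi\int_{y_0}^\infty \frac{dy}{y^{5/2}}\int_{-y^{1/2}/2}^{y^{1/2}/2} \bm{1}\bigl[y^{1/4}|f_\phi(w)|>R\bigr]\,dw.
\end{equation*}
For $|w|>y^{1/2}/2$, the bound $|f_\phi(w)|\leq\kappa_\eta(f)(1+|w|)^{-\eta}$ combined with $y\geq y_0$ contradicts $y^{1/4}|f_\phi(w)|>R$ once $R\geq K_\eta\kappa_\eta(f)$, so the $w$-integration may be extended to all of $\R$ at no cost. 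For fixed $(w,\phi)$ with $|f_\phi(w)|>0$, change variables to $u=y^{1/4}|f_\phi(w)|$; then $dy/y^{5/2} = 4|f_\phi(w)|^6 u^{-7}\,du$, and $\int_R^\infty 4|f_\phi(w)|^6 u^{-7}\,du = \tfrac{2}{3}|f_\phi(w)|^6 R^{-6}$. Integrating over $(w,\phi)\in\R\times[0,\pi)$ yields the leading contribution $\tfrac{2}{3}D(f)R^{-6}$.

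\textbf{Error control.} Sandwich $\{|\Theta_f|>R\}$ between the sets $\{|L|>R\pm\epsilon(y)\}$ where $\epsilon(y) = C_\eta\kappa_\eta(f)y^{-(2\eta-1)/4}$. Under the $u$-change of variables, $\epsilon = C_\eta\kappa_\eta(f)|f_\phi(w)|^{2\eta-1}u^{-(2\eta-1)} \leq C_\eta\kappa_\eta(f)^{2\eta}u^{-(2\eta-1)}$, so near $u\approx R$ the shift in threshold is $\delta \leq C_\eta\kappa_\eta(f)^{2\eta}R^{-(2\eta-1)}$. Hence
\begin{equation*}
\int_{R\pm\delta}^\infty\frac{du}{u^7} = \frac{1}{6R^6}\Bigl(1+O_\eta\bigl(\kappa_\eta(f)^{2\eta}R^{-2\eta}\bigr)\Bigr),
\end{equation*}
which combined with the main-term calculation yields \eqref{mu(tail>R)-formula}.

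\textbf{Main obstacle.} The principal difficulty is ensuring that the $y$-dependent error from Lemma \ref{lem-expansion-at-infinity} translates into a uniform multiplicative error of the claimed rate after the nonlinear change of variables $u=y^{1/4}|f_\phi(w)|$: one has to verify that the perturbation is small uniformly over the full $(w,\phi)$-range of integration, and fix $K_\eta$ large enough that both the sandwich estimate and the tail-in-$w$ argument close simultaneously. Everything else is a routine calculation.
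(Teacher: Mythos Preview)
Your proof is correct and follows essentially the same approach as the paper: localize to the cusp via Lemma \ref{lem-expansion-at-infinity}, sandwich $|\Theta_f|$ between $|L|\pm C_\eta\kappa_\eta(f)\,y^{-(2\eta-1)/4}$, and compute the measure of the level set of $|L|$ by a change of variables (the paper substitutes $w=-\xi_2 y^{1/2}$ and integrates $\xi_2$ last rather than your $u=y^{1/4}|f_\phi(w)|$, but these are equivalent rearrangements of the same integral). The one cosmetic tightening is in the error step: rather than carrying the $y$-dependent threshold $R\pm\epsilon(y)$ through the change of variables, it is cleaner to first use the localization $y\ge c_\eta(R/\kappa_\eta(f))^4$ to replace $\epsilon(y)$ by the constant $\delta\asymp_\eta\kappa_\eta(f)^{2\eta}R^{-(2\eta-1)}$ \emph{before} computing $\mu\{|L|>R\pm\delta\}$ --- precisely the fix you anticipate in your ``Main obstacle'' paragraph, and precisely what the paper does.
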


\begin{proof}
Recall the fundamental domain $\scrF_\Gamma$ in \eqref{fund-dom-H}, and define the subset
\be \scrF_{T}=\left\{(x+\i y,\phi;\vecxi,\zeta): x\in  [-\tfrac12,\tfrac12),\; y>T,\; \phi\in \times [0,\pi),\vecxi\in [-\tfrac12,\tfrac12)^2,\;\zeta\in [-\tfrac12,\tfrac12) 
\right\}.\label{siegelset}\ee
We note that $\scrF_1\subset\scrF_\Gamma\subset\scrF_{1/2}$. 
To simplify notation, set $\tilde\kappa=C_\eta \kappa_\eta(f)$.
We obtain an upper bound for \eqref{mu(tail>R)-formula} via Lemma \ref{lem-expansion-at-infinity},
\begin{equation}\label{mu(tail>R)-formula22}
\begin{split}
\mu(\{g\in\GamG:\:|\Theta_f(g)|>R\}) 
&\leq \mu(\{g\in\scrF_{1/2} :\:|\Theta_f(g)|>R\}) \\
&\leq \mu(\{g\in\scrF_{1/2} :\: y^{1/4} |f_{\phi}(-\theta y^{1/2})| + \tilde\kappa\,  y^{-(2\eta-1)/4} >R\}) .
\end{split}
\end{equation}
In particular, we have $y^{1/4} +  y^{-(2\eta-1)/4} >R/\tilde\kappa$ and $y>\ha$, and hence $y\geq c_\eta (R/\tilde\kappa)^4$ for a sufficiently small $c_\eta>0$.
Thus
\begin{multline}\label{mu(tail>R)-formula23}
\mu(\{g\in\GamG:\:|\Theta_f(g)|>R\}) \\
\leq \mu(\{g\in\scrF_{1/2} :\: y^{1/4} |f_{\phi}(-\theta y^{1/2})| + c_\eta^{-(2\eta-1)/4} \tilde\kappa (\tilde\kappa/R)^{2\eta-1} >R\}) .
\end{multline}
The same argument yields the lower bound
\begin{multline}\label{mu(tail>R)-formula24}
\mu(\{g\in\GamG:\:|\Theta_f(g)|>R\}) 
\geq \mu(\{g\in\scrF_{1} :\:|\Theta_f(g)|>R\}) \\
\geq \mu(\{g\in\scrF_{1} :\: y^{1/4} |f_{\phi}(-\theta y^{1/2})| -  c_\eta^{-(2\eta-1)/4} \tilde\kappa (\tilde\kappa/R)^{2\eta-1} >R\}) .
\end{multline}
The terms in \eqref{mu(tail>R)-formula22} and \eqref{mu(tail>R)-formula24} are of the form
\begin{equation}
I_{T}(\Lambda) = \mu(\{g\in\scrF_{T} :\: y^{1/4} |f_{\phi}(-\theta y^{1/2})| >\Lambda \}) ,
\end{equation}
where $T=\frac12$ or $1$ and $\Lambda=R- c_\eta^{-(2\eta-1)/4}\tilde\kappa (\tilde\kappa/R)^{2\eta-1}$ or $R+   c_\eta^{-(2\eta-1)/4}\tilde\kappa(\tilde\kappa/R)^{2\eta-1}$, respectively.
We have
\begin{equation}
I_{T}(\Lambda) = \int_{-\ha}^{\ha}\de\zeta\int_{-\ha}^{\ha}\de\xi_1\int_{-\ha}^{\ha}\de x\int_{0}^{\pi}\de \phi \int_{-\ha}^{\ha}\de\theta  \int_{y\geq\max(T, |f_\phi(-\theta y^{1/2})|^{-4}\Lambda^4)} \frac{\de y}{y^2} .
\end{equation}
By choosing the constant $K_\eta$ sufficiently large, we can ensure that $\Lambda \geq \kappa_\eta(f) \geq \kappa_{1/2}(f)=\sup_{w,\phi}|f_\phi(w)|(1+|w|)^{1/2}$.
Then $T \leq |f_\phi(-\theta y^{1/2})|^{-4}\Lambda^4$ and $\frac12\geq |w||f_\phi(w)|^2\Lambda^{-2}$, and
using the change of variables $y\mapsto w=-\theta y^{1/2}$, we obtain
\begin{align}
I_{T}(\Lambda) &=2\int_0^{\pi}\left(\int_{-\infty}^0\frac{\de w}{|w|^3}\int_{0}^{|w||f_\phi(w)|^2\Lambda^{-2}}\!\!\theta^2\de\theta+\int_{0}^\infty\frac{\de w}{|w|^3}\int_{-|w||f_\phi(w)|^2\Lambda^{-2}}^0\!\!\theta^2\de\theta\right)\!\de\phi\:\:\:\:\:\:\\
&=\frac{2}{3\Lambda^{6}}\int_0^{\pi}\int_{-\infty}^\infty|f_\phi(w)|^6 \de w\,\de\phi ,
\end{align}
where $\Lambda^{-6} = R^{-6} \big(1+ O_\eta((\tilde\kappa/R)^{2\eta})\big).$
\end{proof}

\subsection{Square integrability of $\Theta_f$ for $f\in L^2(\R)$.}
Although we defined  the Jacobi theta function in (\ref{def-Jacobi-Theta-function-on-G}, \ref{Jacobi-theta-sum-2}) assuming that $f$ is regular enough so that $f_\phi(w)$ decays sufficiently fast as $|w|\to\infty$ uniformly in $\phi$, we recall here that $\Theta_f$ is a well defined element of $L^2(\GamG)$ provided $f\in L^2(\R)$.

\begin{lem}\label{moments2,4}
Let $f_1,f_2,f_3,f_4:\R\to\mathbb{C}$ be Schwartz functions.
Then 
\begin{align}
\frac{1}{\mu(\GamG)}\int_{\GamG}\Theta_{f_1}(g)\overline{\Theta_{f_2}(g)}\de\mu(g)=&\int_{\R}f_1(u)\overline{f_2(u)}\de u\label{statement-Theta_f1Theta_f2}
\end{align}
and
\begin{multline}
\frac{1}{\mu(\GamG)}\int_{\GamG}\Theta_{f_1}(g)\overline{\Theta_{f_2}(g)}\Theta_{f_3}(g)\overline{\Theta_{f_4}(g)}\de\mu(g)\\
=\left(\int_{\R}f_1(u)\overline{f_2(u)}\de u\right)\left(\int_{\R}f_3(u)\overline{f_4(u)}\de u\right) + \left(\int_{\R}f_1(u)\overline{f_4(u)}\de u\right)\left(\int_{\R}\overline{f_2(u)}f_3(u)\de u\right).\label{statement-Theta_f1Theta_f2Theta_f3Theta_f4}
\end{multline}
\end{lem}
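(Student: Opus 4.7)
My strategy is a direct unfolding on the fundamental domain \eqref{fund-dom-H} using the explicit representation \eqref{Jacobi-theta-sum-2}. The two structural inputs are: (a) integration over the compact torus coordinates $\zeta,\xi_1\in[-\tha,\tha)$ produces Kronecker-delta constraints on the summation indices via the phases $e(\zeta)$ and $e(n\xi_1)$; and (b) the Shale--Weil operator $R(\tilde k_\phi)$ is unitary on $L^2(\R)$ (verifiable from \eqref{def-R-tilde-k-f}), so that inner products $\int f_\phi\overline{g_\phi}\,\de w$ are $\phi$-independent and equal to $\int f\overline{g}\,\de w$. The remaining geometric integration over $(x,y,\phi)$ will assemble exactly to $\mu(\GamG)=\pi^2/3$.

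For \eqref{statement-Theta_f1Theta_f2} I expand $\Theta_{f_1}(g)\overline{\Theta_{f_2}(g)}$ as a double sum over $(n_1,n_2)\in\Z^2$. The $\zeta$-integration is trivial. The $\xi_1$-integration of the phase $e((n_1-n_2)\xi_1)$ over $[-\tha,\tha)$ yields the constraint $n_1=n_2=:n$, collapsing the double sum to a single sum. The combined operation $\sum_{n\in\Z}\int_{-1/2}^{1/2}\de\xi_2$ then unfolds via $\xi_2\mapsto\xi_2-n$ into $\int_\R \de\xi_2$; with the substitution $w=(n-\xi_2)y^{1/2}$ the $y^{1/2}$ prefactor (from the two $y^{1/4}$ factors) cancels against the Jacobian $y^{-1/2}$, producing $\int_\R f_{1,\phi}(w)\overline{f_{2,\phi}(w)}\,\de w=\int_\R f_1\overline{f_2}\,\de w$ after unitarity of $R(\tilde k_\phi)$. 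The remaining integration $\int_{\mathcal F_{\SL(2,\Z)}}y^{-2}\de x\,\de y\cdot\int_0^\pi \de\phi=\frac{\pi}{3}\cdot\pi=\mu(\GamG)$ delivers the claim.

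For \eqref{statement-Theta_f1Theta_f2Theta_f3Theta_f4} the analogous expansion gives a quadruple sum over $(n_1,\ldots,n_4)$. Once again $\zeta$ integrates out trivially, and $\xi_1$ enforces the linear constraint $n_1-n_2+n_3-n_4=0$. Under this constraint the cross-terms in $\xi_2$ inside $(n_1-\xi_2)^2-(n_2-\xi_2)^2+(n_3-\xi_2)^2-(n_4-\xi_2)^2$ cancel, leaving the $\xi_2$-independent coefficient $\tha(n_1^2+n_3^2-n_2^2-n_4^2)$ as the coefficient of $x$. A further unfolding of the $x$-integration, using the $\gene_2$-invariance of $\Theta_f$ to access a full period in $x$, produces the quadratic constraint $n_1^2+n_3^2=n_2^2+n_4^2$. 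Combined with the linear constraint, this forces $\{n_1,n_3\}=\{n_2,n_4\}$ as multisets, so the solution set decomposes into the two pairings $(n_1,n_2,n_3,n_4)=(n,n,m,m)$ and $(n,m,m,n)$. Applying the $\xi_2$-unfolding and unitarity of $R(\tilde k_\phi)$ to each pairing, exactly as in the second-moment argument, yields the factored integrals $\bigl(\int f_1\overline{f_2}\bigr)\bigl(\int f_3\overline{f_4}\bigr)$ and $\bigl(\int f_1\overline{f_4}\bigr)\bigl(\int \overline{f_2}f_3\bigr)$ respectively. The overlap $n=m$ of the two pairings becomes a measure-zero locus in the continuous unfolding, so summing the two pairings without subtraction reproduces the right-hand side.

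The most delicate step is the $x$-unfolding in the fourth-moment calculation: the modular fundamental domain $\mathcal F_{\SL(2,\Z)}$ is not a simple product region, and the $\gene_2$-invariance intertwines the shift of $x$ by $1$ with shifts in $\xi_1$ by $\xi_2+\tha$ and in $\zeta$ by $\xi_2/4$. The procedure is to first perform the $\xi_1$- and $\zeta$-integrations (whose outcomes are independent of the $\gene_2$-shift on $|\Theta_{f_1}\overline{\Theta_{f_2}}\Theta_{f_3}\overline{\Theta_{f_4}}|$, since this product is $\Gamma$-invariant), and only then reorganise the $x$-, $\xi_2$- and $y$-integrations so that $\gene_2$-periodicity can be invoked on the $x$-variable to extract the quadratic Kronecker delta. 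Absolute convergence of all sums and interchanges is automatic for Schwartz $f_i$.
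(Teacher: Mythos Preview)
The paper does not give a self-contained argument; it cites \cite[Lemma~7.2]{Marklof2002Duke} for \eqref{statement-Theta_f1Theta_f2} and \cite[Lemma~A.7]{Marklof2003ann} for \eqref{statement-Theta_f1Theta_f2Theta_f3Theta_f4}. Your second-moment computation is correct and is essentially the argument behind the first citation: once the $\xi_1$-integration forces $n_1=n_2$, the $x$-phase disappears and the remaining integrations decouple cleanly.

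Your fourth-moment sketch, however, has a genuine gap at the ``$x$-unfolding'' step. After integrating out $\zeta$ and $\xi_1$ you correctly obtain a function of $(z,\phi,\xi_2)$ that is periodic in $x$ with period $1$; but the base of the remaining integration is $\mathcal F_{\SL(2,\Z)}$, which is \emph{not} a strip $\{|x|<\tfrac12\}\times\{y>c\}$, and the partially integrated function is \emph{not} $\SL(2,\Z)$-invariant in $z$ (the $\gene_1$-invariance of the full product swaps $\xi_1$ and $\xi_2$, so it is lost once you integrate out $\xi_1$ alone). Periodicity under $\gene_2$ by itself gives you no way to ``access a full period in $x$'' on this domain, and the quadratic Kronecker delta does not drop out. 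A second problem: even if you grant the quadratic constraint, the $\xi_2$-unfolding on a pairing such as $(n,n,m,m)$ does not factorise. Unfolding $\sum_n\int_{-1/2}^{1/2}\de\xi_2$ to $\int_\R$ leaves a residual sum over $k=n-m$, producing
\[
y^{1/2}\sum_{k\in\Z}\int_\R (f_{1,\phi}\overline{f_{2,\phi}})(w)\,(f_{3,\phi}\overline{f_{4,\phi}})(w-ky^{1/2})\,\de w,
\]
which still depends on $y$ and does not reduce to $\bigl(\int f_1\overline{f_2}\bigr)\bigl(\int f_3\overline{f_4}\bigr)$.

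The cited references sidestep both issues by computing along an expanding horocycle: one evaluates
\[
\iint_{[-\frac12,\frac12]^2}\Theta_{f_1}\overline{\Theta_{f_2}}\Theta_{f_3}\overline{\Theta_{f_4}}\!\left((1;\sve{\alpha}{0},0)\Psi^u\Phi^t\right)\de u\,\de\alpha
\]
directly (here $u$ and $\alpha$ genuinely range over full periods, so both the linear and quadratic deltas appear), identifies the result as Riemann sums converging to the right-hand side of \eqref{statement-Theta_f1Theta_f2Theta_f3Theta_f4} as $t\to\infty$, and invokes equidistribution of closed horocycles (with a tail estimate from Lemma~\ref{mu(tail>R)} to handle unboundedness) to identify the same limit with $\mu(\GamG)^{-1}\int_{\GamG}$.
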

\begin{proof}
The statement \eqref{statement-Theta_f1Theta_f2} is a particular case of Lemma 7.2 in \cite{Marklof2002Duke}, while \eqref{statement-Theta_f1Theta_f2Theta_f3Theta_f4} follows from 
Lemma A.7 in \cite{Marklof2003ann}.
\end{proof}

\begin{cor}\label{cor-L2L4}
For every $f\in L^2(\R)$, the function $\Theta_f$ is a well defined element of $L^4(\GamG)$. Moreover
\be
\left\|\Theta_f\right\|_{L^2(\GamG)}^2=\mu(\GamG)\|f\|_{L^2(\R)}^2,
\ee
\be
\left\|\Theta_f\right\|_{L^4(\GamG)}^4=2\mu(\GamG)\|f\|_{L^2(\R)}^4 .
\ee
\end{cor}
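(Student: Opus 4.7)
The plan is to obtain both identities first for Schwartz $f$ as a direct consequence of Lemma \ref{moments2,4}, and then extend to $L^2(\R)$ by a density/isometry argument, exploiting the fact that $\GamG$ has finite Haar measure so that $L^4(\GamG)\hookrightarrow L^2(\GamG)$ continuously.

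\emph{Step 1 (Schwartz case).} Apply Lemma \ref{moments2,4} with $f_1=f_2=f$, giving $\|\Theta_f\|_{L^2(\GamG)}^2=\mu(\GamG)\|f\|_{L^2(\R)}^2$. Apply it again with $f_1=f_2=f_3=f_4=f$ to rewrite
\begin{equation}
\int_{\GamG}|\Theta_f(g)|^4\de\mu(g)=\int_{\GamG}\Theta_f\overline{\Theta_f}\Theta_f\overline{\Theta_f}\de\mu(g),
\end{equation}
and observe that both cross-pairings in \eqref{statement-Theta_f1Theta_f2Theta_f3Theta_f4} collapse to $\|f\|_{L^2(\R)}^2$, producing the factor of $2$ and hence $\|\Theta_f\|_{L^4(\GamG)}^4=2\mu(\GamG)\|f\|_{L^2(\R)}^4$.

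\emph{Step 2 (Extension to $L^2(\R)$).} Given $f\in L^2(\R)$, choose a sequence of Schwartz functions $f_n\to f$ in $L^2(\R)$; this is possible by density of $\mathcal{S}(\R)$ in $L^2(\R)$. By linearity of $f\mapsto\Theta_f$ on $\mathcal S(\R)$ and the identities of Step 1 applied to $f_n-f_m$,
\begin{equation}
\|\Theta_{f_n}-\Theta_{f_m}\|_{L^2(\GamG)}^2=\mu(\GamG)\|f_n-f_m\|_{L^2(\R)}^2,
\end{equation}
\begin{equation}
\|\Theta_{f_n}-\Theta_{f_m}\|_{L^4(\GamG)}^4=2\mu(\GamG)\|f_n-f_m\|_{L^2(\R)}^4,
\end{equation}
so $\{\Theta_{f_n}\}$ is simultaneously Cauchy in $L^2(\GamG)$ and in $L^4(\GamG)$. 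Since $\mu(\GamG)<\infty$, H\"older's inequality gives $\|\cdot\|_{L^2(\GamG)}\leq\mu(\GamG)^{1/4}\|\cdot\|_{L^4(\GamG)}$, so the $L^2$ and $L^4$ limits coincide; we \emph{define} $\Theta_f$ to be this common limit. It is independent of the approximating sequence by the same isometric estimates, so $\Theta_f$ is a well-defined element of $L^2(\GamG)\cap L^4(\GamG)$. Passing to the limit $n\to\infty$ in the two displayed identities applied to $f_n$ (rather than differences) yields the two norm formulas of the corollary.

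\emph{Main potential obstacle.} The only subtle point is to make sure that the definition of $\Theta_f$ via completion agrees, where both make sense, with the pointwise series \eqref{def-Jacobi-Theta-function-on-G}, \eqref{Jacobi-theta-sum-2} used for $f\in\mathcal S_\eta(\R)$ with $\eta>1$. This is immediate: for such $f$ one can take $f_n=f$ (constant sequence), so the completion-limit is $\Theta_f$ itself. Hence the two notions of $\Theta_f$ are consistent, and the corollary follows.
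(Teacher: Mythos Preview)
Your proof is correct and follows essentially the same approach as the paper: apply Lemma \ref{moments2,4} with all $f_i$ equal to $f$ (Schwartz), then use linearity and density of $\mathcal S(\R)$ in $L^2(\R)$ to extend. You have simply written out the density/completion step in more detail than the paper, which compresses it to a single sentence.
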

\begin{proof}
Use Lemma \ref{moments2,4}, linearity in each of the $f_i$'s and density to get the desired statements for $f_1=f_2=f_3=f_4=f$.
\end{proof}

\subsection{Hermite expansion for $f_\phi$}\label{subsection:Hermite-f}

In this section we use the strategy of \cite{Marklof-1999} and find another representation for $\Theta_f$ in terms of Hermite polynomials.
We will use this equivalent representation in the proof of Theorem \ref{thm:1} in Section \ref{sec:pf-thm-1}.

Let  $H_k$ be the $k$-th Hermite polynomial
\begin{align} 
H_k(t)&=(-1)^k \e^{t^2}\frac{\de^k}{\de t^k}\e^{-t^2}
= k! \sum_{m=0}^{\lfloor{\frac{k}{2}}\rfloor} \frac{(-1)^m(2t)^{k-2m}}{m!(k-2m)!}.
\end{align}
Consider the classical Hermite functions 
\begin{align}
h_k(t)=(2^{k}k! \sqrt{\pi})^{-1/2}\e^{-\frac{1}{2}t^2}H_k(t). 
\end{align}
For our purposes, we will use a slightly different normalization for our Hermite functions, namely
\begin{align}
\psi_k(t)=\left(2\pi\right)^{\qua}h_k(\sqrt{2\pi}t)=(2^{k-\ha}k!)^{-1/2} H_k(\sqrt{2\pi}\,t)\e^{-\pi t^2}
\end{align}
The families $\{h_k\}_{k\geq 0}$ and $\{\psi_k\}_{k\geq 0}$ are both orthonormal bases for $L^2(\R,\de x)$.
Following \cite{Marklof-1999},  we can write 
\be f_\phi(t)=\sum_{k=0}^\infty\hat f(k)\e^{- i (2k+1)\phi/2}\psi_k(t)
\label{f_phi-Hermite}\ee
where
\begin{align}
\hat f(k)&=\langle f, \psi_k \rangle_{L^2(\R)},
\end{align}
are the Hermite coefficients of $f$ with respect to the basis $\{\psi_k\}_{k\geq0}$. The uniform bound
\be|\psi_k(t)|\ll 1\hspace{.4cm}\mbox{for all $k$ and all real $t$}\label{classical-bound-psi}\ee
is classical, see \cite{Szego}. It is shown in \cite{Thangavelu} that
\be
|\psi_k(t)|\ll\begin{cases}
\left( (2k+1)^{1/3}+|2\pi t^2-(2k+1)|\right)^{-1/4},& \pi t^2\leq 2k+1\\
\e^{-\gamma t^2},&\pi t^2>2k+1
\end{cases}\label{est-hermite-h_k(t)}
\ee
for some $\gamma>0$, where the implied constant does not depend on $t$ or $k$.
For small values of $t$ (relative to $k$) one has the more precise asymptotic
\be
\begin{split}
\psi_k(t)= & \frac{2^{3/4}}{\pi^{1/4}}
((2k+1)-2\pi t^2)^{-\qua}\cos\!\left(\tfrac{(2k+1)(2\theta-\sin\theta)-\pi}{4}\right)\\
& +O\!\left((2k+1)^{\ha}(2k+1-2\pi t^2)^{-\frac{7}{4}}\right)\label{asymptotic-psi_k(t)-small-t}
\end{split}
\ee
where $0\leq \sqrt{2\pi}t\leq (2k+1)^{\ha}-(2k+1)^{-\frac{1}{6}}$ and $\theta=\arccos\!\left(\sqrt{2\pi} t (2k+1)^{-1/2}\right)$.
It will be convenient to consider the normalized Hermite polynomials
\begin{align}\bar{H}_k(t)=(2^{k-\ha}k!)^{-1/2}H_k(\sqrt{2\pi}t)\end{align}
since they satisfy the antiderivative relation
\begin{align}
\int\bar
H_k(t)\de t&= 
(2\pi)^{-\ha}\frac{\bar H_{k+1}(t)}{(2k+2)^{1/2}}.\label{antiderivative-H}
\end{align}

\begin{lem}\label{lem-hat-f(k)}
Let $f:\RR\to\RR$ be of Schwartz class.
For every $k\geq 0$
\begin{align}
&|\hat f(k)|\ll_m \frac{1}{1+k^{m}}\hspace{.5cm}\mbox{for every $m>1$};\label{statement-lem-hat-f(k)}\end{align}
\end{lem}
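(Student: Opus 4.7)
The plan is to exploit the fact that the Hermite functions $\psi_k$ are eigenfunctions of a second-order differential operator, and repeatedly integrate by parts against $f$.

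First I would verify the eigenvalue equation for $\psi_k$ under the rescaled normalization. Starting from the standard relation $(-\partial_t^2 + t^2) h_k = (2k+1)h_k$ and the definition $\psi_k(t) = (2\pi)^{1/4} h_k(\sqrt{2\pi}\,t)$, the change of variables $u=\sqrt{2\pi}\,t$ converts this into
\begin{equation}
L \psi_k = (k+\tfrac{1}{2})\,\psi_k, \qquad L := -\tfrac{1}{4\pi}\,\partial_t^2 + \pi t^2 .
\end{equation}
The operator $L$ is formally self-adjoint, and $L$ preserves the Schwartz class $\mathcal{S}(\R)$.

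Next, since $f\in\mathcal{S}(\R)$ and $\psi_k\in\mathcal{S}(\R)$, repeated integration by parts (with boundary terms vanishing by Schwartz decay) gives, for every $m\in\N$,
\begin{equation}
(k+\tfrac12)^m \,\hat f(k) \;=\; \langle f, L^m \psi_k\rangle_{L^2(\R)} \;=\; \langle L^m f,\psi_k\rangle_{L^2(\R)} .
\end{equation}
By Cauchy--Schwarz and $\|\psi_k\|_{L^2}=1$,
\begin{equation}
|\hat f(k)| \;\leq\; (k+\tfrac12)^{-m}\, \|L^m f\|_{L^2(\R)} .
\end{equation}
Since $L^m f\in\mathcal{S}(\R)\subset L^2(\R)$, the norm $\|L^m f\|_{L^2(\R)}$ is a finite constant depending only on $f$ and $m$. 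This yields $|\hat f(k)|\ll_m (1+k)^{-m}$ for every integer $m\geq 0$, which is stronger than the claimed bound for $m>1$ (any non-integer $m>1$ is handled by applying the bound with $\lceil m\rceil$).

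I do not anticipate a serious obstacle; the only book-keeping point is the $2\pi$-rescaling in the definition of $\psi_k$, which just requires identifying the correct operator $L$ with integer spectrum shifted by $\tfrac12$. Everything else is the standard Schwartz--eigenfunction decay argument.
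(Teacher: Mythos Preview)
Your argument is correct. Both your proof and the paper's are integration-by-parts arguments exploiting the Hermite structure, but they use different operators. You use the second-order harmonic oscillator $L=-\tfrac{1}{4\pi}\partial_t^2+\pi t^2$, for which $\psi_k$ is an eigenfunction with eigenvalue $k+\tfrac12$, so each application of $L$ to $f$ gains a full factor $k^{-1}$. The paper instead integrates $\int f(t)\e^{-\pi t^2}\bar H_k(t)\,\de t$ by parts using the antiderivative relation $\int\bar H_k=(2\pi)^{-1/2}(2k+2)^{-1/2}\bar H_{k+1}$, which transfers a \emph{first-order} operator $(\mathcal L f)(t)=f'(t)-2\pi t f(t)$ onto $f$ and gains only $k^{-1/2}$ per step. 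Your route is cleaner for Schwartz $f$. The paper's first-order version, however, is reused verbatim in the next lemma for the piecewise-$C^1$ triangle function $\Delta$, where one can integrate by parts exactly three times and obtain the half-integer decay $k^{-3/2}$; the second-order eigenvalue trick would only give integer powers there and would require an extra argument.
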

\begin{proof}
We use integration by parts,
\begin{align}
(2\pi)^{1/2}\int f(t)\e^{-\pi t^2}\bar H_k(t)\de t=\frac{f(t)\e^{-\pi t^2} \bar H_{k+1}(t)}{(2k+2)^{1/2}}-\frac{1}{(2k+2)^{1/2}}\int (Lf)(t)\e^{-\pi t^2}\bar H_{k+1}(t)\de t,\label{int-by-parts}
\end{align}
where $L$ is the operator
\be (Lf)(t)=f'(t)-2\pi t f(t).\ee
Since the function $f$ is rapidly decreasing, the boundary terms vanish. Since $L f$ is also Schwarz class, we can iterate \eqref{int-by-parts} as many times as we want. Each time we gain a power $k^{-1/2}$. This fact yields \eqref{statement-lem-hat-f(k)}. 
\end{proof}

The following lemma allows us to approximate $f_\phi$ by $f$ 
 when $\phi$ is near zero. We will use this approximation in the proof of Theorem \ref{thm:1}.
We will use the shorthand $\scrE_f(\phi,t):=|f_\phi(t)-f(t)|$.

\begin{lem}\label{lem-E_f(phi,t)}
Let $f:\RR\to\RR$ be of Schwartz class, and $\sigma>0$. 
Then, for all $|\phi|<1$, $t\in\R$,
\begin{align}
&\scrE_f(\phi,t)\ll_\sigma\frac{|\phi|}{1+|t|^\sigma}.
\label{statement-lem-Ef(phi,t)}
\end{align}
\end{lem}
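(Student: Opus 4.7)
The plan is to exploit the Hermite expansion \eqref{f_phi-Hermite} together with the decay estimates in Lemma \ref{lem-hat-f(k)} and the pointwise bounds on Hermite functions in \eqref{classical-bound-psi}--\eqref{est-hermite-h_k(t)}.

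First I observe that $f_0 = f$ (this is immediate from the definition of $R(\tilde k_\phi)$ for $\phi\equiv 0 \bmod 2\pi$, since $\sigma_0=0$), so that subtracting the Hermite expansions at $\phi$ and $0$ gives
\begin{equation}
f_\phi(t)-f(t)=\sum_{k=0}^\infty\hat f(k)\bigl(\e^{-\i(2k+1)\phi/2}-1\bigr)\psi_k(t).
\end{equation}
Applying the elementary bound $|\e^{\i\theta}-1|\leq|\theta|$ then yields
\begin{equation}\label{basic-bd-Ef}
\scrE_f(\phi,t)\leq\tfrac{|\phi|}{2}\sum_{k=0}^\infty(2k+1)|\hat f(k)||\psi_k(t)|.
\end{equation}

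For $|t|\leq 1$, the uniform bound $|\psi_k(t)|\ll 1$ from \eqref{classical-bound-psi} together with $|\hat f(k)|\ll_m(1+k^m)^{-1}$ for arbitrary $m>1$ (Lemma \ref{lem-hat-f(k)}) shows that the series in \eqref{basic-bd-Ef} is bounded by a constant depending on $f$, giving the desired estimate on this range (since $1+|t|^\sigma \ll 1$).

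For $|t|>1$ I would split the sum in \eqref{basic-bd-Ef} according to whether $2k+1<\pi t^2$ or $2k+1\geq\pi t^2$. On the first range the second alternative of \eqref{est-hermite-h_k(t)} yields $|\psi_k(t)|\ll\e^{-\gamma t^2}$, and using rapid decay of $\hat f(k)$ one trivially bounds this partial sum by $C\e^{-\gamma t^2}\ll_\sigma(1+|t|^\sigma)^{-1}$. On the second range, $k\gtrsim t^2$, and invoking the uniform bound $|\psi_k(t)|\ll 1$ with Lemma \ref{lem-hat-f(k)} at some $m>2+\tfrac{\sigma}{2}$ gives
\begin{equation}
\sum_{2k+1\geq\pi t^2}(2k+1)|\hat f(k)|\,|\psi_k(t)|\ll_m\sum_{k\gtrsim t^2}k^{1-m}\ll_m (t^2)^{2-m}\ll_\sigma |t|^{-\sigma},
\end{equation}
which is the bound we want. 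Combining the two cases with \eqref{basic-bd-Ef} produces \eqref{statement-lem-Ef(phi,t)}.

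The only mildly delicate step is the second range: one must verify that choosing $m$ large enough (depending only on $\sigma$) in Lemma \ref{lem-hat-f(k)} converts the factor $(2k+1)$ from the linearization $|\e^{\i\theta}-1|\leq|\theta|$ into arbitrary polynomial decay in $|t|$. Because $\hat f(k)$ decays faster than any polynomial, no essential obstruction arises and the bound holds for every $\sigma>0$ as stated.
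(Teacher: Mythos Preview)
Your argument is correct, and in fact a bit more streamlined than the paper's. The paper uses the sharper linearization $\e^{-\i(2k+1)\phi/2}-1=O(k\phi\wedge 1)$, which leads to an additional split of the $k$-sum at $k\approx 1/\phi$ and then a case distinction on whether $1/\phi$ lies below or above $\frac{\pi t^2-1}{2}$; it also invokes the refined asymptotic \eqref{asymptotic-psi_k(t)-small-t}. You instead use the cruder bound $|\e^{\i\theta}-1|\leq|\theta|$ with no truncation, and only split the $k$-sum at $k\approx t^2$ according to which branch of \eqref{est-hermite-h_k(t)} applies. This works precisely because $f$ is Schwartz, so Lemma \ref{lem-hat-f(k)} gives $|\hat f(k)|\ll_m(1+k^m)^{-1}$ for arbitrarily large $m$, absorbing the extra factor $(2k+1)$ effortlessly. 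The paper's more elaborate splitting is not really needed here; it is, however, exactly what is required in the companion Lemma \ref{lem-E_Delta(phi,t)} for the triangle function $\Delta$, where $\hat\Delta(k)$ decays only like $k^{-3/2}$ and one cannot afford the unmitigated factor $(2k+1)$. So your simplification is legitimate for Schwartz $f$, while the paper's approach is written with an eye toward the weaker-decay case treated immediately afterwards.
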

\begin{proof}
Assume $\phi\geq0$, the case $\phi\leq 0$ being similar. 
Write \be \e^{- i (2k+1)\phi/2}=1+O(k\phi\wedge 1)\ee
and by \eqref{f_phi-Hermite} we get
\be
\begin{split}
\scrE_f(\phi,t)
&=O\!\left(\sum_{k=0}^\infty |\hat f(k)(k\phi \wedge 1)\psi_k(t)|\right)\\
&=O\!\left(\phi \sum_{0\leq k\leq 1/\phi}k |\hat f(k)\psi_k(t)|\right)+O\!\left(\sum_{k> 1/\phi}|\hat f(k)\psi_k(t)|\right) .\label{lemma-f_phi-f-1a}
\end{split}
\ee
If $1/\phi<\frac{\pi t^2-1}{2}$ then by \eqref{est-hermite-h_k(t)}
\begin{align}
\phi \sum_{0\leq k\leq 1/\phi}k |\hat f(k)\psi_k(t)|\ll \phi \sum_{0\leq k\leq 1/\phi}k|\hat f(k)|\e^{-\gamma t^2}\ll\phi\, \e^{-\gamma t^2}\label{lemma-f_phi-f-2}
\end{align}
and, by (\ref{est-hermite-h_k(t)}, \ref{asymptotic-psi_k(t)-small-t}), 
\be
\begin{split}
\sum_{k> 1/\phi}|\hat f(k)\psi_k(t)| &\ll \sum_{1/\phi < k <\frac{\pi t^2-1}{2}} |\hat f(k)| \e^{-\gamma t^2}\\
&+\sum_{k\geq \frac{\pi t^2-1}{2}}|\hat f(k)|\left((2k+1)^{1/3}+|2k+1-\pi t^2|\right)^{-1/4} \\ &\ll_\sigma \phi\, \e^{-\gamma t^2}+\left(\frac{\pi t^2-1}{2}\right)^{-(\sigma+1)}\\
&\ll_\sigma \phi\, \e ^{-\gamma t^2}+\phi (1+|t|^{\sigma})^{-1}\ll_\sigma \phi (1+|t|^{\sigma})^{-1}.\label{lemma-f_phi-f-3}
\end{split}
\ee
If, on the other hand, $1/\phi\geq \frac{\pi t^2-1}{2}$, then 
\be
\begin{split}
\phi \sum_{0\leq k\leq 1/\phi}k |\hat f(k)\psi_k(t)|&\ll \phi \sum_{0\leq k<\frac{\pi t^2-1}{2}}k|\hat f(k)|\e^{-\gamma t^2}
\\&+\phi \sum_{\frac{\pi t^2-1}{2}\leq k\leq 1/\phi}k |\hat f(k)|\left((2k+1)^{1/3}+|2k+1-\pi t^2|\right)^{-1/4}\\
&\ll_\sigma \phi\, \e^{-\gamma t^2}+\phi(1+ |t|^{\sigma})^{-1}\ll_\sigma \phi(1+|t|^{\sigma})^{-1}\label{lemma-f_phi-f-3bis}
\end{split}
\ee
and
\begin{align}
\sum_{k> 1/\phi}|\hat f(k)\psi_k(t)| &\ll \sum_{k>1/\phi}|\hat f(k)|\e^{-\gamma t^2}\ll\phi\, \e^{-\gamma t^2}.\label{lemma-f_phi-f-5a}
\end{align}
Combining (\ref{lemma-f_phi-f-1a}, \ref{lemma-f_phi-f-2}, \ref{lemma-f_phi-f-3}, \ref{lemma-f_phi-f-3bis}, \ref{lemma-f_phi-f-5a}) we get the desired statement \eqref{statement-lem-Ef(phi,t)}.
\end{proof}

\subsection{The proof of Theorem \ref{thm:1}}\label{sec:pf-thm-1}
Recall the notation introduced in Section \ref{section:geodesic+horocycle+flows}. The automorphic function featured in the statement of Theorem \ref{thm:1} is the Jacobi theta function $\Theta_f$ defined in \eqref{def-Jacobi-Theta-function-on-G}.
\begin{proof}[Proof of Theorem \ref{thm:1}]
The fact that $\Theta_f\in C^\infty(\GamG)$ for smooth $f$ follows from \eqref{f_phi-Hermite} and the estimates for the Hermite functions as shown in \cite{Marklof-1999}, Section 3.1. Let us then prove the remaining part of the theorem. Recall that 
\be
S_N(x,\alpha;f)=\e^{-s/4}\Theta_f\!\left(x+i \e^{-s},0;\sve{\alpha}{0},0\right)=\e^{s/4}\Theta_f\!\left(n_+(x,\alpha)\Phi^{s}\right)
\ee
where $N=\e^{s/2}$.
Notice that 
\be
n_+(x,\alpha)n_-(u,\beta)\Phi^s=\left(x+\frac{u}{e^{2s}+u^2}+i\frac{\e^s}{\e^{2s}+u^2},\arctan(u \e^{-s}),\ve{\alpha+x \beta}{\beta},\ha\alpha\beta\right).\label{pf-thm1-1}
\ee
We need to estimate the difference between $\Theta_f\!\left(n_+(x,\alpha)n_-(u,\beta)\Phi^s\right)$ and $\Theta_f\!\left(n_+(x,\alpha)\Phi^s\right)$ and show it depends continuously on $n_-(u,\beta)\in H_-$. To this extent, it is enough to show continuity on compacta of $H_-$ and therefore we can assume that $u$ and $\beta$ are both bounded. To simplify notation, we assume without loss of generality $u> 0$. 
In the following, we will use the  bounds 
\begin{align}
\left(\frac{\e^s}{\e^{2s}+u^2}\right)^{1/4}&=\frac{1}{\sqrt N}+O\!\left(\frac{u^2}{N^{9/2}}\right)\label{pf-thm1-2}\\
\left(\frac{\e^s}{\e^{2s}+u^2}\right)^{1/2}&=\frac{1}{ N}+O\!\left(\frac{u^2}{N^5}\right)\label{pf-thm1-3}
\end{align}
and
\be
\begin{split}
& e\!\left(\ha (n-\beta)^2\left(x+\frac{u}{\e^{2s}+u^2}\right)+n(\alpha+x\beta)-\ha x\beta^2\right)\\
& =e\!\left(\ha n^2x+n\alpha\right)\left(1+O\!\left(\frac{u\, n^2}{N^4}\wedge 1\right)\right),\label{pf-thm1-4}
\end{split}
\ee
where all the implied constants are uniform for $N\geq1$ and for $n_-(u,\beta)$ in compacta of $H_-$.
From \eqref{pf-thm1-1} we get
\be
\begin{split}
&\Theta_f\!\left(n_+(x,\alpha)n_-(u,\beta)\Phi^s\right)=e\!\left(\ha\alpha\beta-\ha(\alpha+x\beta)\beta \right)\left(\frac{\e^s}{\e^{2s}+u^2}\right)^{\qua}\\
&\times\sum_{n\in\Z}f_{\arctan(u \e^{-s})}\!\left((n-\beta)\left(\frac{\e^s}{\e^{2s}+u^2}\right)^{\ha}\right)e\!\left(\ha(n-\beta)^2\left(x+\frac{u}{\e^{2s}+u^2}\right)+n(\alpha+x\beta)\right).
\end{split}
\ee
By using \eqref{pf-thm1-2} and \eqref{pf-thm1-4}, we obtain
 \be
\begin{split}
&\Theta_f\!\left(n_+(x,\alpha)n_-(u,\beta)\Phi^s\right)=
\left(\frac{1}{\sqrt N}+O\!\left(\frac{u^2}{N^{9/2}}\right)\right)
\\
&\times \sum_{n\in\Z}\left(\left|f\!\left((n-\beta)\left(\frac{\e^s}{\e^{2s}+u^2}\right)^{\ha}\right)\right|+
\mathcal E_f\!\left(\arctan\!\left(\frac{u}{N^2}\right),(n-\beta)\left(\frac{\e^s}{\e^{2s}+u^2}\right)^{1/2}\right)
\right)\\
&\times e\!\left(\ha n^2 x+n\alpha\right)\left(1+O\left(\frac{u\, n^2}{N^4}\wedge 1\right)\right),
\end{split}
\ee
where $\mathcal E_f$ is as in Lemma \ref{lem-E_f(phi,t)}.
We claim that
\be
\begin{split}
&\sum_{n\in\Z} f\!\left( (n-\beta)\left(\frac{\e^s}{\e^{2s}+u^2}\right)^\ha\right)e\!\left(\ha n^2 x+n\alpha \right)\\
&=\sum_{n\in\Z}f\!\left(\frac{n}{N}\right) e\!\left(\ha n^2 x+n\alpha \right)+O(\beta)+O\!\left(\frac{u^2}{N^{3}}\right). \label{pf-thm1-5}
\end{split}
\ee
Indeed, by the Mean Value Theorem, the fact that $f\in\SR$,  and \eqref{pf-thm1-3},  we have
\be
\begin{split}
&\left|\sum_{n\in\Z}\left( f\!\left( (n-\beta)\left(\frac{N^2}{N^4+u^2}\right)^{1/2}\right)-
f\!\left(\frac{n}{N}\right) \right) e\!\left(\ha n^2 x+n\alpha \right)\right|\\
&\ll\sum_{n\in\Z}\left|f'\left(\tau\right)\right|\left(O\!\left(\frac{\beta}{N}\right)+O\!\left(\frac{u^2n}{N^5}\right)\right) = O(\beta)+O\!\left(\frac{u^2}{N^3}\right)
\end{split}
\ee
where $\tau=\tau(u,\beta;n,N)$ belongs to the interval with endpoints $\frac{n}{N}$ and $(n-\beta)\left(\frac{N^2}{N^4+u^2}\right)^{1/2}$, and the implied constants are uniform in $N$ and in $u,\beta$ on compacta. This proves \eqref{pf-thm1-5}.

We require two more  estimates. The first uses \eqref{pf-thm1-3}:
\be
\begin{split}
&\sum_{n\in\Z} \left| f\!\left( (n-\beta)\left(\frac{\e^s}{\e^{2s}+u^2}\right)^\ha\right)\right| O\!\left(\frac{u n^2}{N^4}\wedge 1 \right)\\
&\ll \frac{u}{N}\sum_{|n|\leq N^2/\sqrt{u}}\left| f\!\left(\frac{n}{N}\right)\right|\left(\frac{n}{N}\right)^2\frac{1}{N}+\sum_{|n|>N^2/\sqrt{u}}\left| f\!\left(\frac{n}{N}\right)\right|\\
&=O\!\left(\frac{u}{N}\right)+O\!\left(N\int_{N/\sqrt{u}}^\infty |f(x)| \de x\right)=O\!\left(\frac{u}{N}\right).\label{pf-thm1-6} 
\end{split}
\ee
The second one uses \eqref{statement-lem-Ef(phi,t)} and (\ref{pf-thm1-2}, \ref{pf-thm1-3}):
\be
\begin{split}
&\left(\frac{\e^s}{\e^{2s}+u^2}\right)^{1/4}\sum_{n\in\Z} \mathcal E_f\!\left(\arctan\!\left(\frac{u}{N^2}\right),(n-\beta)\left(\frac{\e^s}{\e^{2s}+u^2}\right)^{1/2}\right)\\
&\ll\frac{1}{\sqrt{N}}\frac{u}{N^2}\sum_{n\in\Z}\frac{1}{1+\left(\frac{|n|}{N}\right)^2}=O\!\left(\frac{u}{N^{3/2}}\right). \label{pf-thm1-8}
\end{split}
\ee
Now, combining (\ref{pf-thm1-5}, \ref{pf-thm1-6},  \ref{pf-thm1-8}), we obtain
\begin{align}
\Theta_f\!\left(n_+(x,\alpha)n_-(u,\beta)\Phi^s\right)=\Theta_f\!\left(n_+(x,\alpha)\Phi^s\right)+O\!\left(\frac{u}{ N^{3/2}}\right)+O\!\left(\frac{\beta}{ N^{1/2}}\right).
\end{align}
This implies \eqref{statement-thm1-intro} with $E_f(n_-(u,\beta))=C\left(|\frac{u}{N}|+|\beta|\right) $ for some positive constant $C$. 
\end{proof}

\section{The automorphic function $\Theta_\chi$}\label{section:dyadic-representation-Theta-sharp-cutoff}

We saw in Corollary \ref{cor-L2L4} that $\Theta_f$ is a well defined element of $L^2(\GamG)$ if $f\in L^2(\R)$.  In this section we will consider the sharp cut-off function $f=\chi=\bm{1}_{(0,1)}$ and find a new series representation for $\Theta_\chi$ by using a dyadic decomposition of the cut-off function (Sections \ref{sec:dyadic}--\ref{sec:hermite2}).
We will find an explicit $\Gamma$-invariant subset $D\subset G$, defined in terms natural Diophantine conditions (Section \ref{sec:diophantine}), where this series is absolutely convergent. Moreover,
the coset space $\Gamma \backslash D$ is of full measure in $\GamG$. 
After proving Theorem \ref{thm:2} in Section \ref{subs:pf-thm-2}, we will show how it implies Hardy and Littlewood's classical approximate functional equation \eqref{ApproxFeq} (Section \ref{sec:HL}). Furthermore, we will use the explicit series representation for $\Theta_\chi$ to prove an analogue of Lemma \ref{mu(tail>R)} (Theorem \ref{tail-asymptotics-for-Theta_chi} in section \ref{subs:tail-asymptotic-Theta_chi}). A uniform variation of this result is shown in Section \ref{subs:unif-tail-bound}.

\subsection{Dyadic decomposition for $\Theta_\chi$}\label{sec:dyadic}
Let $\chi=\bm{1}_{(0,1)}$. 
Define the ``triangle'' function. 
\be
\Delta(w):=\begin{cases}
0&w\notin [\tfrac{1}{6},\tfrac{2}{3}]\\
72\left(x-\frac{1}{6}\right)^2&w\in [\tfrac{1}{6},\tfrac{1}{4}]\\
1-72\left(x-\frac{1}{3}\right)^2&w\in [\tfrac{1}{4},\tfrac{1}{3}]\\
1-18\left(x-\frac{1}{3}\right)^2&w\in [\tfrac{1}{3},\tfrac{1}{2}]\\
18\left(x-\frac{2}{3}\right)^2&w\in [\tfrac{1}{2},\tfrac{2}{3}]\\
\end{cases}\label{def-triangle-Delta}
\ee
Notice that
\be
\sum_{j=0}^{\infty}\Delta(2^j w)=\begin{cases}0&w\notin(0,\tfrac{2}{3})\\1&w\in(0,\tfrac{1}{3}]\\\Delta(w)&w\in[\tfrac{1}{3},\tfrac{2}{3}]\end{cases}
\ee
and hence 
\be \chi(w)=\sum_{j=0}^\infty\Delta(2^j w)+\sum_{j=0}^\infty\Delta(2^j(1-w)).\label{Chi=sums-of-Deltas-0}\ee
In other words, $\{\Delta(2^{j}\cdot),\Delta(2^{j'}(1-\cdot))\}_{j,j'\geq 0}$ is a partition of unity, see Figure \ref{fig:dyadic}.
\begin{center}
\begin{figure}[h!]
\hspace{.1cm}\includegraphics[width=16.4cm]{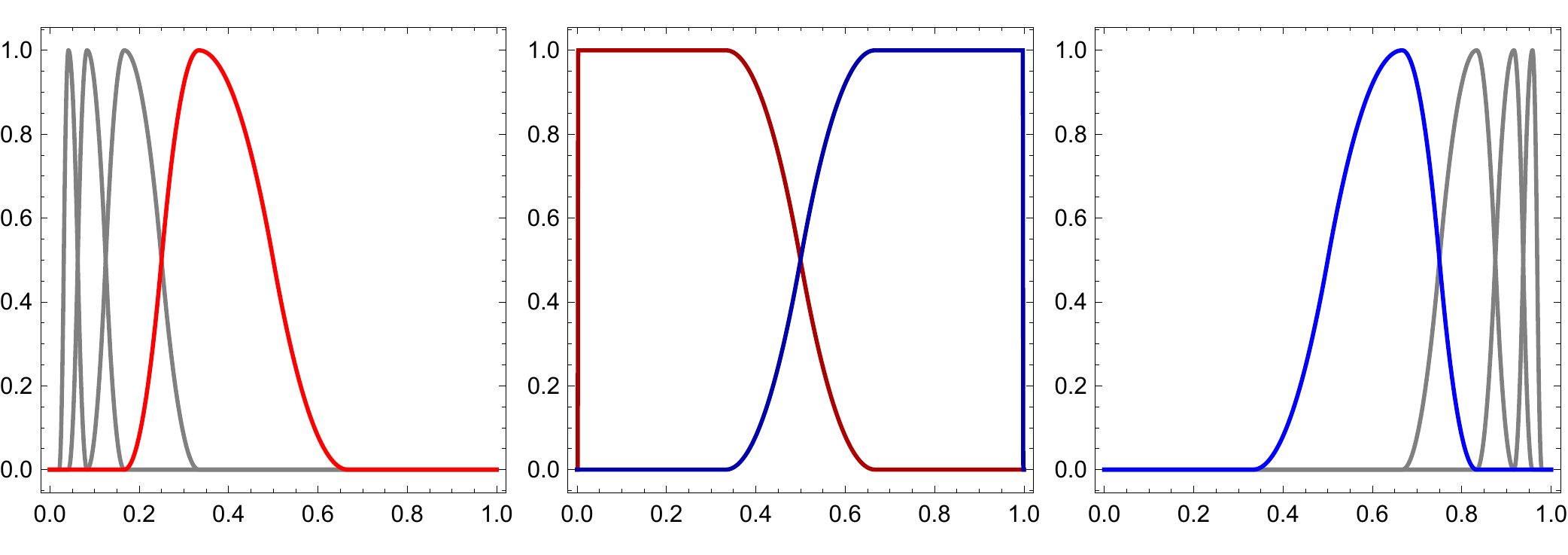}
\caption{Left: the functions $w\mapsto \Delta(w)$ (red) and  $w\mapsto \Delta(2^jw)$, $j=1,\ldots,3$ (gray). Center: the functions $w\mapsto \sum_{j=0}^\infty \Delta(2^jw)$  (red) and $w\mapsto \sum_{j=0}^\infty \Delta(2^j(1-w))$ (blue). Right: the functions $w\mapsto \Delta(1-w)$ (blue) and $w\mapsto \Delta(2^j(1-w))$, $j=1,\ldots,3$ (gray). }\label{fig:dyadic}
\end{figure}
\end{center}
Recall (\ref{Iwasawa-tlstr}, \ref{defR-projective}). We have
\be
2^{j/2}\Delta(2^j w)=[R(\tilde a_{2^{2j}};\bm 0,0)\Delta](w),\label{Chi=sums-of-Deltas-1}
\ee
\be
2^{j/2}\Delta(2^j(1-w))=[R(\tilde a_{2^{2j}};\sve{0}{1},0)\Delta_-](w),\label{Chi=sums-of-Deltas-2}
\ee
where  $\Delta_-(t):=\Delta(-t)$. 
Thus 
\be\chi(t)=\sum_{j=0}^\infty2^{-j/2}[R(\tilde a_{2^{2j}};\bm 0,0)\Delta](t)+\sum_{j=0}^\infty2^{-j/2}[R(\tilde a_{2^{2j}};\sve{0}{1},0)\Delta_-](t).\ee
Let us also write the partial sums
\begin{align}
\chi^{(J)}_L&=\sum_{j=0}^{J-1}2^{-j/2}[R(\tilde a_{2^{2j}};\bm 0,0)\Delta](t),\label{def-chi_L^J}\\
\chi^{(J)}_R&=\sum_{j=0}^{J-1}2^{-j/2}[R(\tilde a_{2^{2j}};\sve{0}{1},0)\Delta_-](t),\label{def-chi_R^J}
\end{align}
and $\chi^{(J)}=\chi^{(J)}_L+\chi^{(J)}_T$.  Consider 
 the following ``trapezoidal'' function:
\be
T^{\varepsilon,\delta}_{a,b}(w)=\begin{cases}
0&w\leq a-\varepsilon\\
\frac{2}{\varepsilon^2}(w-(a-\varepsilon))^2&a-\varepsilon<w\leq a-\frac{\varepsilon}{2}\\
1-\frac{2}{\varepsilon^2}(w-a)^2&a-\frac{\varepsilon}{2}<w\leq a\\
1&a<w< b\\
1-\frac{2}{\delta^2}(w-b)^2&b\leq w< b+\frac{\delta}{2}\\
\frac{2}{\delta^2}(w-(b+\delta))^2&b+\frac{\delta}{2}\leq w< b+\delta\\
0&w\geq b+\delta,\end{cases}\label{def-trapezoidal-function}
\ee
see Figure \ref{fig:trapezoid}. \begin{figure}[h!]
\begin{center}
\includegraphics[width=10cm]{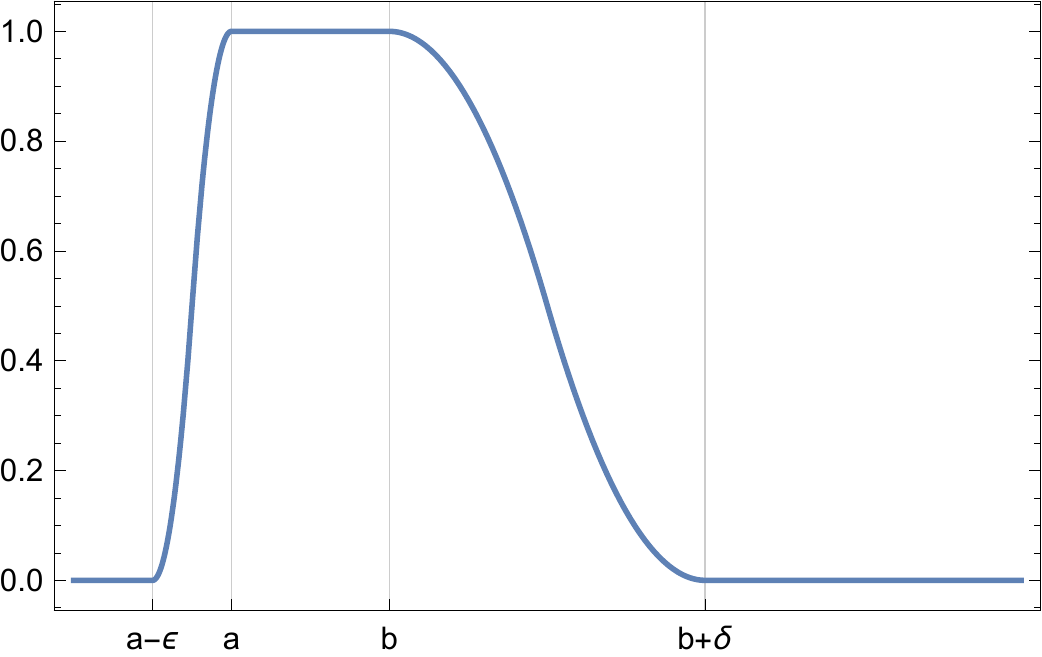}
\end{center}
\caption{The function $w\mapsto T^{\varepsilon,\delta}_{a,b}(w)$.}
\label{fig:trapezoid}
\end{figure}

Later we will use the notation $I_1=[a-\varepsilon,a-\varepsilon/2]$, $I_2=[a-\varepsilon/2,a]$, $I_3=[a,b]$, $I_4=[b,b+\delta/2]$, $I_5=[b+\delta/2,b+\delta]$ and $f_i= T^{\varepsilon,\delta}_{a,b}|_{I_i}$ for $i=1,\ldots, 5$. The functions $\chi$, $\chi^{(J)}_L$, $\chi^{(J)}_R$, $\chi^{(J)}$, $\Delta$, $\Delta_-$ are all special cases of \eqref{def-trapezoidal-function}, with parameters as in the table below.
\bgroup
\def\arraystretch{1.3}
\begin{center}
\begin{tabular}{|c|c|c|c|c|}\hline 	 & $a$ & $b$ & $\varepsilon$ & $\delta$ \\\hline $\chi$ & 0 & 1 & 0 & 0 \\\hline $\chi_L^{(J)}$ & $\frac{1}{3\cdot 2^{J-1}}$  & $\frac{1}{3}$  &  $\frac{1}{6\cdot 2^{J-1}}$& $\frac{1}{3}$  \\\hline $\chi_R^{(J)}$ & $\frac{2}{3}$& $1-\frac{1}{3\cdot 2^{J-1}}$ &$\frac{1}{3}$  & $\frac{1}{6\cdot 2^{J-1}}$    \\\hline $\chi^{(J)}$ &  $\frac{1}{3\cdot 2^{J-1}}$  &  $1-\frac{1}{3\cdot 2^{J-1}}$ &  $\frac{1}{6\cdot 2^{J-1}}$  &  $\frac{1}{6\cdot 2^{J-1}}$ \\\hline $\Delta$ & $\frac{1}{3}$ & $\frac{1}{3}$ & $\frac{1}{6}$ & $\frac{1}{3}$ \\\hline $\Delta_-$ & $\frac{2}{3}$ & $\frac{2}{3}$ & $\frac{1}{3}$ & $\frac{1}{6}$ \\\hline\end{tabular}
\end{center}
\egroup

\begin{lem}\label{lem-general-trapezoid} There exists a constant $C$ such that 
\be\kappa_2(T^{\varepsilon,\delta}_{a,b})=\sup_{\phi,w}\left|(T^{\varepsilon,\delta}_{a,b})_\phi(w)\right|(1+|w|)^2\leq C(\varepsilon^{-1}+\delta^{-1})\label{statement-lemma-trapezoid}\ee
for all $\varepsilon,\delta\in(0,1]$ and $0\leq a\leq b\leq 1$. 
\end{lem}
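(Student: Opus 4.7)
The plan is to bound $\sup_{\phi,w} |T_\phi(w)|(1+|w|)^2$ uniformly, where $T = T^{\varepsilon,\delta}_{a,b}$. I would first extract the structural facts about $T$ from \eqref{def-trapezoidal-function}: $T$ is $C^1$ with support in $[a-\varepsilon, b+\delta] \subset [-1,2]$, $\|T\|_\infty \leq 1$, $\|T'\|_\infty \leq 2\max(\varepsilon^{-1},\delta^{-1})$, and its distributional second derivative is piecewise constant with $\|T''\|_{L^1} \leq 4(\varepsilon^{-1}+\delta^{-1})$. The cases $\phi \in \pi\ZZ$ are immediate, since $T_\phi(w) = T(\pm w)$ and the support bound forces $(1+|w|)^2 \leq 9$ whenever $T_\phi(w) \neq 0$.

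For $\phi \notin \pi\ZZ$ I would use the integral representation \eqref{def-R-tilde-k-f}. Completing the square in the phase around the stationary point $w/\cos\phi$ gives
\begin{equation*}
T_\phi(w) = c_\phi \,|\sin\phi|^{-1/2} \,e\!\left(\tfrac{w^2\cos\phi}{2\sin\phi}\right) \widehat{g_\phi}\!\left(\tfrac{w}{\sin\phi}\right), \qquad g_\phi(w') := T(w') \, e\!\left(\tfrac12 w'^2 \cot\phi\right),
\end{equation*}
with $|c_\phi| = 1$. For $|\sin\phi| \geq \tfrac12$, two integrations by parts in the Fourier transform, valid because $g_\phi$ inherits the $C^1$ plus absolutely-continuous-derivative regularity from $T$, yield $|\widehat{g_\phi}(\xi)|(1+\xi^2) \ll \|g_\phi\|_{L^1} + \|g_\phi''\|_{L^1}$. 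A Leibniz computation bounds $\|g_\phi''\|_{L^1} \ll \|T''\|_{L^1} + |\cot\phi|\,\|T'\|_{L^1} + \cot^2\phi\,\|T\|_{L^1} \ll \varepsilon^{-1}+\delta^{-1}$, using $|\cot\phi| \leq \sqrt{3}$ in this regime. Combining this with $(1+|w|)^2 \leq 4w^2 \leq 4\sin^2\phi(1+\xi^2)$ for $|w|\geq 1$, and the trivial bound $|\widehat{g_\phi}|\leq \|g_\phi\|_{L^1}$ for $|w|\leq 1$, yields $|T_\phi(w)|(1+|w|)^2 \ll \varepsilon^{-1}+\delta^{-1}$ throughout this regime.

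The main obstacle is the regime $|\sin\phi| < \tfrac12$: now $|\sin\phi|^{-1/2}$ is genuinely singular, and the stationary point $w/\cos\phi$ sits inside $\supp T$, so integration by parts is useless nearby. Here I would perform a stationary-phase analysis directly. The substitution $u = (w' - w/\cos\phi)|\cot\phi|^{1/2}$ turns the inner integral into a Fresnel-type expression whose $|\sin\phi/\cos\phi|^{1/2}$ prefactor exactly cancels $|\sin\phi|^{-1/2}$ up to a harmless factor $|\cos\phi|^{-1/2} = O(1)$. The leading contribution is $T(w/\cos\phi)/|\cos\phi|^{1/2}$, which is nonzero only when $w/\cos\phi \in [-1,2]$, forcing $|w| \leq 2$ and hence $(1+|w|)^2 \leq 9$. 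The error terms arising from first-order Taylor expansion of $T$ about $w/\cos\phi$ involve $\|T'\|_\infty = O(\varepsilon^{-1}+\delta^{-1})$ multiplied by a moment of the Fresnel kernel over the (bounded) effective $u$-range dictated by $\supp T$; these contribute at most $O(\varepsilon^{-1}+\delta^{-1})$. Combining both regimes completes the proof.
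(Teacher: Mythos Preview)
Your treatment of the regimes $\phi\in\pi\ZZ$ and $|\sin\phi|\geq\tfrac12$ is correct and close in spirit to the paper's integration-by-parts argument. The gap is in the regime $|\sin\phi|<\tfrac12$.

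First, the ``bounded effective $u$-range'' claim is false. After the substitution $u=(w'-w/\cos\phi)\,|\cot\phi|^{1/2}$, the condition $w'\in\supp T\subset[-1,2]$ forces $u$ into an interval of length $\asymp|\cot\phi|^{1/2}$, which blows up as $\phi\to 0$. Consequently your first-order Taylor remainder picks up a factor $\|T'\|_\infty\,|\tan\phi|^{1/2}\!\int_I|u|\,du\asymp\|T'\|_\infty\,|\cot\phi|^{1/2}$, which is not $O(\varepsilon^{-1}+\delta^{-1})$ uniformly in $\phi$.

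Second, and more seriously, your stationary-phase argument only addresses the case where the stationary point $w_0=w/\cos\phi$ lies in or near $\supp T$, i.e.\ $|w|\lesssim 2$. For large $|w|$ the leading term $T(w_0)$ vanishes and a Taylor expansion about $w_0$ gives no information; what you actually need there is the $(1+|w|)^{-2}$ decay, which is a \emph{non}-stationary-phase estimate.

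The paper circumvents both issues by splitting on $|w|$ rather than on $|\sin\phi|$. For $|w|\geq 3$ the phase $\varphi(w')=\frac{\frac12(w^2+w'^2)\cos\phi-ww'}{\sin\phi}$ has derivative $\varphi'(w')=(w'\cos\phi-w)/\sin\phi$ bounded away from zero on $\supp T$ for \emph{every} $\phi$; two integrations by parts then convert the prefactor $|\sin\phi|^{-1/2}$ into $|\sin\phi|^{3/2}$ and produce the $|w|^{-2}$ decay, with the worst term coming from $\|T''\|_{L^1}=O(\varepsilon^{-1}+\delta^{-1})$. For $|w|<3$, van der Corput's second-derivative lemma (Lemma~\ref{lemma-Huxley}) applied with $\lambda=|\cot\phi|$ gives $|T_\phi(w)|=O(1)$ uniformly, since the total variation of $T$ is $O(1)$; this is the rigorous replacement for your stationary-phase heuristic. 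Either adopt this splitting, or in your $|\sin\phi|<\tfrac12$ regime treat $|w|\geq 3$ separately via non-stationary phase and replace the Taylor argument for $|w|<3$ by the second-derivative test.
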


By adjusting $C$, the restriction of $a,b$ to $[0,1]$ can be replaced by any other bounded interval; we may also replace the upper bound on $\varepsilon,\delta$ by an arbitrary positive constant.
The lemma shows in particular that $T^{\varepsilon,\delta}_{a,b}\in\mathcal S_2(\R)$ for $\varepsilon,\delta>0$ and $a\leq b$. 
Its proof requires the following two estimates.

\begin{lem}[Second derivative test for exponential integrals, see Lemma 5.1.3 in \cite{Huxley-book}]\label{lemma-Huxley}
Let $\varphi(x)$ be real and twice differentiable on the open interval $(\alpha,\beta)$ with $\varphi''(x)\geq \lambda>0$ on $(\alpha,\beta)$. Let $f(x)$ be real and let $V=V_\alpha^\beta(f)+\max_{\alpha\leq x\leq \beta}|f(x)|$, where $V_\alpha^\beta(g)$ denotes the total variation of $f(x)$ on the closed interval $[\alpha,\beta]$. Then
\be
\left|\int_\alpha^\beta e(\varphi(x)) f(x)\de x\right|\leq\frac{4V}{\sqrt{\pi\lambda}}.
\ee
\end{lem}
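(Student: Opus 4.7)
Set $T = T^{\varepsilon,\delta}_{a,b}$. Directly from its piecewise-quadratic definition, $T \in C^1(\R)$ with support $[a-\varepsilon, b+\delta] \subset [-1,2]$, vanishes together with its derivative at both endpoints, $\|T\|_\infty \leq 1$, $\mathrm{TV}(T) \leq 2$, and $\|T''\|_{L^1}= \mathrm{TV}(T') = 4(\varepsilon^{-1}+\delta^{-1})$. The plan is to bound $(1+|w|)^2|T_\phi(w)|$ in three regimes. Throughout, for $\phi\not\equiv 0\pmod\pi$ I will use the formula from \eqref{def-R-tilde-k-f},
\begin{equation*}
T_\phi(w) = c(\phi)|\sin\phi|^{-1/2}\int_{a-\varepsilon}^{b+\delta} e(\varphi(w'))\,T(w')\,dw',\qquad|c(\phi)|=1,
\end{equation*}
with quadratic phase $\varphi(w') = \tfrac12\cot\phi\,(w')^2 -\csc\phi\cdot ww' + \tfrac12\cot\phi\,w^2$, so that $\varphi'(w') = (w'\cos\phi-w)/\sin\phi$ and $\varphi''(w')\equiv\cot\phi$.

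\emph{Regime A: $\phi\equiv 0\pmod\pi$.} Here $T_\phi(w) = T(\pm w)$, bounded by $1$ and supported in $[-2,2]$, giving $(1+|w|)^2|T_\phi(w)|\leq 9$. Since $\varepsilon^{-1}+\delta^{-1}\geq 2$, this is absorbed into the target bound.

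\emph{Regime B: $\phi\not\equiv 0\pmod\pi$ and $|w|\leq 4$.} Now $(1+|w|)^2\leq 25$, so I only need $|T_\phi(w)|=O(1)$ uniformly in $\phi$. If $|\cot\phi|\geq 1$ (equivalently $|\cos\phi|\geq 1/\sqrt2$), I apply Lemma \ref{lemma-Huxley} to $\pm\varphi$ (chosen so that its second derivative is positive) with $\lambda=|\cot\phi|$ and $V=\|T\|_\infty+\mathrm{TV}(T)\leq 3$, obtaining $\bigl|\int e(\varphi)T\,dw'\bigr|\leq 12/\sqrt{\pi|\cot\phi|}$ and therefore $|T_\phi(w)|\leq 12/\sqrt{\pi|\cos\phi|}\leq 12\cdot 2^{1/4}/\sqrt\pi$. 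Otherwise $|\sin\phi|\geq 1/\sqrt2$, and the trivial $L^1$ bound yields $|T_\phi(w)|\leq |\sin\phi|^{-1/2}\|T\|_{L^1}\leq 3\cdot 2^{1/4}$.

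\emph{Regime C: $\phi\not\equiv 0\pmod\pi$ and $|w|>4$.} On the support $[a-\varepsilon, b+\delta]$ one has $|w'|\leq 2\leq |w|/2$, so $|\varphi'(w')|\geq (|w|-|w'\cos\phi|)/|\sin\phi|\geq |w|/(2|\sin\phi|)$. Since $T$ and $T'$ both vanish at $a-\varepsilon$ and $b+\delta$ and $T'$ is absolutely continuous, applying the identity $e(\varphi)=(2\pi i\varphi')^{-1}(d/dw')e(\varphi)$ and integrating by parts twice eliminates all boundary contributions and yields
\begin{equation*}
\int e(\varphi)T\,dw' = \frac{1}{(2\pi i)^2}\!\int\!e(\varphi)\frac{T''}{(\varphi')^2}dw' - \frac{2}{(2\pi i)^2}\!\int\!e(\varphi)\frac{T'\varphi''}{(\varphi')^3}dw' + \frac{1}{2\pi i}\!\int\!e(\varphi)\frac{T\varphi''}{(\varphi')^2}dw'.
\end{equation*}
Using the lower bound on $|\varphi'|$, the identity $|\varphi''|=|\cot\phi|$, and the $L^1$ estimates on $T,T',T''$, the three integrals are bounded above by absolute constants times $\sin^2\phi\,(\varepsilon^{-1}+\delta^{-1})/w^2$, $|\cos\phi|\sin^2\phi/|w|^3$, and $|\sin\phi\cos\phi|/w^2$, respectively. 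Multiplying by the prefactor $|\sin\phi|^{-1/2}$ turns these into $|\sin\phi|^{3/2}(\varepsilon^{-1}+\delta^{-1})/w^2$, $|\sin\phi|^{3/2}|\cos\phi|/|w|^3$, and $|\sin\phi|^{1/2}|\cos\phi|/w^2$, all $\ll (\varepsilon^{-1}+\delta^{-1})/w^2$ (using $\varepsilon^{-1}+\delta^{-1}\geq 2$ and $|w|>4$). Hence $(1+|w|)^2|T_\phi(w)|\ll (\varepsilon^{-1}+\delta^{-1})$.

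The chief technical point is the bookkeeping in Regime C: one must check that every copy of the blowup prefactor $|\sin\phi|^{-1/2}$ is cancelled by a positive power of $|\sin\phi|$ extracted from $|\varphi'|^{-1}\leq 2|\sin\phi|/|w|$, which is precisely why a single integration by parts is insufficient and two are required. A minor additional technicality is that $T''$ has jumps at the interfaces $a-\varepsilon/2$, $a$, $b$, $b+\delta/2$, so the second integration by parts must be carried out piecewise on $I_1,\ldots,I_5$, with the interface boundary contributions cancelling by continuity of $T'$ and $(\varphi')^{-2}$ at each interface.
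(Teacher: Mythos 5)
You have not proved the statement in question. The statement is Lemma \ref{lemma-Huxley} itself, the second derivative test $\bigl|\int_\alpha^\beta e(\varphi(x))f(x)\,\de x\bigr|\leq 4V/\sqrt{\pi\lambda}$ for a real phase with $\varphi''\geq\lambda>0$. Your write-up is instead a proof of Lemma \ref{lem-general-trapezoid} (the bound $\kappa_2(T^{\varepsilon,\delta}_{a,b})\ll \varepsilon^{-1}+\delta^{-1}$), and in Regime B you explicitly \emph{invoke} Lemma \ref{lemma-Huxley} as a black box; so as an argument for the stated lemma it is circular, and as written it simply addresses a different result. (For what it is worth, your three-regime argument does track the paper's own proof of Lemma \ref{lem-general-trapezoid} quite closely: $\phi\equiv 0\bmod\pi$ trivially, bounded $|w|$ via Lemma \ref{lemma-|f_phi(w)|<const}, and two integrations by parts for large $|w|$ — but that is not what was asked.)

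For the lemma actually at issue, the paper gives no proof: it is quoted from Huxley's book (Lemma 5.1.3), so a self-contained argument would have to follow the classical van der Corput scheme. Since $\varphi''\geq\lambda>0$, the derivative $\varphi'$ is strictly increasing, hence vanishes at most once, say at $c$ (or take $c$ to be the nearest endpoint if $\varphi'$ has constant sign). Split $[\alpha,\beta]$ into the interval $|x-c|\leq \Delta/\lambda$, where the trivial bound contributes at most $2(\Delta/\lambda)\max|f|$, and the complementary pieces, where $|\varphi'|\geq\Delta$ and the monotonicity of $1/\varphi'$ together with the bounded variation of $f$ (first derivative test, via the second mean value theorem or integration by parts in the Stieltjes sense) contributes $O(V/\Delta)$. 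Choosing $\Delta\asymp\sqrt{\lambda}$ balances the two terms and yields a bound of the form $CV/\sqrt{\lambda}$; obtaining the specific constant $4/\sqrt{\pi}$ requires the sharper Fresnel-integral comparison carried out in Huxley's book. None of this appears in your proposal, so the required content is missing.
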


\begin{lem}\label{lemma-|f_phi(w)|<const}
Let $f$ be  real and compactly supported on $[\alpha,\beta]$, with $V_\alpha^\beta(f)<\infty$. Then, for every $w,\phi\in\R$,
\be
|f_\phi(w)|\leq \max\{ 3V,2 I\},\label{statement-lemma-|f_phi(w)|<const}
\ee
where $V=V_\alpha^\beta(f)+\max_{\alpha\leq x\leq \beta}|f(x)|$ and $I=\int_{\alpha}^\beta |f(x)|\de x$.
\end{lem}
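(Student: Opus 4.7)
My plan is to bound $|f_\phi(w)|$ by interpolating between two elementary estimates for the oscillatory integral that defines $f_\phi$, using the hypothesis that $f$ has bounded variation.

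First I would dispose of the degenerate cases $\phi \equiv 0 \pmod\pi$: the explicit formula for $R(\tilde k_\phi)$ in Section \ref{section-Schrodinger-Shale-Weil-reps} gives $|f_\phi(w)| = |f(\pm w)| \leq \max_{x\in[\alpha,\beta]}|f(x)| \leq V$, which is already within the required bound. So I may assume $\phi \not\equiv 0 \pmod\pi$ and work with the integral formula
\begin{equation*}
f_\phi(w) = e\!\left(-\tfrac{\sigma_\phi}{8}\right)|\sin\phi|^{-1/2}\int_\alpha^\beta e\!\left(\varphi(w')\right) f(w')\,\de w',\qquad \varphi(w')=\frac{\tha(w^2+w'^2)\cos\phi-ww'}{\sin\phi}.
\end{equation*}
The phase satisfies $\varphi''(w')=\cot\phi$ identically in $w'$, so both of the natural bounds are easy to compute: the trivial $L^1$ estimate yields $|f_\phi(w)|\leq |\sin\phi|^{-1/2}\,I$, while Lemma \ref{lemma-Huxley} (applied to $\pm\varphi$ depending on the sign of $\cot\phi$, with $\lambda=|\cot\phi|$) yields
\begin{equation*}
|f_\phi(w)|\leq |\sin\phi|^{-1/2}\cdot\frac{4V}{\sqrt{\pi|\cot\phi|}}=\frac{4V}{\sqrt{\pi}}\,|\cos\phi|^{-1/2}.
\end{equation*}

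The second step is to choose the better bound according to the value of $\phi$, exploiting $\sin^2\phi+\cos^2\phi=1$. If $|\sin\phi|\geq 2^{-1/2}$ I would use the trivial bound, which produces $|f_\phi(w)|\leq 2^{1/4}\,I\leq 2I$. If on the other hand $|\sin\phi|<2^{-1/2}$, then $|\cos\phi|>2^{-1/2}$, and the Huxley-type bound gives $|f_\phi(w)|\leq 4\cdot 2^{1/4}\pi^{-1/2}\,V$, which is numerically less than $3V$. Taking the maximum of the two cases gives \eqref{statement-lemma-|f_phi(w)|<const}.

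I do not anticipate any real obstacle; the only mildly delicate point is verifying that the constants $2$ and $3$ in the statement are indeed sufficient (rather than, say, a tighter $2^{1/4}$ and $4\cdot 2^{1/4}/\sqrt{\pi}$), which is immediate from the arithmetic above. Note that the proof does not require any smoothing or approximation of $f$, since Lemma \ref{lemma-Huxley} is already formulated for functions of bounded variation and the bounded variation assumption on $f$ transfers to the full integrand (the phase factor $e(\varphi(w'))$ has bounded variation on $[\alpha,\beta]$ of controlled size, but we only need to bound $f$ itself, which is what appears in the definition of $V$).
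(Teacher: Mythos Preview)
Your proof is correct and essentially identical to the paper's own argument: both split according to whether $|\sin\phi|$ or $|\cos\phi|$ dominates (the paper phrases this as $(\phi\bmod\pi)\in[\tfrac{\pi}{4},\tfrac{3\pi}{4}]$ versus its complement), apply the trivial $L^1$ bound in the first case and Lemma~\ref{lemma-Huxley} with $\lambda=|\cot\phi|$ in the second, arriving at the same numerical constants $2^{1/4}\leq 2$ and $4\cdot 2^{1/4}/\sqrt{\pi}<3$.
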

\begin{proof}
If $\phi\equiv0\bmod\pi$, then $|f_\phi(w)|=|f(w)|\leq V$. If $\phi\equiv\frac{\pi}{2} \bmod\pi$, then $|f_\phi(w)|\leq I$.
If $0<(\phi\bmod\pi)<\frac{\pi}{4}$, let $\varphi(x)=\frac{\ha(x^2+w^2)\cos\phi-wx}{\sin\phi}$ satisfies the hypothesis of Lemma \ref{lemma-Huxley} with $\lambda=\cot\phi$ and we get 
\be
|f_\phi(w)|\leq |\sin\phi|^{-\ha} \frac{4 V}{\sqrt{\pi\cot\phi}}=\frac{4V}{\sqrt{\pi |\cos\phi|}}\leq 3V.
\ee
The case $\frac{3\pi}{4}\leq (\phi \bmod\pi)<\pi$ yields the same bound by considering the complex conjugate of the integral before applying Lemma \ref{lemma-Huxley}. If $\frac{\pi}{4}\leq (\phi\bmod\pi)\leq \frac{3\pi}{4}$ then we have the trivial bound
\be
|f_\phi(w)|\leq |\sin\phi|^{-\ha}\int_{\alpha}^\beta |f(x)|\de x\leq 2\int_{\alpha}^\beta |f(x)|\de x.
\ee
Combining all the estimates we get \eqref{statement-lemma-|f_phi(w)|<const}.
\end{proof}

\begin{proof}[Proof of Lemma \ref{lem-general-trapezoid}]
 If $\phi\equiv0 \bmod \pi$, then $|(T^{\varepsilon,\delta}_{a,b})_\phi(w)|=|T^{\varepsilon,\delta}_{a,b}(w)|$ and the estimate
\be\sup_{w}\left|(T^{\varepsilon,\delta}_{a,b})_\phi(w)\right|(1+|w|)^2=O(1) \label{estimate-sup-w-trapezoid}\ee
holds trivially. 

If $\phi\equiv\tfrac{\pi}{2}\bmod 2\pi$, then by \eqref{def-R-tilde-k-f}, the function $f_\phi(w)=e(-\sigma_\phi/8)\int_{\R}e(- w w')f(w')\de w'$ is (up to a phase factor) the Fourier transform of $f$,
which reads for $w\neq 0$:
\begin{align}
\begin{split}
(T^{\varepsilon,\delta}_{a,b})_{\phi}(w)=\frac{ie(-\sigma_\phi/8)}{2 \pi^3 w^3 \varepsilon^2 \delta^2}\left(\varepsilon^2 e(-w(b+\delta))(1-e(w\delta/2))^2-\delta^2 e(-a w)(1-e(w\varepsilon/2))^2\right),\label{pf-lemma-trapezoid-sin^3-new1}
\end{split}
\end{align}
and for $w=0$: $(T^{\varepsilon,\delta}_{a,b})_{\phi}(0)=e(-\sigma_\phi/8)\frac{2b-2a+\varepsilon+\delta}{2}$.

Similarly, if $\phi\equiv-\tfrac{\pi}{2}\bmod 2\pi$, then $f_\phi(w)=e(-\sigma_\phi/8)\int_{\R}e( w w')f(w')\de w'$, and formula \eqref{pf-lemma-trapezoid-sin^3-new1} holds with $w$ replaced by $-w$.

We use the bound
\begin{equation} \label{u1}
|1-e(x)|^2\leq 2 |1-e(x)| \leq  4\pi |x| 
\end{equation}
applied to $x=w\delta/2$ and $x=w\varepsilon/2$
to conclude that, for $\phi\equiv\tfrac{\pi}{2}\bmod \pi$, 
\be
\begin{split}
|(T^{\varepsilon,\delta}_{a,b})_\phi(w)| & \ll |w|^{-2} \left( \delta^{-1}+\varepsilon^{-1}\right).
\end{split}
\ee
This gives the desired bound for $|w|\geq 1$. For $|w|<1$, we employ instead of \eqref{u1}
\begin{equation}
|1-e(x)|^2\leq 4\pi^2 |x|^2, 
\end{equation}
which shows that $|(T^{\varepsilon,\delta}_{a,b})_\phi(w)|=O(1)$ in this range.

For all other $\phi$ (i.e. such that $\sin\phi,\cos\phi\neq0$) we apply twice the identity
\be\int_a^b \e^{g(v)}f(v)\de v=\left[\e^{g(v)}\frac{f(v)}{g'(v)}\right]_{a}^{b}-\int_a^b \e^{g(v)}\left(\frac{f(v)}{g'(v)}\right)'\de v,\label{identity-by-parts}\ee
where $g(v)=2\pi i \frac{\ha(w^2+v^2)\cos\phi-wv}{\sin\phi}$. We have
\begin{multline}
|(T^{\varepsilon,\delta}_{a,b})_\phi(w)|=|\sin\phi|^{-\ha}\!\left| \sum_{j=1}^5 \int_{I_j}\e^{g(v)}f_j(v)\de v\right| =\frac{|\sin\phi|^{3/2}}{4\pi^2}\\
\times\!\left| \sum_{j=1}^5\int_{I_j}\e^{g(v)}\frac{3\cos^2\phi f_j(v)+3\cos\phi(w-v\cos\phi)f_j'(v)+(w-v\cos\phi)^2f_j''(v)}{ (w- v \cos\phi)^4}
\de v\right|. 
\label{pf-lemma-trapezoid-sin^3/2}
\end{multline}
Let us estimate the integrals in \eqref{pf-lemma-trapezoid-sin^3/2}. 
Consider the range $|w|\geq 3$ first. The bounds
\be
|f_j(v)|\leq 	\begin{cases}1& v\in[a-\varepsilon,b+\delta];\\0&\mbox{otherwise},\end{cases}
\ee
\be\label{first-deri}
|f_j'(v)| \leq \begin{cases} \frac{2}{\varepsilon}&v\in[a-\varepsilon,a],\\
\frac{2}{\delta}&v\in[b,b+\delta],\\
0&\mbox{otherwise},\end{cases}
\ee
and
\be
|f_j''(v)|\leq \begin{cases} \frac{4}{\varepsilon^2}&v\in[a-\varepsilon,a];\\
\frac{4}{\delta^2}&v\in[b,b+\delta];\\
0&\mbox{otherwise},\end{cases}
\ee
imply that
\be
\sum_{j=1}^5\int_{I_j}\frac{|3\cos^2\phi f_j(v)|}{ (w- v \cos\phi)^4}\de v \ll \int_{a-\varepsilon}^{b+\delta}\frac{\de v}{ (w- v \cos\phi)^4}\ll \frac{1}{w^4},
\ee
\be
\sum_{j=1}^5\int_{I_j}\frac{\left|3\cos\phi(w-v\cos\phi)f_j'(v)\right|}{ (w- v \cos\phi)^4}\de v\ll \int_{I_1\sqcup I_2}\frac{\varepsilon^{-1}\de v}{ |w- v \cos\phi|^3} +\int_{I_4\sqcup I_5}\frac{\delta^{-1}\de v}{ |w- v \cos\phi|^3} \ll \frac{1}{|w|^3},
\ee
\be
\sum_{j=1}^5\int_{I_j}\frac{|f_j''(v)|}{ (w- v \cos\phi)^2}\de v\ll \int_{I_1\sqcup I_2}\frac{\varepsilon^{-2}\de v}{ (w- v \cos\phi)^2} +\int_{I_4\sqcup I_5}\frac{\delta^{-2}\de v}{ (w- v \cos\phi)^2}\\
\ll\frac{1}{w^2}(\varepsilon^{-1}+\delta^{-1}).
\ee
Therefore, for $|w|\geq 3$, we have
\be
|(T^{\varepsilon,\delta}_{a,b})_\phi(w)|\ll\frac{1}{w^2}(\varepsilon^{-1}+\delta^{-1})
\ee
uniformly in all variables.
For $|w|< 3$ we apply Lemma \ref{lemma-|f_phi(w)|<const}, which yields 
\be
|(T_{a,b}^{\varepsilon,\delta})_\phi(w)|=O(1)
\ee
since in view of \eqref{first-deri} the total variation of $T_{a,b}^{\varepsilon,\delta}$ is uniformly bounded.
\end{proof}

\begin{cor}\label{cor-Theta_Delta}
The series defining $\Theta_{\Delta}(g)$ and $\Theta_{\Delta_-}(g)$ converge absolutely and uniformly on compacta in $G$.
\end{cor}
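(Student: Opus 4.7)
The plan is to reduce the corollary to Lemma \ref{lem-general-trapezoid} together with the standard absolute-convergence estimate for theta series with weights in $\mathcal{S}_\eta(\RR)$, $\eta>1$.

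First I would observe from the table immediately preceding Lemma \ref{lem-general-trapezoid} that $\Delta = T^{1/6,1/3}_{1/3,1/3}$, so Lemma \ref{lem-general-trapezoid} (applied with its allowed constant choice $\varepsilon = 1/6$, $\delta = 1/3$) yields $\kappa_2(\Delta) < \infty$, i.e.\ $\Delta \in \mathcal{S}_2(\RR)$. For $\Delta_-(w)=\Delta(-w)$, the support lies in $[-2/3,-1/6]$, outside the range $[0,1]$ used in the statement of Lemma \ref{lem-general-trapezoid}; I would handle this in one of two equivalent ways. Either invoke the remark following the lemma, which allows the bounded interval containing $[a-\varepsilon,b+\delta]$ to be arbitrary (adjusting the implied constant), or observe the symmetry $(\Delta_-)_\phi(w)=\Delta_\phi(-w)$ obtained by the change of variables $w'\mapsto -w'$ in the integral representation \eqref{def-R-tilde-k-f} of $R(\tilde k_\phi)$. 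Either way, $\kappa_2(\Delta_-)=\kappa_2(\Delta)<\infty$, so $\Delta_- \in \mathcal{S}_2(\RR)$ as well.

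Next I would use the explicit formula \eqref{Jacobi-theta-sum-2}: for $g=(x+\i y,\phi;\vecxi,\zeta)$,
\begin{equation*}
\Theta_f(g) = y^{1/4} e(\zeta-\tha \xi_1\xi_2)\sum_{n\in\ZZ} f_\phi\!\left((n-\xi_2)y^{1/2}\right)e\!\left(\tha(n-\xi_2)^2 x+n\xi_1\right),
\end{equation*}
and apply the pointwise bound $|f_\phi(w)|\leq \kappa_2(f)(1+|w|)^{-2}$ with $f=\Delta$ or $f=\Delta_-$. This gives
\begin{equation*}
\bigl|\Theta_\Delta(g)\bigr|\;\leq\; y^{1/4}\kappa_2(\Delta)\sum_{n\in\ZZ}\bigl(1+|n-\xi_2|y^{1/2}\bigr)^{-2},
\end{equation*}
and similarly for $\Delta_-$. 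The right-hand side converges for every $g$ with $y>0$, proving absolute convergence on all of $G$.

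Finally, for uniform convergence on compacta, fix a compact $K\subset G$. Then there exist $y_0>0$ and $M>0$ such that $y\geq y_0$ and $|\xi_2|\leq M$ for all $(x+\i y,\phi;\vecxi,\zeta)\in K$. For $|n|\geq 2M$ we have $|n-\xi_2|\geq |n|/2$, hence the tail satisfies
\begin{equation*}
\sum_{|n|>N}\bigl|f_\phi\!\left((n-\xi_2)y^{1/2}\right)\bigr|\;\leq\; \kappa_2(f)\sum_{|n|>N}\bigl(1+\tfrac{|n|y_0^{1/2}}{2}\bigr)^{-2}
\end{equation*}
for every $N\geq 2M$, and this bound is independent of $g\in K$ and tends to $0$ as $N\to\infty$. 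This establishes uniform convergence of the series on $K$ for both $\Theta_\Delta$ and $\Theta_{\Delta_-}$. No step presents a real obstacle: the work has already been done in Lemma \ref{lem-general-trapezoid}, and what remains is a routine application of the Weierstrass $M$-test.
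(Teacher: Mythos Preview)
Your proposal is correct and follows essentially the same approach as the paper: both reduce the claim to the fact that $\Delta$ and $\Delta_-$ are trapezoidal functions $T^{\varepsilon,\delta}_{a,b}$ with $\varepsilon,\delta>0$, so Lemma~\ref{lem-general-trapezoid} gives $\kappa_2(\Delta),\kappa_2(\Delta_-)<\infty$, from which absolute and locally uniform convergence of the theta series is immediate. The paper's proof is terser (it simply states that the conclusion follows from the lemma), whereas you spell out the Weierstrass $M$-test bound and the handling of $\Delta_-$ via the symmetry $(\Delta_-)_\phi(w)=\Delta_\phi(-w)$; the latter is a nice touch since $\Delta_-$ is supported in $[-2/3,-1/6]$ rather than $[0,1]$.
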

\begin{proof}
 We saw that $\Delta$ and $\Delta_-$ are of the form $T_{a,b}^{\varepsilon,\delta}$ with $\varepsilon,\delta>0$. The statement follows from  Lemma \ref{lem-general-trapezoid}. 
\end{proof}
Formul\ae\, (\ref{Chi=sums-of-Deltas-0}, \ref{Chi=sums-of-Deltas-1}, \ref{Chi=sums-of-Deltas-2}) motivate the following definition of $\Theta_\chi$: 
\bey 
\Theta_{\chi}(g)=\sum_{j=0}^\infty 2^{-j/2}\Theta_\Delta(\Gamma g(\tilde a_{2^{2j}};\bm0,0))+
\sum_{j=0}^\infty2^{-j/2}\Theta_{\Delta_-}(\Gamma g(1;\sve{0}{1},0)(\tilde a_{2^{2j}};\bm0,0)).\label{def-Theta-chi}
\eey
Each term in the above is a Jacobi theta function and, by Corollary \ref{cor-Theta_Delta}, is $\Gamma$-invariant (cf. Section \ref{sec:JTF}). 
We will show that the series \eqref{def-Theta-chi} defining $\Theta_\chi( g)$ is absolutely convergent for an explicit, $\Gamma$-invariant subset of $G$. This set projects onto a full measure set of $\GamG$. This means that we are only allow to write $\Theta_\chi(\Gamma g)$ only for almost every $g$. Therefore
$\Theta_\chi$ is an almost everywhere defined automorphic function on the homogeneous space $\GamG$.

\subsection{Hermite expansion for $\Delta_\phi$}\label{sec:hermite2}
We will use here the  notations from Section \ref{subsection:Hermite-f}.

\begin{lem}\label{lem-hat-Delta(k)}
Let $\Delta:\RR\to\RR$ be the ``triangle'' function \eqref{def-triangle-Delta}.
For every $k\geq 0$
\begin{align}
&|\hat \Delta(k)|\ll \frac{1}{1+k^{3/2}}.\label{statement-lem-hat-Delta(k)}
\end{align}
\end{lem}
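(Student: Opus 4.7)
The plan is three integrations by parts in $\hat\Delta(k) = \int \Delta(t)\psi_k(t)\,dt$, combined with the decay of $\psi_k$ on compact sets lying in the oscillatory region. The starting observation is that $\Delta$ is $C^1(\R)$ with compact support $[1/6, 2/3]$, and that $\Delta''$ is piecewise constant with jumps exactly at the five breakpoints $P = \{1/6, 1/4, 1/3, 1/2, 2/3\}$. With the operator $Lf = f'-2\pi t f$, this means $\Delta$ and $L\Delta$ are continuous on all of $\R$, while $L^2\Delta = \Delta'' - 4\pi t\Delta' + (4\pi^2 t^2 - 2\pi)\Delta$ inherits exactly the jumps of $\Delta''$ (its other summands being continuous). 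Using the antiderivative identity \eqref{antiderivative-H} as in the proof of Lemma \ref{lem-hat-f(k)}, the first two IBPs are clean (no boundary terms at infinity, none at the breakpoints since $\Delta, L\Delta\in C^0$) and yield
\[
\hat\Delta(k) = \frac{1}{2\pi(2k+2)^{1/2}(2k+4)^{1/2}}\int_\R (L^2\Delta)(t)\,\psi_{k+2}(t)\,dt.
\]

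For a third IBP, I would split the integral at the $t_i\in P$, apply IBP on each smooth piece, and recombine. The exterior contributions at $\pm\infty$ vanish because $L^2\Delta$ has compact support, while the interior boundary terms combine to a sum of jumps of $L^2\Delta$, which by the remark above equal the jumps of $\Delta''$. The result is
\[
\int_\R (L^2\Delta)\psi_{k+2}\,dt = -\frac{1}{(2\pi)^{1/2}(2k+6)^{1/2}}\!\left(\sum_{t_i\in P}[\Delta''](t_i)\,\psi_{k+3}(t_i) + \int_\R (L^3\Delta)(t)\,\psi_{k+3}(t)\,dt\right),
\]
where $L^3\Delta$ is bounded and compactly supported in $[1/6, 2/3]$ (note that $\Delta''' = 0$ on each smooth piece, so $L^3\Delta$ involves only $\Delta, \Delta', \Delta''$ with polynomial coefficients).

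The final estimate uses the asymptotic \eqref{asymptotic-psi_k(t)-small-t}: every $t_i$ and every $t\in \supp(L^3\Delta)\subset[1/6, 2/3]$ satisfies $2\pi t^2 \leq 8\pi/9 < 2k+1$ for $k\geq 2$, so $|\psi_{k+3}(t)|\ll k^{-1/4}$ uniformly on this set. Both the boundary sum and the bulk integral therefore contribute $O(k^{-1/4})$, and multiplying by the prefactors from the three IBPs gives
\[
|\hat\Delta(k)| \ll k^{-1}\cdot k^{-1/2}\cdot k^{-1/4} = k^{-7/4},
\]
which is stronger than \eqref{statement-lem-hat-Delta(k)}. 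The finitely many small $k$ are handled by the trivial bound $|\hat\Delta(k)| \leq \|\Delta\|_{L^2}\|\psi_k\|_{L^2} = \|\Delta\|_{L^2} = O(1)$. The main obstacle is essentially bookkeeping: ensuring that the first two IBPs produce no boundary contributions (which relies decisively on $\Delta, L\Delta\in C^0(\R)$) and correctly identifying the jumps of $L^2\Delta$ with those of $\Delta''$ before applying the pointwise Hermite decay.
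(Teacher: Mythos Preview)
Your approach is correct and follows exactly the paper's strategy of three integrations by parts; the paper's proof simply says ``apply \eqref{int-by-parts} three times'' and bounds the resulting boundary terms using the uniform estimate $|\psi_k|\ll 1$ to obtain $k^{-3/2}$. You go one step further by invoking the oscillatory-region decay $|\psi_{k+3}(t)|\ll k^{-1/4}$ on $\supp\Delta\subset[1/6,2/3]$, which is precisely the refinement the paper records in Remark~\ref{rk:non-optimal-1} as yielding the improved $k^{-7/4}$; so your proposal is the paper's proof together with the remark it defers.
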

\begin{proof}

Repeat the argument in the proof of Lemma \ref{lem-hat-f(k)} 
with $\Delta$ in place of $f$. In this case we can apply \eqref{int-by-parts} three times, and we get \eqref{statement-lem-hat-Delta(k)}.
\end{proof}
\begin{remark}\label{rk:non-optimal-1}
The estimate \ref{statement-lem-hat-Delta(k)} 
 is not optimal. One can get an additional  $O(k^{-1/4})$ saving by applying \eqref{asymptotic-psi_k(t)-small-t} to the boundary terms after the three integration by parts. Since the additional power saving does not improve our later results, we will simply use \eqref{statement-lem-hat-Delta(k)}. 
 \end{remark}

The following lemma allows us to approximate 
$\Delta_\phi$ by $\Delta$ when $\phi$ is near zero. We will use this approximation in the proof of Theorem \ref{thm:2} in Section \ref{subs:pf-thm-2}.

\begin{lem}\label{lem-E_Delta(phi,t)}
Let $\Delta:\RR\to\RR$ be the ``triangle'' function \eqref{def-triangle-Delta} 
and let  $\mathcal E_\Delta(\phi,t)=|\Delta_\phi(t)-\Delta(t)|$.
For every $|\phi|<\frac{1}{6}$ and every $t\in\R$ we have
\begin{align}
&\mathcal E_\Delta(\phi,t)\ll\begin{cases}
|\phi|^{3/4},& |t|\leq 2;\\
\\
\displaystyle{\frac{|\phi|^{3/2}}{1+|t|^2}},& |t|>2.
\end{cases}\label{statement-lem-E(phi,t)}
\end{align}
\end{lem}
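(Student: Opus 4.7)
The two regimes in the statement call for different techniques, since the Hermite-expansion strategy of Lemma \ref{lem-E_f(phi,t)} will not, by itself, produce the sharp decay $|\phi|^{3/2}/(1+t^2)$ when $|t|>2$. My plan is to handle the regime $|t|\leq 2$ by adapting the proof of Lemma \ref{lem-E_f(phi,t)} using the sharper Hermite coefficient bound $|\hat\Delta(k)|\ll (1+k)^{-3/2}$ from Lemma \ref{lem-hat-Delta(k)}, and to handle $|t|>2$ by direct integration by parts in the integral representation \eqref{def-R-tilde-k-f} of $\Delta_\phi$, exploiting the fact that $\Delta(t)=0$ there so $\mathcal E_\Delta(\phi,t)=|\Delta_\phi(t)|$.

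For $|t|\leq 2$, starting from the Hermite expansion $\Delta_\phi(t)-\Delta(t)=\sum_k\hat\Delta(k)(e^{-i(2k+1)\phi/2}-1)\psi_k(t)$ and the bound $|e^{-i(2k+1)\phi/2}-1|\ll\min(k|\phi|,1)$, I would split the sum at $k\asymp 1/|\phi|$ to get the usual two-piece estimate
\[
\mathcal E_\Delta(\phi,t)\ll |\phi|\sum_{k\leq 1/|\phi|}(1+k)^{-1/2}|\psi_k(t)|+\sum_{k>1/|\phi|}(1+k)^{-3/2}|\psi_k(t)|.
\]
For $|t|\leq 2$ and $k$ bounded below by an absolute constant one has $\pi t^2\leq 2k+1$ and $|2k+1-\pi t^2|\gg k$, so \eqref{est-hermite-h_k(t)} yields $|\psi_k(t)|\ll(1+k)^{-1/4}$; combined with the classical bound \eqref{classical-bound-psi} for small $k$, this gives $|\psi_k(t)|\ll(1+k)^{-1/4}$ uniformly. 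Plugging into the two sums produces $|\phi|\cdot(1/|\phi|)^{1/4}=|\phi|^{3/4}$ and $(1/|\phi|)^{-3/4}=|\phi|^{3/4}$ respectively, yielding the first bound.

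For $|t|>2$, I would start from the integral representation \eqref{def-R-tilde-k-f} with phase $\Psi(w)=\bigl(\tfrac12(t^2+w^2)\cos\phi-tw\bigr)/\sin\phi$. For $w$ in the support of $\Delta$ and $|\phi|<1/6$, one has $|\Psi'(w)|\gg |t|/|\sin\phi|$ and $|\Psi''(w)|\asymp 1/|\sin\phi|$, while $\Psi'''\equiv 0$. The plan is to integrate by parts twice using $e(\Psi(w))=(2\pi i\Psi'(w))^{-1}\partial_w e(\Psi(w))$, so that the integrand becomes a linear combination of $\Delta''/\Psi'^2$, $\Delta'\Psi''/\Psi'^3$ and $\Delta(\Psi'')^2/\Psi'^4$. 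Each of these factors is bounded by $\sin^2\phi/t^2$ in sup-norm, and since all terms are supported in $[1/6,2/3]$, the integral is $O(\sin^2\phi/t^2)$. Multiplying by the prefactor $|\sin\phi|^{-1/2}$ yields $|\Delta_\phi(t)|\ll|\phi|^{3/2}/t^2$, which for $|t|>2$ is the asserted bound.

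The main technical subtlety is in justifying the two integrations by parts: $\Delta$ is $C^1$ with compact support and $\Delta'$ is piecewise linear (continuous, absolutely continuous, with $\Delta''$ only piecewise constant). The first IBP is immediate since $\Delta$ vanishes outside $[1/6,2/3]$; the second requires that $\Delta'/\Psi'-\Delta\Psi''/\Psi'^2$ be absolutely continuous with bounded a.e.\ derivative, which holds because $\Delta'$ is and $\Psi'$ does not vanish on the support. Once this regularity is pinned down, the rest is routine. I expect no difficulty in matching the two regimes at $|t|=2$, since both bounds remain valid in a neighborhood of the transition and the implicit constants can be chosen uniformly.
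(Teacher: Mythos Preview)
Your proposal is correct and follows essentially the same route as the paper: for $|t|>2$ the paper invokes the double integration-by-parts formula \eqref{pf-lemma-trapezoid-sin^3/2} from the proof of Lemma~\ref{lem-general-trapezoid} (exactly your IBP argument, with the piecewise-smooth issue handled there by splitting into subintervals $I_j$), and for $|t|\le 2$ it runs the Hermite-expansion argument with the coefficient bound $|\hat\Delta(k)|\ll(1+k)^{-3/2}$ and the same splitting at $k\asymp 1/|\phi|$, arriving at the two sums $\phi\sum_{k\le 1/\phi}k^{-3/4}$ and $\sum_{k>1/\phi}k^{-7/4}$ just as you do.
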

\begin{proof}
Assume $\phi\geq0$, the case $\phi\leq 0$ being similar. 
The estimate \eqref{statement-lem-E(phi,t)} for $|t|>2$ follows from the proof of Lemma  \ref{lem-general-trapezoid} (see \eqref{pf-lemma-trapezoid-sin^3/2}) and the fact that $\Delta$ is compactly supported. 
Let us then consider the case $|t|<2$. 
We get
\begin{align}
\mathcal E_\Delta(\phi,t)
&=O\!\left(\phi \sum_{0\leq k\leq 1/\phi}k |\hat \Delta(k)\psi_k(t)|\right)+O\!\left(\sum_{k> 1/\phi}|\hat \Delta(k)\psi_k(t)|\right).\label{lemma-f_phi-f-1}
\end{align}
Since $|\phi|<\frac{1}{6}$, the inequality $1/\phi\geq\frac{\pi t^2-1}{2}$ is satisfied.
Therefore, by \eqref{est-hermite-h_k(t)} and Lemma \ref{lem-hat-Delta(k)},  
\be
\begin{split}
\phi \sum_{0\leq k\leq 1/\phi}k |\hat \Delta(k)\psi_k(t)|&\ll \phi \sum_{0\leq k<\frac{\pi t^2-1}{2}}k|\hat \Delta(k)|e^{-\gamma t^2}
\\&+\phi \sum_{\frac{\pi t^2-1}{2}\leq k\leq 1/\phi}k |\hat \Delta(k)|\left((2k+1)^{1/3}+|2k+1-\pi t^2|\right)^{-1/4}\\
&\ll \phi\e^{-\gamma t^2}+\phi\sum_{1\leq k\leq 1/\phi} k^{-3/4}\\
&\ll \phi^{3/4}
\end{split}
\ee
and
\begin{align}
\sum_{k> 1/\phi}|\hat \Delta(k)\psi_k(t)| &\ll \sum_{k>1/\phi}
k^{-7/4}\ll\phi^{3/4}
.\label{lemma-f_phi-f-5}
\end{align}
Combining \eqref{lemma-f_phi-f-1}-\eqref{lemma-f_phi-f-5} we get the desired statement \eqref{statement-lem-E(phi,t)}.
\end{proof}
\begin{remark}
The statement of Lemma \ref{lem-E_Delta(phi,t)} is not optimal. The estimate \eqref{statement-lem-E(phi,t)} could be improved for $|t|<2$ to $O(|\phi|\log(1/|\phi|)$ by using a stronger version of Lemma \ref{lem-hat-Delta(k)}, see Remark \ref{rk:non-optimal-1}. Since this improvement is not going to affect our results,  we are content with \eqref{statement-lem-E(phi,t)}.
\end{remark}

\subsection{Divergent orbits and Diophantine conditions}\label{sec:diophantine}
In this section we recall a well-known fact relating the excursion of divergent geodesics into the cusp of $\GamG$ and the Diophantine properties of the limit point.

A real number $\omega$ is said to be \emph{Diophantine of type $(A,\kappa)$} for  $A>0$  and $\kappa\geq 1$ if 
\be\left|\omega-\frac{p}{q}\right|>\frac{A}{q^{1+\kappa}}\ee
for every $p,q\in\Z$, $q\geq 1$.
We will denote by $\mathcal D(A,\kappa)$ the set of such $\omega$'s, and by $\mathcal D(\kappa)$ the union of $\mathcal D(A,\kappa)$ for all  $A>0$.
It is well known that for every $\kappa>1$, the set $\mathcal D(\kappa)$
has full Lebesgue measure.
The elements of $\mathcal D(1)$ are 
called \emph{badly approximable}. 
The set $\mathcal D(1)$  
has zero Lebesgue measure but  Hausdorff measure 1.

If we consider the action of $\sltr$ on $\RR$, seen as the boundary of $\frak H$, then for every $\kappa\geq 1$ the set $\mathcal D(\kappa)$ is $\sltz$-invariant:
\begin{lem}\label{lem-D(kappa)-is-Gamma-invariant}
Let $\kappa\geq 1$ and  $\omega\in\mathcal D(\kappa)$. 
Then for every $M=\sma{a}{b}{c}{d}\in\SL(2,\Z)$, $M\omega=\frac{a\omega+b}{c\omega+d}\in\mathcal D(\kappa)$.
\end{lem}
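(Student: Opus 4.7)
My plan is to reduce any rational approximation $p/q$ of $M\omega$ to a rational approximation of $\omega$ via the inverse matrix $M^{-1}=\sma{d}{-b}{-c}{a}$. Set $\sve{p'}{q'} := M^{-1}\sve{p}{q}$, i.e.\ $p'=dp-bq$ and $q'=aq-cp$. Since $M^{-1}\in\SL(2,\Z)$ acts as a $\Z$-linear automorphism of $\Z^2$, we have $\gcd(p',q')=\gcd(p,q)$, so without loss of generality we may assume both fractions are in lowest terms and $q\geq 1$.

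Next I would invoke the standard cocycle identity for M\"obius transformations,
\[
Mz-Mz' = \frac{z-z'}{(cz+d)(cz'+d)},
\]
applied with $z=\omega$ and $z'=p'/q'$. A short computation using $\det M=1$ gives $cp'/q'+d=q/q'$, producing the crucial relation
\[
\left|M\omega-\frac{p}{q}\right|=\frac{|q'|\,|\omega-p'/q'|}{|q|\,|c\omega+d|}.
\]
This is the heart of the argument: it converts an approximation to $M\omega$ by $p/q$ into an approximation to $\omega$ by $p'/q'$, where I can invoke the Diophantine hypothesis.

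Assuming $q'\neq 0$, applying $\omega\in\mathcal D(A,\kappa)$ to the right-hand side yields a lower bound proportional to $|q'|^{-\kappa}|q|^{-1}$. To reach the desired shape $A'|q|^{-(1+\kappa)}$ I need $|q'|\ll |q|$. I would handle this by splitting into the trivial case $|M\omega-p/q|\geq 1$ (where the inequality holds automatically for $A'\leq 1$) and the interesting case $|M\omega-p/q|<1$, in which $|p|\leq (|M\omega|+1)|q|$, hence $|q'|\leq C|q|$ with $C=|a|+|c|(|M\omega|+1)$. This yields the claim with an explicit constant $A'=\min\bigl\{1,\; A\,C^{-\kappa}|c\omega+d|^{-1}\bigr\}$.

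The only step requiring care is the exceptional configuration $q'=0$, which (in lowest terms, assuming $c\neq 0$) forces $(p,q)=\pm(a,c)$. For this finite set of approximants I would instead compute directly that $M\omega-a/c=-1/\bigl(c(c\omega+d)\bigr)$, so the left-hand side is a fixed positive number and the Diophantine bound $|M\omega-a/c|>A'|c|^{-(1+\kappa)}$ reduces to a single constraint $A'<|c|^\kappa/|c\omega+d|$, easily absorbed into the constant chosen above. Aside from this minor bookkeeping, the proof is essentially a change of variables, and I do not expect a genuine obstacle.
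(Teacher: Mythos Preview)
Your proof is correct. The cocycle identity $Mz-Mz'=(z-z')/\bigl((cz+d)(cz'+d)\bigr)$ together with the change of variables $\sve{p'}{q'}=M^{-1}\sve{p}{q}$ does exactly what you say, and the case split (trivial range $|M\omega-p/q|\geq 1$, generic range with $|q'|\leq C|q|$, exceptional denominator $q'=0$) covers everything. One small point you left implicit: the ``absorption'' of the exceptional constraint $A'<|c|^\kappa/|c\omega+d|$ into your formula for $A'$ uses that $A<1$, which follows from the Diophantine condition at $q=1$ (namely $A<\dist(\omega,\Z)\leq\tfrac12$); with that, $A\,C^{-\kappa}\leq 1\leq |c|^\kappa$ holds automatically when $c\neq 0$.

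The paper takes a different route: it checks invariance only on the generators $\sma{1}{1}{0}{1}$ and $\sma{0}{-1}{1}{0}$ of $\SL(2,\Z)$, the first being trivial and the second amounting to showing $\omega\in\mathcal D(\kappa)\Leftrightarrow\omega^{-1}\in\mathcal D(\kappa)$ by a short direct estimate. Your approach handles every $M$ in one stroke and yields an explicit dependence of the new constant $A'$ on $A$, $\kappa$, and $M$; the paper's approach is more elementary (no cocycle identity, no exceptional case) but gives no control on how $A$ degrades along a word in the generators. Both are standard; yours is arguably cleaner for applications where the constant matters.
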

\begin{proof}
(This is standard.) It is enough to check that the claim holds for the generators $\sma{1}{1}{0}{1}$ and $\sma{0}{-1}{1}{0}$. For the first one the statement is trivial. For the second, it suffices to show $\omega\in\mathcal D(\kappa)$ is equivalent to $\omega^{-1}\in\mathcal D(\kappa)$. Assume without loss of generality $0<\omega<1$. Suppose first $\omega^{-1}\in\mathcal D(A,\kappa)$ for some $A>0$. Then, for $0<p\leq q$, 
\begin{equation}
\left|\omega - \frac{p}{q} \right| = \frac{\omega p}{q} \left|\omega^{-1} - \frac{q}{p} \right| 
> \frac{A \omega}{q p^{\kappa}} \geq  \frac{A \omega}{q^{1+\kappa}} .
\end{equation}
For $p\not\in(0,q]$, $|\omega - \frac{p}{q}| \geq \min(\omega,1-\omega)$.
We have thus proved $\omega\in\mathcal D(\kappa)$. To establish the reverse implication, suppose $\omega\in\mathcal D(A,\kappa)$ for some $A>0$. Then, for $0< q\leq p \lceil \omega^{-1} \rceil$, 
\begin{equation}
\left|\omega^{-1} - \frac{q}{p} \right| = \frac{q}{\omega p} \left|\omega - \frac{p}{q} \right| 
> \frac{A}{\omega p q^{\kappa}} \geq  \frac{A}{\omega  \lceil \omega^{-1} \rceil^{\kappa} p^{1+\kappa}} .
\end{equation}
Again we have a trivial bound for the remaining range $q\notin (0,p \lceil \omega^{-1} \rceil]$. This shows that $\omega^{-1}\in\mathcal D(\kappa)$.
\end{proof}

\begin{lem}\label{lem:diophantine-visit-cusp}
Let $x\in \mathcal D(A,\kappa)$ 
 for some $A\in(0,1]$ and $\kappa\geq1$.  
Define
\begin{align}\label{3.39}
z_s(x,u)&=\ma{1}{x}{0}{1}\ma{1}{0}{u}{1}\ma{\e^{-s/2}}{0}{0}{\e^{s/2}}i=x+\frac{u}{\e^{2s}+u^2}+i\frac{\e^{s}}{\e^{2s}+u^2}.
\end{align}
Then, for $s\geq 0$ and $u\in\R$,
\be
\begin{split}\label{statement-lem:diophantine-visit-cusp}
\sup_{M\in \sltz}\Im(M z_s(x,u)) & \leq A^{-\frac{2}{\kappa}} \e^{-(1-\frac{1}{\kappa})s}\, W(u\e^{-s}) \\
& \leq A^{-\frac{2}{\kappa}} \e^{-(1-\frac{1}{\kappa})s}\, W(u) ,
\end{split}
\ee
with
\begin{equation}
W(t):=  1+\ha \left(t^2+|t|\sqrt{4+t^2}\right).
\end{equation}
\end{lem}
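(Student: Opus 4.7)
The plan is to compute $\Im(M z_s(x,u))$ explicitly for $M=\sma{a}{b}{c}{d}\in\sltz$ and then to optimize in $M$ using the Diophantine hypothesis on $x$. Multiplying $M$ onto the matrix product in \eqref{3.39} and reading off the imaginary part using $\Im(g\,i) = 1/(C^2+D^2)$ for $g\in\SL(2,\R)$ with bottom row $(C,D)$, one obtains
$$\Im(Mz_s(x,u)) = \frac{e^s}{F(c,d)}, \qquad F(c,d) := (cx+d)^2 e^{2s} + \bigl((cx+d)u + c\bigr)^2.$$
Since the M\"obius action factors through $\PSL(2,\Z)$, one may assume $c\geq 0$.

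The case $c=0$ forces $d=\pm 1$, so $F = e^{2s}+u^2$ and $\Im(Mz_s) = e^{-s}/(1+v^2)$ with $v := ue^{-s}$; the desired bound then reduces to $e^{-s/\kappa}/(1+v^2) \leq A^{-2/\kappa}W(v)$, which is immediate from $e^{-s/\kappa}\leq 1$, $A^{-2/\kappa}\geq 1$, $W(v)\geq 1$.

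For $c\geq 1$, I would introduce $t := (cx+d)e^s$ and rewrite $F = t^2 + (tv+c)^2$. Minimizing this quadratic in $t\in\R$ gives $F\geq c^2/(1+v^2)$, while the Diophantine hypothesis $|cx+d|>A/c^\kappa$ gives $F\geq t^2 > A^2 e^{2s}/c^{2\kappa}$. Combining,
$$\Im(Mz_s) \leq \min\!\left(\frac{c^{2\kappa}}{A^2 e^s},\ \frac{e^s(1+v^2)}{c^2}\right),$$
and the crucial step is applying the elementary interpolation $\min(\alpha,\beta)\leq \alpha^{1/(\kappa+1)}\beta^{\kappa/(\kappa+1)}$, which cancels the powers of $c$ exactly to leave
$$\Im(Mz_s) \leq A^{-2/(\kappa+1)}\, e^{s(\kappa-1)/(\kappa+1)}\, (1+v^2)^{\kappa/(\kappa+1)}.$$
The first inequality in \eqref{statement-lem:diophantine-visit-cusp} will then follow by three monotonicity weakenings: $A^{-2/(\kappa+1)}\leq A^{-2/\kappa}$ (since $A\leq 1$), $e^{s(\kappa-1)/(\kappa+1)}\leq e^{s(\kappa-1)/\kappa}$ (since $s\geq 0$, $\kappa\geq 1$), and $(1+v^2)^{\kappa/(\kappa+1)}\leq 1+v^2\leq W(v)$, the last from the direct computation $W(v)-(1+v^2) = \ha|v|(\sqrt{4+v^2}-|v|)\geq 0$.

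The second inequality will follow from the monotonicity of $W$ in $|v|$ (by direct differentiation) combined with $|ue^{-s}|\leq |u|$ for $s\geq 0$. The main obstacle is isolating the correct weights in the interpolation step: an arbitrary weighted geometric mean of the two bounds leaves a residual power of $c$, and only the weights $1/(\kappa+1)$ and $\kappa/(\kappa+1)$ simultaneously cancel the $c$-dependence and produce the $\kappa$-dependent exponents of $A$ and $e^s$ matching the statement.
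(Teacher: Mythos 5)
Your proof is correct and takes a genuinely different route from the paper's. The paper first treats $u=0$, observing that the supremum over $M$ is attained at a point of the fundamental domain, so $\Im(Mz_s(x,0))\geq1$ yields the two constraints $(cx+d)^2\leq y$ and $c^2y\leq1$ (with $y=e^{-s}$) on top of the Diophantine hypothesis; combining these eliminates $c$. It then extends to general $u$ by a hyperbolic-distance estimate, bounding the increase of cusp height by $\exp(\dist_{\mathfrak H}(z_s(x,u),z_s(x,0)))$, which is the source of the factor $W(ue^{-s})$. You instead work with the explicit formula $\Im(Mz_s(x,u))=e^s/F(c,d)$ uniformly in $u$: your two lower bounds on $F$ --- minimizing the quadratic in $t$, and invoking the Diophantine hypothesis --- replace the paper's two fundamental-domain constraints, and the weighted geometric mean with exponents $1/(\kappa+1)$ and $\kappa/(\kappa+1)$ replaces the direct elimination of $c$. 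Your route avoids the hyperbolic-distance argument entirely, a genuine simplification; the paper's route makes the geometric meaning of $W$ transparent, namely $W(ue^{-s})=\exp\bigl(\mathrm{arcosh}(1+\tfrac12 u^2 e^{-2s})\bigr)$.

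One point you must flag explicitly rather than pass over: the bound your chain of weakenings actually produces is $A^{-2/\kappa}e^{(1-1/\kappa)s}W(ue^{-s})$, with a \emph{positive} exponent on $e^s$ --- your middle weakening ends at $e^{s(\kappa-1)/\kappa}$, not at $e^{-s(\kappa-1)/\kappa}$ --- whereas the lemma as printed has $e^{-(1-1/\kappa)s}$. This is a sign typo in the statement, not a gap in your argument: the paper's own proof also arrives at $A^{-2/\kappa}y^{-1+1/\kappa}=A^{-2/\kappa}e^{(1-1/\kappa)s}$, and the printed bound is in fact false for $\kappa>1$ (take $x\in\mathcal D(A,\kappa)$ with infinitely many convergents $p/q$ satisfying $|x-p/q|$ of order $q^{-1-\kappa}$; the cusp height along $x+ie^{-s}$ is then unbounded in $s$, while the printed right-hand side tends to zero). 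When you write that ``the first inequality will then follow'' from your weakenings you are silently asserting the corrected sign; you should say so rather than let the claim appear to match the printed exponent.
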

\begin{proof}
Let us set $y:=\e^{-s}\leq 1$. The supremum in \eqref{statement-lem:diophantine-visit-cusp} is achieved when $M z_s(x,u)$ belongs to the fundamental domain $\mathcal F_{\sltz}$. Then
\be\mbox{either}\hspace{.3cm}\frac{\sqrt 3}{2}\leq\Im(M z_s(x,0))< 1,\hspace{.3cm}\mbox{or}\hspace{.3cm} \Im(M z_s(x,0))\geq 1.\ee
In the first case we have the obvious bound $\Im(Mz_s(x,0))\leq 1$. In the second case, write $M=\sma{a}{b}{c}{d}$. If $c=0$, then $\Im(Mz_s(x,0))=y\leq 1$. If $c\neq 0$,
\be
\Im(Mz_s(x,0))=\frac{y}{(cx+d)^2+c^2 y^2}\geq1.\label{pf-lem-dioph-cusp-1}
\ee
This implies that $(cx+d)^2/y\leq 1$ and $c^2 y\leq 1$. 
The first inequality yields
\be
y\geq A^2 |c|^{-2\kappa},
\ee and therefore we have
\be\left(\frac{
A^2}{y}\right)^{1/2\kappa}\leq |c|\leq  \left(\frac{1}{y}\right)^{1/2}.\ee
This means that 
\begin{align}
1
\leq\Im\left(\sma{a}{b}{c}{d}z\right)=\frac{y}{(cx+d)^2+c^2 y^2}\leq \frac{1}{c^2y}\leq 
A^{-\frac{2}{\kappa}}
y^{-1+\frac{1}{\kappa}}.
\end{align}
This proves the lemma for $u=0$. 

Let us now consider the case of general $u$. 
Let us estimate the hyperbolic distance between $\Gamma z_s(x,0)$ and $\Gamma z_s(x,u)$ on $\Gamma\backslash {\frak H}$, which is
\begin{equation}
\dist_{\Gamma\backslash {\frak H}} (\Gamma z_s(x,u),\Gamma z_s(x,0)) 
:=\inf_{M\in\Gamma} \dist_{\frak H}(M z_s(x,u),z_s(x,0)).
\end{equation}
We compute
\be
\begin{split}
\dist_{\frak H}(z_s(x,u),z_s(x,0)) 
&=\mathrm{arcosh}\!\left(1+\frac{(\Re(z(u)-z(0))^2+(\Im(z(u)-z(0))^2}{2\,\Im(z(u))\Im(z(0))}\right)\\
&=\mathrm{arcosh}\!\left(1+\ha u^2 y^2\right),
\end{split}
\ee
and hence
\be
\begin{split}
\dist_{\Gamma\backslash {\frak H}} (\Gamma z_s(x,u),\Gamma z_s(x,0)) 
& \leq \mathrm{arcosh}\!\left(1+\ha u^2 y^2\right) \\
& = \log \left(1+\ha u^2 y^2+\ha |u| y\sqrt{4+u^2 y^2}\right).
\end{split}
\ee
Now,
\be
\begin{split}
\sup_{M\in \sltz}\Im(M z_s(x,u)) & \leq \sup_{M\in \sltz}\Im(M z_s(x,0)) \e^{\dist_{\Gamma\backslash {\frak H}} (\Gamma z_s(x,u),\Gamma z_s(x,0)) } \\
& \leq \sup_{M\in \sltz}\Im(M z_s(x,0)) \left(1+\ha u^2 y^2+\ha |u| y\sqrt{4+u^2 y^2}\right),
\end{split}
\ee
and the claim follows from the case $u=0$.
\end{proof}

We will in fact use the following backward variant of Lemma \ref{lem:diophantine-visit-cusp}.

\begin{lem}\label{lem:diophantine-visit-cusp22}
Let $x\in\R$, $u\in\R-\{0\}$ such that $x+\frac1u\in \mathcal D(A,\kappa)$ for some $A\in(0,1]$ and $\kappa\geq1$. Then, for $s\geq 2\log(1/|u|)$,
\begin{equation}\label{statement-lem:diophantine-visit-cusp22}
\sup_{M\in \sltz}\Im(M z_{-s}(x,u)) \leq
\begin{cases}
\max(\frac{1}{2|u|} , 2) & \text{if $0\leq s\leq 2\log^+(1/|u|)$,} \\
 A^{-\frac{2}{\kappa}} u^{1-\frac{1}{\kappa}}\, \e^{-(1-\frac{1}{\kappa})s}\, W(u) & \text{if $s\geq 2\log^+(1/|u|)$},
\end{cases}
\end{equation}
with $\log^+(x):=\max(\log x, 0)$.
\end{lem}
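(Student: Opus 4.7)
The plan is to reduce to Lemma~\ref{lem:diophantine-visit-cusp} applied at the Diophantine point $\tilde x := x+1/u$, which is the opposite endpoint on $\partial\mathfrak H$ of the geodesic arc $\{z_s(x,u)\}_{s\in\RR}$. I would first establish the rewriting
\[
z_{-s}(x,u) = x + \frac{1}{u - ie^{-s}} = \tilde x - \frac{e^{-2s}}{u(u^2+e^{-2s})} + i\,\frac{e^{-s}}{u^2+e^{-2s}},
\]
from which $Y := \Im(z_{-s}(x,u))$ is maximised at $s = \log(1/|u|)$ with maximum $1/(2|u|)$. For Case 1 ($0\le s\le 2\log^+(1/|u|)$, nonempty only if $|u|<1$), the point $z_{-s}(x,u)$ lies in a compact region of $\mathfrak H$, with $Y \le 1/(2|u|)$ and real part within $O(1)$ of $\tilde x$; a single integer translation followed, if necessary, by the involution $w=\left(\begin{smallmatrix}0&-1\\1&0\end{smallmatrix}\right)$ brings the point into the standard fundamental domain of $\sltz$, producing the claimed bound $\max(1/(2|u|),2)$.

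For Case 2 ($s\ge 2\log^+(1/|u|)$, equivalently $|u|e^s\ge 1$), I would set $s^* := s + 2\log|u| \ge 0$ and compare $z_{-s}(x,u)$ with the vertical geodesic point $z_{s^*}(\tilde x,0) = \tilde x + ie^{-s}/u^2$. The identity $\cosh\dist_{\mathfrak H}(z,w) = 1 + |z-w|^2/(2\Im z\,\Im w)$ applied to these two points gives
\[
\cosh\,\dist_{\mathfrak H}\!\bigl(z_{-s}(x,u),\,z_{s^*}(\tilde x,0)\bigr) = 1 + \frac{1}{2u^2 e^{2s}}.
\]
Since $\sup_M\Im(M\cdot)$ descends to $\sltz\backslash\mathfrak H$ and distorts by at most a factor $e^{\dist_{\mathfrak H}}$ under hyperbolic translation, combining this with Lemma~\ref{lem:diophantine-visit-cusp} applied to $\tilde x \in \mathcal D(A,\kappa)$ (with parameter $u=0$ and forward time $s^*$) yields
\[
\sup_M \Im(Mz_{-s}(x,u)) \le e^{\dist_{\mathfrak H}}\, A^{-2/\kappa}\, \e^{-(1-1/\kappa)s^*} = e^{\dist_{\mathfrak H}}\, A^{-2/\kappa}\, |u|^{-2(1-1/\kappa)}\, \e^{-(1-1/\kappa)s}.
\]

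The main obstacle is matching the precise $u$-dependence advertised in Case 2: the hyperbolic distance correction $e^{\dist_{\mathfrak H}}$ is uniformly $O(1)$ on the range $|u|e^s\ge 1$, so producing the explicit factor $W(u)$, which grows like $u^2$ for large $|u|$, requires careful bookkeeping near the boundary $s=2\log^+(1/|u|)$ where the initial configuration $z_0(x,u)$ can itself lie deep in the cusp. This will be handled exactly in parallel with the $W(ue^{-s})$ contribution in the proof of Lemma~\ref{lem:diophantine-visit-cusp}, by tracking how shifting the basepoint of the comparison vertical geodesic transfers the cusp excursion from $W(u)$ to the value $W(ue^{-s^*})$ that governs the forward estimate.
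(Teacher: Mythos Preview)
Your approach is correct and close to the paper's, but you miss a simplification that dissolves your perceived obstacle. The paper notes the exact matrix identity
\[
z_{-s}(x,u) \;=\; z_\tau\bigl(x+\tfrac1u,\,-u\bigr), \qquad \tau = s + \log u^2,
\]
so that in Case~2 one applies Lemma~\ref{lem:diophantine-visit-cusp} directly at the Diophantine point $\tilde x = x+\tfrac1u$ with parameter $-u$ and forward time $\tau\ge 0$; the factor $W(u)=W(-u)$ then drops out of that lemma with no further work. Your route---comparing instead to the vertical point $z_{s^*}(\tilde x,0)$ and paying the factor $e^{\dist}$---reproduces by hand the hyperbolic-distance step already contained in the proof of Lemma~\ref{lem:diophantine-visit-cusp}. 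In fact your computation $\cosh\dist = 1 + \tfrac12 t^2$ with $t = 1/(|u|e^s)$ gives exactly
\[
e^{\dist} \;=\; 1 + \tfrac12\bigl(t^2 + |t|\sqrt{4+t^2}\bigr) \;=\; W(t),
\]
and on the range $s\ge 2\log^+(1/|u|)$ one checks $t\le |u|$, hence $e^{\dist}\le W(u)$ since $W$ is even and increasing on $[0,\infty)$. So the ``careful bookkeeping'' you anticipate is unnecessary: your bound already implies the stated one. Case~1 is as you sketch, and matches the paper's argument (using that $\Im(z)\cdot\Im(Mz)\le 1$ whenever the lower-left entry of $M$ is nonzero).
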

\begin{proof}
We have
\begin{equation}
\ma{1}{x}{0}{1}\ma{1}{0}{u}{1}\Phi^{-s}i=\ma{1}{x+\frac{1}{u}}{0}{1}\ma{1}{0}{-u}{1}\Phi^{\tau}i,\label{backward-forward-t-tau}
\end{equation}
with $\tau=s+\log^+(u^2)$. In the range $s\geq 2\log(1/|u|)$, we may therefore apply Lemma \ref{lem:diophantine-visit-cusp} with $\tau$ in place of $s$, and $x+\frac{1}{u}$ in place of $x$.

In the range $0\leq s\leq 2\log^+(1/|u|)$ we have $\Im(z_{-s}(x,u))=\frac{1}{\e^s u^2+\e^{-s}}\geq \frac12$. If  $\Im(z_{-s}(x,u))\geq 1$, then the maximal possible height is $\frac{1}{2|u|}$. If on the other hand $\frac12\leq \Im(z_{-s}(x,u))< 1$, then $\Im(M z_{-s}(x,u))\leq 2$ for all $M\in\Gamma$.
\end{proof}

\subsection{Proof of Theorem \ref{thm:2}}\label{subs:pf-thm-2}
Let us now give a more precise formulation of Theorem \ref{thm:2} from the Introduction.
\begin{theorem}\label{thm:2-rephrased}
Fix $\kappa>1$. For  $x\in\R$, define
\begin{align}
P^x:=\bigcup_{A>0} P_A^x , \qquad P_A^x:=\left\{n_-(u,\beta)\in H_-:\: 
\:x+\frac{1}{u}\in \mathcal D(A,\kappa)\right\}. 
\end{align}
Then, for every $(x,\alpha)\in\RR^2$,  $h\in P^x$ and $s\geq 0$, 
the series \eqref{def-Theta-chi} defining \be\Theta_\chi(\Gamma n_+(x,\alpha) h \Phi^s)\ee is absolutely convergent. 
Moreover, there exists a measurable function $E_\chi^x:P^x\to\R_{\geq 0}$  so that 
\begin{enumerate}
\item[(i)] for every $(x,\alpha)\in\RR^2$, $h\in P^x$ and $s\geq 0$
\begin{align}
\left|\frac{1}{\sqrt N}S_N(x,\alpha)
-\Theta_\chi\!\left(\Gamma n_+(x,\alpha)h\Phi^s\right)\right|
\leq \frac{1}{\sqrt N}
E_\chi^x(h),\label{statement-thm2-rephrased}
\end{align}
where $N=\lfloor \e^{s/2}\rfloor$;
\item[(ii)] for every $u_0>1$, $\beta_0>0$, $A>0$,
\begin{equation}\label{upperEchi}
\sup_{x\in\RR} \sup_{h \in P_A^x \cap \scrK(u_0,\beta_0)} E_\chi^x(h) <\infty,
\end{equation}
with the compact subset $\scrK(u_0,\beta_0)=\{ n_-(u,\beta): u_0^{-1}\leq |u| \leq u_0, \; |\beta|\leq\beta_0\}\subset H_-$.
\end{enumerate}
\end{theorem}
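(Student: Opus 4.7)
The plan is to combine the dyadic decomposition of $\chi$ from Section \ref{sec:dyadic} with Theorem \ref{thm:1} applied to the $C^1$ bump $\Delta$ (and $\Delta_-$) and the Diophantine cusp estimate of Lemma \ref{lem:diophantine-visit-cusp22}. Using $\Phi^s \tilde a_{2^{2j}} = \Phi^{s_j}$ with $s_j := s - 2j\log 2$ (so $N_j := \e^{s_j/2} = N/2^j$, up to rounding) and writing $g_j := \Gamma n_+(x,\alpha) h \Phi^{s_j}$, the defining series \eqref{def-Theta-chi} becomes
\[
\Theta_\chi\bigl(\Gamma n_+(x,\alpha) h \Phi^s\bigr) = \sum_{j=0}^{\infty} 2^{-j/2}\bigl[\Theta_\Delta(g_j) + \Theta_{\Delta_-}\bigl(g_j\,(1;\sve{0}{1},0)\bigr)\bigr].
\]
Setting $J_0 := \lfloor s/(2\log 2) \rfloor$, the forward range $0 \leq j \leq J_0$ (where $s_j \geq 0$, $N_j \geq 1$) will be handled by Theorem \ref{thm:1}, while the backward range $j > J_0$ will be controlled via the Diophantine estimate.

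For absolute convergence in the backward range, I would invoke Lemma \ref{lem:diophantine-visit-cusp22}: the hypothesis $x + 1/u \in \mathcal{D}(A,\kappa)$ bounds the height $y_j$ of the $\SL(2,\R)$-projection of $g_j$ in a fundamental domain by $y_j \ll_{A,\kappa,u} \e^{-(1-1/\kappa)|s_j|}$ once $|s_j| \geq 2\log^+(1/|u|)$. Since $\Delta,\Delta_- \in \mathcal{S}_2$ (Lemma \ref{lem-general-trapezoid}), Lemma \ref{lem-expansion-at-infinity} with $\eta = 2$ yields $|\Theta_\Delta(g_j)| \ll y_j^{1/4}$, so each backward summand is bounded by $2^{-j/2} \cdot 2^{-(j-J_0)(1-1/\kappa)/2}$ and the series converges geometrically for $\kappa > 1/2$. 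Multiplying by the prefactor $\sqrt N = 2^{J_0/2}$ cancels the $2^{-J_0/2}$, so the entire backward tail $\sqrt N\,\bigl|\Theta_\chi - \Theta_{\chi^{(J_0+1)}}\bigr|(\Gamma n_+(x,\alpha)h\Phi^s)$ will be $O(1)$ uniformly in $N$, with constant depending only on $A,\kappa,|u|,|\beta|$.

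For the forward range I would take $J_0+1 \geq \log_2(3N)$ so that $\chi^{(J_0+1)}(n/N) = \chi(n/N)$ at every sample $n \in \{1,\ldots,N-1\}$, giving $S_N(x,\alpha;\chi^{(J_0+1)}) = S_N(x,\alpha) + O(1)$. The telescoping identity $\sum_{j=0}^{J_0}\Delta(2^j t) = \chi_L^{(J_0+1)}(t)$ yields $\sum_{j=0}^{J_0}S_{N_j}(x,\alpha;\Delta) = S_N(x,\alpha;\chi_L^{(J_0+1)})$, with an analogous identity for $\chi_R^{(J_0+1)}$ involving $\Delta_-$. Applying Theorem \ref{thm:1} at each scale $s_j \geq 0$ will approximate the summand $S_{N_j}(x,\alpha;\Delta)$ by $\sqrt{N_j}\,\Theta_\Delta(g_j)$ up to a per-term error.

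The hard part will be controlling the cumulative error from these $J_0 = O(\log N)$ applications of Theorem \ref{thm:1}: the naive bound $E_\Delta(h) = C(|u|+|\beta|)$ per term produces an unacceptable $O((|u|+|\beta|)\log N)$. I would resolve this by invoking the sharper $N$-dependent form of the error extracted inside the proof of Theorem \ref{thm:1}, namely $|S_{N_j} - \e^{s_j/4}\,\Theta_\Delta(g_j)| \leq C(|u|/N_j + |\beta|)$: the $|u|/N_j$ part sums geometrically to $O(|u|)$, and the residual $|\beta|$-part is reabsorbed by choosing $J_0$ slightly below $s/(2\log 2)$ and transferring the extra $O(\log\log N)$ forward levels to the backward-tail estimate, whose geometric ratio $2^{-(j-J_0)(2-1/\kappa)/2}$ still sums to $O(1)$. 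This should give $|S_N(x,\alpha) - \sqrt N\,\Theta_\chi(g)| \leq E_\chi^x(h)$ with $E_\chi^x(h)$ an explicit function of $|u|,|\beta|,A,\kappa,W(u)$; statement (ii) then follows by inspection since each of these quantities is uniformly bounded on $P_A^x \cap \scrK(u_0,\beta_0)$.
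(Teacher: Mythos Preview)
Your backward-range treatment via the Diophantine height bound is correct and matches the paper. The gap is in the forward range, specifically in the $\beta$-error.

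Applying the sharper error from the proof of Theorem~\ref{thm:1} at each level $0\leq j\leq J_0$ gives, as you say, $|S_{N_j}(x,\alpha;\Delta) - \sqrt{N_j}\,\Theta_\Delta(g_j)| \leq C(|u|/N_j + |\beta|)$. Summing over $j$, the $|u|/N_j$ part indeed telescopes geometrically to $O(|u|)$, but the $|\beta|$ part contributes $C|\beta|$ at \emph{every} level, giving a total of $C|\beta|(J_0+1) \asymp |\beta|\log N$. Your proposed fix---shifting the split point down by $O(\log\log N)$---does not help: the forward range then has $J_0 - O(\log\log N)$ levels, which is still $\asymp \log N$, so the accumulated $\beta$-error is still $\asymp |\beta|\log N$. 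Moreover, the levels you ``transfer'' still have $s_j\geq 0$ and are therefore not in the regime where Lemma~\ref{lem:diophantine-visit-cusp22} applies (that lemma controls $z_{-s}$ for $s\geq 0$), so you have no bound on them either.

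The paper resolves this by \emph{not} treating $\beta$ level by level. It aggregates the forward levels into a single trapezoidal function $T=\chi^{(J)}$ with $J=\lceil\log_2 N\rceil$ and bounds
\[
\bigl|\Theta_T(n_+(x,\alpha)n_-(u,\beta)\Phi^s)-\Theta_T(n_+(x,\alpha)n_-(u,0)\Phi^s)\bigr|=O(|\beta|/\sqrt N)
\]
in one step by the Mean Value Theorem. The point is that although $|T'|$ is large (of order $2^J\sim N$), it is supported on intervals of total length $\sim 2^{-J}\sim 1/N$, so only $O(1)$ integers $n$ lie in the support of $T'(\cdot/N)$; the MVT therefore yields a single $O(|\beta|)$ rather than $O(|\beta|\log N)$. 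Your level-by-level approach loses this because at each scale $\Delta'$ has support of \emph{fixed} length $\asymp 1$, catching $\asymp 1$ integer per level and hence $\asymp\log N$ integers in total. The $u$-error in the forward range is handled similarly in the paper, comparing $\Theta_T(n_+n_-(u,0)\Phi^s)$ directly with $\Theta_\chi(n_+\Phi^s)$ via the refined approximation of Lemma~\ref{lem-E_Delta(phi,t)} rather than Theorem~\ref{thm:1}; your cruder $O(|u|)$ bound would suffice for the theorem as stated, so this part is not a gap.
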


\begin{remark}
To see that Theorem \ref{thm:2-rephrased} implies Theorem \ref{thm:2}  notice that for every $x$ the set $P^x$ is of full measure in the stable horospherical subgroup $H_-$. 
Let
\begin{equation}
\begin{split}
Z&= \{ g\in G : \Phi^{-s} g \Phi^s = g \text{ for all } s\in\RR \} \\
& = \{ \Phi^s (1;\vecnull,t) : (s,t)\in\RR^2 \}.
\end{split}
\end{equation}
Then $G=H_+ H_- Z$ up to a set of Haar measure zero.
By Lemma \ref{lem-D(kappa)-is-Gamma-invariant} and a short calculation (see Lemma \ref{lem-prods}, Remark \ref{remark-D-kappa} below), the set
\be
D=\left\{n_+(x,\alpha) h \in G:\: (x,\alpha)\in\RR^2,\: h \in P^x\right\} Z \subset G
\ee
is $\Gamma$-invariant and has full measure in $G$. 
\end{remark}

\begin{proof}[Proof of Theorem \ref{thm:2-rephrased}]
We assume in following that $|u|\leq u_0$ for an arbitrary $u_0>0$. All implied constants will depend on $u_0$.
Since $\chi$ is the characteristic function of $(0,1)$ (rather than $(0,1]$), we have
\begin{equation}
\left| S_N(x,\alpha)-\sqrt{N}\;\Theta_\chi(n_+(x,\alpha)\Phi^s)\right|\leq 1 \label{pf-thm2-new-00}
\end{equation}
with $s=2\log N$. Now, for every integer $J\geq 0$,
\begin{align}
\sum_{j=0}^J 2^{-j/2}\left(\Theta_\Delta(\Gamma g(\tilde a_{2^{2j}};\bm0,0))+
\Theta_{\Delta_-}(\Gamma g(1;\sve{0}{1},0)(\tilde a_{2^{2j}};\bm0,0))\right)=\Theta_{T}(\Gamma g),
\end{align}
where $T$ is the trapezoidal function \be T=T_{\frac{1}{3\cdot 2^J},1-\frac{1}{3\cdot 2^J}}^{\frac{1}{6\cdot 2^J},\frac{1}{6\cdot 2^J}}.\label{trapezoid-T}\ee
Since, by Lemma \ref{lem-general-trapezoid}, $T\in\mathcal S_2(\R)$, then the series \eqref{def-Jacobi-Theta-function-on-G} defining $\Theta_T(\Gamma g)$ is absolutely convergent for every $\Gamma g\in\GamG$. In view of the support of $T$, we have 
\begin{equation}
\Theta_{T}(\Gamma n_+(x,\alpha)\Phi^s) = \Theta_\chi(n_+(x,\alpha)\Phi^s)
\end{equation}
provided $2^J>\frac{1}{3}\e^{s/2}$. Furthermore,
\be
\begin{split}
\Theta_\chi(\Gamma n_+(x,\alpha) n_-(u,\beta)\Phi^s)=&\,\Theta_T( n_+(x,\alpha) n_-(u,\beta)\Phi^s)\\
&+\sum_{j=J+1}^\infty 2^{-j/2}\Theta_\Delta(n_+(x,\alpha) n_-(u,\beta)\Phi^{s-(2\log 2)j})
\\
&+\sum_{j=J+1}^\infty 2^{-j/2}\Theta_{\Delta_-}(n_+(x,\alpha) n_-(u,\beta)\Phi^{s}n_-(0,1)\Phi^{-(2\log 2)j}) .
\end{split}
\ee
The proof of Theorem \ref{thm:2-rephrased} therefore follows from the following estimates, which we will derive below with the choice $J=\lceil \log_2 N\rceil$:
\begin{equation}
\left|\Theta_\chi(n_+(x,\alpha)\Phi^s)-\Theta_T(\Gamma n_+(x,\alpha)n_-(u,0)\Phi^s)\right| \ll \frac{|u|^{2/3}}{N^{1/2}}; \label{pf-thm2-9}
\end{equation}
\begin{equation}
\left|\Theta_T(n_+(x,\alpha)n_-(u,\beta)\Phi^s)-\Theta_T(n_+(x,\alpha)n_-(u,0)\Phi^s)\right|=O\!\left(\frac{|\beta|}{N^{1/2}}\right);\label{pf-thm2-10}
\end{equation}
\begin{equation}\label{three.sixty}
\sum_{j=J+1}^\infty 2^{-j/2}\left| \Theta_\Delta(n_+(x,\alpha) n_-(u,\beta)\Phi^{s-(2\log 2)j}) \right|\ll_\kappa 
\frac{F_A(u)}{N^{1/2}};
\end{equation}
\begin{equation}\label{three.sixty1}
\sum_{j=J+1}^\infty 2^{-j/2}\left| \Theta_{\Delta_-}(n_+(x,\alpha) n_-(u,\beta)\Phi^{s}n_-(0,1)\Phi^{-(2\log 2)j}) \right| \ll_\kappa \frac{F_A(u)}{N^{1/2}},
\end{equation}
with
\begin{equation}
F_A(u)= \log_2^+(1/|u|) \max(\tfrac{1}{2|u|} , 2)^{1/4} + A^{-\frac{1}{2\kappa}} u^{(1-\frac{1}{\kappa})/4}\,W(u)^{1/4} \max(|u|^{\frac{1}{2\kappa}},1) .
\end{equation}

\begin{proof}[Proof of \eqref{three.sixty} and \eqref{three.sixty1}.] In view of Lemma \ref{lem-expansion-at-infinity},
\begin{equation}\label{three.sixty22}
\sum_{j=J+1}^\infty 2^{-j/2}\left| \Theta_\Delta(n_+(x,\alpha) n_-(u,\beta)\Phi^{s-(2\log 2)j}) \right|\ll 
\sum_{j=J+1}^\infty 2^{-j/2} z_{s-(2\log 2)j}(x,u)^{1/4}. 
\end{equation}
and 
\begin{equation}\label{three.sixty2221}
\sum_{j=J+1}^\infty 2^{-j/2}\left| \Theta_{\Delta_-}(n_+(x,\alpha) n_-(u,\beta)\Phi^{s}n_-(0,1)\Phi^{-(2\log 2)j}) \right|\ll 
\sum_{j=J+1}^\infty 2^{-j/2} z_{s-(2\log 2)j}(x,u)^{1/4}. 
\end{equation}
We divide the sum on the right and side of the above into $j< J+ J_0$ and $j\geq J+J_0$ with $J_0=\lceil\log_2^+(1/|u|)\rceil$. The first is bounded by (apply Lemma \ref{lem:diophantine-visit-cusp22})
\be
\begin{split}
\sum_{J+1\leq j <J+J_0} 2^{-j/2} z_{s-(2\log 2)j}(x,u)^{1/4} 
& \ll  
2^{J/2} J_0 \max(\tfrac{1}{2|u|} , 2)^{1/4} \\
& \ll N^{-1/2} \log_2^+(1/|u|) \max(\tfrac{1}{2|u|} , 2)^{1/4}.
\end{split}
\ee
In the second range
\be
\begin{split}
\sum_{j \geq J+J_0} 2^{-j/2} z_{s-(2\log 2)j}(x,u)^{1/4} 
& \ll  A^{-\frac{1}{2\kappa}} u^{(1-\frac{1}{\kappa})/4}\,W(u)^{1/4}  \sum_{j \geq J+J_0} 2^{-j/2}  \e^{-(1-\frac{1}{\kappa})(s-(2\log 2)j)/4} \\
& \ll_\kappa  N^{-1/2} A^{-\frac{1}{2\kappa}} u^{(1-\frac{1}{\kappa})/4}\,W(u)^{1/4} \max(|u|^{\frac{1}{2\kappa}},1).
\end{split}
\ee
\end{proof}

\begin{proof}[Proof of \eqref{pf-thm2-9}.]
Recall that $2^J>\frac{1}{3}\e^{s/2}$ and we can therefore write \eqref{def-Theta-chi} as 
\be\label{pf-thm2-new-5}
\begin{split}
\Theta_\chi(n_+(x,\alpha)\Phi^s)=
&\sum_{j=0}^J 2^{-\frac{j}{2}}\Theta_{\Delta}(n_+(x,\alpha)\Phi^{s-(2\log 2)j})
\\
&+\sum_{j=0}^J 2^{-\frac{j}{2}}\Theta_{\Delta_-}(n_+(x,\alpha) n_-(0,e^{s/2})\Phi^{s-(2\log 2)j}).
\end{split}
\ee
Consider the sum in the first line of \eqref{pf-thm2-new-5} first.
We have (cf. \eqref{pf-thm1-1})
\begin{equation} 
n_+(x,\alpha)\Phi^{2\log N-(2\log2)j}=\left( x+i\frac{1}{N^2 2^{-2j}},0;\ve{\alpha}{0},0\right) .
\end{equation}
Furthermore
\begin{multline}
n_+(x,\alpha)n_-(u,0)\Phi^{2\log N-(2\log2)j}\\
=\left(x+\frac{u}{N^4 2^{-4j}+u^2}+i\frac{N^2 2^{-2j}}{N^4 2^{-4j}+u^2},\arctan\!\left(\frac{u}{N^2 2^{-2}}\right);\ve{\alpha}{0},0\right),\label{pf-thm2-6}
\end{multline}
so that, for $0\leq j\leq J$,
\be
\begin{split}
&\Theta_{\Delta} (n_+(x,\alpha)n_-(u,0)\Phi^{2\log N-(2\log 2)j})\\
& =
\left(\frac{N^2 2^{-2j}}{N^4 2^{-4j}+u^2}\right)^{1/4}
\sum_{n\in\Z}\Delta_{\arctan\left(\frac{u}{N^2 2^{-2j}}\right)}\!\left(n 
\left(\frac{N^2 2^{-2j}}{N^4 2^{-4j}+u^2}\right)^{1/2}\right)\\
&
\times e\!\left(\ha n^2 
\left(x+\frac{u}{N^4 2^{-4j}+u^2}\right)+n\alpha 
\right).\label{pf-thm2-0}
\end{split}
\ee
Let us now proceed as in the proof of Theorem \ref{thm:1}.
\begin{multline}
\Theta_{\Delta} (\Gamma n_+(x,\alpha)n_-(u,0)\Phi^{2\log N-(2\log 2)j})=
\left(\frac{1}{(N2^{-j})^{1/2}}+O\!\left(\frac{u^2}{(N2^{-j})^{9/2}}\right)\right)\\
 \times \sum_{n\in\Z}\left[\Delta\!\left(n
\left(\frac{(N2^{-j})^{2}}{(N2^{-j})^4+u^2}\right)^{1/2}\right)
+\mathcal E_\Delta\!\left(\arctan\!\left(\frac{u}{(N2^{-j})^2}\right), n
\left(\frac{(N2^{-j})^{2}}{(N2^{-j})^4+u^2}\right)^{1/2}\right)
\right]\\
\times e\!\left(\ha n^2 x+n\alpha\right)\left(1+O\left(\frac{|u|\, n^2}{(N2^{-j})^4}\wedge 1\right)\right).
\end{multline}
Using the Mean Value Theorem as in the proof of \eqref{pf-thm1-5} we obtain
\be
\begin{split}
&\sum_{n\in\Z}\Delta\!\left(n 
\left(\frac{(N2^{-j})^2}{(N2^{-j})^4+u^2}\right)^{1/2}\right)e\!\left(\ha n^2x+n\alpha\right)\\
&=\sum_{n\in\Z}\Delta\!\left(\frac{n}{N2^{-j}}\right)e\!\left(\ha n^2x+n\alpha\right)+
O\!\left(\frac{u^2}{(N2^{-j})^3}\right) .\label{pf-thm2-1}
\end{split}
\ee
Analogously to  \eqref{pf-thm1-6} we have
\begin{align}
&\sum_{n\in\Z} \Delta\!\left( n
\left(\frac{(N2^{-j})^2}{(N2^{-j})^4+u^2}\right)^\ha\right) O\!\left(\frac{u n^2}{(N2^{-j})^4}\wedge 1 \right)
=O\!\left(\frac{|u|}{N2^{-j}}\right).\label{pf-thm2-2}
\end{align}
Moreover, using \eqref{statement-lem-E(phi,t)} and the fact that $\sum_{|n|>2 A}\frac{1}{1+|n/A|^2}=O(A)$, we have
\be\label{pf-thm2-3}
\begin{split}
&\left(\frac{(N2^{-j})^2}{(N2^{-j})^4+u^2}\right)^{1/4}\sum_{n\in\Z} \mathcal E_\Delta\!\left(\arctan\!\left(\frac{u}{(N2^{-j})^2}\right),n 
\left(\frac{(N2^{-j})^{2}}{(N2^{-j})^4+u^2}\right)^{1/2}\right)\\
&\ll \frac{1}{(N2^{-j})^{1/2}}\left(\sum_{|n|\ll N2^{-j}}\left(\frac{|u|}{(N2^{-j})^2}\right)^{3/4}+\sum_{|n|\gg N2^{-j}}\frac{\left(\frac{|u|}{(N2^{-j})^{2}}\right)^{2/3}}{1+\left|\frac{n}{N2^{-j}}\right|^2}\right)\\
&=O\!\left(\frac{|u|^{3/4}}{N2^{-j}}\right)+O\!\left(\frac{|u|^{2/3}}{(N2^{-j})^{5/6}}\right)\\
&= O\!\left(\frac{|u|^{2/3}}{(N2^{-j})^{5/6}}\right).
\end{split}
\ee

Now, combining \eqref{pf-thm2-0}, \eqref{pf-thm2-1}, \eqref{pf-thm2-2}, \eqref{pf-thm2-3} we obtain that 
\be\label{pf-thm2-4}
\begin{split}
&\Theta_{\Delta} (\Gamma n_+(x,\alpha)n_-(u,0)\Phi^{2\log N-(2\log 2)j})\\
&=\Theta_{\Delta} (\Gamma n_+(x,\alpha)\Phi^{2\log N-(2\log 2)j})+O\!\left(\frac{|u|^{2/3}}{(N2^{-j})^{5/6}}\right)
.
\end{split}
\ee
We can use \eqref{pf-thm2-4} for $0\leq j\leq J$ to estimate
\be\label{pf-thm2-7}
\begin{split}
&\sum_{j=0}^J2^{-j/2}\left|\Theta_\Delta( n_+(x,\alpha)\Phi^{s-(2\log 2)j})-\Theta_\Delta( n_+(x,\alpha)n_-(u,0)\Phi^{s-(2\log2)j})\right|\\
&\ll \frac{|u|^{2/3}}{N^{5/6}}\sum_{j=0}^J 2^{j/3}
\ll \frac{|u|^{2/3}}{N^{1/2}}.
\end{split}
\ee

We leave to the reader to repeat the above argument for the sum in the second line of  \eqref{pf-thm2-new-5} and show that
\be\label{pf-thm2-8}
\begin{split}
&\sum_{j=0}^J2^{-j/2}\left|\Theta_\Delta( n_+(x,\alpha)n_-(0,\e^{s/2})\Phi^{s-(2\log 2)j})-\Theta_\Delta( n_+(x,\alpha)n_-(u,\e^{s/2})\Phi^{s-(2\log2)j})\right|\\
&\ll\frac{|u|^{2/3}}{N^{1/2}}
\end{split}
\ee 
\end{proof}

\begin{proof}[Proof of \eqref{pf-thm2-10}.]
This bound follows from the Mean Value Theorem as in the proof of \eqref{pf-thm1-5}.
\end{proof}

This concludes the proof of Theorem \ref{thm:2-rephrased} (and hence of Theorem \ref{thm:2}).
\end{proof}

\subsection{Hardy and Littlewood's approximate functional equation}\label{sec:HL}

To illustrate the strength of Theorem \ref{thm:2-rephrased}, let us show how it implies the approximate functional equation \eqref{ApproxFeq}. Recall the definition \eqref{invariance-by-h1} of $\gene_1\in\Gamma$.

\begin{lem}\label{lem-prods}
For $N>0$, $x>0$,
\begin{equation}\label{prods}
\gene_1 n_+(x,\alpha)n_-(u,\beta)\Phi^{2\log N} =
n_+(x',\alpha')n_-(u',\beta')\Phi^{2\log N'} \bigg( 1 ; \vecnull, \frac18 - \frac{\alpha^2}{2x} \bigg),
\end{equation}
where
\begin{equation}
x'=-\frac1x,\qquad N'= N x,\qquad \alpha'=\frac{\alpha}{x}, \qquad 
u'= x (1+u x) ,\qquad \beta'=\alpha+\beta x.
\end{equation}
\end{lem}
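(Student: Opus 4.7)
My plan is to verify \eqref{prods} as an identity in $G=\tsltr\ltimes\Hei$ by direct computation. I will check the three layers of the decomposition of $G$ separately: the $\SL(2,\R)$ projection, the $\tsltr$ phase lift, and the Heisenberg factor. The central element $(1;\bm 0,\tfrac18-\tfrac{\alpha^2}{2x})$ on the right will appear precisely as the residual needed to balance the Heisenberg cocycle after the change of variables.

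For the $\SL(2,\R)$ projection, matrix multiplication gives
\[
\ma{0}{-1}{1}{0}\ma{1}{x}{0}{1}\ma{1}{0}{u}{1}\ma{1/N}{0}{0}{N}=\ma{-u/N}{-N}{(1+xu)/N}{xN},
\]
and substituting $x'=-1/x$, $u'=x(1+xu)$, $N'=xN$ into $\ma{1}{x'}{0}{1}\ma{1}{0}{u'}{1}\ma{1/N'}{0}{0}{N'}$ produces the same matrix. For the $\tsltr$ lift I apply the cocycle rule $(z_1,\phi_1)(z_2,\phi_2)=(g_1 z_2,\beta_1(z_2)+\phi_2)$ from \eqref{mult-tsltr}, using $\beta(z)=\arg z$ for the $\gene_1$ factor and $\beta(z)=\arg(uz+1)$ for $n_-(u,\beta)$. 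Telescoping the resulting sum at $z=i$, the LHS phase equals $\arg(xN^2+(1+xu)i)$, while the analogous RHS expansion gives $\arg(N'^2+u'i)=\arg x+\arg(xN^2+(1+xu)i)$; the hypothesis $x>0$ is used exactly here to give $\arg x=0$ so the phases agree. Positivity of $x$ also guarantees $N'=xN>0$, so that $\log N'$ is real.

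For the Heisenberg factor I iterate \eqref{mult--univ-Jacobi}. On the LHS the translation vectors accumulate to $\sve{-\beta}{\alpha+x\beta}$ and the central coordinate becomes $\tfrac18+\tha\alpha\beta$, the contribution $\tha\alpha\beta$ arising from $\tha\omega(\sve{0}{\alpha},\sma{0}{-1}{1}{x}\sve{0}{\beta})$. The RHS gives translation vector $\sve{\alpha'+x'\beta'}{\beta'}$ and central coordinate $\tha\alpha'\beta'+\tfrac18-\tfrac{\alpha^2}{2x}$. Substituting $x'=-1/x$, $\alpha'=\alpha/x$, $\beta'=\alpha+\beta x$ reduces the translation to $\sve{-\beta}{\alpha+x\beta}$ and the central coordinate to $\tfrac{\alpha^2}{2x}+\tha\alpha\beta+\tfrac18-\tfrac{\alpha^2}{2x}=\tfrac18+\tha\alpha\beta$, matching the LHS. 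The only genuine subtlety in the whole argument is the multivalued $\tsltr$ phase: one must verify that no spurious factor of $2\pi$ (an element of $Z$ in \eqref{def-Z}) accumulates as the $\arg$-cocycles are composed. This is routine since at every step the relevant arguments lie in the upper half-plane, and the final simplification only uses $x>0$ to eliminate $\arg x$.
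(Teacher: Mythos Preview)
Your proof is correct and follows essentially the same strategy as the paper: a direct verification of the identity in $G$. The paper organizes the computation slightly differently---it first right-multiplies both sides by $(n_-(u,\beta)\Phi^{2\log N})^{-1}$ to reduce to the simpler identity $\gene_1 n_+(x,\alpha)=n_+(x',\alpha')n_-(\tilde u,\tilde\beta)\Phi^{2\log\tilde N}(1;\bm 0,\tfrac18-\tfrac{\alpha^2}{2x})$ with $\tilde N=\tilde u=x$, $\tilde\beta=\alpha$, then checks the $\phi$-coordinate via the arctangent addition formula---whereas you verify the full identity layer by layer ($\SL(2,\R)$, phase, Heisenberg). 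Your ``telescoping'' of the $\arg$-cocycles is equivalent to the paper's arctangent identity.

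One small imprecision: your justification that no $2\pi$ ambiguity arises (``the relevant arguments lie in the upper half-plane'') is not literally true, since for $u<0$ the factor $1+ui/N^2$ lies in the lower half-plane. The clean way to close this is to note that $w_1 w_2=(xN^2+(1+xu)i)/N^2$ has positive real part (since $x>0$), so $\arg(w_1 w_2)\in(-\tfrac{\pi}{2},\tfrac{\pi}{2})$, while $\arg(w_1)+\arg(w_2)\in(-\tfrac{\pi}{2},\tfrac{3\pi}{2})$; the difference, being an integer multiple of $2\pi$ lying in $(-\pi,2\pi)$, must vanish. Alternatively, a continuity argument in $u$ starting from $u=0$ works. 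Either way the point is indeed routine, as you say.
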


\begin{proof}
Multiplying \eqref{prods} from the right by the inverse of $n_-(u,\beta)\Phi^{2\log N}$ yields 
\begin{equation}\label{prods1}
\gene_1 n_+(x,\alpha) =
n_+(x',\alpha')n_-(\tilde u,\tilde \beta)\Phi^{2\log \tilde N} \bigg( 1 ; \vecnull, \frac18 - \frac{\alpha^2}{2x} \bigg) ,
\end{equation}
where
\begin{equation}
\tilde N = \frac{N'}{N},\qquad \tilde u= u'-u \tilde N^2, \qquad \tilde\beta=\beta'-\beta \tilde N .
\end{equation}
Multiplying the corresponding matrices in \eqref{prods1} yields
\begin{equation}
x'=-\frac1x,\qquad \tilde N  = \tilde u= x, \qquad \alpha'=\frac{\alpha}{x}, \qquad
\tilde\beta=\alpha .
\end{equation}
To conclude, we have to check that the $\phi$-coordinates in \eqref{prods} agree. In fact, the equality
\begin{align}
\arg\!\left(\frac{N^{-2} i}{u N^{-2}i+1}+x\right)+\arg\!\left(u N^{-2} i+1\right)=\arg\!\left( x(1+ux) (x N)^{-2}i+1\right)
\end{align}
is equivalent (since $x,u,N$ are positive) to
\begin{align}
\arctan\!\left(\frac{N^2}{u+(N^4+u^2)x}\right)+\arctan\!\left(\frac{u}{N^2}\right)=\arctan\!\left(\frac{1+ux}{N^2 x}\right),
\end{align}
which can be seen to hold true using the identity $\arctan(A)+\arctan(B)=\arctan\!\left(\frac{A+B}{1-AB}\right)$.
\end{proof}

\begin{remark}\label{remark-D-kappa}
Note that in Lemma \ref{lem-prods}
\begin{equation}
x'+\frac{1}{u'} = - \bigg(x+\frac{1}{u}\bigg)^{-1}.
\end{equation}
Therefore, by Lemma \ref{lem-D(kappa)-is-Gamma-invariant}, $x+\frac{1}{u}\in\mathcal D(\kappa)$  if and only if $x'+\frac{1}{u'}\in\mathcal D(\kappa)$.
\end{remark}

\begin{cor}
For every $0<x<2$ and every $0\leq\alpha\leq 1$ the approximate functional equation \eqref{ApproxFeq} holds.
\end{cor}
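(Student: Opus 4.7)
The plan is to apply Theorem \ref{thm:2-rephrased} twice---once to expand $S_N(x,\alpha)$ in terms of $\Theta_\chi$, and once in reverse to recognize the $\gene_1$-translated value as $S_{\lfloor Nx\rfloor}(-1/x,\alpha/x)$. Lemma \ref{lem-prods} is precisely the identity that aligns the two automorphic arguments; the central factor $(1;\vecnull,\tfrac18-\tfrac{\alpha^2}{2x})$ appearing there produces, via \eqref{Jacobi-theta-sum-2}, exactly the phase $e(\tfrac18-\tfrac{\alpha^2}{2x})$ on $\Theta_\chi$. Since $e(1/8)=\sqrt i$, this phase becomes $\sqrt{i/x}\, e(-\alpha^2/(2x))$ after division by $\sqrt x$, matching the leading factor in \eqref{ApproxFeq}.

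Concretely, I would fix $x\in(0,2)$, $\alpha\in[0,1]$, $N\in\NN$, set $x'=-1/x$, $\alpha'=\alpha/x$, and choose $h=n_-(u,\beta)$ lying in a compact subset of $P^x$; by the Remark after Lemma \ref{lem-prods}, the companion $h'=n_-(u',\beta')$ with $u'=x(1+ux)$, $\beta'=\alpha+\beta x$ then lies in $P^{x'}$. Since $\Theta_\chi$ is $\Gamma$-invariant and right-multiplication by $(1;\vecnull,\zeta_0)$ multiplies $\Theta_\chi$ by $e(\zeta_0)$ (immediate from \eqref{Jacobi-theta-sum-2}), Lemma \ref{lem-prods} yields
\[
\Theta_\chi(\Gamma n_+(x,\alpha)\,h\,\Phi^{2\log N})
= e\!\left(\tfrac18-\tfrac{\alpha^2}{2x}\right)\Theta_\chi(\Gamma n_+(x',\alpha')\,h'\,\Phi^{2\log(Nx)}).
\]
Applying Theorem \ref{thm:2-rephrased} to each side (noting $\lfloor e^{s'/2}\rfloor=\lfloor Nx\rfloor$) and multiplying through by $\sqrt N$ then produces
\[
S_N(x,\alpha) = e\!\left(\tfrac18-\tfrac{\alpha^2}{2x}\right)\sqrt{\tfrac{N}{\lfloor Nx\rfloor}}\, S_{\lfloor Nx\rfloor}(x',\alpha')
+ O\!\left(E_\chi^x(h)+\sqrt{\tfrac{N}{\lfloor Nx\rfloor}}\,E_\chi^{x'}(h')\right).
\]
The identity $\sqrt{N/\lfloor Nx\rfloor}=x^{-1/2}+O\!\left((Nx)^{-1}x^{-1/2}\right)$ together with the trivial bound $|S_{\lfloor Nx\rfloor}(x',\alpha')|\leq Nx$ absorbs the discrepancy between $\sqrt{N/\lfloor Nx\rfloor}$ and $x^{-1/2}$ into an $O(1/\sqrt x)$ error, producing the desired main term $\sqrt{i/x}\, e(-\alpha^2/(2x))\,S_{\lfloor Nx\rfloor}(-1/x,\alpha/x)$.

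The hard part will be arranging uniform control on $E_\chi^x(h)$ and $E_\chi^{x'}(h')$. Since $1/\sqrt x\geq 1/\sqrt 2$ on $(0,2)$, it suffices to bound both by a constant independent of $x$. The uniform estimate \eqref{upperEchi} delivers exactly this, provided $h$ lies in $P_A^x\cap\scrK(u_0,\beta_0)$ and $h'$ in $P_{A'}^{x'}\cap\scrK(u_0^*,\beta_0^*)$ for constants independent of $x$. To arrange this, for each $x\in(0,2)$ I would choose $u$ in a bounded positive interval so that $\omega:=x+1/u$ lies in a fixed compact set $[a,b]\subset\mathcal D(A,\kappa)$ bounded away from $0$ and $\infty$; such a $u$ exists because $\mathcal D(A,\kappa)\cap[a,b]$ has positive Lebesgue measure for any small $A>0$. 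Then $\omega^{-1}=-(x'+1/u')$ also lies in $\mathcal D(A',\kappa)$ for some $A'$ depending only on $[a,b]$, by the quantitative version of Lemma \ref{lem-D(kappa)-is-Gamma-invariant} evident from its proof. Both sides of $\Theta_\chi$ are thus evaluated at points in uniformly controlled Diophantine slices, the error terms are $O(1)=O(1/\sqrt x)$, and the approximate functional equation \eqref{ApproxFeq} follows.
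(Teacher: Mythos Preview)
Your approach is correct and essentially identical to the paper's: apply Theorem~\ref{thm:2-rephrased} on both sides of the $\gene_1$-identity in Lemma~\ref{lem-prods}, then choose $h=n_-(u,\beta)$ so that the error terms are controlled uniformly via \eqref{upperEchi}. The paper's only simplification over your argument is the explicit choice $u=\tfrac{1}{\sqrt5-x}$, $\beta=0$, which pins $x+\tfrac1u=\sqrt5$ (and hence $x'+\tfrac1{u'}=-\tfrac1{\sqrt5}$) to a \emph{fixed} quadratic irrational in $\mathcal D(1)$; this makes the uniformity of the Diophantine constant and of the compactum $\scrK(u_0,\beta_0)$ immediate, whereas your generic selection from $[a,b]\cap\mathcal D(A,\kappa)$ requires the extra (routine) check you sketch. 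One phrasing slip: $\mathcal D(A,\kappa)$ contains no interval, so ``$[a,b]\subset\mathcal D(A,\kappa)$'' should read ``$\omega\in[a,b]\cap\mathcal D(A,\kappa)$''.
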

\begin{proof} Let us use the notation of Lemma  \ref{lem-prods}.
The invariance of $\Theta_\chi$ under the left multiplication by $\gene_1\in\Gamma$, see \eqref{Jacobi1}, and Lemma \ref{lem-prods} 
yield
\begin{align}
\Theta_\chi\!\left(\Gamma n_+(x,\alpha)n_-(u,\beta)\Phi^{2\log N}\right)=\sqrt{i}\;e\!\left(-\frac{\alpha^2}{2x}\right)\Theta_\chi\!\left(\Gamma n_+(x',\alpha')n_-(u',\beta')\Phi^{2\log N'}\right).
\end{align}
By applying \eqref{statement-thm2-rephrased} twice with $n_-(u,\beta)\in P^x$ and (by Remark \ref{remark-D-kappa}) $n_-(u',\beta')\in P^{x'}$ we obtain
\be
\begin{split}
&\left|S_N(x,\alpha)-\sqrt{\frac{i}{x}}\;e\!\left(-\frac{\alpha^2}{2x}\right)S_{\lfloor xN\rfloor}\!\left(-\frac{1}{x},\frac{\alpha}{x}\right) \right|\\
&\leq \left| S_N(x,\alpha)-\sqrt{N}\Theta_\chi\!\left(\Gamma n_+(x,\alpha)n_-(u,\beta)\Phi^{2\log N}\right)\right|\\
&+\left|\sqrt{iN}\;e\!\left(-\frac{\alpha^2}{2x}\right)\Theta_\chi\!\left(\Gamma n_+(x',\alpha')n_-(u',\beta')\Phi^{2\log N'}\right)-\sqrt{\frac{i}{x}}\;e\!\left(-\frac{\alpha^2}{2x}\right)S_{\lfloor xN\rfloor}\!\left(-\frac{1}{x},\frac{\alpha}{x}\right) \right|\\
&\leq E_\chi^{x}(n_-(u,\beta))+\frac{1}{\sqrt x}E_\chi^{x'}(n_-(u',\beta')) .
\end{split}
\ee
What remains to be shown is that $E_\chi^{x}(n_-(u,\beta))$ and $E_\chi^{x'}(n_-(u',\beta'))$ are uniformly bounded in $x,\alpha$ over the relevant ranges. To this end, recall that $u$ and $\beta$ are free parameters that, given $x$, we choose as
\begin{equation}
u=\frac{1}{\sqrt{5}-x},\qquad \beta=0 .
\end{equation}
Then $x+\frac1u=\sqrt5\in\mathcal D(1)$ and $u$ is bounded away from $0$ and $\infty$ for $0<x<2$, and hence, by \eqref{upperEchi},  $E_\chi^{x}(n_-(u,\beta))$ is uniformly bounded. Furthermore, with the above choice of $u$, we have
\begin{equation}
u'=\bigg(\frac1x - \frac{1}{\sqrt5}\bigg)^{-1}, \qquad \beta'=\alpha. 
\end{equation}
Thus $x'+\frac{1}{u'}=-\frac{1}{\sqrt5}\in\mathcal D(1)$ and $u'$ is bounded away from $0$ and $\infty$ for $0<x<2$. Therefore, again in view of \eqref{upperEchi}, $E_\chi^{x'}(n_-(u',\beta'))$ is uniformly bounded for $0<x<2$, $0\leq\alpha\leq 1$.
\end{proof}

\subsection{Tail asymptotic for $\Theta_\chi$}\label{subs:tail-asymptotic-Theta_chi}

For the theta function $\Theta_\chi$ we also have tail asymptotics with an explicit power saving.
\begin{theorem}\label{tail-asymptotics-for-Theta_chi}
For $R\geq 1$, 
\be\mu(\{g\in\GamG:\: |\Theta_\chi(g)|>R\})=2 R^{-6} 
\left(1+O\!\left(R^{-\frac{12}{31}}\right)\right) .
\label{statement-theorem-tail-asymptotics-for-Theta_chi}
\ee
\end{theorem}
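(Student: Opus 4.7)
The plan is to approximate $\chi=\bm{1}_{(0,1)}$ by the smooth trapezoid $T:=\chi^{(J)}$ from the dyadic decomposition of Section \ref{sec:dyadic}, apply Lemma \ref{mu(tail>R)} to $T\in\mathcal S_2(\R)$, and control the tail $\Theta_\chi-\Theta_T$ via $L^p$ moments that follow from the same lemma applied to $\Delta$ and $\Delta_-$.

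First I would identify the leading constant $\tfrac{2}{3}D(\chi)=2$. For $\phi\in(0,\pi)$, formula \eqref{def-R-tilde-k-f} gives
\begin{equation}
|\chi_\phi(w)|^6=|\sin\phi|^{-3}\Bigl|\int_0^1 e(uw'^2-zw')\,dw'\Bigr|^6,\qquad u=\tfrac{1}{2}\cot\phi,\; z=\tfrac{w}{\sin\phi}.
\end{equation}
The substitution $(\phi,w)\mapsto(u,z)$ is a bijection from $(0,\pi)\times\R$ onto $\R^2$ with Jacobian $2\sin^3\phi$, hence
\begin{equation}
D(\chi)=2\int_{\R^2}\Bigl|\int_0^1 e(uw'^2-zw')\,dw'\Bigr|^6du\,dz=2\rho_0=3,
\end{equation}
using the value $\rho_0=3/2$ recorded in the introduction.

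Setting $\varepsilon:=2^{-J}$, Lemma \ref{lem-general-trapezoid} yields $\kappa_2(T)\ll\varepsilon^{-1}$, so Lemma \ref{mu(tail>R)} with $\eta=2$ produces $\mu(|\Theta_T|>R)=\tfrac{2}{3}D(T)R^{-6}(1+O(\varepsilon^{-4}R^{-4}))$. A comparison of the inner Fresnel integrals for $T$ and $\chi$, using $\|T-\chi\|_{L^1}=O(\varepsilon)$ together with the finiteness of $\int_{\R^2}|I_\chi|^5\,du\,dz$ (checked via the van der Corput bound $|I_\chi|\ll\min(1,|u|^{-1/2},|z|^{-1})$ on the regions $|z|\lessgtr 2|u|$), gives $D(T)=D(\chi)+O(\varepsilon)$. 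For the tail, the dyadic formula \eqref{def-Theta-chi} expresses, on the full-measure set $\Gamma\backslash D$ of Theorem \ref{thm:2-rephrased},
\begin{equation}
\Theta_\chi(g)-\Theta_T(g)=\sum_{j\geq J}2^{-j/2}\Bigl[\Theta_\Delta(\Gamma g(\tilde a_{2^{2j}};\bm 0,0))+\Theta_{\Delta_-}(\Gamma g(1;\sve{0}{1},0)(\tilde a_{2^{2j}};\bm 0,0))\Bigr].
\end{equation}
Lemma \ref{mu(tail>R)} applied to $\Delta,\Delta_-\in\mathcal S_2(\R)$ shows $\|\Theta_\Delta\|_{L^p},\|\Theta_{\Delta_-}\|_{L^p}<\infty$ for every $p<6$; combining $\mu$-invariance of the right $\Phi^s$-action with the triangle inequality in $L^p$ gives $\|\Theta_\chi-\Theta_T\|_{L^p}\ll_p\varepsilon^{1/2}$, so Chebyshev yields $\mu(|\Theta_\chi-\Theta_T|>\delta)\ll_p\varepsilon^{p/2}\delta^{-p}$.

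Combining everything via the inclusions $\{|\Theta_T|>R+\delta\}\setminus\{|\Theta_\chi-\Theta_T|>\delta\}\subseteq\{|\Theta_\chi|>R\}\subseteq\{|\Theta_T|>R-\delta\}\cup\{|\Theta_\chi-\Theta_T|>\delta\}$ and writing $\varepsilon=R^{-a}$, $\delta=R^{s}$, I obtain
\begin{equation}
\mu(|\Theta_\chi|>R)=2R^{-6}\Bigl(1+O(R^{-a})+O(R^{s-1})+O(R^{4a-4})+O(R^{6-p(a/2+s)})\Bigr).
\end{equation}
Letting $p\to 6^-$, the three binding constraints $1-s=4-4a=p(a/2+s)-6=\gamma$ (with $a$ automatically over-achieved) have unique solution $a=28/31$, $s=19/31$, $\gamma=12/31$. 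The main obstacle is the endpoint $p=6$: since $\|\Theta_\Delta\|_{L^6}=\infty$ one cannot take $p=6$ directly, and must either choose $p=6-c/\log R$ and absorb the resulting logarithmic factor, or sharpen the tail estimate via a weak-$L^6$ or truncation argument so that the exponent $12/31$ is reached without loss.
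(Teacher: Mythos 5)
Your computation of the leading constant $\frac{2}{3}D(\chi)=2$ via the change of variables $u=\tfrac12\cot\phi$, $z=w\csc\phi$, and your use of Lemma \ref{mu(tail>R)} on the trapezoid $\chi^{(J)}$ with $\kappa_2(\chi^{(J)})\ll 2^{J}$, are exactly what the paper does, and the exponent bookkeeping that produces $a=28/31$, $s=19/31$, $\gamma=12/31$ is the same optimization. The genuine divergence is in how you control $\mu(|\Theta_\chi-\Theta_{\chi^{(J)}}|>\delta)$: you propose $L^p$ + Chebyshev, whereas the paper uses a union bound with geometric weights.

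This is where your argument has a real gap, which you flag but do not close. Since $\mu(|\Theta_\Delta|>R)\asymp R^{-6}$, one has $\|\Theta_\Delta\|_{L^p}<\infty$ only for $p<6$, with $\|\Theta_\Delta\|_{L^p}^p\asymp(6-p)^{-1}$. Taking $p=6-c/\log R$, as you suggest, the Chebyshev error picks up the factor $\|\Theta_\Delta\|_{L^p}^p\gg\log R$, and the exponent $6-p(a/2+s)$ differs from $-12/31$ only by $O(1/\log R)$, which contributes a bounded power of $R$; the net effect is an error $O(R^{-12/31}\log R)$, strictly weaker than the stated $O(R^{-12/31})$. So "absorbing the logarithm'' does not actually recover the theorem, and the alternative you gesture at — a weak-$L^6$ or truncation argument — is the step that would need to be executed.

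The paper's Lemmas \ref{Theta_chi-series-truncated} and \ref{lemma-union-bounds} carry out precisely that sharpening. The dyadic remainder is regrouped into blocks of size $J$, each of the form $2^{-kJ/2}\mathcal F_J(g\Phi^{-(2\log 2)kJ})$ with $\mathcal F_J=\Theta_{\chi_L^{(J)}}$, and then a union bound with weights $\delta^k$ is applied; crucially, each block is estimated via the weak-$L^6$ tail bound of Lemma \ref{mu(tail>R)} (not a moment), giving $\sum_{k\geq 1}(R\delta^k2^{kJ/2})^{-6}\ll R^{-6}\delta^{-6}2^{-3J}$, with no logarithmic loss. That is the clean substitute for your Chebyshev step with $p=6$: the weak-type bound at the endpoint survives even though the $L^6$ norm does not. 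With this replacement your optimization goes through unchanged. The rest of your write-up — the comparison $D(\chi^{(J)})=D(\chi)+O(2^{-J})$, the sandwiching inclusions, the verification that $R\pm\delta$ is in the admissible range for Lemma \ref{mu(tail>R)} — matches the paper.
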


Recall the ``trapezoidal'' functions $\chi^{(J)}_L$ and $\chi^{(J)}_R$ defined in (\ref{def-chi_L^J}, \ref{def-chi_R^J}) and $\chi^{(J)}=\chi^{(J)}_L+\chi^{(J)}_R$. The proof of Theorem \ref{tail-asymptotics-for-Theta_chi} requires the following three lemmata.
\begin{lem}\label{Theta_chi-series-truncated}
Let $J\geq 1$. 
Define
\begin{align}
\mathcal F_J(g)&:=\Theta_{\chi^{(J)}_L}(g)=\sum_{j=0}^{J-1}2^{-\frac{j}{2}}\Theta_{\Delta}(g \Phi^{-(2\log 2)j})\\
\mathcal G_J(g)&:=\Theta_{\chi^{(J)}_L(-\cdot)}(g)=\sum_{j=0}^{J-1}2^{-\frac{j}{2}}\Theta_{\Delta_-}(g \Phi^{-(2\log 2)j}).
\end{align}
Then 
\be
\Theta_{\chi}(g)=\Theta_{\chi^{(J)}}(g)+\sum_{k=1}^\infty 2^{-k \frac{J}{2}}\left(\mathcal F_J(g \Phi^{-(2\log 2)k J})+\mathcal G_J(g (1;\sve{0}{1},0)\Phi^{-(2\log 2)k J})\right)\label{statement-Theta_chi-series-truncated}
\ee
\end{lem}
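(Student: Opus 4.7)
The strategy is a simple reindexing of the two series defining $\Theta_\chi$ in \eqref{def-Theta-chi}, grouping the terms into consecutive blocks of length $J$. The key algebraic observation is that $(\tilde a_{2^{2j}};\bm 0,0)=\Phi^{-(2\log 2)j}$, so that the one-parameter subgroup property $\Phi^{-(2\log 2)(kJ+\ell)}=\Phi^{-(2\log 2)kJ}\Phi^{-(2\log 2)\ell}$ lets us factor each block cleanly.

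First I would fix $g\in G$ in the full-measure $\Gamma$-invariant subset on which the series \eqref{def-Theta-chi} converges absolutely (this is the set that will be identified via the Diophantine conditions of Section \ref{sec:diophantine}). On this set, the absolute convergence of
\begin{equation}
\Theta_\chi(g)=\sum_{j=0}^\infty 2^{-j/2}\Theta_\Delta\!\left(g\Phi^{-(2\log 2)j}\right)+\sum_{j=0}^\infty 2^{-j/2}\Theta_{\Delta_-}\!\left(g(1;\sve{0}{1},0)\Phi^{-(2\log 2)j}\right)
\end{equation}
allows arbitrary rearrangement. Writing each index uniquely as $j=kJ+\ell$ with $k\geq 0$ and $0\leq \ell\leq J-1$, and using $2^{-(kJ+\ell)/2}=2^{-kJ/2}2^{-\ell/2}$ together with the factorisation of $\Phi^{-(2\log 2)(kJ+\ell)}$, the first series becomes
\begin{equation}
\sum_{k=0}^\infty 2^{-kJ/2}\sum_{\ell=0}^{J-1}2^{-\ell/2}\Theta_\Delta\!\left(g\Phi^{-(2\log 2)kJ}\Phi^{-(2\log 2)\ell}\right)=\sum_{k=0}^\infty 2^{-kJ/2}\mathcal F_J\!\left(g\Phi^{-(2\log 2)kJ}\right),
\end{equation}
and similarly the second series becomes $\sum_{k=0}^\infty 2^{-kJ/2}\mathcal G_J\!\left(g(1;\sve{0}{1},0)\Phi^{-(2\log 2)kJ}\right)$, where the shift $(1;\sve{0}{1},0)$ simply passes through the rearrangement untouched because it is inserted before the geodesic factor.

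Finally I would isolate the $k=0$ summand. By the definitions \eqref{def-chi_L^J}, \eqref{def-chi_R^J} together with the identities $2^{j/2}\Delta(2^j w)=[R(\tilde a_{2^{2j}};\bm 0,0)\Delta](w)$ and $2^{j/2}\Delta(2^j(1-w))=[R(\tilde a_{2^{2j}};\sve{0}{1},0)\Delta_-](w)$, the $k=0$ block reads
\begin{equation}
\mathcal F_J(g)+\mathcal G_J\!\left(g(1;\sve{0}{1},0)\right)=\Theta_{\chi_L^{(J)}}(g)+\Theta_{\chi_R^{(J)}}(g)=\Theta_{\chi^{(J)}}(g),
\end{equation}
and the remaining $k\geq 1$ terms give exactly the tail appearing in \eqref{statement-Theta_chi-series-truncated}.

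There is no real obstacle here beyond bookkeeping; the only item requiring care is the justification of the rearrangement, which is legitimate precisely on the Diophantine set where \eqref{def-Theta-chi} converges absolutely (and on which both sides of \eqref{statement-Theta_chi-series-truncated} are therefore simultaneously well defined).
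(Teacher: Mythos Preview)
Your proposal is correct and follows essentially the same approach as the paper: regroup the series \eqref{def-Theta-chi} into blocks of length $J$ via $j=kJ+\ell$, use the semigroup property of $\Phi$, and identify the $k=0$ block with $\Theta_{\chi^{(J)}}(g)$ through the relation $\mathcal G_J\!\left(g(1;\sve{0}{1},0)\right)=\Theta_{\chi_R^{(J)}}(g)$. The only cosmetic difference is that the paper remarks (via Lemma~\ref{lem-general-trapezoid}) that $\mathcal F_J$ and $\mathcal G_J$ are well defined for every $g$, whereas you phrase the justification for rearrangement in terms of the Diophantine full-measure set; either viewpoint suffices.
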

\begin{proof}
Recall that, by Lemma \ref{lem-general-trapezoid},  $\chi^{(J)}_L, \chi^{(J)}_L(-\cdot)\in\mathcal S_2(\R)$,  and therefore the two theta functions above are well defined for every $g$.

The first sum in \eqref{def-Theta-chi} can be written as
\begin{align}
\sum_{j=0}^\infty 2^{-\frac{j}{2}}\Theta_{\Delta}(g \Phi^{-(2\log 2)j})&=\sum_{k=0}^{\infty}\sum_{j=k J}^{(k+1)J-1}2^{-\frac{j}{2}}\Theta_{\Delta}(g\Phi^{-(2\log2)j}).
\end{align}
and the $k$-th term in the above series is
\be
\begin{split}
\sum_{l=0}^{J-1}2^{-\frac{l+ kJ}{2}}\Theta_{\Delta}(g \Phi^{-(2\log 2)(l+k J)})&=2^{-k\frac{J}{2}}\sum_{l=0}^{J-1}2^{-\frac{l}{2}}\Theta_\Delta(g\Phi^{-(2\log 2)k J}\Phi^{-(2\log2)l})\\
&=2^{-k\frac{J}{2}}\mathcal F_J(g\Phi^{-(2\log 2)k J}).
\end{split}
\ee
Similarly, the second sum in \eqref{def-Theta-chi} reads as
\be
\begin{split}
&\sum_{j=0}^\infty 2^{-\frac{j}{2}}\Theta_{\Delta_-}(g (1;\sve{0}{1},0)\Phi^{-(2\log 2)j})\\
& =\sum_{k=0}^\infty \sum_{j=k J}^{(k+1)J-1}2^{-\frac{j}{2}} \Theta_{\Delta_-}(g (1;\sve{0}{1},0)\Phi^{-(2\log 2)j})\\
&=\mathcal G_J(g (1;\sve{0}{1},0))+\sum_{k=1}^\infty 2^{-k\frac{J}{2}} \mathcal G_J(g (1;\sve{0}{1},0)\Phi^{-(2\log 2)k J}).
\end{split}
\ee
The last thing to observe is that $\mathcal G_J (g (1;\sve{0}{1},0))=\Theta_{\chi^{(J)}_L(1-\cdot)}(g)=\Theta_{\chi^{(J)}_R}(g)$. Thus
\be
\mathcal F_J(g)+\mathcal G_J (g (1;\sve{0}{1},0)) = \Theta_{\chi^{(J)}}(g),
\ee
which concludes the proof of \eqref{statement-Theta_chi-series-truncated}.
\end{proof}

\begin{lem}\label{lemma-union-bounds}
There is a constant $K$ such that, for all $R\geq K \delta^{-1} 2^{J/2}$, $2^{-(J-1)/2}\leq\delta\leq \ha$,
\begin{align}
&\mu\{g\in\GamG:\:|\Theta_\chi(g)|>R\}- \mu\{g\in\GamG:\: |\Theta_{\chi^{(J)}}(g)|>R(1-\delta)\}
=O\!\left(\frac{1}{R^6 \delta^6 2^{3J}}\right)\label{statement-lemma-union-bounds-1},
\\
&\mu\{g\in\GamG:\: |\Theta_{\chi^{(J)}}(g)|>R(1+\delta)\}-\mu\{g\in\GamG:\:|\Theta_\chi(g)|>R\}
=O\!\left(\frac{1}{R^6 \delta^6 2^{3J}}\right). \label{statement-lemma-union-bounds-2}
\end{align}
\end{lem}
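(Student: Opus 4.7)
The plan is to reduce both inequalities to a single tail estimate for the remainder
\be
R_J(g) := \Theta_\chi(g) - \Theta_{\chi^{(J)}}(g) .
\ee
The triangle inequality yields the set inclusions
\be
\{|\Theta_\chi|>R\} \subseteq \{|\Theta_{\chi^{(J)}}|>R(1-\delta)\} \cup \{|R_J|>R\delta\} , \qquad \{|\Theta_{\chi^{(J)}}|>R(1+\delta)\} \subseteq \{|\Theta_\chi|>R\} \cup \{|R_J|>R\delta\} ,
\ee
so it suffices to prove $\mu\{|R_J|>R\delta\} \ll R^{-6}\delta^{-6} 2^{-3J}$.

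By Lemma \ref{Theta_chi-series-truncated}, $R_J(g)$ decomposes as a series indexed by $k\geq 1$, with the $k$-th term carrying the prefactor $2^{-kJ/2}$ and evaluating $\mathcal F_J$ or $\mathcal G_J$ on a right translate of $g$. I would choose geometric weights $w_k = 2^{-k}$ summing to $1$ and apply a union bound: the event $\{|R_J|>R\delta\}$ is contained in the union, over $k\geq 1$, of the events $\{|\mathcal F_J(g\Phi^{-(2\log 2)kJ})| > c R\delta\, 2^{k(J-2)/2}\}$ together with the analogous events for $\mathcal G_J$, for some absolute constant $c>0$. Since $\Phi^s$ and the element $(1;\sve{0}{1},0)$ act by right multiplication and hence preserve the Haar measure $\mu$ on $\GamG$, each translated event has the same measure as its untranslated counterpart.

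The next step is to apply Lemma \ref{mu(tail>R)} with $\eta = 2$ to $\mathcal F_J = \Theta_{\chi^{(J)}_L}$ and similarly to $\mathcal G_J$. Two uniform-in-$J$ estimates are needed: first, $\kappa_2(\chi^{(J)}_L) \ll 2^J$, which follows directly from Lemma \ref{lem-general-trapezoid}; second, $D(\chi^{(J)}_L) = O(1)$. For the latter, note that $\chi^{(J)}_L$ is bounded by $1$, supported in the fixed interval $[0,\tfrac{2}{3}]$, and has total variation at most $2$, so Lemma \ref{lemma-|f_phi(w)|<const} gives $|(\chi^{(J)}_L)_\phi(w)| = O(1)$ uniformly in $\phi, w, J$; combining with the $L^2$-isometry of $R(\tilde k_\phi)$ yields
\be
D(\chi^{(J)}_L) \ll \int_0^\pi\!\!\int_\RR |(\chi^{(J)}_L)_\phi(w)|^2 \de w\, \de\phi \ll \|\chi^{(J)}_L\|_{L^2(\RR)}^2 \ll 1 .
\ee
If the constant $K$ in the hypothesis is chosen large enough, then $c R\delta\, 2^{k(J-2)/2}$ exceeds the threshold required by Lemma \ref{mu(tail>R)} for every $k\geq 1$, the binding constraint being the $k=1$ case, which is exactly $R\delta \gg 2^{J/2}$. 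Lemma \ref{mu(tail>R)} then gives $\mu\{|\mathcal F_J| > c R\delta\, 2^{k(J-2)/2}\} \ll (R\delta)^{-6} 2^{-3k(J-2)}$, with the same bound for $\mathcal G_J$, and summing the resulting geometric series over $k\geq 1$ produces the bound $(R\delta)^{-6} 2^{-3J}$. The series converges uniformly because the joint hypothesis $2^{-(J-1)/2}\leq \delta\leq \tfrac12$ forces $J\geq 3$.

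The main obstacle is matching the two error scales: the truncation level $R\delta$ must exceed the cusp-asymptotic threshold $\asymp 2^J$ after the prefactor $2^{k(J-2)/2}$ is absorbed, and it is precisely the $k=1$ term that produces the quantitative form $R \geq K\delta^{-1}2^{J/2}$ of the hypothesis. A secondary subtlety is the uniform-in-$J$ control $D(\chi^{(J)}_L) = O(1)$: although $\chi^{(J)}_L$ becomes more singular as $J\to\infty$ (with $\kappa_2 \asymp 2^J$), its $L^2$ norm and total variation remain uniformly bounded, so the leading constant in Lemma \ref{mu(tail>R)} does not blow up with $J$.
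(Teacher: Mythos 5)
Your proof is correct and follows essentially the same route as the paper: decompose the tail $\Theta_\chi - \Theta_{\chi^{(J)}}$ via Lemma \ref{Theta_chi-series-truncated}, take a weighted union bound over $k\geq 1$, and invoke Lemma \ref{mu(tail>R)} with $\eta=2$ for each term (the paper distributes the threshold as $R(1-\delta) + R(1-\delta)\sum_{k\geq1}\delta^k$ rather than using geometric weights $2^{-k}$ applied to the remainder, but this changes nothing material). One useful addition in your write-up is the explicit verification that $D(\chi^{(J)}_L)=O(1)$ uniformly in $J$ via the uniform bound from Lemma \ref{lemma-|f_phi(w)|<const} together with unitarity of $R(\tilde k_\phi)$; the paper relies on this fact implicitly without isolating it.
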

\begin{proof}
We use the  identity $1=(1-\delta)+2\frac{(1-\delta)}{2}\sum_{k=1}^\infty\delta^k$ and 
 Lemma \ref{Theta_chi-series-truncated} for the following union bound estimate:
\begin{align}
&\mu\{g\in\GamG:\:|\Theta_\chi(g)|>R\} \nonumber \\
&\leq \mu\{g\in\GamG:\:|\Theta_{\chi^{(J)}}(g)|>R(1-\delta)\} \nonumber \\
&+\sum_{k=1}^\infty\mu\{g\in\GamG:\:|2^{-k\frac{J}{2}}\mathcal F_J (g\Phi^{-(2\log 2)k J})
|>R\tfrac{(1-\delta)}{2}\delta^k\}\label{sum-mu-F}
\\
&+\sum_{k=1}^\infty\mu\{g\in\GamG:\:|2^{-k\frac{J}{2}}\mathcal G_J (g(1;\sve{0}{1},0)\Phi^{-(2\log 2)k J})|>R\tfrac{(1-\delta)}{2}\delta^k\}. \label{sum-mu-G}
\end{align}
Let us consider the sum in \eqref{sum-mu-F}. We know by Lemma \ref{lem-general-trapezoid} that $\chi^{(J)}_L\in\mathcal S_2(\R)$ with $\kappa_2(\chi^{(J)}_L)=O(2^J)$. We choose the constant $K$  sufficiently large so that
\begin{equation}
R\tfrac{(1-\delta)}{2}\delta 2^{\frac{J}{2}} \geq K \kappa_2(\chi^{(J)}_L) ,
\end{equation}
holds uniformy in all parameters over the assumed ranges. This implies
\begin{equation}
R\tfrac{(1-\delta)}{2}\delta^k 2^{k\frac{J}{2}} \geq K \kappa_2(\chi^{(J)}_L) .
\end{equation}
for all $k$.
Hence, by Lemma \ref{mu(tail>R)}, we can write \eqref{sum-mu-F} as
\begin{align}
&\sum_{k=1}^\infty \mu\{g\in\GamG:\: |\Theta_{\chi^{(J)}_L}(g\Phi^{-(2\log2)k J})|>R\frac{(1-\delta)}{2}\delta^k 2^{k\frac{J}{2}}\}\\
&=\sum_{k=1}^\infty\frac{D(\chi^{(J)}_L)}{R^6 (1-\delta)^6\delta^{6k}2^{3kJ}} O(1)
=O\!\left(\frac{1}{R^6\delta^6 2^{3J}}\right) .\label{O()+O()}
\end{align}
The sum in \eqref{sum-mu-G} is estimated in the same way and also yields \eqref{O()+O()}. This proves \eqref{statement-lemma-union-bounds-1}. In order to get \eqref{statement-lemma-union-bounds-2} we use again Lemma \ref{Theta_chi-series-truncated} and the following union bound, yielding a lower bound:
\be
\begin{split}
&\mu\{g\in\GamG:\:|\Theta_{\chi}(g)|>R\}\\
&\geq \mu\{g\in\GamG:\:|\Theta_{\chi^{(J)}}(g)|> R(1+\delta)\}\\
&-\sum_{k=1}^\infty\mu\{g\in\GamG:\:|2^{-k\frac{J}{2}}\mathcal F_J (g\Phi^{-(2\log 2)k J})
|>R\tfrac{(1-\delta)}{2}\delta^k\}\\
&-\sum_{k=1}^\infty\mu\{g\in\GamG:\:|2^{-k\frac{J}{2}}\mathcal G_J (g(1;\sve{0}{1},0)\Phi^{-(2\log 2)k J})
|>R\tfrac{(1-\delta)}{2}\delta^k\}.
\end{split}
\ee
The last two sums are $O\!\left(\frac{1}{R^6\delta^6 2^{3J}}\right)$ as before and we obtain \eqref{statement-lemma-union-bounds-2}.
\end{proof}

\begin{lem}\label{lemma-contributions-main-term-delta-J}
There is a constant $K$ such that, for all $R\geq K 2^{J}$, $0<\delta\leq \ha$,
\begin{multline}
\mu\{g\in\GamG:\:|\Theta_{\chi^{(J)}}(g)|>R(1\pm \delta)\} \\
=\frac{2}{3}\frac{D(\chi)}{R^6}(1+O(\delta))\left(1+O\!\left(2^{4J} R^{-4}\right)\right)(1+O(2^{-J})) . \label{from-main-term}
\end{multline}
\end{lem}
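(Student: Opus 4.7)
The plan is to apply Lemma \ref{mu(tail>R)} directly to $f=\chi^{(J)}$, with $\eta=2$, and then compare the resulting constant $D(\chi^{(J)})$ with $D(\chi)$. By Lemma \ref{lem-general-trapezoid} applied to the trapezoid \eqref{trapezoid-T}, $\chi^{(J)}\in\mathcal{S}_2(\R)$ and $\kappa_2(\chi^{(J)})\ll 2^J$. Choosing the constant $K$ sufficiently large and using $\delta\leq\tfrac12$ ensures that the hypothesis $R(1\pm\delta)\geq K_2\kappa_2(\chi^{(J)})$ of Lemma \ref{mu(tail>R)} holds. That lemma, applied with $f=\chi^{(J)}$ and with $R$ replaced by $R(1\pm\delta)$, gives
\begin{equation*}
\mu\{g\in\GamG:|\Theta_{\chi^{(J)}}(g)|>R(1\pm\delta)\}=\tfrac{2}{3}\,D(\chi^{(J)})\,R^{-6}(1\pm\delta)^{-6}\bigl(1+O(2^{4J}R^{-4})\bigr).
\end{equation*}
A Taylor expansion $(1\pm\delta)^{-6}=1+O(\delta)$ accounts for the first error factor in the stated formula, and matches the $\kappa_2(\chi^{(J)})^{2\eta}R^{-2\eta}=O(2^{4J}R^{-4})$ factor with $\eta=2$.

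It then remains to prove $D(\chi^{(J)})=D(\chi)\bigl(1+O(2^{-J})\bigr)$, which is where the substantive work lies. Set $r_J:=\chi-\chi^{(J)}$: this function is supported on the two corner intervals $[0,\tfrac{1}{3\cdot 2^{J-1}}]\cup[1-\tfrac{1}{3\cdot 2^{J-1}},1]$ with values in $[0,1]$, so $\|r_J\|_{L^1(\R)}\ll 2^{-J}$ and $\|r_J\|_{L^2(\R)}^2\ll 2^{-J}$. The pointwise inequality $\bigl||a|^6-|b|^6\bigr|\leq 3\max(|a|,|b|)^5\,|a-b|$, applied with $a=\chi_\phi(w)$, $b=\chi^{(J)}_\phi(w)$, combined with H\"older's inequality in $(\phi,w)\in[0,\pi]\times\R$ with exponents $6$ and $6/5$, yields
\begin{equation*}
|D(\chi)-D(\chi^{(J)})|\leq 3\bigl(D(\chi)+D(\chi^{(J)})\bigr)^{5/6}D(r_J)^{1/6}.
\end{equation*}
Since $D(\chi)$ and $D(\chi^{(J)})$ are finite and uniformly bounded in $J$ (as will be apparent from the bounds below), the task reduces to establishing the sharp decay $D(r_J)\ll 2^{-6J}$.

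The principal obstacle is precisely this sharp bound on $D(r_J)$. The approach is to split the $\phi$-integral at a threshold $|\sin\phi|=c$. In the non-cuspidal region $|\sin\phi|\geq c$, unitarity of $R(\tilde k_\phi)$ on $L^2(\R)$ gives $\|(r_J)_\phi\|_{L^2}=\|r_J\|_{L^2}\ll 2^{-J/2}$, while the integral representation \eqref{def-R-tilde-k-f} gives $\|(r_J)_\phi\|_{L^\infty}\ll|\sin\phi|^{-1/2}\|r_J\|_{L^1}\ll|\sin\phi|^{-1/2}2^{-J}$, whence $\int|(r_J)_\phi|^6\,dw\ll|\sin\phi|^{-2}2^{-5J}$. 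For $|\sin\phi|<c$, one returns to the dyadic decomposition $r_J=\sum_{j\geq J}(\Delta(2^j\,\cdot)+\Delta(2^j(1-\cdot)))$, transferring each dyadic piece to its natural Iwasawa frame via the flow $\Phi^{-(2\log 2)j}$ so that Lemma \ref{lem-E_Delta(phi,t)} becomes applicable, and then uses the geometric decay of the dyadic weights. Balancing the two regions in $c$ and summing the dyadic contributions produces the required bound $D(r_J)\ll 2^{-6J}$, thereby completing the proof. The delicate part is the cuspidal estimate, which must exploit the dyadic structure of $r_J$ rather than its crude $L^1/L^2$ size alone; this is the only step in the argument that is not a direct invocation of the lemmata already established.
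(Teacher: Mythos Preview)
Your first paragraph—applying Lemma~\ref{mu(tail>R)} with $f=\chi^{(J)}$ and $\eta=2$, then expanding $(1\pm\delta)^{-6}=1+O(\delta)$—is correct and is precisely what the paper does.

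The gap is the claim $D(r_J)\ll 2^{-6J}$, which is false. Each of the two pieces of $r_J$ is the rescaling by factor $2^J$ of a single fixed compactly supported profile, and the change of variables $(\phi,w)\mapsto(\tfrac12\cot\phi,\,w\csc\phi)$ used later in the paper to evaluate $D(\chi)$ (see \eqref{oscillatory-integral}) gives the scaling law $D(g(\lambda\,\cdot))=\lambda^{-3}D(g)$; hence $D(r_J)\asymp 2^{-3J}$. You can see the obstruction directly in your own splitting: at $\phi=0$ one has $(r_J)_0=r_J$ with $\int|r_J|^6\,dw\asymp 2^{-J}$, and this persists throughout $|\phi|\lesssim 4^{-J}$; balanced against your non-cuspidal bound $|\sin\phi|^{-2}2^{-5J}$, the optimal threshold is $c\asymp 4^{-J}$ and the total is $\asymp 2^{-3J}$. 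Feeding the correct $D(r_J)\asymp 2^{-3J}$ into your H\"older step yields only $|D(\chi)-D(\chi^{(J)})|\ll 2^{-J/2}$, weaker than the $O(2^{-J})$ in the statement.

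The paper's route is shorter and avoids $D(r_J)$ altogether: it factors $x^6-y^6=(x^2-y^2)(x^4+x^2y^2+y^4)$, uses the uniform pointwise bound $|\chi_\phi|,\,|\chi^{(J)}_\phi|=O(1)$ from Lemma~\ref{lemma-|f_phi(w)|<const} to absorb the last factor, and then invokes unitarity of $R(\tilde k_\phi)$ to reduce to $\bigl|\,\|\chi\|_{L^2}^2-\|\chi^{(J)}\|_{L^2}^2\bigr|$, which is $O(2^{-J})$ since $\chi$ and $\chi^{(J)}$ differ only on a set of measure $O(2^{-J})$. No dyadic decomposition, no $\phi$-splitting, no appeal to Lemma~\ref{lem-E_Delta(phi,t)}.
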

\begin{proof}
Lemma \ref{mu(tail>R)} implies that
\be
\begin{split}
\mu(\{g\in\GamG:\:|\Theta_{\chi^{(J)}}(g)|>R(1\pm\delta)\})&=\frac{2}{3}\frac{D(\chi^{(J)})}{R^6(1\pm\delta)^6}\left(1+O\!\left(2^{4J} R^{-4}\right)\right)\\
&=\frac{2}{3}\frac{D(\chi^{(J)})}{R^6}(1+O(\delta))\left(1+O\!\left(2^{4J} R^{-4}\right)\right).
\end{split}
\ee
To complete the proof we need to show that
\be\label{claim-D(chi^J)=D(chi)(1+error)}
D(\chi^{(J)})=D(\chi)+O(2^{-J}) .
\ee
The identity $x^6-y^6=(x^2-y^2)(x^4+x^2y^2+y^4)$, and the fact that $\chi_\phi^{(J)}$, $\chi_\phi$ are uniformly bounded (Lemma \ref{lemma-|f_phi(w)|<const}), imply
\be
\begin{split}
\left| D(\chi) - D(\chi^{(J)}) \right| & \ll 
\left| \int_{0}^\pi\int_{-\infty}^\infty |\chi_\phi(w)|^2\de w\,\de \phi
- \int_{0}^\pi\int_{-\infty}^\infty |\chi^{(J)}_\phi(w)|^2\de w\,\de \phi \right| \\
& = \left| \int_{0}^\pi \| \chi_\phi \|_{L^2}^2 \de\phi - \int_{0}^\pi \| \chi_\phi^{(J)} \|_{L^2}^2 \de\phi \right| \\
& = \pi \left| \| \chi \|_{L^2}^2 - \| \chi^{(J)} \|_{L^2}^2 \right| ,
\end{split}
\ee
by unitarity of the Shale-Weil representation. The triangle inequality yields
\begin{equation}
\left| \| \chi \|_{L^2}^2 - \| \chi^{(J)} \|_{L^2}^2 \right| \leq \left( \| \chi \|_{L^2} + \| \chi^{(J)} \|_{L^2} \right) \| \chi  -  \chi^{(J)} \|_{L^2} = O(2^{-J}),
\end{equation}
and \eqref{claim-D(chi^J)=D(chi)(1+error)} follows.
\end{proof}

\begin{proof}[Proof of Theorem \ref{tail-asymptotics-for-Theta_chi}]
Combining Lemma \ref{lemma-union-bounds} and Lemma \ref{lemma-contributions-main-term-delta-J} we have
\begin{multline}
\mu(\{g\in\GamG:\: |\Theta_\chi(g)|>R\}) \\
=\frac{2}{3}\frac{D(\chi)}{R^6}(1+O(\delta))\left(1+O\!\left(2^{4J} R^{-4}\right)\right)(1+O(2^{-J}))
+O\!\left(R^{-6}\delta^{-6} 2^{-3J}\right),
\end{multline}
provided
\be\label{condi}
R\geq K \delta^{-1} 2^{J/2}, \qquad 2^{-(J-1)/2}\leq\delta\leq \ha .
\ee
We set $J=\alpha \log_2 R$, $\delta=K R^{-\beta}$,
with positive constants $\alpha,\beta, K>0$ satisfying $\alpha\geq 2\beta$, $\alpha+2\beta\leq 2$, $K\geq \sqrt 2$.
Then \eqref{condi} holds for all $R\geq 1$, and
\begin{equation}
\mu(\{g\in\GamG:\: |\Theta_\chi(g)|>R\} )
=\frac{2}{3}\frac{D(\chi)}{R^6} + \frac{1}{R^6} \, O(R^{-\beta}+R^{4\alpha-4}+R^{-\alpha}+R^{6\beta-3\alpha}).
\end{equation}
We need to work out the minimum of $\beta,4-4\alpha,\alpha,3\alpha-6\beta$ under the given constraints. If $\beta\leq 3\alpha-6\beta$, the largest possible value for $\beta$ is $\frac37 \alpha$. Optimizing $\alpha$ yields $\alpha=\frac{28}{31}$ and thus the error term is $O(R^{-\frac{12}{31}})$. If on the other hand $\beta\geq 3\alpha-6\beta$, we maximise $3\alpha-6\beta$ by choosing the smallest permitted $\beta$, which is again $\frac37 \alpha$. This yields $3\alpha-6\beta=\frac37\alpha$ and we proceed as before to obtain the error $O(R^{-\frac{12}{31}})$.

Finally, we need to prove 
\begin{align}
D(\chi)=\int_{-\infty}^{\infty}\int_0^\pi|\chi_\phi(w)|^6\de \phi\de w=3,\label{D(chi)-to-show}
\end{align}
see Figure \ref{fig:integrand}.
\begin{figure}[hb!]
\begin{center}
\hspace{-1cm}\includegraphics[width=13.5cm]{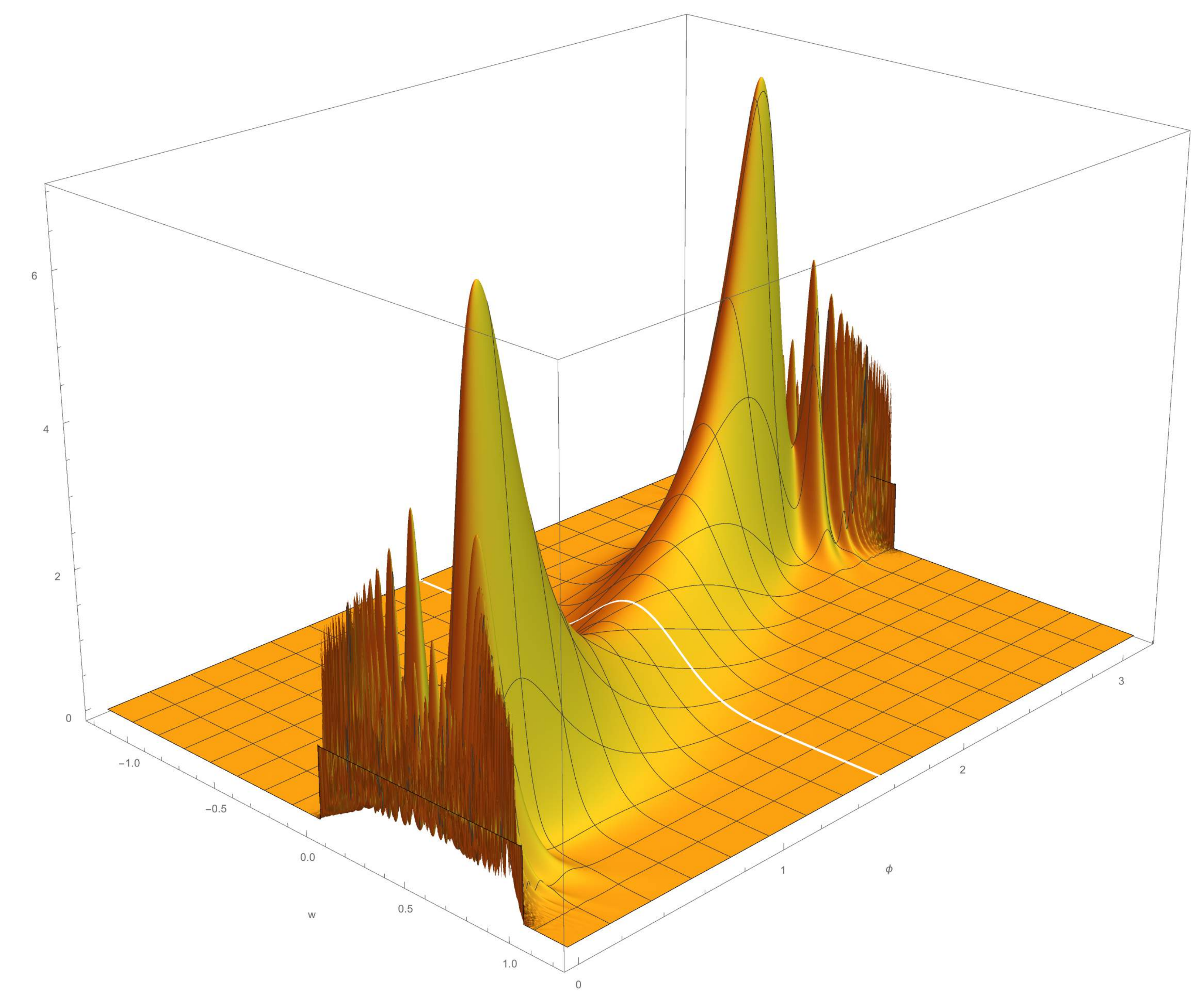}
\includegraphics[width=10.6cm]{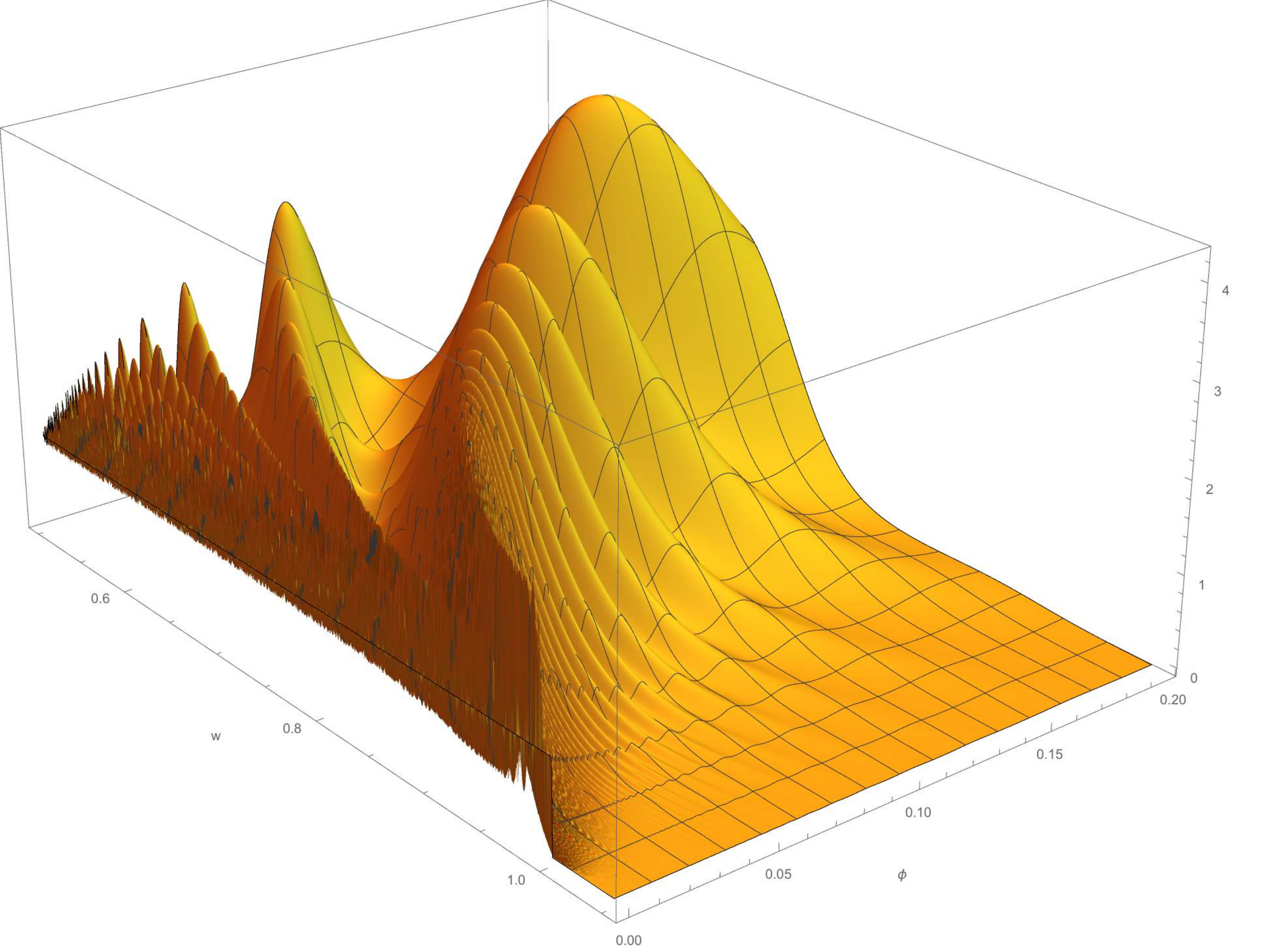}
\caption{The function $(w,\phi)\mapsto|\chi_\phi(w)|^6$. The first plot corresponds to $(w,\phi)\in[-\tfrac{3}{2},\tfrac{3}{2}]\times[0,\pi]$. The second plot corresponds to $(w,\phi)\in[0.5,1.05]\times[0,\tfrac{\pi}{16}]$ and illustrates the highly oscillatory nature of the integrand in  \eqref{D(chi)-to-show}.}\label{fig:integrand}
\end{center}
\end{figure}
Recall \eqref{def-R-tilde-k-f}. Using the change of variables $z=w \csc\phi$ we obtain
\be\begin{split}
D(\chi)&=\int_{0}^\pi\int_{-\infty}^\infty\frac{1}{|\sin \phi|^3}\left|\int_0^1 e\!\left(\ha w'^2\cot\phi-ww'\csc\phi\right)dw'\right|^6 dw d\phi\\
&=\int_{0}^\pi\int_{-\infty}^\infty\frac{1}{|\sin \phi|^2}\left|\int_0^1 e\!\left(\ha w'^2\cot\phi-w'z\right)dw'\right|^6 dz d\phi.
\end{split}
\ee
Now the change of variables $u=\ha\cot\phi$ yields
\begin{align}
D(\chi)&=2\int_{-\infty}^\infty\int_{-\infty}^\infty \left|\int_0^1 e(u w'^2-zw') dw'\right|^6 dzdu=3,\label{oscillatory-integral}
\end{align}
cf. \cite{Rogovskaya}. This concludes the proof of \eqref{statement-theorem-tail-asymptotics-for-Theta_chi}.
\end{proof}

\subsection{Uniform tail bound for $\Theta_\chi$}\label{subs:unif-tail-bound}
The goal of this section is to obtain a result, similar to Theorem \ref{tail-asymptotics-for-Theta_chi}, for the tail distribution of $|\Theta_\chi(x+i y,0;\vecxi,\zeta)|$ uniform in all variables. Namely the following 

\begin{prop}\label{lem-uniform-tail-estimate-chi}
Let $\lambda$ be a Borel probability measure on $\RR$ which is absolutely continuous with respect to Lebesgue measure. Then
\begin{align}
\lambda(\{x\in\R :\: |\Theta_\chi(x+i y,0;\vecxi,\zeta)|>R\})\ll \frac{1}{(1+R)^4},
\end{align}
uniformly in $y\leq 1$,  $R>0$, and $(\vecxi,\zeta)\in\Hei$.
\end{prop}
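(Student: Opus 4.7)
The strategy is to establish a pointwise bound for $|\Theta_\chi|$ in terms of the cuspidal excursion height of $z=x+iy$ in the modular surface (uniform in $\vecxi,\zeta$), and then to control the $\lambda$-measure of large excursions via a Farey-fan decomposition combined with absolute continuity of $\lambda$.

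\emph{Step 1 (Reduction to a height function).} For $z=x+iy$, let $Y(z):=\sup_{M\in\sltz}\Im(Mz)$. The plan is to show that there exists a constant $C$ such that
\begin{equation*}
|\Theta_\chi(x+iy,0;\vecxi,\zeta)|\leq C\,Y(z)^{1/4}
\end{equation*}
uniformly in $y\leq 1$, $\vecxi\in\R^2$, $\zeta\in\R$. Since $\Theta_\chi$ is $\Gamma$-invariant on its domain of absolute convergence (Theorem \ref{thm:2-rephrased}), there exists $\gamma\in\Gamma$ projecting to the $\sltz$-element that realises $Y(z)$, and $\gamma g$ lies in $\mathcal{F}_\Gamma$ with new $y$-coordinate equal to $Y(z)$. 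The trouble is that the $\phi$-coordinate after translation is generally non-zero, so the finite-sum representation of Section \ref{subsection-Jacobi-theta-functions-universal-Jacobi-group} no longer applies. Instead, one inserts the dyadic decomposition \eqref{def-Theta-chi} and applies Lemma \ref{lem-expansion-at-infinity} to each $\Theta_\Delta$ (since $\Delta\in\mathcal{S}_2$ by Lemma \ref{lem-general-trapezoid}), using the decay of $\Delta_\phi(w)$ in $|w|$ to make the series over dyadic scales convergent with total contribution of order $Y(z)^{1/4}$.

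\emph{Step 2 (Farey tail for $Y$).} On the horizontal slice $\Im z=y\leq 1$, the set $\{x\in\R:Y(x+iy)>T\}$ decomposes as a union of intervals $I_{c,d}$ centred at $-d/c$ of length $\tfrac{2}{|c|}\sqrt{y/T-c^2y^2}$, with coprime $(c,d)$ satisfying $|c|\leq 1/\sqrt{yT}$, as in the proof of Lemma \ref{lem:diophantine-visit-cusp}. Summing over $(c,d)$ one gets $|\{Y>T\}\cap I|\ll |I|T^{-1}$ on each bounded interval $I\subset\RR$, uniformly in $y\leq 1$.

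\emph{Step 3 (Transfer to $\lambda$).} For absolutely continuous $\lambda$ with density $h\in L^1(\R)$, I will deduce
\begin{equation*}
\lambda\{x:Y(x+iy)>T\}\ll_\lambda T^{-1},
\end{equation*}
uniformly in $y\leq 1$. This follows by decomposing $h = h_K+h^K$ with $h_K:=h\cdot\bm{1}_{|h|\leq K}$ bounded and $\|h^K\|_1\to 0$ as $K\to\infty$; the bounded piece yields the Lebesgue bound of Step 2 multiplied by $\|h_K\|_\infty|\supp(h_K)|$, while the $\|h^K\|_1$ error is absorbed into the $\lambda$-dependent constant.

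\emph{Step 4 (Conclusion).} Combining Steps 1 and 3,
\begin{equation*}
\lambda\{|\Theta_\chi(x+iy,0;\vecxi,\zeta)|>R\}\leq\lambda\{Y(x+iy)>(R/C)^4\}\ll_\lambda R^{-4},
\end{equation*}
and the trivial bound $\lambda(\cdot)\leq 1$ for $R\leq 1$ upgrades this to $\ll (1+R)^{-4}$.

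\emph{Main obstacle.} Step 1 is the delicate point: a naive application of Lemma \ref{lem-expansion-at-infinity} to each dyadic term produces logarithmic factors when the translated $\xi_2'$-coordinate is close to an integer (so that $\Delta_{\phi'}$ is evaluated away from its decaying regime). These must be absorbed by exploiting the additional cancellation coming from the interaction of the $\phi$- and $\vecxi$-coordinates under the dyadic shifts $\Phi^{-(2\log 2)j}$, in order to obtain a clean $Y^{1/4}$ bound that converts the $T^{-1}$ tail of Step 3 into the desired $R^{-4}$ tail without logarithmic loss.
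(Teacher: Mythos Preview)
Your Step 1 is where the plan breaks down, and the paper's argument is organized precisely to avoid it. After translating by $\gamma\in\Gamma$ into $\mathcal F_\Gamma$ the $\phi$-coordinate is generically nonzero, so the dyadic series \eqref{def-Theta-chi} is genuinely infinite. Bounding term-by-term via Lemma \ref{lem-expansion-at-infinity} and the height function gives
\[
|\Theta_\chi(g)|\le\sum_{j\ge0}2^{-j/2}\bigl|\Theta_\Delta(g\Phi^{-(2\log 2)j})\bigr|+\cdots\;\ll\;\sum_{j\ge0}2^{-j/2}\,Y\!\bigl(g\Phi^{-(2\log 2)j}\bigr)^{1/4},
\]
and since $Y(g\Phi^{-s})$ can be as large as $Y(g)\,\e^{s}$ (attained whenever the backward geodesic endpoint is rational, a case that must be allowed for a bound uniform in all parameters), each summand is only $\ll Y(g)^{1/4}$ with \emph{no decay} in $j$; the series diverges. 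There is no cancellation to exploit here---you have already taken absolute values---so the ``additional cancellation'' you appeal to in the Main obstacle is not available at this level of the argument. Whether or not the pointwise inequality $|\Theta_\chi|\ll Y^{1/4}$ is ultimately true, your sketch does not establish it, and any logarithmic loss would degrade the final exponent below $4$.

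The paper sidesteps the question by reversing the order of operations. It uses the dyadic decomposition \emph{before} any $\Gamma$-translation, exploiting the hypothesis $\phi=0$: at $\phi=0$ the series \eqref{def-Theta-chi} for $\Theta_\chi(x+iy,0;\vecxi,\zeta)$ terminates (up to a bounded remainder) after $J\asymp\log_2 y^{-1/2}$ terms. One then applies a union bound at the level of \emph{measure} rather than pointwise: with weights $\delta_j\asymp j^{-2}$ summing to $\tfrac12$,
\[
\lambda\{|\Theta_\chi|>R\}\;\le\;2\sum_{j=0}^{J}\lambda\bigl\{|\Theta_\Delta(\,\cdot\,\Phi^{-(2\log 2)j})|>2^{j/2}\delta_j R\bigr\},
\]
and now each event concerns $\Theta_\Delta$ with $\Delta\in\scrS_2(\R)$, for which the height comparison (Lemma \ref{lem-compare-|Theta_f|-and-H(z)}) \emph{does} hold, so your Steps 2--3 (packaged as Lemma \ref{lem-uniform-tail-estimate}) give each term $\ll(2^{j/2}\delta_j R)^{-4}$. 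The sum $\sum_j 2^{-2j}j^8$ converges, delivering a clean $R^{-4}$. The idea missing from your plan is precisely this reversal: union-bound first, and invoke the height comparison only on the smooth pieces $\Theta_\Delta$. (A secondary issue: your Step 3 does not actually produce a uniform $T^{-1}$ rate from absolute continuity alone---the tail $\|h^K\|_1$ is independent of $T$---but this is orthogonal to the main gap.)
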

This proposition will be used in Section \ref{sec:tightness} to show that the finite dimensional limiting distributions for $X_N(t)$ are tight.
The proof of Proposition \ref{lem-uniform-tail-estimate-chi} requires three lemmata.
First, let us define $\hat\Gamma=\PSL(2,\ZZ)$ and $\Gamma_\infty=\{\sma{1}{m}{0}{1}:\: m\in\Z\}$. 
For $z\in\h$ and $\gamma\in\Gamma_\infty\backslash \hat\Gamma$ set $y_\gamma=\Im(\gamma z)$ and define
\begin{align}
H(z)&=\sum_{\gamma\in\Gamma_\infty\backslash \hat\Gamma} y_\gamma^{1/4}\chi_{[\frac{1}{2},\infty)} (y_\gamma^{1/4})\label{def-H(z)} .
\end{align}
We note that this sum has a bounded number of terms, and $H(\gamma z)=H(z)$. $H$ may thus be viewed as a function on $\hat\Gamma\backslash\h$. Since $\GamG$ is a fibre bundle over the modular surface $\hat\Gamma\backslash\h$ (recall Section \ref{sec:JTF}), we may furthermore identify $H$ with a $\Gamma$-invariant function on $G$ by setting $H(g) := H(z)$ with $g=(x+iy,\phi;\vecxi,\zeta)$.

\begin{lem}\label{lem-compare-|Theta_f|-and-H(z)}
Let $f\in\mathcal S_\eta(\R)$, $\eta>1$. Then, for every $g\in G$, we have
\be
|\Theta_f(g)|\leq C_1 H(g),
\ee
where $C_1=\kappa_0(f)+C_1' \kappa_\eta(f)$ and $C_1'>0$ is some absolute constant.
\end{lem}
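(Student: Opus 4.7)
The plan is to exploit the $\Gamma$-invariance of $\Theta_f$ to reduce to the case where the $z$-coordinate of $g$ has maximal imaginary part under $\hat\Gamma$, and then apply Lemma \ref{lem-expansion-at-infinity} at that reduced point.

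First, I would note that the natural projection $\Gamma\to\hat\Gamma=\PSL(2,\ZZ)$ is surjective: the generators $\gene_1,\gene_2$ from Section \ref{sec:JTF} project to representatives of $S=\sma{0}{-1}{1}{0}$ and $T=\sma{1}{1}{0}{1}$, which generate $\hat\Gamma$. Hence, for any $\gamma_0\in\hat\Gamma$ there exists $\tilde\gamma_0\in\Gamma$ whose action on $G=(\h\times\RR)\ltimes\Hei$ induces $z\mapsto\gamma_0 z$ on the $\h$-factor. Write $g=(z,\phi;\vecxi,\zeta)$ and choose $\gamma_0\in\hat\Gamma$ so that $\gamma_0 z\in\scrF_{\SL(2,\ZZ)}$; then $y_0:=\Im(\gamma_0 z)\geq \sqrt3/2$ and in particular $y_0^{1/4}\geq(\sqrt3/2)^{1/4}>\tfrac12$, so the coset of $\gamma_0$ in $\Gamma_\infty\backslash\hat\Gamma$ contributes the term $y_0^{1/4}$ to the sum \eqref{def-H(z)}. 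Consequently
\[
H(g)\geq y_0^{1/4}.
\]

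Next, using $\Theta_f(g)=\Theta_f(\tilde\gamma_0 g)$ and Lemma \ref{lem-expansion-at-infinity} (which applies since $y_0\geq\tfrac12$), together with the trivial bound $|f_\phi(w)|\leq\kappa_0(f)$, I would write
\[
|\Theta_f(g)|\;\leq\; \kappa_0(f)\,y_0^{1/4}+C_\eta\kappa_\eta(f)\,y_0^{-(2\eta-1)/4}.
\]
Since $y_0\geq\sqrt3/2$ and $\eta>1$, one has
\[
y_0^{-(2\eta-1)/4}=y_0^{1/4}\cdot y_0^{-\eta/2}\;\leq\;(\sqrt3/2)^{-\eta/2}\,y_0^{1/4}.
\]
Combining these and using $y_0^{1/4}\leq H(g)$ gives
\[
|\Theta_f(g)|\;\leq\;\bigl(\kappa_0(f)+C_\eta(\sqrt3/2)^{-\eta/2}\kappa_\eta(f)\bigr)H(g),
\]
which is the claimed bound with $C_1'=C_\eta(\sqrt3/2)^{-\eta/2}$ (one could absorb the mild $\eta$-dependence into an absolute constant if desired, at the price of worsening the numerical value).

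The only genuine subtlety is step one, the compatibility of the $\Gamma$-action on $G$ with the $\hat\Gamma$-action on $\h$; everything else is a one-line application of Lemma \ref{lem-expansion-at-infinity}. Since the surjectivity of $\Gamma\to\hat\Gamma$ is already implicit in Section \ref{sec:JTF} (the image of $\Gamma$ on $\h$ is exactly the theta group's image, which contains $\hat\Gamma$ via $\gene_1,\gene_2$), this obstacle is minor and the proof is essentially immediate.
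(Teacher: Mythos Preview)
Your proof is correct and follows essentially the same route as the paper's: reduce via $\Gamma$-invariance to a representative with $y\geq\sqrt3/2$, apply Lemma \ref{lem-expansion-at-infinity} there, and compare with the identity-coset term $y^{1/4}$ in $H$. Your explicit verification that $\Gamma$ surjects onto $\hat\Gamma$ and your remark on the $\eta$-dependence of $C_1'$ (which is implicitly present in the paper's argument as well) are minor clarifications of an otherwise identical strategy.
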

\begin{proof}
Lemma \ref{lem-expansion-at-infinity} implies that $|\Theta_f(x+iy,\phi;\vecxi,\zeta)|\leq C_1 y^\frac{1}{4}$ uniformly in all variables for $y\geq \frac{1}{2}$, and thus uniformly for all points in the fundamental domain $\mathcal F_{\Gamma}$. By definition, $H(x+iy)$ is a sum of positive terms and $y^{1/4}$ is one of them if $y\geq\frac{1}{2}$. Hence 
 $y^{\frac{1}{4}}\leq H(x+iy)$.
\end{proof}
The following lemma estimates how much of the closed horocycle $\{x+iy:\:-\frac{1}{2}\leq x\leq \frac{1}{2}\}$ is above a certain height in the cusp. 
\begin{lem}\label{lem-sum-r}
Let $R\geq 1$ and $r(y)=\bm{1}_{\{y^{1/4}>R\}}$. Then, for every $y\leq 1$, 
\be
\int_0^1\sum_{\gamma\in\Gamma_\infty\backslash \hat\Gamma} r(y_\gamma)\de x\leq 2 R^{-4}
\ee
\end{lem}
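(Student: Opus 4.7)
The plan is to parametrize the coset space $\Gamma_\infty\backslash\hat\Gamma$ by coprime pairs $(c,d)$ and to exploit the explicit formula $\Im(\gamma z)=y/((cx+d)^2+c^2y^2)$. The identity coset ($c=0$) contributes nothing, since $y\leq 1\leq R^4$ forces $r(y)=0$. For $c\geq 1$, the condition $y_\gamma>R^4$ becomes
\begin{equation*}
(cx+d)^2+c^2y^2 < y/R^4,
\end{equation*}
which in particular requires $c<1/(R^2\sqrt{y})$, and, when this holds, forces $|cx+d|<\epsilon_c:=\sqrt{y/R^4-c^2y^2}$.

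Next, for each fixed $c\geq 1$, I will upper-bound the sum over $d$ coprime to $c$ by the sum over all $d\in\Z$. Substituting $u=cx+d$ and using the fact that the intervals $[d,c+d]$ ($d\in\Z$) cover $\R$ exactly $c$ times,
\begin{equation*}
\int_0^1\sum_{d\in\Z}\mathbf{1}_{\{|cx+d|<\epsilon_c\}}\,dx = \frac{1}{c}\sum_{d\in\Z}\int_d^{c+d}\mathbf{1}_{\{|u|<\epsilon_c\}}\,du = 2\epsilon_c.
\end{equation*}
Summing this bound over $c$ with $C:=1/(R^2\sqrt{y})$ and using the integral comparison $\sum_{c=1}^{\lfloor C\rfloor}\sqrt{C^2-c^2}\leq \int_0^C\sqrt{C^2-u^2}\,du=\pi C^2/4$, I obtain
\begin{equation*}
\int_0^1\sum_{\gamma\in\Gamma_\infty\backslash\hat\Gamma}r(y_\gamma)\,dx \leq 2y\sum_{c=1}^{\lfloor C\rfloor}\sqrt{C^2-c^2}\leq \frac{\pi y C^2}{2}=\frac{\pi}{2R^4}\leq 2R^{-4}.
\end{equation*}

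The steps are essentially routine; the only places requiring care are (a) ensuring that dropping the coprimality condition is a legitimate upper bound (it is, since we sum nonnegative quantities), and (b) handling the boundary case $c=0$ separately, since the unipotent coset is the unique coset not parametrized by a coprime pair with $c\geq 1$. The integral comparison step gives $\pi/2\approx 1.57$, comfortably below the target constant $2$, so no extra optimization is needed.
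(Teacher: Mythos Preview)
Your proof is correct and follows essentially the same approach as the paper: parametrize $\Gamma_\infty\backslash\hat\Gamma$ by coprime bottom rows $(c,d)$, discard the $c=0$ coset since $y\leq 1\leq R^4$, unfold the $d$-sum into an integral over $\RR$, and drop coprimality. The only difference is at the final step: the paper uses the cruder bound $\epsilon_c\leq \sqrt{y}/R^2$ and counts at most $C=1/(R^2\sqrt{y})$ admissible values of $c$, landing exactly on the constant $2$, whereas your integral comparison $\sum_{c}\sqrt{C^2-c^2}\leq \pi C^2/4$ yields the sharper constant $\pi/2$.
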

\begin{proof}
For $z=x+i y$ we have $\Im\!\left(\sma{a}{b}{c}{d} z\right)=\frac{y}{|cz+d|^2}$.
Thus, writing $d=d'+mc$ with $1\leq d'\leq d-1$, we get
\be
\begin{split}
\sum_{\gamma\in\Gamma_\infty\backslash \hat\Gamma} r(y_\gamma)&= r(y)+\sum_{\footnotesize{\ba{c} (c,d)=1\\ c>0\\ d\in \Z \ea}}r\!\left(\frac{y}{|cz+d|^2}\right)\\
&=r(y)+\sum_{c=1}^\infty\sum_{\footnotesize{\ba{c}d'\bmod c\\ (c,d')=1\ea}}\sum_{m\in\Z}r\!\left(\frac{y}{c^2\left|z+\frac{d'}{c}+m\right|^2}\right).
\end{split}
\ee
So
\begin{align}
\int_0^1\sum_{\gamma\in\Gamma_\infty\backslash\hat\Gamma}r(y_\gamma)
\de x=r(y)+\sum_{c=1}^\infty\sum_{\footnotesize{\ba{c} d'\bmod c\\ (c,d')=1\ea}}\int_{-\infty}^\infty  r\!\left(\frac{y}{c^2|x+i y|^2}\right)\de x.\label{pf-lem-sum-r-1}
\end{align}
A change of variables allows us to write the last integral as
\begin{align}
\int_{-\infty}^\infty  r\!\left(\frac{y}{c^2|x+i y|^2}\right)\de x=y\int_{-\infty}^\infty r\!\left(\frac{1}{c^2 y (x^2+1)}\right)\de x=y\tilde r(c^2 y),
\end{align}
where 
\begin{align}
\tilde r(t)&=\int_{-\infty}^\infty  r\!\left(\frac{1}{t (x^2+1)}\right)\de x=\int_{\{x\in \R:\:t(x^2+1)<R^{-4}\}}\de x=\begin{cases}2\sqrt{\frac{1}{R^4 t}-1}&\mbox{if $R^{-4}>t$,}\\ 0&\mbox{otherwise.}\end{cases}
\end{align}
Now \eqref{pf-lem-sum-r-1} equals 
\begin{align}
&r(y)+2y\sum_{\footnotesize{\ba{c}c\geq 1\\ c^2 y<R^{-4}\ea}}  \sum_{\footnotesize{\ba{c} d'\bmod c\\ (c,d')=1\ea}}\sqrt{\frac{1}{R^{4}c^2 y}-1}.\label{pf-lem-sum-r-2}
\end{align}
Since  $y\leq 1$
we have $r(y)=0$ and
\eqref{pf-lem-sum-r-2} is
\begin{align}
&\leq 
2 y^{\ha}R^{-2}\sum_{\footnotesize{\ba{c}c\geq 1\\ c^2 y<R^{-4}\ea}}  \sum_{\footnotesize{\ba{c} d'\bmod c\\ (c,d')=1\ea}}\frac{1}{c}\\
&\leq 
2 y^{\ha}R^{-2}\sum_{\footnotesize{\ba{c}c\geq 1\\ c^2 y<R^{-4}\ea}} 1=
2 y^{\ha}R^{-2}\left\lfloor \sqrt{\frac{1}{y r^4}}\right\rfloor\leq y^{\ha}R^{-2}\sqrt{\frac{1}{yR^4}}=
2 R^{-4}.
\end{align}
\end{proof}

\begin{lem}\label{lem-uniform-tail-estimate}
Let $f\in \mathcal S_\eta(\R)$, $\eta>1$, and $\lambda$ as in Proposition \ref{lem-uniform-tail-estimate-chi}.
\begin{align}
\lambda(\{x\in\R :\:|\Theta_f(x+i y,\phi;\vecxi,\zeta)|>R\})\ll (1+R)^{-4}\label{statement-lemma-unif-tail-estimate},
\end{align}
uniformly in $y\leq 1$,  $R>0$, and all $\phi,\vecxi,\zeta$.
\end{lem}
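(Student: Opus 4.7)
My plan is to combine the pointwise cusp bound of Lemma \ref{lem-compare-|Theta_f|-and-H(z)} with the horocyclic excursion estimate of Lemma \ref{lem-sum-r}, and to transfer the resulting Lebesgue bound into a $\lambda$-bound using absolute continuity. Since $\lambda$ is a probability measure, the assertion is trivial for bounded $R$, so I assume $R$ large.

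By Lemma \ref{lem-compare-|Theta_f|-and-H(z)}, $|\Theta_f(x+iy,\phi;\vecxi,\zeta)| \leq C_1(f)\,H(x+iy)$ uniformly in $\phi,\vecxi,\zeta$, and hence it suffices to prove $\lambda(\{x\in\R : H(x+iy) > R'\}) \ll (R')^{-4}$ uniformly in $y\leq 1$, with $R' := R/C_1$. The crucial observation is that for $R'$ sufficiently large the event $\{H(z) > R'\}$ collapses to a single deep cusp excursion: picking $M\in\hat\Gamma$ with $z_0 := M z \in \mathcal F_{\hat\Gamma}$ and setting $T := \Im z_0$, if $T \geq 16$ then for any coset representative $\gamma$ other than $M$, writing $\gamma M^{-1} = \sma{a}{b}{c}{d}$ with $c\neq 0$, one has
\[
y_\gamma(z) = \Im(\gamma M^{-1} z_0) = \frac{T}{|c z_0 + d|^2} \leq \frac{1}{c^2 T} < \frac{1}{16},
\]
so every term in \eqref{def-H(z)} except $y_M^{1/4} = T^{1/4}$ is killed by the indicator; if instead $T<16$ then $H(z) = H(z_0)$ is bounded by a constant depending only on $\hat\Gamma$, since $\mathcal F_{\hat\Gamma} \cap \{\Im z \leq 16\}$ is compact. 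Thus, for $R'$ larger than some absolute threshold,
\[
\{x\in\R : H(x+iy) > R'\} = \{x\in\R : \exists \gamma \in \Gamma_\infty\backslash\hat\Gamma,\; y_\gamma(x+iy)^{1/4} > R'\}.
\]

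Applying Lemma \ref{lem-sum-r} with $r(y) := \mathbf{1}[y^{1/4} > R']$ then yields the uniform Lebesgue estimate
\[
\bigl|\{x \in [0,1] : H(x+iy) > R'\}\bigr| \leq \int_0^1 \sum_{\gamma\in\Gamma_\infty\backslash\hat\Gamma} r(y_\gamma(x+iy))\, dx \leq 2(R')^{-4}
\]
uniformly in $y\leq 1$. To pass to $\lambda$, I would write $d\lambda = \rho\, dx$ and use the $1$-periodicity of $H$ in $x$ to replace $\rho$ by the periodized density $\tilde\rho(x) := \sum_{n\in\Z}\rho(x+n) \in L^1([0,1])$, giving $\lambda(\{H(x+iy) > R'\}) = \int_0^1 \tilde\rho(x) \mathbf{1}\{H(x+iy) > R'\}\, dx$.

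The main obstacle will be this last step. If $\tilde\rho \in L^\infty$, the bound $\lambda(\{H > R'\}) \leq \|\tilde\rho\|_\infty \cdot 2(R')^{-4}$ is immediate; for a general $L^1$ density one must control the possible concentration of $\rho$ on the cusp set---a disjoint union of short intervals around rationals $-d/c$ of total Lebesgue length $\ll (R')^{-4}$. The natural remedy is to truncate $\tilde\rho$ at a level $M = M(R)$ and balance the bounded-part contribution $M(R')^{-4}$ against the absolutely continuous tail $\int_{\{\tilde\rho > M\}} \tilde\rho\, dx$, exploiting that the implicit constant in $\ll$ may depend on $\lambda$ to absorb this trade-off and preserve the stated rate $(1+R)^{-4}$.
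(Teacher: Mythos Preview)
Your overall route is the paper's: dominate $|\Theta_f|$ by $C_1 H$ via Lemma~\ref{lem-compare-|Theta_f|-and-H(z)}, exploit the $1$-periodicity of $H$ to work on $\R/\Z$, and feed into Lemma~\ref{lem-sum-r}. Your explicit ``single deep excursion'' reduction is a slightly more careful justification of the same inequality the paper uses when it replaces $\mathbf 1\{H>R/C_1\}$ by $\sum_\gamma \mathbf 1\{y_\gamma^{1/4}>R/C_1\}$; up to that point you are reproducing the paper's argument.

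The gap is exactly where you flag it, and your proposed fix does not close it. The truncation bound
\[
\lambda_\Z(A)\le M\,|A|+\int_{\{\tilde\rho>M\}}\tilde\rho\,\de x
\]
cannot deliver $\ll (R')^{-4}$: to make the first term $\ll (R')^{-4}$ you must keep $M=O_\lambda(1)$, but then the tail integral is a fixed positive constant whenever $\tilde\rho\notin L^\infty$, and no $\lambda$-dependent implied constant turns a positive constant into something $\ll (R')^{-4}$ as $R'\to\infty$. In fact the stated $R^{-4}$ rate genuinely fails for unbounded densities: take $\rho(x)=c|x|^{-1/2}$ on $[-\tfrac12,\tfrac12]$, $f$ Gaussian, $\phi=0$, $\vecxi=\bm 0$, $y=R^{-8}$; the cusp interval around $x=0$ (from the coset of $\sma{0}{-1}{1}{0}$) on which $|\Theta_f|>R$ has length $\asymp R^{-6}$, and its $\lambda$-measure is $\asymp R^{-3}$, not $O(R^{-4})$. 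The paper compresses this passage to the single line $\int\mathbf 1_{\{H>R/C_1\}}\lambda_\Z(\de x)\ll\int\mathbf 1_{\{H>R/C_1\}}\de x$; you have correctly isolated the point where an extra hypothesis---a bounded periodized density $\tilde\rho\in L^\infty$---is what actually yields the claimed rate.
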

\begin{proof}
Lemma \ref{lem-compare-|Theta_f|-and-H(z)} yields
\begin{equation}
\int_{\R}\bm 1_{\{|\Theta_f(x+i y,\phi;\vecxi,\zeta)|>R\}}\lambda(\de x) 
\leq\int_{\R}\bm 1_{\{H(x+ i y)>\frac{R}{C_1}\}} \lambda(\de x).
\end{equation}
Since $H(z+1)=H(z)$, we have
\begin{equation}
\int_{\R}\bm 1_{\{H(x+ i y)>\frac{R}{C_1}\}} \lambda(\de x) =
\int_{\R/\Z}\bm 1_{\{H(x+ i y)>\frac{R}{C_1}\}} \lambda_\Z(\de x)
\end{equation}
where $\lambda_\Z$ is the push forward under the map $\R\to\R/\Z$, $x\mapsto z+\Z$. Since $\lambda$ is absolutely continuous, so is $\lambda_\Z$ (with respect to Lebesgue measure on $\R/\Z$) and we have
\begin{equation}
\int_{\R/\Z}\bm 1_{\{H(x+ i y)>\frac{R}{C_1}\}} \lambda_\Z(\de x) \ll
\int_{\R/\Z}\bm 1_{\{H(x+ i y)>\frac{R}{C_1}\}} \de x .
\end{equation}
Now, for $R\geq C_1$ we have
\begin{equation}
\int_{\R/\Z}\bm 1_{\{H(x+ i y)>\frac{R}{C_1}\}} \de x =
\sum_{\gamma\in\Gamma_\infty\backslash\hat\Gamma} \bm 1_{\left\{y_\gamma^{1/4}>\frac{R}{C_1}\right\}}\de x \leq 2 C_1^4 R^{-4}
\end{equation}
in view of Lemma \ref{lem-sum-r}. As the left hand side of \eqref{statement-lemma-unif-tail-estimate} is bounded trivially by 1 for all $R<C_1$, the proof is complete.
\end{proof}

\begin{proof}[Proof of Proposition \ref{lem-uniform-tail-estimate-chi}]
Write 
\begin{align}
\Theta_\chi(x+iy,0;\vecxi,\zeta)=
&\sum_{j=0}^J 2^{-\frac{j}{2}}\Theta_{\Delta}((x+iy,0;\vecxi,\zeta)\Phi^{-(2\log 2)j})\label{union-bounds-for-tightness-3}\\
&+\sum_{j=0}^J 2^{-\frac{j}{2}}\Theta_{\Delta_-}((x+iy,0;\vecxi,\zeta)(1;\sve{0}{1},0)\Phi^{-(2\log 2)j})\label{union-bounds-for-tightness-4}
\end{align}
where  $J=\lceil \log_2 y^{-1/2}\rceil$.
Set
\begin{equation}
\delta_j :=\frac{3}{2\pi^2}\frac{1}{j^2} \qquad(1\leq j\leq J), \qquad
\delta_0 :=\frac{1}{2}-\sum_{j=1}^J \delta_j .
\end{equation}
Notice that $\tfrac{1}{4}<\delta_0\leq \frac{1}{2}-\frac{3}{2\pi^2}$ and 
$2\sum_{j=0}^J \delta_j
=1$. 
In order to handle the two sums \eqref{union-bounds-for-tightness-3} and \eqref{union-bounds-for-tightness-4} we use a union bound as in the proof of Lemma \ref{lemma-union-bounds} and apply Lemma \ref{lem-uniform-tail-estimate}. We obtain
\be\label{union-bounds-for-tightness-5}
\begin{split}
&\lambda(\{x\in\R:\:|\Theta_\chi(x+i y,0;\vecxi,\zeta)|>R\})\\
&\leq \sum_{j=0}^{\lceil \log y^{-1}\rceil}\lambda(\{x\in\R:\: |\Theta_{\Delta}((x+iy,0;\vecxi,\zeta)\Phi^{-(2\log 2)j})|>2^{\frac{j}{2}}\delta_j R \})\\
&+\sum_{j=0}^{\lceil \log y^{-1}\rceil}\lambda(\{x\in\R:\: |\Theta_{\Delta_-}((x+iy,0;\vecxi,\zeta)(1;\sve{0}{1},0)\Phi^{-(2\log 2)j})|>2^{\frac{j}{2}}\delta_j R \})\\
&\ll 
\sum_{j=0}^{\lceil \log y^{-1}\rceil} \frac{1 
}{(1+ 2^{\frac{j}{2}} \delta_j R)^4}\leq R^{-4} \sum_{j=0}^{\lceil \log y^{-1}\rceil}2^{-2j}\delta_j^{-4}\ll R^{-4} 
\end{split}
\ee
uniformly in $y$. This bound is useful for $R\geq 1$. For $R<1$ we use the trivial bound $\lambda(\{\ldots\})\leq 1$.
\end{proof}

\section{Limit theorems}\label{sec:limit-theorems}

We now apply the findings of the previous two main sections to prove the invariance principle for theta sums, Theorem \ref{thm-1}. Following the strategy of \cite{Marklof-1999}, we first establish that the random process $X_N(t)$ converges in finite-dimensional distribution to $X(t)$ (Section \ref{section:convergence-fdd}). The proof exploits equidistribution of translated horocycle segments on $\GamG$, which we derive in Section \ref{sec:equi} using theorems of Ratner \cite{Ratner} and Shah \cite{Shah}. Tightness of the sequence of processes $X_N(t)$ is obtained in Section \ref{sec:tightness}; it follows from the uniform tail bound for $\Theta_\chi$ (Section \ref{subs:unif-tail-bound}). Convergence in finite-dimensional distribution and tightness yield the invariance principle for $X_N(t)$. The limiting process $X(t)$ has a convenient geometric interpretation in terms of a random geodesic in $\GamG$ (Section \ref{sec:the-limiting-process}), 
from which the invariance and continuity properties stated in Theorem \ref{thm-2} can be extracted; cf.\ Sections \ref{section: invariance properties}--\ref{section:Holder-continuity}.

\subsection{Equidistribution theorems}\label{sec:equi}
Recall that $G=\widetilde\SL(2,\RR)\ltimes \HH(\RR)$, and define $\ASL(2,\ZZ)=\sltr\ltimes \R^2$ as in \cite{Marklof2003ann}. Throughout this section, we assume that  $\Gamma$ is a lattice in $G$ so that $\Gamma_0=\varphi(\Gamma)$ 
 is commensurable with $\ASL(2,\ZZ)$. 
An example of such $\Gamma$ is the one defined in \eqref{def-Gamma}. We have the following equidistribution theorems.
Let $\Phi^t$ and $\Psi^u$ be as in section \ref{section:geodesic+horocycle+flows}.

\begin{theorem}\label{thmJEQ}
Let $F:\GamG\to\RR$ be bounded continuous and $\lambda$ a Borel probability measure on $\RR$ which is absolutely contiuous with respect to Lebesgue measure. For any $M\in\widetilde\SL(2,\RR)$, $\vecxi\in\RR^2\setminus \QQ^2$ and $\zeta\in\RR$ we have
\begin{equation}
\lim_{t\to\infty} \int_{\RR} F( \Gamma (M; \vecxi, \zeta) \Psi^u \Phi^t ) \de\lambda(u) = \frac{1}{\mu(\GamG)} \int_{\GamG} F(g) \de\mu(g) .
\end{equation}
\end{theorem}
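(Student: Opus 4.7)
The plan is to invoke Shah's equidistribution theorem for polynomial trajectories on finite-volume homogeneous spaces \cite{Shah}, which in turn rests on Ratner's measure classification \cite{Ratner}. For each fixed $t\geq 0$ the curve $u\mapsto\Gamma(M;\vecxi,\zeta)\Psi^u\Phi^t$ is polynomial in $u$ because $\Psi^u$ is unipotent in $G$, and as $t\to\infty$ this curve becomes longer and longer in the unstable direction of the geodesic flow. Shah's theorem says that any weak-$*$ subsequential limit of the push-forward probability measures on $\GamG$ is algebraic, i.e.\ supported on a finite-volume orbit of a closed subgroup of $G$; the essential content is therefore to rule out proper orbit closures by means of the Diophantine hypothesis $\vecxi\notin\QQ^2$.

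First I would reduce to $F\in C_c^\infty(\GamG)$ by a standard cut-off and density argument (and afterwards extend to bounded continuous $F$ via dominated convergence, together with a uniform upper bound on the mass escaping into the cusp). Next I would pass to the quotient by the central subgroup $Z$ defined in \eqref{def-Z}: since $Z$ is normal in $G$ with $G/Z\simeq\ASL(2,\RR)$, and the image $\Gamma_0$ of $\Gamma$ is commensurable with $\ASL(2,\ZZ)$, one obtains a Fourier decomposition $F=\sum_m F_m$ with respect to the compact torus $Z/(\Gamma\cap Z)$. The zero mode $F_0$ descends to a function on $\Gamma_0\backslash\ASL(2,\RR)$, and the equidistribution of $(\tilde M;\vecxi)\Psi^u\Phi^t$ in that affine setting---precisely the Ratner--Shah theorem applied to $\ASL(2,\ZZ)\backslash\ASL(2,\RR)$, formulated in this form in \cite{Marklof2003ann}---yields the desired limit for $F_0$, using exactly the hypothesis $\vecxi\notin\QQ^2$.

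The main obstacle will be the nonzero Fourier modes. For each $m\neq 0$ the integrand $F_m(\Gamma(M;\vecxi,\zeta)\Psi^u\Phi^t)$ carries an oscillating factor of the form $e^{2\pi i m\,\zeta_t(u)}$, where $\zeta_t(u)$ is a quadratic polynomial in $u$ whose leading coefficient grows exponentially in $t$. A non-stationary phase bound, combined with integration by parts exploiting absolute continuity of $\lambda$ (so that its Radon--Nikodym derivative can be approximated by smooth compactly supported densities) and the rapid decay of the Fourier coefficients of $F$ coming from smoothness, should give $o(1)$ contributions that are summable over $m\neq 0$. An alternative, perhaps more robust route is to apply Shah's theorem directly on $\GamG$, bypassing the Fourier decomposition: it then suffices to verify that the polynomial trajectory is not contained in a proper closed invariant set, which again reduces to the Diophantine condition $\vecxi\notin\QQ^2$. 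Either approach will conclude the proof.
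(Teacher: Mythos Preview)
Your primary approach via Fourier decomposition along the central $\zeta$-direction has a concrete gap: the phase you hope to exploit is simply not there. A direct computation with the multiplication law \eqref{mult--univ-Jacobi} shows
\[
(M;\vecxi,\zeta)\,\Psi^u\,\Phi^t=(M\tilde n_u\tilde a_{\e^{-t}};\vecxi,\zeta),
\]
because $\Psi^u$ and $\Phi^t$ have trivial Heisenberg components. Thus both $\vecxi$ and $\zeta$ are \emph{constant} in $u$ and $t$; there is no ``quadratic polynomial $\zeta_t(u)$ with exponentially growing leading coefficient'', and hence no non-stationary phase to integrate away the nonzero modes $F_m$. The Fourier decomposition merely restates the problem: for each $m\neq 0$ you must still prove equidistribution of the horocycle in the bundle $\GamG$, which is exactly the original question.

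What the paper does is your ``alternative route'', but with the decisive algebraic step spelled out. One first uses the known equidistribution of the projected curve in $\Gamma_0\backslash\ASL(2,\RR)$ (this is where $\vecxi\notin\QQ^2$ enters, via \cite{Elkies,Marklof10}) to obtain tightness of $(\nu_t)$ and to force any weak limit $\nu$ to project onto the full base. Any such $\nu$ is $\Psi^\RR$-invariant by the standard argument, so Ratner's classification gives ergodic components supported on closed orbits $\Gamma\backslash\Gamma H$ with $H$ closed, connected, and containing $\Psi^\RR$. The key point---absent from your sketch---is that the only closed connected $H\leq G$ with $\varphi(H)=G/Z$ is $H=G$: if the Lie algebra $\mathfrak h$ surjects onto $\mathfrak g/\mathfrak z$, the $\mathfrak{sl}_2$-action on the Heisenberg piece together with $[X,Y]=T$ forces $\mathfrak z\subset\mathfrak h$ and hence $\mathfrak h=\mathfrak g$. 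This rules out all proper intermediate orbit closures and gives $\nu=\mu$. Your proposal gestures at this step (``reduces to $\vecxi\notin\QQ^2$'') without identifying it; the Diophantine condition alone controls only the base, and the passage to the full group is genuinely a statement about the subgroup structure of $G$.
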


\begin{proof}
By a standard approximation argument it is sufficient to show that, for every $-\infty<a<b<\infty$, the curve
\begin{equation}
C_t = \{ \Gamma (M; \vecxi, \zeta) \Psi^u \Phi^t ) : u\in [a,b] \}
\end{equation}
becomes equdistributed in $\GamG$ with respect to $\mu$. In other words, the uniform probability measure $\nu_t$ on the orbit $C_t$ converges weakly to $\mu$ (appropriately normalised). We know from \cite{Elkies} (cf.\ also \cite{Marklof10}) that for $\vecxi\in\RR^2\setminus \QQ^2$ the projection $\varphi(C_t)$ becomes equidistributed in $\Gamma_0\backslash\ASL(2,\RR)$, where $\Gamma_0=\varphi(\Gamma)$ is commensurable with $\ASL(2,\ZZ)$. Since $\GamG$ is a compact extension of $\Gamma_0\backslash\ASL(2,\RR)$, this imples that (a) the sequence $(\nu_t)_t$ is tight and (b) the support of any possible weak limit $\nu$ projects to $\Gamma_0\backslash\ASL(2,\RR)$. Again a classic argument (cf.\ \cite{Elkies,Shah}) shows that $\nu$ is invariant under the right action of $\Psi^u$. Therefore, by Ratner's measure classification argument \cite{Ratner}, every ergodic component of $\nu$ is supported on the orbit $\Gamma\backslash\Gamma H$ for some closed connected subgroup $H\leq G$. By (b) we know that $\Gamma\backslash\Gamma H$ must project to $\Gamma_0\backslash\ASL(2,\RR)$ and hence $\varphi(H)=G/Z$. The only subgroup $H\leq G$ which satisfies $\varphi(H)=G/Z$ is, however, $H=G$ and hence every ergodic component of $\nu$ equals $\mu$ for any possible weak limit, i.e., $\nu=\mu$ (up to normalisation). Therefore the limit is unique, which in turn implies that every subsequence in $(\nu_t)_t$ converges.
\end{proof}

\begin{theorem}\label{thmJEQ2}
Let $F:\RR\times\GamG\to\RR$ be bounded continuous and $\lambda$ a Borel probability measure on $\RR$ which is absolutely continuous with respect to Lebesgue measure. Let $F_t:\RR\times\GamG\to\RR$ be a family of uniformly bounded, continuous functions so that $F_t\to F$, uniformly on compacta. For any $M\in\widetilde\SL(2,\RR)$, $\vecxi\in\RR^2\setminus \QQ^2$ and $\zeta\in\RR$ we have
\begin{equation}
\lim_{t\to\infty} \int_{\RR} F_t( u, \Gamma (M; \vecxi, \zeta) \Psi^u \Phi^t ) \de\lambda(u) = \frac{1}{\mu(\GamG)}  \int_{\RR\times\GamG} F(u,g) \de\lambda(u) \de\mu(g) .
\end{equation}
\end{theorem}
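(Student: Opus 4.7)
The plan is a three-step approximation argument that reduces everything to Theorem \ref{thmJEQ}. Write $g_t(u):=\Gamma(M;\vecxi,\zeta)\Psi^u\Phi^t$ and let $\mathcal{M}$ be a uniform bound for $|F_t|$ and $|F|$. First I fix $\epsilon>0$ and choose (i) a compact $K\subset\RR$ with $\lambda(\RR\setminus K)<\epsilon$, possible since $\lambda$ is a probability measure, and (ii) a continuous, compactly supported $\eta:\GamG\to[0,1]$ with $\mu(\GamG)^{-1}\int_{\GamG}(1-\eta)\,\de\mu<\epsilon$, possible since $\mu(\GamG)<\infty$. Applying Theorem \ref{thmJEQ} to the bounded continuous function $1-\eta$ gives, for $t$ large,
\[
\int_\RR (1-\eta)(g_t(u))\,\de\lambda(u) < 2\epsilon,
\]
which bounds the $\lambda$-measure of $\{u: g_t(u)\notin\supp\eta\}$. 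On the compact set $K\times\supp\eta$, the uniform convergence $F_t\to F$ yields $\sup|F_t-F|<\epsilon$ for $t$ large; splitting $\int_\RR (F_t-F)(u,g_t(u))\,\de\lambda(u)$ according to whether $(u,g_t(u))$ lies in $K\times\supp\eta$, and using uniform boundedness to control the contribution from the complement, gives $|I_t-J_t|\ll\mathcal{M}\epsilon$, where $I_t$ is the integral on the left-hand side of the claim and $J_t$ is the analogous integral with $F$ in place of $F_t$.

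Next, with the same cutoffs $\chi_K\in C_c(\RR)$ (equal to $1$ on $K$) and $\eta$, I set $\tilde F(u,g):=F(u,g)\chi_K(u)\eta(g)$. The same estimates, applied both to the orbit integrals and to the Haar-measure integral defining $I:=\mu(\GamG)^{-1}\iint F\,\de\lambda\,\de\mu$, show $|J_t-\tilde J_t|+|I-\tilde I|\ll\mathcal{M}\epsilon$, where the tildes denote the corresponding quantities with $\tilde F$ in place of $F$. Since $\tilde F$ is continuous and supported in the compact set $\supp\chi_K\times\supp\eta$, Stone--Weierstrass produces a uniform approximation $\tilde F(u,g)\approx\sum_{j=1}^n\phi_j(u)\psi_j(g)$ with $\phi_j\in C_c(\RR)$, $\psi_j\in C_c(\GamG)$, and uniform error at most $\epsilon$.

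For each such product $\phi_j(u)\psi_j(g)$, I decompose $\phi_j=\phi_j^+-\phi_j^-$ and set $c_{j,\pm}:=\int\phi_j^\pm\,\de\lambda$. When $c_{j,\pm}>0$, the normalized measure $\nu_{j,\pm}:=c_{j,\pm}^{-1}\phi_j^\pm\lambda$ is a Borel probability measure absolutely continuous with respect to Lebesgue, so Theorem \ref{thmJEQ} applies to $\psi_j$ and $\nu_{j,\pm}$ and yields $\int_\RR\psi_j(g_t(u))\,\de\nu_{j,\pm}(u)\to\mu(\GamG)^{-1}\int_{\GamG}\psi_j\,\de\mu$. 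Multiplying by $c_{j,\pm}$, subtracting, and summing over $j$ produces convergence of the product-function approximation to the expected limit; combining this with the two preceding approximation steps yields $|I_t-I|\ll\mathcal{M}\epsilon$ for $t$ large, and since $\epsilon$ is arbitrary, $I_t\to I$.

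The only real obstacle is the non-compactness of $\GamG$, which blocks a direct Stone--Weierstrass approximation of $F$ on $\RR\times\GamG$ by products. Theorem \ref{thmJEQ} itself, applied to the cusp cutoff $1-\eta$, precisely controls the $\lambda$-measure of orbit excursions into the cusp and so legitimises the reduction to a compactly supported $\tilde F$.
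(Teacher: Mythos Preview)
Your argument is correct and is precisely the standard approximation argument the paper invokes (the paper itself only cites \cite[Theorem 5.3]{Marklof10} without writing out the details). The three steps---using Theorem \ref{thmJEQ} on a cusp cutoff to control orbit excursions and pass from $F_t$ to a compactly supported $\tilde F$, Stone--Weierstrass approximation by finite sums of products, and reweighting $\lambda$ by the nonnegative factors $\phi_j^\pm$ to feed each term back into Theorem \ref{thmJEQ}---are exactly what that standard argument consists of.
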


\begin{proof}
This follows from Theorem \ref{thmJEQ} by a standard argument, see \cite[Theorem 5.3]{Marklof10}.
\end{proof}

\begin{cor}\label{cor-thmJEQ}
Let $F$, $F_t$ and $\lambda$ be as in Theorem \ref{thmJEQ2}. For any $(\alpha,\beta)\in\RR^2\setminus \QQ^2$ and $\zeta,\gamma\in\RR$ we have
\begin{equation}
\lim_{t\to\infty} \int_{\RR} F_t\!\left( u, \Gamma\left(1 ; \sve{\alpha + u \beta}{0}, \zeta+u \gamma\right)\Psi^u \Phi^t\right) \de\lambda(u) = \frac{1}{\mu(\GamG)} \int_{\RR\times\GamG} F(u,g) \de\lambda(u) \de\mu(g) .
\end{equation}
\end{cor}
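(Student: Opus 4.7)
The plan is to recast the orbit $\Gamma(1;\sve{\alpha+u\beta}{0},\zeta+u\gamma)\Psi^u\Phi^t$ as a translate of the orbit of a \emph{different} one-parameter unipotent subgroup of $G$, and then run the same Ratner--Shah argument that underlies the proof of Theorem \ref{thmJEQ}.

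First I would verify the algebraic identity
\begin{equation*}
(1;\sve{\alpha+u\beta}{0},\zeta+u\gamma)\Psi^u \;=\; (1;\sve{\alpha}{0},\zeta)\,\rho^u,\qquad \rho^u:=(1;\sve{u\beta}{0},u\gamma)\Psi^u,
\end{equation*}
which is immediate from the group law \eqref{mult--univ-Jacobi} once one uses that $\omega$ vanishes on pairs of vectors in the $\xi_1$-axis and that $n_u\sve{*}{0}=\sve{*}{0}$. The same two observations show that $\{\rho^u\}_{u\in\RR}$ is a one-parameter subgroup of $G$ whose image in $\widetilde{\SL}(2,\RR)$ is the unipotent $\{n_u\}_{u\in\RR}$; in particular $\rho$ is itself one-parameter unipotent.

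With this rewriting, the integrand in the corollary equals $F_t(u,\Gamma(1;\sve{\alpha}{0},\zeta)\rho^u\Phi^t)$, which has exactly the form appearing in Theorem \ref{thmJEQ2} but with the horocycle $\Psi^u$ replaced by the unipotent $\rho^u$. The proof of Theorem \ref{thmJEQ} then adapts mutatis mutandis: one establishes tightness of the orbit measures $\nu_t$ supported on $\{\Gamma(1;\sve{\alpha}{0},\zeta)\rho^u\Phi^t:u\in[a,b]\}$, invokes Ratner's measure classification to identify every weak limit as a homogeneous measure $\mu_{\Gamma H}$ with $\rho^{\RR}\subseteq H$, and forces $\varphi(H)=G/Z$ (and hence $H=G$) by matching the projection to $\Gamma_0\backslash\ASL(2,\RR)$. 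The passage from a fixed bounded continuous $F$ to a uniformly convergent family $F_t$ is then carried out exactly as in the derivation of Theorem \ref{thmJEQ2} from Theorem \ref{thmJEQ}, cf.\ \cite[Theorem~5.3]{Marklof10}.

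The main obstacle is the equidistribution of the projected orbit
\begin{equation*}
\Gamma_0\,(n_u a_{e^{-t}};\sve{\alpha+u\beta}{0})\subset\Gamma_0\backslash\ASL(2,\RR),
\end{equation*}
which no longer follows directly from \cite{Elkies} because the $\vecxi$-coordinate now moves with $u$ along an affine line inside the $\xi_1$-axis. Under the hypothesis $\sve{\alpha}{\beta}\notin\QQ^2$ the required equidistribution can be extracted from Shah's theorem on translates of one-parameter unipotent orbits, applied to the projected subgroup $\bar\rho^u$ acting on $\ASL(2,\RR)$; the irrationality hypothesis is precisely what excludes the proper intermediate subgroups permitted by Ratner's classification.
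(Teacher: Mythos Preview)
Your approach is plausible but takes a genuinely harder route than the paper's proof. You introduce a new unipotent one-parameter subgroup $\rho^u$ and propose to re-run the Ratner--Shah machinery for $\rho$-orbits, acknowledging yourself that the ``main obstacle'' is establishing equidistribution of the projected orbit in $\Gamma_0\backslash\ASL(2,\RR)$, which no longer follows from the cited literature and would require a fresh application of Shah's theorem together with a case analysis on the irrationality hypothesis.

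The paper sidesteps this entirely by a different factorisation. Instead of keeping $\beta$ in the $\xi_1$-slot (the \emph{expanding} direction under $\Phi^t$), it moves $\beta$ into the $\xi_2$-slot (the \emph{contracting} direction):
\[
\bigl(1;\sve{\alpha+u\beta}{0},\zeta+u\gamma\bigr)\Psi^u\Phi^t
=\bigl(1;\sve{\alpha}{-\beta},\zeta\bigr)\,\Psi^u\Phi^t\,\bigl(1;\sve{0}{e^{-t/2}\beta},\,\zeta'(u)\bigr),
\]
for an explicit $u$-dependent $\zeta'$. The right factor is absorbed into the test function, giving a new family $\tilde F_t$ that converges uniformly on compacta to $\tilde F(u,g)=F(u,g(1;\vecnull,\zeta'(u)))$. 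Now Theorem \ref{thmJEQ2} applies \emph{verbatim} with base point $(1;\sve{\alpha}{-\beta},\zeta)$, because $\sve{\alpha}{-\beta}\notin\QQ^2$ is exactly the hypothesis $(\alpha,\beta)\notin\QQ^2$. Finally the $\zeta$-invariance of Haar measure removes the residual $(1;\vecnull,\zeta'(u))$.

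The comparison: your method would work in principle but costs a second, independent invocation of measure-classification; the paper's trick reduces the corollary to a direct consequence of Theorem \ref{thmJEQ2} by exploiting that the $\xi_2$-direction is stable under $\Phi^t$. The latter is both shorter and avoids the obstacle you flag.
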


\begin{proof}
We have
\be
\begin{split}
\left(1 ; \sve{\alpha + u \beta}{0}, \zeta+u \gamma\right)\,\Psi^u \Phi^t
& =\left(1 ; \sve{\alpha}{-\beta}, \zeta \right)\,\Psi^u\; \left(1 ; \sve{0}{\beta}, u \gamma-\beta(\alpha+u\beta)\right)\,\Phi^t \\
&  =\left(1 ; \sve{\alpha}{-\beta}, \zeta\right)\, \Psi^u\Phi^t \, \left(1 ; \sve{0}{e^{-t/2} \beta}, u \gamma-\beta(\alpha+u\beta)\right).
\end{split}
\ee
Define 
\be
\tilde F_t(u,g)= F_t\!\left(u, g \left(1 ; \sve{0}{e^{-t/2} \beta}, u \gamma-\beta(\alpha+u\beta)\right)\right),
\ee
\be
\tilde F(u,g)= F\!\left(u, g\left (1 ; \vecnull , u \gamma-\beta(\alpha+u\beta)\right)\right).
\ee
Since right multiplication is continuous and commutes with left multiplication, we see that, under the assumptions on $F_t$, $\tilde F_t$ is a family of uniformly bounded, continuous functions $\RR\times\GamG\to\RR$ so that $\tilde F_t\to \tilde F$, uniformly on compacta. Theorem \ref{thmJEQ2} therefore yields
\begin{equation}
\lim_{t\to\infty} \int_{\RR} F_t\!\left( u, \Gamma \left(1 ; \sve{\alpha + u \beta}{ 0}, \zeta+u \gamma\right)\Psi^u \Phi^t\right) \de\lambda(u)  = \frac{1}{\mu(\GamG)} \int_{\RR\times\GamG} \tilde F(u,g) \de\lambda(u) \de\mu(g) .
\end{equation}
Finally, the invariance of $\mu$ under right multiplication by $(1;\vecnull,\zeta)$, for any $\zeta\in\RR$, shows that
\begin{equation}
 \int_{\RR\times\GamG} \tilde F(u,g) \de\lambda(u) \de\mu(g)= \frac{1}{\mu(\GamG)} \int_{\RR\times\GamG} F(u,g) \de\lambda(u) \de\mu(g).
\end{equation}
\end{proof}

\subsection{Convergence of finite-dimensional distributions}\label{section:convergence-fdd}
In this section we prove the convergence of the finite dimensional distributions for the random curves \eqref{definition-X_N(x;t)}. This means that, for every $k\geq1$, every $0\leq t_1<t_2<\cdots<t_k\leq 1$, and every bounded continuous function $B:\C^k\to\R$,
\begin{multline}
\lim_{N\ti}\int_{\R}B(X_N(x;t_1),\ldots ,X_N(x;t_k))h(x)\de x\\
=\frac{1}{\mu(\GamG)}\int_{\GamG}B(\sqrt{t_1}\Theta_\chi(g\Phi^{2\log t_1}),\ldots,\sqrt{t_k}\Theta_\chi(g\Phi^{2\log t_k}))\,\de \mu(g)\label{rephrasing-conv-fin-dim}
\end{multline}
Recall that, by (\ref{definition-X_N(x;t)}, \ref{rewriting-SNalpha}), 
\be
X_N(x;t)=\e^{s/4}\Theta_{f}(x+i y\e^{-s},0;\sve{\alpha+c_1x}{0},c_0x)+\frac{\{t N\}}{\sqrt{N}}\left(S_{\lfloor tN\rfloor+1}(x)-S_{\lfloor tN\rfloor}(x)\right),
\ee
where $s=2\log t$, $y=N^{-2}$ and $f=\bm{1}_{(0,1]}$. It will be more convenient to work with
\be
\tilde X_N(x;t)=\e^{s/4}\Theta_{\chi}(x+i y\e^{-s},0;\sve{\alpha+c_1x}{0},c_0x),\ee
where $\chi=\bm{1}_{(0,1)}$.
The difference
$X_N(t)-\tilde X_N(t)$ 
comprises finitely many terms and is thus of order $N^{-1/2}$ uniformly in $t$.
Therefore, it is enough to show that the  limit  \eqref{rephrasing-conv-fin-dim} 
holds for $\tilde X_N(x;t)$ 
 in place of $X_N(x;t)$. 
 We can write 
\begin{equation}
\tilde X_N(x;t)= \e^{s/4}\Theta_{\chi}\!\left(\left(1;\sve{\alpha+c_1 x}{0},c_0x\right)\Psi^{x}\Phi^{\tau}\Phi^{s}\right),
\end{equation}
where $\tau=-\log y$. 

To simplify notation, we will write in the following $\Theta_{f,s}(g):=\e^{s/4}\Theta_f(g\Phi^s)$, $g\in G$. Observe that $\Theta_{f,s}$ is also well defined on $\GamG$. 
Moreover, for every $k\in\N$ and every  $\underline{s}=(s_1,s_2,\ldots,s_k)\in\R^k$ with $s_1<s_2<\dots<s_k$, let us define
\be\Theta_{f,\underline{s}}:\GamG\to\C^k,\hspace{.5cm}\Theta_{f,\underline{s}}(g):=\left(\Theta_{f,s_1}(g),\Theta_{f,s_2}(g),\ldots,\Theta_{f,s_k}(g)\right).\label{def-Theta-array-s}\ee
With this we have, for $0<t_1<t_2<\ldots<t_k\leq 1$,
\begin{align} 
\left(\tilde X_N(x;t_1),\ldots, \tilde X_N(x;t_k)\right)=\Theta_{\chi,\underline{s}}(\left(1;\sve{\alpha+c_1 x}{0},c_0x\right)\Psi^x\Phi^\tau)
\end{align}
with $s_j=2\log t_j$ and $\tau=2\log N$.  
The weak convergence of finite dimensional distribution of the process $\tilde X_N(t)$  stated in 
\eqref{rephrasing-conv-fin-dim} is a consequence of the above discussion and the following

\begin{theorem}\label{convergence-fin-dim-distr-chi}
Let $\lambda$ be a Borel probability measure on $\RR$ which is absolutely continuous with respect to Lebesgue measure. Let $c_1,c_0,\alpha\in\R$ with $(\alpha,c_1)\notin\Q^2$. Then for every $k\geq1$, every $\underline{s}=(s_1,s_2,\ldots,s_k)\in\R^k$, and every bounded continuous function $B:\C^k\to\R$
\begin{align}
\lim_{\tau\to\infty}\int_{\R}B\!\left(\Theta_{\chi,\underline{s}}\!\left((1;\sve{\alpha+c_1 x}{0},c_0x)\Psi^x\Phi^\tau\right)\right)\de\lambda(x)=\frac{1}{\mu(\GamG)}\int_{\GamG}B(\Theta_{\chi,\underline{s}}(g))\de\mu(g).\label{statement-convergence-fin-dim-distr-chi}
\end{align}
\end{theorem}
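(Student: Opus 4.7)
The obstacle is that the function $g\mapsto B(\Theta_{\chi,\underline s}(g))$ is only a bounded \emph{measurable} function on $\GamG$, so Corollary \ref{cor-thmJEQ} (which needs continuity) cannot be applied directly. The plan is to approximate $\chi$ by the trapezoidal cutoffs $\chi^{(J)}$ of Section \ref{sec:dyadic}: by Lemma \ref{lem-general-trapezoid} we have $\chi^{(J)}\in\scrS_2(\RR)$, hence $\Theta_{\chi^{(J)}}$ is a bounded continuous function on $\GamG$ (Lemma \ref{lem-expansion-at-infinity}). Then $g\mapsto B(\Theta_{\chi^{(J)},\underline s}(g))$ is bounded continuous, and Corollary \ref{cor-thmJEQ} (with $\beta=c_1$, $\gamma=c_0$, $F=F_t=B\circ\Theta_{\chi^{(J)},\underline s}$) yields, for each fixed $J$,
\begin{equation*}
\lim_{\tau\ti}\int_{\R}B(\Theta_{\chi^{(J)},\underline s}((1;\sve{\alpha+c_1 x}{0},c_0 x)\Psi^x\Phi^\tau))\,\de\lambda(x)=\frac{1}{\mu(\GamG)}\int_{\GamG}B(\Theta_{\chi^{(J)},\underline s}(g))\,\de\mu(g).
\end{equation*}

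Next I would pass $J\to\infty$ on the $\GamG$--side. Corollary \ref{cor-L2L4} applied to $\chi-\chi^{(J)}$ gives $\|\Theta_\chi-\Theta_{\chi^{(J)}}\|_{L^2(\GamG)}^2=\mu(\GamG)\|\chi-\chi^{(J)}\|_{L^2(\R)}^2=O(2^{-J})$, so each coordinate of $\Theta_{\chi^{(J)},\underline s}$ converges to that of $\Theta_{\chi,\underline s}$ in $\mu$--measure. Because $B$ is bounded and continuous, the continuous mapping theorem and bounded convergence give
\begin{equation*}
\lim_{J\ti}\frac{1}{\mu(\GamG)}\int_{\GamG}B(\Theta_{\chi^{(J)},\underline s}(g))\,\de\mu(g)=\frac{1}{\mu(\GamG)}\int_{\GamG}B(\Theta_{\chi,\underline s}(g))\,\de\mu(g),
\end{equation*}
which is the desired right-hand side.

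The main obstacle, and the heart of the proof, is to establish the corresponding approximation on the horocycle orbit \emph{uniformly in} $\tau$: for every $\varepsilon>0$ there exists $J_0$ such that, for all $J\geq J_0$ and all $\tau\geq 0$,
\begin{equation*}
\left|\int_{\R}\left[B(\Theta_{\chi,\underline s}(\ldots))-B(\Theta_{\chi^{(J)},\underline s}(\ldots))\right]\de\lambda(x)\right|<\varepsilon.
\end{equation*}
For this I would use a tightness/truncation argument. By Proposition \ref{lem-uniform-tail-estimate-chi} applied at the $k$ times $s_i$ (noting that the extra right multiplication by $\Phi^{s_i}$ only rescales the $y$--coordinate and does not affect the uniformity in $y\leq 1$), the family of vectors $\Theta_{\chi,\underline s}((1;\sve{\alpha+c_1x}{0},c_0x)\Psi^x\Phi^\tau)$ is tight under $\lambda$, uniformly in $\tau$; an analogous bound for $\Theta_{\chi^{(J)},\underline s}$ follows by writing $\Theta_{\chi^{(J)}}=\Theta_\chi-(\Theta_\chi-\Theta_{\chi^{(J)}})$ and applying the union bound of Lemma \ref{lemma-union-bounds} to the dyadic tail in Lemma \ref{Theta_chi-series-truncated} together with Lemma \ref{lem-uniform-tail-estimate}. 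Outside a $\lambda$--set of measure $O(R^{-4})$, both vectors lie in the compact box $\{|z|\leq R\}^k$, on which $B$ is uniformly continuous with some modulus $\omega_R$; and a further union bound of the same dyadic series (with weights $\delta_k=c/k^2$ as in the proof of Lemma \ref{lemma-union-bounds} and Proposition \ref{lem-uniform-tail-estimate-chi}) shows that $\lambda\{x:|\Theta_\chi(\ldots)-\Theta_{\chi^{(J)}}(\ldots)|>\eta\}\to0$ as $J\to\infty$, uniformly in $\tau$, for every fixed $\eta>0$. A standard three--$\varepsilon$ argument then yields the required uniform smallness, and combining this with the previous two steps completes the proof.
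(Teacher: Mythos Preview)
Your plan is essentially correct and carries through, but it is \emph{not} the route the paper takes; let me note one misstatement and then compare.

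\textbf{A small correction.} You write that $\Theta_{\chi^{(J)}}$ is ``a bounded continuous function on $\GamG$ (Lemma~\ref{lem-expansion-at-infinity})''. This is false: Lemma~\ref{lem-expansion-at-infinity} shows precisely the opposite, namely that $|\Theta_f(g)|\asymp y^{1/4}$ in the cusp for any $f\in\scrS_\eta$. What you actually need (and later use) is only that $\Theta_{\chi^{(J)}}$ is \emph{continuous}, so that the composition $g\mapsto B(\Theta_{\chi^{(J)},\underline s}(g))$ is bounded continuous (since $B$ is bounded). With this correction your first step is exactly Lemma~\ref{convergence-fin-dim-distr-eta>1}.

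\textbf{Comparison of the two arguments.} For the crucial uniform-in-$\tau$ approximation, the paper does \emph{not} use the dyadic tail machinery of Section~\ref{subs:unif-tail-bound}. Instead it proves from scratch a variance bound on the horocycle (Lemma~\ref{lem:variance}): for compactly supported Riemann-integrable $f$,
\[
\limsup_{\tau\to\infty}\int_\RR\|\Theta_{f,\underline s}((1;\sve{\alpha+c_1x}{0},c_0x)\Psi^x\Phi^\tau)\|_{\C^k}^2\,h(x)\,\de x\;\le\;2k\,\|f\|_{L^2}^2\,\|h\|_{L^1},
\]
via Parseval and an elementary off-diagonal estimate. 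Applying this with $f=\chi-\tilde f$ and Chebyshev (Lemmata~\ref{lem-Chebyshev}--\ref{eps-delta-f}) gives the approximation in $\lambda$-probability, uniformly in $\tau$, whenever $\|\chi-\tilde f\|_{L^2}$ is small. The same variance bound also furnishes tightness (Lemma~\ref{tight-Helly}), after which a Helly--Prokhorov/Cauchy-sequence argument upgrades from Lipschitz $B$ to all bounded continuous $B$.

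Your route instead recycles the $L^\infty$-type tail bounds already set up in Section~\ref{subs:unif-tail-bound}: write $\Theta_\chi-\Theta_{\chi^{(J)}}$ as the dyadic tail $\sum_{j\ge J}2^{-j/2}(\Theta_\Delta(\cdots)+\Theta_{\Delta_-}(\cdots))$ (which, at $\phi=0$, terminates at $j\approx\tfrac12\log_2 y^{-1}$), apply a union bound with weights $\delta_j\asymp(j-J+1)^{-2}$, and invoke Lemma~\ref{lem-uniform-tail-estimate} for $\Theta_\Delta$ term by term. This yields $\lambda\{|\Theta_\chi-\Theta_{\chi^{(J)}}|>\eta\}\ll 2^{-2J}\eta^{-4}$ uniformly in $\tau$, which is exactly what you need. (Be careful to go all the way down to the individual $\Theta_\Delta$ terms, not stop at $\scrF_J=\Theta_{\chi_L^{(J)}}$: the latter has $\kappa_2\asymp 2^J$, so the constant in Lemma~\ref{lem-uniform-tail-estimate} would blow up.)

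Both arguments are valid. The paper's variance lemma is more elementary and self-contained, and has the bonus of simultaneously delivering tightness; your approach is economical in that it reuses the machinery already built for Proposition~\ref{lem-uniform-tail-estimate-chi} (and later needed anyway for tightness in Section~\ref{sec:tightness}).
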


We first prove a variant of this statement for smooth cut-off sums, with $\Theta_{\chi,\underline{s}}$ replaced $\Theta_{f,\underline{s}}$, where $f\in\mathcal{S}_\eta$, $\eta>1$. 

\begin{lem}\label{convergence-fin-dim-distr-eta>1}
Let $f\in\mathcal{S}_\eta(\R)$, with $\eta>1$. Under the assumptions of Theorem \ref{convergence-fin-dim-distr-chi}, for every $k\geq1$, every $\underline{s}=(s_1,s_2,\ldots,s_k)\in\R^k$, and every bounded continuous function $B:\C^k\to\R$
\begin{align}
\lim_{\tau\to\infty}\int_{\R}B\!\left(\Theta_{f,\underline{s}}\!\left((1;\sve{\alpha+c_1 x}{0},c_0x)
\Psi^x\Phi^\tau\right)\right) \de\lambda(x)=\frac{1}{\mu(\GamG)}\int_{\GamG}B(\Theta_{f,\underline{s}}(g))\de\mu(g).\label{statement-convergence-fin-dim-distr-eta>1}
\end{align}
\end{lem}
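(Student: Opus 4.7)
The plan is to deduce this lemma directly from Corollary~\ref{cor-thmJEQ}, exploiting the fact that the random variable $x$ enters only through the group element and not through the test function. Concretely, I would introduce $F:\R\times\GamG\to\R$ by $F(u,g):=B(\Theta_{f,\underline{s}}(g))$. Since $B$ is bounded continuous and $\Theta_{f,s_j}(g)=\e^{s_j/4}\Theta_f(g\Phi^{s_j})$ inherits continuity from $\Theta_f$, the function $F$ is bounded continuous on $\R\times\GamG$ and independent of $u$. Taking the constant family $F_\tau\equiv F$, all hypotheses of Corollary~\ref{cor-thmJEQ} are then satisfied.

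The identification of parameters with those in the corollary is immediate: set $\beta=c_1$, $\zeta=0$, $\gamma=c_0$, so that the Diophantine condition $(\alpha,\beta)\in\R^2\setminus\Q^2$ in Corollary~\ref{cor-thmJEQ} coincides with the hypothesis $(\alpha,c_1)\notin\Q^2$ of the present lemma. Applying the corollary then yields
\begin{equation*}
\lim_{\tau\to\infty}\int_{\R}B\!\left(\Theta_{f,\underline{s}}\!\left(\Gamma(1;\sve{\alpha+c_1x}{0},c_0x)\Psi^x\Phi^\tau\right)\right)\de\lambda(x)=\frac{1}{\mu(\GamG)}\int_{\R\times\GamG}F(u,g)\,\de\lambda(u)\,\de\mu(g),
\end{equation*}
and since $F$ is independent of $u$ and $\lambda$ is a probability measure, the double integral collapses to $\frac{1}{\mu(\GamG)}\int_{\GamG}B(\Theta_{f,\underline{s}}(g))\,\de\mu(g)$, which is precisely \eqref{statement-convergence-fin-dim-distr-eta>1}.

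The only genuine technical point is to verify that $\Theta_f$ is continuous on $\GamG$ for $f\in\mathcal{S}_\eta$ with $\eta>1$. The decay bound $|f_\phi(w)|\leq\kappa_\eta(f)(1+|w|)^{-\eta}$ together with dominated convergence ensures that the defining series \eqref{Jacobi-theta-sum-2} converges absolutely and uniformly on compact subsets of $G$, so continuity reduces to the joint continuity of $(\phi,w)\mapsto f_\phi(w)$. For Schwartz $f$ this is immediate from the Hermite expansion \eqref{f_phi-Hermite} combined with the rapid decay of the Hermite coefficients (Lemma~\ref{lem-hat-f(k)}) and the uniform bound \eqref{classical-bound-psi}. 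For general $f\in\mathcal{S}_\eta$, one approximates $f$ by Schwartz functions $f_n$ satisfying $\kappa_\eta(f-f_n)\to 0$; Lemma~\ref{lem-expansion-at-infinity} gives uniform convergence $\Theta_{f_n}\to\Theta_f$ in the cusp, while on compacta in $\GamG$ the approximation is controlled by the $L^2$ estimate of Corollary~\ref{cor-L2L4}. This passage to the limit is the main place where one must be slightly careful, but it does not affect the core application of Corollary~\ref{cor-thmJEQ} described above.
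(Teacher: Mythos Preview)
Your core argument---apply Corollary~\ref{cor-thmJEQ} with $F_t(u,g)=F(u,g)=B(\Theta_{f,\underline{s}}(g))$, $\beta=c_1$, $\gamma=c_0$, $\zeta=0$, and then collapse the $u$-integral since $F$ does not depend on $u$---is exactly the paper's one-line proof.

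Your final paragraph, however, contains two unjustified steps. First, the claim that any $f\in\mathcal{S}_\eta$ can be approximated by Schwartz $f_n$ with $\kappa_\eta(f-f_n)\to 0$ is not established in the paper and is not obviously true: $\kappa_\eta$ is a weighted sup-norm involving \emph{all} $\phi$, and density of $\mathcal{S}(\R)$ in this topology would require a separate argument. Second, the $L^2$ bound of Corollary~\ref{cor-L2L4} controls $\|\Theta_{f_n}-\Theta_f\|_{L^2(\GamG)}$, which does not by itself give uniform convergence on compacta. If you want to close this continuity gap cleanly, a more direct route is available: for $\phi\notin\pi\Z$ the integral formula \eqref{def-R-tilde-k-f} makes $(\phi,w)\mapsto f_\phi(w)$ jointly continuous by dominated convergence, and the uniform decay $|f_\phi(w)|\leq\kappa_\eta(f)(1+|w|)^{-\eta}$ then gives uniform convergence of the series \eqref{Jacobi-theta-sum-2} on compacta in $G$ away from $\phi\in\pi\Z$; continuity across $\phi\in\pi\Z$ can be handled for the specific functions actually used later (compactly supported trapezoidal functions, cf.\ Lemma~\ref{eps-delta-f}) rather than in full generality. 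The paper itself does not spell this out.
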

\begin{proof}
Apply Corollary \ref{cor-thmJEQ} with $F_t(u,g)=F(u,g)=B(\Theta_{f,\underline{s}}(g))$ (no dependence on $t,u$), $\beta=c_1$, $\gamma=c_0$, and $\zeta=0$.
\end{proof}

Lemma \ref{convergence-fin-dim-distr-eta>1} implies Theorem \ref{convergence-fin-dim-distr-chi} via a standard approximation argument that requires the following lemmta. We first consider the variance. For $\bm z=(z_1,\ldots,z_k)\in\C^k$, let $\|\bm z\|_{\C^k}=(|z_1|^2+\ldots+|z_k|^2)^{1/2}$.

\begin{lem}\label{lem:variance}
Let $f$, $h$ be compactly supported, Riemann-integrable functions on $\R$, and assume $h\geq 0$. Then, for all $\alpha,c_1,c_0\in\R$,
\begin{align}
\limsup_{\tau\to\infty}\int_{\R} \left\|\Theta_{f,\underline{s}}\!\left((1;\sve{\alpha+c_1 x}{0},c_0x)\Psi^x\Phi^\tau\right)\right\|^2_{\C^k} h(x) \de x\leq 2 k \| f \|_{L^2}^2  \| h \|_{L^1}.
\end{align}
\end{lem}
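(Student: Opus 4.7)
The plan is to unfold $\Theta_{f,\underline{s}}$ explicitly via \eqref{Jacobi-theta-sum-2}, reduce the second moment to a double sum, and extract the main contribution from the diagonal. Writing $g(x):=(1;\sve{\alpha+c_1 x}{0},c_0x)\Psi^x\Phi^\tau$ and using the multiplication law of $G$, the point $g(x)\Phi^{s_j}$ has coordinates $(x+i/N_j^2,\,0;\,\sve{\alpha+c_1 x}{0},\,c_0 x)$ with $N_j=\e^{(\tau+s_j)/2}$. Formula \eqref{Jacobi-theta-sum-2} then gives
\[
|\Theta_f(g(x)\Phi^{s_j})|^2 = N_j^{-1}\,|S_{N_j}(x)|^2, \qquad S_{N_j}(x):=\sum_n f(n/N_j)\,e\!\left(P(n)x+n\alpha\right),
\]
with $P(n)=\tha n^2+c_1n+c_0$. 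The problem therefore reduces to estimating
\[
\sum_{j=1}^k \e^{s_j/2}\, N_j^{-1}\int_\R |S_{N_j}(x)|^2\, h(x)\,\de x
\]
in the limit $\tau\to\infty$.

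Next, I would expand $|S_{N_j}|^2$ as a double sum, substitute $n=m+k$, and integrate term by term against $h$ using $\int h(x)\,e(tx)\,\de x=\hat h(-t)$:
\[
\int_\R|S_{N_j}|^2 h\,\de x \;=\; \sum_{k\in\Z}e(k\alpha)\sum_m f\!\left(\tfrac{m+k}{N_j}\right)\overline{f\!\left(\tfrac{m}{N_j}\right)}\,\hat h\!\left(-(mk+\tha k^2+c_1 k)\right).
\]
The diagonal term $k=0$ equals $\hat h(0)\sum_m |f(m/N_j)|^2 = \|h\|_{L^1}\cdot N_j\,\|f\|_{L^2}^2\,(1+o(1))$ as $\tau\to\infty$, by Riemann-sum convergence for the compactly supported, Riemann integrable function $|f|^2$. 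Combined with the prefactor $\e^{s_j/2}N_j^{-1}$, this contributes exactly $\e^{s_j/2}\|f\|_{L^2}^2\|h\|_{L^1}$ to the limit.

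The main obstacle is to show that the off-diagonal part ($k\neq 0$) is $o(N_j)$. For smooth, compactly supported $h$ the Fourier transform $\hat h$ decays rapidly, so for each fixed $k\neq 0$ the inner sum over $m$ is $O(\|f\|_\infty^2/|k|)$, since $\hat h$ is sampled along an arithmetic progression of spacing $|k|$; the restriction $|k|\lesssim N_j$ imposed by $\supp f$ then yields a total of $O(\log N_j)=o(N_j)$. To extend this to a general compactly supported, Riemann integrable $h\geq 0$, I would sandwich $h$ between smooth, compactly supported functions $h^-\leq h\leq h^+$ with $\|h^+\|_{L^1}-\|h^-\|_{L^1}<\varepsilon$, and exploit the positivity $|S_{N_j}|^2\geq 0$ to deduce $\int|S_{N_j}|^2 h\,\de x \leq \int |S_{N_j}|^2 h^+\,\de x$; applying the smooth case and letting $\varepsilon\downarrow 0$ recovers the sharp constant. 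Finally, summing the contributions over $j$ and using $\sum_j \e^{s_j/2}\leq 2k$ in the range of $\underline s$ relevant to the invariance principle (namely $s_j\leq 0$) yields the stated bound $2k\|f\|_{L^2}^2\|h\|_{L^1}$.
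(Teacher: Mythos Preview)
Your overall strategy---expand the square, isolate the diagonal as a Riemann sum, control the remainder by Fourier decay of $\hat h$, and pass from smooth $h$ to Riemann-integrable $h$ via a one-sided approximation using positivity of $|S_{N_j}|^2$---is exactly the paper's. The gap is in your off-diagonal estimate. The phase you feed into $\hat h$ is $k\bigl(m+\tfrac{k}{2}+c_1\bigr)$, which vanishes not only on the diagonal $k=0$ but also on the \emph{anti-diagonal} $m=-\tfrac{k}{2}-c_1$. Whenever $2c_1\in\Z$ (in particular in the basic case $c_1=0$) this has an integer solution for every $k$ of the appropriate parity, and for those pairs $\hat h$ is evaluated at $0$, yielding $\|h\|_{L^1}$ rather than something decaying in $k$. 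Hence your claim ``for each fixed $k\neq 0$ the inner sum over $m$ is $O(\|f\|_\infty^2/|k|)$'' fails for half of the $k$'s; summing over $|k|\lesssim N_j$ then gives a contribution of order $N_j$, not $O(\log N_j)$. The paper separates this anti-diagonal $m+n+2c_1=0$ explicitly: it is a second Riemann sum, bounded by $\int_\R|f(w)f(-w)|\,\de w\cdot\|h\|_{L^1}\leq\|f\|_{L^2}^2\|h\|_{L^1}$ via Cauchy--Schwarz, and only the remaining terms (where the argument of $\hat h$ is genuinely nonzero) are shown to vanish using $|\hat h(t)|\ll|t|^{-2}$. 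This second diagonal is precisely the source of the factor $2$ in $2k\|f\|_{L^2}^2\|h\|_{L^1}$.

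A secondary point: the lemma is stated for arbitrary $\underline s\in\R^k$, and the paper's argument does not restrict to $s_j\leq 0$; it computes the two diagonal limits directly. Your final step of invoking ``the range relevant to the invariance principle'' to bound $\sum_j\e^{s_j/2}\leq 2k$ is therefore not a proof of the lemma as stated.
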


\begin{proof}
By a standard approximation argument, we may assume without loss of generality that $h\in C_c^2(\R)$. The Fourier transform $\widehat h$ of $h$ then satisfies the bound $|\widehat h(y)| \ll |y|^{-2}$. We have, by Parseval's identity,
\begin{equation}\label{pf-approximation-lemma-2}
\begin{split}
\int_{\R} & \left\|\Theta_{f,\underline{s}}\!\left((1;\sve{\alpha+c_1 x}{0},c_0x)\Psi^x\Phi^\tau\right)\right\|^2_{\C^k} h(x) \de x \\
& = y^{\ha} \sum_{j=1}^k \e^{s_j/2}\sum_{n,m\in\Z}f(n y^\ha\e^{-s_j/2})\, f(m y^\ha\e^{-s_j/2})\,e((n-m)\alpha) \,\widehat h\!\left(\tfrac12(m^2-n^2)+c_1(m-n)\right) \\
& \leq y^{\ha} \sum_{j=1}^k \e^{s_j/2}\sum_{n,m\in\Z} \left|f(n y^\ha\e^{-s_j/2})\, f(m y^\ha\e^{-s_j/2})\, \widehat h\!\left(\tfrac12(m^2-n^2)+c_1(m-n)\right) \right| ,
\end{split}
\end{equation}
where $y=\e^{-\tau}$.
Note that $(m^2-n^2)+2c_1(m-n)=(m-n)(m+n+2c_1)=0$ if and only if [$m=n$ or $m=-n-2c_1$].
The sum restricted to $m=n$ is a Riemann sum. In the limit $y\to 0$,
\begin{equation}
y^{\ha} \sum_{j=1}^k \e^{s_j/2}\sum_{n\in\Z} \left|f(n y^\ha\e^{-s_j/2})\, f(n y^\ha\e^{-s_j/2}) \right| 
\to k \| f \|_{L^2}^2  \| h \|_{L^1} .
\end{equation}
Likewise, the sum restricted to $m=-n-2c_1$ yields
\begin{equation}
\begin{split}
y^{\ha} \sum_{j=1}^k \e^{s_j/2}\sum_{n\in\Z} \left|f(n y^\ha\e^{-s_j/2})\, f(-(n+2c_1) y^\ha\e^{-s_j/2}) \right| 
& \to k \int_\RR |f(w) f(-w)| \de w\,  \| h \|_{L^1} \\
& \leq k \| f \|_{L^2}^2  \| h \|_{L^1} .
\end{split}
\end{equation}
The sum of the remaining terms with $(m^2-n^2)+2c_1(m-n)\neq 0$ is bounded above by (set $p=m-n$, $q=m+n$ and overcount by allowing all $p,q\in\Z$ with $p\neq 0$, $q\neq-2c_1$)
\begin{equation}
\begin{split}
& y^{\ha} \sum_{j=1}^k \e^{s_j/2}\sum_{\substack{p,q\in\Z\\ p\neq 0\\ q\neq-2c_1}} \left|f(\tfrac12(q-p) y^\ha\e^{-s_j/2})\, f(\tfrac12(q+p) y^\ha\e^{-s_j/2})\, \widehat h\!\left(\tfrac12 p(q+2c_1)\right) \right| \\
& \ll y^{\ha} \sum_{j=1}^k \e^{s_j/2}\sum_{\substack{p,q\in\Z\\ p\neq 0\\ q\neq-2c_1}}  \left|p(q+2c_1)\right|^{-2}
= O(y^{\ha}) .
\end{split}
\end{equation}
Hence all ``off-diagonal'' contributions vanish as $y\to 0$.
\end{proof}

For the rest of this section assume  that $\lambda$ is a Borel probability measure on $\RR$ which is absolutely continuous with respect to Lebesgue measure.

\begin{lem}\label{lem-Chebyshev}
Let $f$ be a compactly supported, Riemann-integrable function on $\R$. Then, for all $\alpha,c_1,c_0\in\R$, $K>0$,
\begin{align}\label{eq:Markov}
\limsup_{\tau\to\infty}
\lambda\left( \left\{ x\in\RR: \left\|\Theta_{f,\underline{s}}\!\left((1;\sve{\alpha+c_1 x}{0},c_0x)\Psi^x\Phi^\tau\right)\right\|_{\C^k} > K \right\}\right) < \frac{4 k^2 \| f \|_{L^2}^2}{K^2} .
\end{align}
\end{lem}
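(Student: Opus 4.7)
The plan is to combine Chebyshev's inequality with the variance bound of Lemma~\ref{lem:variance}. The only wrinkle is that Lemma~\ref{lem:variance} requires the weight to be compactly supported and Riemann integrable, whereas the density of $\lambda$ is merely a non-negative $L^{1}$ function; one therefore has to pass from $\lambda$ to a nearby absolutely continuous probability measure whose density meets the hypotheses of Lemma~\ref{lem:variance}.

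Let $\rho=\de\lambda/\de x$, write $A_\tau$ for the set on the left-hand side of \eqref{eq:Markov}, and fix a (small) $\epsilon>0$ to be chosen later. Since $C_c(\R)$ is dense in $L^{1}(\R)$, there exists a non-negative, compactly supported, continuous (hence Riemann integrable) function $h\colon\R\to\R$ with $\|\rho-h\|_{L^{1}}<\epsilon$, so in particular $\|h\|_{L^{1}}\leq 1+\epsilon$. Trading $\rho$ for $h$ costs at most $\|\rho-h\|_{L^{1}}$; applying Markov's inequality in the form $\mathbf{1}_{A_\tau}\leq K^{-2}\|\Theta_{f,\underline s}(\cdots)\|_{\C^{k}}^{2}$ then gives
\be
\lambda(A_\tau)\leq\int_{A_\tau}h(x)\,\de x+\epsilon\leq K^{-2}\int_{\R}\left\|\Theta_{f,\underline s}\!\left((1;\sve{\alpha+c_{1}x}{0},c_{0}x)\Psi^{x}\Phi^{\tau}\right)\right\|_{\C^{k}}^{2}h(x)\,\de x+\epsilon.
\ee
Taking $\limsup_{\tau\to\infty}$ and invoking Lemma~\ref{lem:variance} bounds the integral by $2k\|f\|_{L^{2}}^{2}\|h\|_{L^{1}}\leq 2k(1+\epsilon)\|f\|_{L^{2}}^{2}$, hence
\be
\limsup_{\tau\to\infty}\lambda(A_\tau)\leq\frac{2k(1+\epsilon)\|f\|_{L^{2}}^{2}}{K^{2}}+\epsilon.
\ee
Since $2k<4k^{2}$ for all $k\geq 1$, any sufficiently small $\epsilon>0$ makes the right-hand side strictly less than $4k^{2}\|f\|_{L^{2}}^{2}/K^{2}$, which yields \eqref{eq:Markov}.

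There is no real obstacle: the slack between the sharp constant $2k$ afforded by Lemma~\ref{lem:variance} and the generous $4k^{2}$ in the statement is exactly what allows one to absorb the $L^{1}$-approximation error without spoiling the strict inequality. No dynamical input beyond Lemma~\ref{lem:variance} is required.
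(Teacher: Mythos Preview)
Your proof is correct and follows essentially the same approach as the paper's: approximate the density of $\lambda$ in $L^1$ by a non-negative $h\in C_c(\R)$, apply Chebyshev's inequality with weight $h$, and invoke Lemma~\ref{lem:variance}. The paper passes directly from the variance bound $2k\|f\|_{L^2}^2\|h\|_{L^1}$ to the strict inequality with constant $4k^2$, while you make the role of the slack $2k<4k^2$ explicit in absorbing the $L^1$-approximation error; the substance is identical.
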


\begin{proof}
Let us denote by $\lambda'\in L^1(\R)$ the probability density of $\lambda$, and by $m_h$ the measure with density $h\in C_c(\R)$, $h\geq 0$. We have 
\begin{equation}
\begin{split}
& \lambda\left( \left\{ x\in\RR: \left\|\Theta_{f,\underline{s}}\!\left((1;\sve{\alpha+c_1 x}{0},c_0x)\Psi^x\Phi^\tau\right)\right\|_{\C^k} > K \right\}\right) \\
& \leq m_h\left( \left\{ x\in\RR: \left\|\Theta_{f,\underline{s}}\!\left((1;\sve{\alpha+c_1 x}{0},c_0x)\Psi^x\Phi^\tau\right)\right\|_{\C^k} > K \right\}\right) + \|\lambda'-h\|_{L^1} \\
& < 4 k^2 K^{-2} \| f \|_{L^2}^2 \| h \|_{L^1} + \|\lambda'-h\|_{L^1}
\end{split}
\end{equation}
by Chebyshev's inequality and Lemma  \ref{lem:variance}. Since $C_c(\R)$ is dense in $L^1(\R)$, rel.~\eqref{eq:Markov} follows.
\end{proof}
\begin{lem}\label{tight-Helly}
For all $\alpha,c_1,c_0\in\R$ and  $\varepsilon>0$ there exists  a constant $K_\varepsilon>0$ such that
\begin{align}
\limsup_{\tau\to\infty} \lambda\!\left(\left\{x\in\R:\: \left\|\Theta_{f,\underline{s}}\!\left((1;\sve{\alpha+c_1 x}{0},c_0x)\Psi^x\Phi^\tau\right)\right\|_{\C^k} > K_\varepsilon\right\}\right)\leq\varepsilon \|f\|_{L^2}^2
\end{align}
for every compactly supported, Riemann-integrable $f$. 
\end{lem}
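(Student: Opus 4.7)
The plan is to derive this essentially as a direct corollary of Lemma \ref{lem-Chebyshev}. That lemma gives the strict inequality
\[
\limsup_{\tau\to\infty}
\lambda\left( \left\{ x\in\RR: \left\|\Theta_{f,\underline{s}}\!\left((1;\sve{\alpha+c_1 x}{0},c_0x)\Psi^x\Phi^\tau\right)\right\|_{\C^k} > K \right\}\right) < \frac{4 k^2 \| f \|_{L^2}^2}{K^2}
\]
for every compactly supported, Riemann-integrable $f$ and every $K>0$. The crucial feature is that the bound is proportional to $\|f\|_{L^2}^2$ with a proportionality constant that depends only on $k$ and $K$, not on $f$ itself.

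Given $\varepsilon>0$, I would simply set $K_\varepsilon = 2k/\sqrt{\varepsilon}$, so that $4k^2 / K_\varepsilon^{2} = \varepsilon$. This choice is manifestly independent of $f$, and substituting into the bound from Lemma \ref{lem-Chebyshev} yields the desired inequality
\[
\limsup_{\tau\to\infty} \lambda\!\left(\left\{x\in\R:\: \left\|\Theta_{f,\underline{s}}\!\left((1;\sve{\alpha+c_1 x}{0},c_0x)\Psi^x\Phi^\tau\right)\right\|_{\C^k} > K_\varepsilon\right\}\right)\leq\varepsilon \|f\|_{L^2}^2.
\]

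There is no obstacle here; the content of the lemma is really just a repackaging of Lemma \ref{lem-Chebyshev} in the Helly/tightness-friendly form ``for every $\varepsilon$ there exists $K_\varepsilon$'' rather than ``for every $K$, a Chebyshev bound''. The reason this reformulation is worth isolating as a separate statement is that it is the form needed later: in the approximation argument deducing Theorem \ref{convergence-fin-dim-distr-chi} from Lemma \ref{convergence-fin-dim-distr-eta>1}, one decomposes $\chi$ into a smooth part plus a remainder $f$ of small $L^2$-norm, and one needs to control the contribution of the remainder to the joint distribution uniformly in $\tau$ with a bound that scales linearly in $\|f\|_{L^2}^2$ (so that the error can be made arbitrarily small by choosing a sufficiently good smooth approximation).
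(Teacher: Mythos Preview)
Your proposal is correct and is exactly the paper's approach: the paper's proof reads in its entirety ``It follows immediately from Lemma \ref{lem-Chebyshev}.'' Your explicit choice $K_\varepsilon = 2k/\sqrt{\varepsilon}$ simply unpacks that one line.
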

\begin{proof}
It follows immediately from Lemma \ref{lem-Chebyshev}.
\end{proof}
\begin{lem}\label{eps-delta-f}
Let $f$ be a compactly supported, Riemann-integrable function on $\R$. Then, for every $\varepsilon>0$, $\delta>0$ there exists $\tilde f\in\scrS_2(\R)$ with compact support such that
\begin{align}
&\limsup_{\tau\to\infty}\lambda\!\left(\left\{x\in\R: \|\Theta_{f,\underline{s}}\!\left((1;\sve{\alpha+c_1 x}{0},c_0x)\Psi^x\Phi^\tau\right)-\Theta_{\tilde f,\underline{s}}\!\left((1;\sve{\alpha+c_1 x}{0},c_0x)\Psi^x\Phi^\tau\right)\|_{\C^k}>\delta\right\}\right)\nonumber\\
&< \varepsilon.
\end{align}
\end{lem}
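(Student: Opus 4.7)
The plan is to combine linearity with the Chebyshev estimate of Lemma \ref{lem-Chebyshev}. First observe that at horocycle points $g = \left(1;\sve{\alpha+c_1 x}{0},c_0x\right)\Psi^x\Phi^\tau$ the $\phi$-coordinate is zero, and the subsequent right multiplication by $\Phi^{s_j}$ only rescales the imaginary part while leaving $\phi=0$. Hence, for any compactly supported function $h$, the series defining $\Theta_h(g\Phi^{s_j})$ in \eqref{Jacobi-theta-sum-2} collapses to a finite sum. In particular, if both $f$ and $\tilde f$ are compactly supported, pointwise linearity holds:
\begin{equation*}
\Theta_{f,\underline{s}}(g) - \Theta_{\tilde f,\underline{s}}(g) = \Theta_{f-\tilde f,\underline{s}}(g).
\end{equation*}

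My first step would then be to apply Lemma \ref{lem-Chebyshev} to the compactly supported Riemann-integrable function $f - \tilde f$ with threshold $K = \delta$, obtaining
\begin{equation*}
\limsup_{\tau\to\infty}\lambda\!\left(\left\{x\in\R : \left\|\Theta_{f-\tilde f,\underline{s}}\!\left((1;\sve{\alpha+c_1 x}{0},c_0x)\Psi^x\Phi^\tau\right)\right\|_{\C^k} > \delta\right\}\right) < \frac{4k^2\,\|f-\tilde f\|_{L^2(\R)}^2}{\delta^2}.
\end{equation*}
It therefore suffices to produce a compactly supported $\tilde f\in\mathcal{S}_2(\R)$ satisfying $\|f-\tilde f\|_{L^2}^2 < \varepsilon\delta^2/(4k^2)$.

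Next I would construct such a $\tilde f$ as a finite linear combination of trapezoidal functions $T^{\varepsilon',\delta'}_{a,b}$ from \eqref{def-trapezoidal-function}. Each trapezoid lies in $\mathcal{S}_2(\R)$ by Lemma \ref{lem-general-trapezoid}, and since $\mathcal{S}_2(\R)$ is visibly a vector space (subadditivity of $\kappa_2$ being immediate from its definition), any finite linear combination is a compactly supported element of $\mathcal{S}_2(\R)$. Such combinations are dense in $L^2(\R)$ in the relevant $L^2$-sense: $f$ is bounded and compactly supported, hence in $L^2(\R)$; simple functions are $L^2$-dense there; and each indicator $\bm{1}_{[a,b]}$ is approximated in $L^2$ by $T^{\varepsilon',\delta'}_{a,b}$ as $\varepsilon',\delta'\downarrow 0$. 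Choosing $\tilde f$ from this class with $\|f-\tilde f\|_{L^2}$ sufficiently small yields the claim.

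There is no significant obstacle: the argument is a standard Chebyshev-plus-density approximation. The one point one must verify is the pointwise linearity of the theta sums, which is immediate in the horocycle setting ($\phi=0$) once both $f$ and $\tilde f$ are taken to have compact support.
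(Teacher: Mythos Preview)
Your proof is correct and follows essentially the same approach as the paper: linearity $\Theta_{f,\underline{s}}-\Theta_{\tilde f,\underline{s}}=\Theta_{f-\tilde f,\underline{s}}$, then Lemma~\ref{lem-Chebyshev} applied to $f-\tilde f$, together with density of compactly supported $\mathcal{S}_2(\R)$ functions in $L^2(\R)$. You supply more detail than the paper (justifying linearity via $\phi=0$ and explicitly constructing $\tilde f$ from trapezoids), but the strategy is identical.
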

\begin{proof}
Note that $\Theta_{f,\underline{s}}-\Theta_{\tilde f,\underline s}=\Theta_{f-\tilde f,\underline s}$. Since $f-\tilde f$ is compactly supported and Riemann-integrable, we can apply Lemma  \ref{lem-Chebyshev} 
with $f-\tilde f$ in place of $f$. Choose $K=\epsilon$ and $\tilde f$ so that
\begin{equation}
\frac{4 k^2 \| f-\tilde f \|_{L^2}^2}{K^2} \leq \delta,
\end{equation}
which is possible since $\scrS_2(\R)$ is dense in $L^2(\R)$.
The claim follows.
\end{proof}

\begin{proof}[Proof of Theorem \ref{convergence-fin-dim-distr-chi}]
Lemma \ref{tight-Helly} and Helly-Prokhorov's Theorem imply that every sequence $(\tau_j)_{j\geq 1}$ such that $\tau_j\to\infty$ as $j\to\infty$ has a subsequence $(\tau_{j_l})_{l\geq 1}$ with the following property: there is a probability measure $\nu$ on $\C$ such that for every bounded continuous function $B:\C^k\to\C$ we have
\begin{align}
\lim_{l\to\infty}\int_{\R}B\!\left(\Theta_{\chi,\underline{s}}\!\left((1;\sve{\alpha+c_1 x}{0},c_0x)\Psi^x\Phi^{\tau_{j_l}}\right)\right)\de\lambda(x)=\int_{\C}B(\bm z)\de\nu(\bm z).\label{pf-conv-fin-dim-distr-1}
\end{align}
The measure $\nu$ may of course depend on the choice of the subsequence. To identify that measure, we restrict to test functions $B\in C_c^\infty(\C^k)$.
We claim that for such $B$ the limit 
\begin{align}
I(\chi)=\lim_{j\to\infty}\int_{\R}B\!\left(\Theta_{\chi,\underline{s}}\!\left((1;\sve{\alpha+c_1 x}{0},c_0x)\Psi^x\Phi^{\tau_{j}}\right)\right)\de\lambda(x)\label{limit-to-show-it-exists}
\end{align}
exists. To prove this, let us first notice that, since $B\in C_c^\infty(\C^k)$, it is Lipschitz, i.e. $|B(\bm z')-B(\bm z'')|\leq C\|\bm z'-\bm z''\|_{\C^k}$ for some constant $C>0$. Therefore, by Lemma \ref{eps-delta-f}, for every $\varepsilon>0$, $\delta>0$, we can find a compactly supported $f\in\mathcal S_\eta(\R)$ with $\eta>1$ such that
\be\label{C-eps-delta}
\begin{split}
&\int_{\R}|B\!\left(\Theta_{\chi,\underline{s}}\!\left((1;\sve{\alpha+c_1 x}{0},c_0x)\Psi^x\Phi^{\tau_{j}}\right)\right)-B\!\left(\Theta_{f,\underline{s}}\!\left((1;\sve{\alpha+c_1 x}{0},c_0x)\Psi^x\Phi^{\tau_{j}}\right)\right)|\de\lambda(x)\\
&\leq C\int_{\R}\|\Theta_{\chi,\underline{s}}\!\left((1;\sve{\alpha+c_1 x}{0},c_0x)\Psi^x\Phi^{\tau_{j}}\right)-\Theta_{f,\underline{s}}\!\left((1;\sve{\alpha+c_1 x}{0},c_0x)\Psi^x\Phi^{\tau_{j}}\right)\|_{\C^k}\de\lambda(x)\\
&\leq C(\varepsilon+\delta).
\end{split}
\ee
Since the limit 
\begin{align}
I(f)=\lim_{j\to\infty}\int_{\R}B\!\left(\Theta_{f,\underline{s}}\!\left((1;\sve{\alpha+c_1 x}{0},c_0x)\Psi^x\Phi^{\tau_{j}}\right)\right)\de\lambda(x)
\end{align}
exists by Lemma \ref{convergence-fin-dim-distr-eta>1}, the sequence 
\begin{align}
\left(\int_{\R}B\!\left(\Theta_{f,\underline{s}}\!\left((1;\sve{\alpha+c_1 x}{0},c_0x)\Psi^x\Phi^{\tau_{j}}\right)\right)\de\lambda(x)\right)_{j\geq 1}
\end{align}
is Cauchy. Using this fact, the bound \eqref{C-eps-delta} and the triangle inequality, we see that
\begin{align}
\left(\int_{\R}B\!\left(\Theta_{\chi,\underline{s}}\!\left((1;\sve{\alpha+c_1 x}{0},c_0x)\Psi^x\Phi^{\tau_{j}}\right)\right)\de\lambda(x)\right)_{j\geq 1}
\end{align}
is a Cauchy sequence, too, and hence the limit $I(\chi)$ 
exists as claimed. The bound \eqref{C-eps-delta} implies that $I(f)\to I(\chi)$ as $f\to\chi$ in $L^2$ and therefore the right-hand side of \eqref{pf-conv-fin-dim-distr-1} must equal the right-hand side of \eqref{statement-convergence-fin-dim-distr-chi}. 

We have now established that, for any convergent subsequence, the weak limit $\nu$ in \eqref{pf-conv-fin-dim-distr-1} in fact unique, i.e.\ the same for every converging subsequence. This means that every subsequence converges---in particular the full sequence. This concludes the proof of the theorem.
\end{proof}

\subsection{Tightness}\label{sec:tightness}

The purpose of this section is to prove that the family of processes $\{X_N\}_{N\geq 1}$ is tight. Recall that each  $X_N$ is a random variable with values on the Polish space $(\mathcal{C}_0,d)$, cf. \eqref{definition-X_N(x;t)}.
If we denote by $\mathbb{P}_N$ the probability measure induced by $X_N$ on $\mathcal C_0$, then tightness means that for every $\varepsilon>0$ there exists a compact set $\mathcal K_\varepsilon\subset \mathcal{C}_0$ with $\mathbb{P}_N(\mathcal K_\varepsilon)>1-\varepsilon$ for every $N\geq 1$. We prove the following

\begin{prop}\label{thm-tightness}
The sequence $\{X_ N\}_{N\geq 1}$ is tight.
\end{prop}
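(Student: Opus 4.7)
The plan is to invoke the classical sufficient condition for tightness in $\mathcal{C}_0([0,1],\CC)$ (Billingsley's criterion, Theorem 12.3 of \cite{Billingsley}, or Karatzas--Shreve Prop.~2.4.10): since $X_N(0)=0$ automatically from \eqref{definition-X_N(x;t)}, tightness follows once we exhibit constants $p>0$, $\gamma>0$, $C<\infty$, independent of $N$, with
\begin{equation}
\mathbb{E}_N \left[ |X_N(t) - X_N(s)|^p \right] \leq C |t-s|^{1+\gamma}
\qquad (0\leq s,t\leq 1).\label{target-moment-bound}
\end{equation}
The strategy is to derive \eqref{target-moment-bound} for $p=3$, $\gamma=\tfrac12$ from the uniform tail bound of Proposition \ref{lem-uniform-tail-estimate-chi}.

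First, I would represent the increment $X_N(t)-X_N(s)$ itself as a rescaled theta sum. Setting $k=\lfloor sN\rfloor$ and $M=\lfloor tN\rfloor-\lfloor sN\rfloor$, the boundary terms in \eqref{definition-X_N(x;t)} contribute $O(1/\sqrt N)$ uniformly, while the identity $P(m+k)=P(m)+km+\tfrac12 k^2+c_1 k$ extracts a unit-modulus phase and gives
\begin{equation}
X_N(t)-X_N(s) = \frac{\e(\phi_{s,N,x})}{\sqrt N}\, S_M(x;P,\alpha+kx) + O(1/\sqrt N).
\end{equation}
Using \eqref{rewriting-SNalpha} (and noting that replacing $\bm 1_{(0,1]}$ by $\chi=\bm 1_{(0,1)}$ alters $S_M$ by at most $1$), for $M\geq 1$ we obtain
\begin{equation}
|X_N(t)-X_N(s)| \leq \sqrt{M/N}\,\Bigl| \Theta_\chi\!\Bigl(x+\i M^{-2},0;\sve{\alpha+kx+c_1 x}{0},c_0 x\Bigr)\Bigr| + O(1/\sqrt N),
\end{equation}
with $\sqrt{M/N}\leq \sqrt{(t-s)+1/N}$. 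Since Proposition \ref{lem-uniform-tail-estimate-chi} is uniform in $y\leq 1$ and in $(\vecxi,\zeta)\in\Hei$, its tail bound applies with parameters independent of $N,s,t,\alpha,c_0,c_1$, yielding
\begin{equation}
\mathbb{P}_N\Bigl\{ |X_N(t)-X_N(s)| > R\sqrt{(t-s)+1/N}+C/\sqrt N \Bigr\}\ll (1+R)^{-4}.\label{increment-tail}
\end{equation}

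Integrating \eqref{increment-tail} against $3R^2$ (which converges since the tail exponent $4$ exceeds $3$) produces, in the regime $(t-s)\geq 1/N$,
\begin{equation}
\mathbb{E}_N\bigl[|X_N(t)-X_N(s)|^3\bigr] \ll \bigl((t-s)+1/N\bigr)^{3/2} \ll (t-s)^{3/2}.
\end{equation}
For the complementary regime $(t-s)<1/N$, the piecewise linearity of $X_N$ on cells of size $1/N$, with slope bounded by $\sqrt N$, yields the deterministic estimate $|X_N(t)-X_N(s)|\leq \sqrt N\,|t-s|\leq\sqrt{t-s}$, and the same cubic bound holds trivially. Thus \eqref{target-moment-bound} is established with $p=3$, $\gamma=\tfrac12$, uniformly in $N$, and Kolmogorov--Chentsov tightness completes the proof. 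The main bookkeeping burden, rather than a substantive obstacle, will be verifying that the $O(1/\sqrt N)$ error terms in the first step are genuinely uniform in $(s,t,x,\alpha,c_0,c_1)$ and that the parameters fed into $\Theta_\chi$ remain within the range covered by Proposition \ref{lem-uniform-tail-estimate-chi} (in particular, $M^{-2}\leq 1$ whenever $M\geq 1$).
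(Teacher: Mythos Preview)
Your argument is correct and takes a genuinely different, though closely parallel, route. Both your proof and the paper's rest on the same two pillars: expressing the increment $X_N(t)-X_N(s)$ as a rescaled theta sum (you shift the summation index to bring the second $\vecxi$-component to zero; the paper leaves it as $N_1$---either is fine since Proposition~\ref{lem-uniform-tail-estimate-chi} is uniform in $\vecxi$), and the uniform tail bound of that proposition. Where you integrate the tail to a third-moment estimate and invoke the Kolmogorov--Chentsov criterion (Billingsley Thm.~12.3), the paper works directly with tails at dyadic times: it bounds the probability of the event $\mathcal{M}_{K,N}$ that some dyadic increment $|X_N((k+1)2^{-m})-X_N(k2^{-m})|$ exceeds $K/m^2$, sums the tail estimate over dyadic levels $m\leq 5\log_2 N$ (the Lipschitz bound $\sqrt{N}$ truncates the scale, playing exactly the role of your short-range case $|t-s|<1/N$), obtains $\lambda(\mathcal{M}_{K,N})\ll K^{-4}$ uniformly in $N$, and appeals to the Arzel\`a--Ascoli form of the tightness criterion (Billingsley Thm.~7.3). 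Your approach is cleaner and more textbook; the paper's is slightly more hands-on but avoids computing any moments.
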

\begin{proof}
For every $K>0$ and every positive integer $N$ set
\begin{align}
\mathcal {M}_{K,N}=\left\{x\in\R:\: \exists\, 
m\geq 1 \mbox{ and }0\leq k<2^m \mbox{ s.t. } 
\left|  X_N\!\left(\frac{k+1}{2^m}\right)- X_N\!\left(\frac{k}{2^m}\right)\right|> \frac{K}{m^2}  \right\},\label{def-set-M_C,N}
\end{align}
We will prove the tightness of the process $t\mapsto X_N(t)$ by establishing that
\begin{align}
\lim_{K\to \infty}\sup_{N\geq1}\lambda(\mathcal{M}_{K,N})=0.\label{this-implies-tightness}
\end{align}
To see that \eqref{this-implies-tightness} is equivalent to tightness, recall how the curve $t\mapsto X_N(t)$ depends on $x$ (cf. \eqref{theta-sum-intro}, \eqref{definition-X_N(x;t)}) and observe that functions in the set
\begin{align}
\mathcal C_0\smallsetminus \bigcup_{K>0}\{X_N:[0,1]\to\C\ \: | \:\: x\in \mathcal \mathcal{M}_{K,N}\}
\end{align}
are uniformly equicontinuous on a dense set (of dyadic rationals). Since in our case $X_N(0)=0$ by definition, uniform equicontinuity is equivalent to tightness (see \cite{Billingsley}, Theorem 7.3).

Let us now show \eqref{this-implies-tightness}.
By construction,
\begin{align}
\left|  X_N\!\left(\frac{k+1}{2^m}\right)- X_N\!\left(\frac{k}{2^m}\right)\right|\leq \frac{\sqrt{N}}{2^m}.
\end{align}
Therefore, if $m>0$ is such that 
$\frac{\sqrt{N}}{2^m}\leq \frac{1}{m^2}$ for all $N\geq 1$, then the inequality defining \eqref{def-set-M_C,N} has no solution for all $K>1$. For $N=1$ the inequality is $\frac{m^2}{2^m}\leq1$ is valid for every $m\geq 4$. For $N\geq 2$ a sufficient condition for  $\frac{m^2}{2^m}\leq \frac{1}{\sqrt{N}}$ is  $m> 5\log_{2}N$.
Thus, it is enough to restrict the range of $m$ in \eqref{def-set-M_C,N} to $1\leq m\leq 5 \log_2 N$.  For these values of $m$, let us estimate the measure of $\mathcal M_{K,N}$ from above by estimating the measure of 
\be
\left\{x\in\R:\: 
\left|  X_N\!\left(\frac{k+1}{2^m}\right)- X_N\!\left(\frac{k}{2^m}\right)\right|> \frac{K}{m^2}  \right\}
\ee
for fixed $m$ and $k$.
Define 
$N_1=\frac{k}{2^m}N$ and 
$N_2=\frac{k+1}{2^m}N$.  
Recall \eqref{rewriting-SNalpha}, and let us observe that
\be\label{S_N2-S_N1=Theta}
\begin{split}
\sqrt{N}\left(X_N\!\left(\frac{k+1}{2^m}\right)- X_N\!\left(\frac{k}{2^m}\right)\right)&=
\sum_{N_1<n\leq N_2} e\!\left((\tha n^2+c_1 n+c_0)x+\alpha x\right)+O(1)\\
&=y^{-\frac{1}{4}}\Theta_\chi\!\left(x+i y,0;\sve{\alpha+c_1 x}{N_1},c_0x+\zeta'\right)+O(1),
\end{split}
\ee
with $y=\frac{1}{(N_2-N_1)^2}$ and for some $\zeta'\in\R$, and where the $O$-term is bounded in absolute value by 2.
We have
\begin{multline}
\lambda\left\{x\in\R:\: 
\left|  X_N\!\left(\frac{k+1}{2^m}\right)- X_N\!\left(\frac{k}{2^m}\right)\right|> \frac{K}{m^2}  \right\}\\
\leq\lambda\left\{x\in\R:\: \sqrt{N_2-N_1}\left|\Theta_\chi(x+i\tfrac{1}{(N_2-N_1)^2},0;\sve{\alpha+c_1x}{N_1},c_0x+\zeta')\right|>\frac{K \sqrt N}{m^2}-2\right\}.
\end{multline}
Observe that $\frac{K \sqrt N}{m^2}-2>\frac{K \sqrt N}{2m^2}$ for $m<\frac{\sqrt{K}N^{1/4}}{2}$ and for sufficiently large $K$ (uniformly in $m,k$), the inequality $\frac{\sqrt{K}N^{1/4}}{2}>5\log_2 N$ holds true for all $N\geq 2$.
Now we apply Proposition \ref{lem-uniform-tail-estimate-chi}: 
\be
\begin{split}
&\lambda\left\{x\in\R:\: 
\left|  X_N\!\left(\frac{k+1}{2^m}\right)- X_N\!\left(\frac{k}{2^m}\right)\right|> \frac{K}{m^2}  \right\}\\
&\leq\lambda\left\{x\in\R:\: \sqrt{N_2-N_1}\left|\Theta_\chi(x+i\tfrac{1}{(N_2-N_1)^2},0;\sve{\alpha+c_1x}{N_1},c_0x+\zeta')\right|>\frac{K \sqrt N}{2 m^2}\right\}\\
&\ll \left(1+\frac{K}{2m^2}\sqrt{\frac{N}{N_2-N_1}}\right)^{-4}\ll K^{-4} m^8 \!\left(\frac{N_2-N_1}{N}\right)^2\ll K^{-4}2^{-2m}m^8.
\end{split}
\ee
Using the fact that 
$\sum_{m=1}^\infty 2^{-m} m^8$ is finite, we have
\begin{align}
\lambda(\mathcal{M}_{K,N})&\ll \sum_{m\leq 5\log_2 N}
\sum_{k=0}^{2^m-1} K^{-4}2^{-2m}m^8
\ll K^{-4} \sum_{m=1}^{\infty} 2^{-m}m^8\ll K^{-4},
\end{align}
uniformly in $N\geq 2$. Taking the limit as $K\to\infty$ concludes the proof of the Proposition. 
\end{proof}

\subsection{The limiting process}\label{sec:the-limiting-process}
Convergence of finite dimensional limiting distributions (from Theorem \ref{convergence-fin-dim-distr-chi}) and tightness (Proposition \ref{thm-tightness}) imply that there exists a random process $[0,1]\ni t\mapsto X(t)\in \C$ such that 
\be X_N
\Longrightarrow X\hspace{.3cm}\mbox{as $N\ti$}\ee 
where ``$\Rightarrow$'' denotes weak convergence in the Wiener space $\mathcal C_0$. 
This shows part (ii) of Theorem \ref{thm-1}. Part (i) follows from Corollary \ref{cor-L2L4}.
By \eqref{statement-convergence-fin-dim-distr-chi}, we can be more precise and write  the limiting process explicitly as a $\mathcal C_0$-valued measurable function on the probability space $(\GamG,\frac{3}{\pi^2}\mu)$, where $\mu$ is the Haar measure \eqref{Haar-measure-on-G} on the homogeneous space $\GamG$. We have 
\be
\left(\GamG,\tfrac{3}{\pi^2}\mu\right)\ni g\mapsto
X\in \mathcal C_0,\hspace{.5cm}X(t)=\begin{cases}0&\mbox{$t=0$}\\ \e^{s/4}\Theta_\chi(\Gamma g\Phi^s)&\mbox{$t>0$}\end{cases}\label{formula-limiting-process}
\ee
where 
$s=2\log t$. 
In other words, the curves of our random process are images (via the automorphic function $\Theta_\chi$ discussed in Section \ref{section:dyadic-representation-Theta-sharp-cutoff}) of geodesic paths in $\GamG$, rescaled by the function $\e^{s/4}=\sqrt t$, where the ``randomness'' comes from the choice of $g\in\GamG$ according to the normalized Haar measure $\frac{3}{\pi^2}\mu$.
Moreover, we can extend our process, a priori defined only for $0\leq t\leq 1$, to all  $t\geq 0$ by means of the formula \eqref{formula-limiting-process}.

Notice that the function $\Theta_\chi$, discussed in Section \ref{section:dyadic-representation-Theta-sharp-cutoff}, is not defined \emph{everywere}. 
However, we are only interested in the value of $\Theta_\chi$ \emph{along geodesics} starting at \emph{$\mu$-almost any point $\Gamma g\in\GamG$}. 
One can check that
\bey
\left(x+i y,\phi;\vecxi,\zeta\right)\Phi^s&=&\left(x_s+i y_s,\phi_s,\vecxi,\zeta\right),\label{gPhi^s=g_s}
\eey
where
\begin{align}
x_s&=x+\frac{y}{\cot 2\phi+\coth s\csc2\phi},\label{xt}\\
y_s&=\frac{y}{\cosh s+\cos 2\phi \sinh s },\label{yt}\\
\phi_s&=2k\pi+\epsilon_1 \arccos\!\left(\epsilon_2\frac{\sqrt2 \e^s\cos\phi}{\sqrt{1+\e^{2s}+(\e^{2s}-1)\cos2\phi}}\right)\hspace{.2cm}\mbox{if $(2k-1)\pi\leq \phi<(2k+1)\pi$}.\hspace{1cm}\label{phi-t}
\end{align}
In the above formula  $\arccos:[-1,1]\to[-\pi,\pi]$ and
\bey
(\epsilon_1,\epsilon_2)=\begin{cases} 
(-1,-1),&(2k-1)\pi\leq\phi<(2k-\tha)\pi;\\
(-1,+1),&(2k-\tha)\pi\leq \phi<2k\pi;\\
(+1,+1),&2k\pi\leq\phi<(2k+\tha)\pi;\\
(+1,-1),&(2k+\tha)\pi\leq \phi<(2k+1)\pi. \end{cases}
\eey
Moreover, the values at $\phi=2k\pi$ (resp. $\phi=(2k\pm\ha)\pi$) are understood as limits as $\phi\to2k\pi$ (resp. $\phi\to(2k\pm\ha)\pi$), at which we get $(x_s,y_s,\phi_s)=(x,\e^{-s}y,2k\pi)$ (resp. $(x_s,y_s,\phi_s)=(x,\e^sy,(2k\pm\ha)\pi)$).
For every $s\in\R$ the function $\R\to\R$, $\phi\mapsto\phi_s$ is a bijection, see Figure \ref{phit}. 
%
\begin{figure}[h!]
\hspace{3cm}\includegraphics[width=10cm]{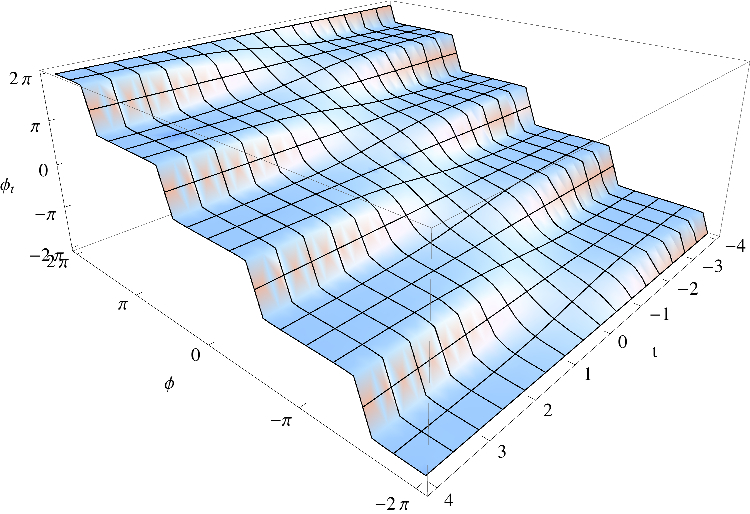}
\caption{The function $\phi_s$ for $-2\pi\leq\phi\leq 2\pi$ and $-4\leq s\leq 4$.}\label{phit}
\end{figure}
%
Moreover, for $\phi\notin\tfrac{\pi}{2}\Z$
\be
(x_s,y_s,\phi_s)\longrightarrow
\begin{cases}
(x+y\tan\phi,0,\lfloor\tfrac{\phi}{\pi}+\tha\rfloor\pi)&\mbox{as $s\to+\infty$},\\
(x-y\cot\phi,0,(\lfloor \tfrac{\phi}{\pi}\rfloor+\ha)\pi)&\mbox{as $s\to-\infty$}.
\end{cases}\ee
It follows from \eqref{gPhi^s=g_s} 
and Theorem \ref{thm:2} 
that for $\mu$-almost every $\Gamma g\in\GamG$, the function $\Theta_\chi(\Gamma g\Phi^s)$ is well defined for all $s\in\R$. Since $s=2\log t$, then the \emph{typical}  curve $t\mapsto X(t)$ process is well defined \emph{for every $t\geq0$}.
The explicit representation \eqref{formula-limiting-process} of the process $X(t)$ allows us to deduce several properties of its typical realizations. These properties reflect those of the geodesic flow $\Phi^s$ on $\GamG$.

Let us remark that part (i) and (ii) of Theorem \ref{thm-2} are simply a restatement of Theorem \ref{tail-asymptotics-for-Theta_chi} and Theorem \ref{convergence-fin-dim-distr-chi}, respectively.

\subsection{Invariance properties}\label{section: invariance properties}
By \emph{scaling invariance} of the theta process we refer to a family of time-changes that leave the distribution of the process $t\mapsto X(t)$ unchanged. In this section we show parts (iii)-(vii) of Theorem \ref{thm-2}.

\begin{lem}[Scaling invariance]\label{lem-scaling-invariance}
Let $X$ denote the theta process. Let $a>0$, then the process $\{Y(t)\colon t\geq0\}$ defined by $Y(t)=\frac{1}{a}X(a^2 t)$ is also a theta process.
\end{lem}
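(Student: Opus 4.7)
The plan is to read the scaling relation directly off the explicit representation \eqref{formula-limiting-process}, using right-invariance of Haar measure on $\GamG$ under the geodesic flow $\Phi^s$. Because the theta process is realised as a deterministic function of a $\mu$-random point in $\GamG$, every distributional identity for $X$ reduces to a measure-preserving identification on $\GamG$.

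First I would substitute. With $s=2\log t$ and $s'=2\log(a^2 t)=4\log a+s$, formula \eqref{formula-limiting-process} gives, for $t>0$,
\be
X(a^2 t)=\e^{s'/4}\Theta_\chi(\Gamma g \Phi^{s'})
=a\,\e^{s/4}\Theta_\chi(\Gamma g\Phi^{4\log a}\Phi^{s}),
\ee
so that
\be
Y(t)=\frac{1}{a}X(a^2 t)=\e^{s/4}\Theta_\chi\!\left(\Gamma (g\Phi^{4\log a})\Phi^{s}\right).
\ee
At $t=0$ both processes vanish by definition. Thus $Y$ is given by exactly the same functional $g\mapsto (t\mapsto \e^{s/4}\Theta_\chi(\Gamma g\Phi^s))$ as $X$, but evaluated at the shifted point $g\Phi^{4\log a}$ rather than at $g$.

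The second step is to invoke invariance of the Haar measure. Since the Haar measure $\mu$ on $G$ is right-invariant, it descends to a right-$\Phi^s$-invariant measure on $\GamG$ for every $s\in\RR$; in particular the map $\Gamma g\mapsto \Gamma g\Phi^{4\log a}$ preserves the normalised Haar measure $\tfrac{3}{\pi^2}\mu$. Consequently, if $g$ is sampled according to $\tfrac{3}{\pi^2}\mu$, then so is $g\Phi^{4\log a}$, and the two $\mathcal C_0$-valued random variables
\be
g\mapsto\bigl(t\mapsto \e^{s/4}\Theta_\chi(\Gamma g\Phi^s)\bigr),\qquad g\mapsto\bigl(t\mapsto \e^{s/4}\Theta_\chi(\Gamma g\Phi^{4\log a}\Phi^s)\bigr)
\ee
have the same law. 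This gives $Y\sim X$ as $\mathcal C_0$-valued processes.

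There is essentially no obstacle here: the whole argument is a one-line change of variables combined with right-invariance of $\mu$. The only small point worth flagging is that $\Theta_\chi$ is only defined off a $\mu$-null set in $\GamG$, so one has to know that the null set is preserved by $\Phi^{4\log a}$; this is automatic, again from right-invariance of $\mu$ (and from the fact that the Diophantine set $D\subset G$ identified after Theorem \ref{thm:2-rephrased} is $\Phi^s$-invariant, since $\Phi^s$ commutes with itself and normalises the relevant horospherical subgroups). The same template — substitute into \eqref{formula-limiting-process}, then invoke right-invariance of $\mu$ under an appropriate element of $G$ — will also cover parts (iv), (vi) and (vii) of Theorem \ref{thm-2}.
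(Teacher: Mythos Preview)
Your proof is correct and is essentially identical to the paper's own argument: substitute into \eqref{formula-limiting-process}, collect the factor $a$ from $\e^{s'/4}=a\,\e^{s/4}$, and invoke right-invariance of the Haar measure under $\Phi^{4\log a}$. The additional remark about the $\mu$-null set being preserved is a welcome clarification but not strictly needed.
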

\begin{proof}
By \eqref{formula-limiting-process}
\begin{align}
Y(t)=\frac{1}{a}X(a^2 t)=\frac{1}{a}\e^{2 \log(a^2t)/4}\Theta_{\chi}(\Gamma g \Phi^{2\log a^2}\Phi^{2\log t})=\e^{s/4}\Theta_\chi(\Gamma g'\Phi^s),
\end{align}
where $s=2\log t$ and $g'=g\Phi^{\log a^4}$. By right-invariance of the Haar measure, if $g$ is distributed according to the normalized Haar measure $\frac{\mu}{\pi^2/3}$, then $g'$ is also distributed according to the same measure. Therefore the processes $X$ and $Y$ have the same distribution.

\end{proof}
Another time change that leaves the distribution of $\{X(t)\colon t\geq0\}$ is  unchanged after the rescaling  $t\mapsto 1/t$. This is called \emph{$t$-time-inversion} and is related to the \emph{$s$-time-reversal symmetry} for the geodesic flow $\Phi^s$ on $\GamG$.
\begin{prop}[Time inversion]\label{prop-time-inversion}
 The process $\{Y(t)\colon t\geq 0\}$ defined by
\be
Y(t)=\begin{cases}
      0&t=0;\\
      t X(1/t)&t>0,
     \end{cases}
\ee
is also a theta process.
\end{prop}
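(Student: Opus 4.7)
Proof plan: I would start from the explicit representation \eqref{formula-limiting-process}, namely $X(t) = \sqrt{t}\,\Theta_\chi(\Gamma g\Phi^{2\log t})$ with $g$ distributed according to the normalized Haar measure on $\Gamma\backslash G$. This immediately gives
\begin{equation*}
Y(t) = tX(1/t) = \sqrt{t}\,\Theta_\chi(\Gamma g\Phi^{-2\log t}), \qquad g\sim \mu/\mu(\Gamma\backslash G),
\end{equation*}
for $t>0$, and $Y(0)=0=X(0)$. The goal is to show that this process has the same finite-dimensional distributions as $X$, which—given the continuity properties established for the theta process—is enough.

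The key algebraic ingredient is that the generator $\gene_1\in\Gamma$ from \eqref{invariance-by-h1} conjugates the geodesic flow into its time reversal. Indeed, $\gene_1 = \tilde k_{\pi/2}\cdot (1;\vecnull,\tfrac18)$, where the central factor commutes with $\Phi^s$, and since the Weyl element $k_{\pi/2}=\bigl(\begin{smallmatrix}0&-1\\1&0\end{smallmatrix}\bigr)$ satisfies $k_{\pi/2}\Phi^s k_{\pi/2}^{-1}=\Phi^{-s}$ in $\SL(2,\R)$, one obtains
\begin{equation*}
\Phi^{-s} \;=\; \gene_1\,\Phi^s\,\gene_1^{-1}.
\end{equation*}
Substituting this into the formula for $Y(t)$ gives $Y(t)=\sqrt{t}\,\Theta_\chi(\Gamma g\gene_1\Phi^s\gene_1^{-1})$. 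By the right-invariance of the Haar measure $\mu$ on $\Gamma\backslash G$ (which holds because $G$ is unimodular—its Haar density \eqref{Haar-measure-on-G} is bi-invariant), the substitution $g\mapsto g\gene_1^{-1}$ is measure-preserving, and yields
\begin{equation*}
Y(t) \;\sim\; \sqrt{t}\,\Theta_\chi\!\bigl(\Gamma g\,\Phi^s\,\gene_1^{-1}\bigr), \qquad g\sim\mu/\mu(\Gamma\backslash G).
\end{equation*}

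The remaining step—and the main obstacle—is to absorb the residual right-factor $\gene_1^{-1}$. The left $\Gamma$-invariance of $\Theta_\chi$ cannot be applied directly to a factor on the right, but one may decompose $\gene_1^{-1}=(1;\vecnull,-\tfrac18)\tilde k_{-\pi/2}$. The central factor produces a global phase $e(-\tfrac18)$ via $\Theta_\chi(h\cdot (1;\vecnull,\zeta_0))=e(\zeta_0)\Theta_\chi(h)$, while the factor $\tilde k_{-\pi/2}$ intertwines $\Theta_\chi$ with the Shale-Weil rotated cutoff via $\Theta_\chi(h\tilde k_{-\pi/2})=\Theta_{\chi_{-\pi/2}}(h)$, where $\chi_{-\pi/2}=R(\tilde k_{-\pi/2})\chi$. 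The global phase is harmless: the joint law of $(\Theta_\chi(\Gamma g\Phi^{s_j}))_j$ is invariant under multiplication by any common $e^{i\theta}$, a consequence of the $\mu$-invariance under right translation by the central one-parameter subgroup $\{(1;\vecnull,\zeta):\zeta\in\R\}$ combined with the phase rule above. The final—and most delicate—identification is to show that the theta process $\sqrt{t}\,\Theta_{\chi_{-\pi/2}}(\Gamma g\Phi^s)$ coincides in distribution with the process $\sqrt{t}\,\Theta_\chi(\Gamma g\Phi^s)$. I expect this to be the hard part, and to rest on (i) the equidistribution theorems of Section~\ref{sec:equi} applied with the alternative cutoff (using that $\chi_{-\pi/2}\in L^2(\R)$ so the corresponding theta function is well-defined $\mu$-a.e.), and (ii) the invariance of $D(\chi)$ under the $\widetilde{\SO}(2)$-action (via unitarity of $R$ and the $\pi$-periodicity $|\chi_{\phi+\pi}|=|\chi_\phi|$), which ensures that the corresponding process obeys the same finite-dimensional limit law.
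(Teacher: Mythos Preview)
Your starting point and core mechanism match the paper's proof exactly: write $Y(t)=\sqrt t\,\Theta_\chi(\Gamma g\Phi^{-s})$, use the time-reversal conjugation $\tilde k_{\pi/2}\Phi^s\tilde k_{\pi/2}^{-1}=\Phi^{-s}$, and invoke right-invariance of Haar measure. The paper does this in two lines, with $h=(i,\pi/2;\vecnull,0)$ in place of your $\gene_1$ (they differ only by a central factor), and simply stops after ``right-invariance of the normalized Haar measure.'' You are more careful and correctly isolate the residual right-factor $\tilde k_{-\pi/2}$ that survives the substitution $g\mapsto g\tilde k_{\pi/2}$; the paper does not comment on it.

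Your handling of the central phase is fine (it is absorbed exactly as you say, by right-translation in the $\zeta$-direction). The genuine gap is in your final step. You reduce to showing that the process $\sqrt t\,\Theta_{\chi_{-\pi/2}}(\Gamma g\Phi^s)$ has the same finite-dimensional distributions as $\sqrt t\,\Theta_\chi(\Gamma g\Phi^s)$, and propose to deduce this from the equidistribution theorems of Section~\ref{sec:equi} together with the identity $D(\chi_{-\pi/2})=D(\chi)$. Neither ingredient does the job. The finite-dimensional law at times $(t_1,\ldots,t_k)$ is the pushforward of Haar under $g\mapsto(\Theta_{f,s_1}(g),\ldots,\Theta_{f,s_k}(g))$; this depends on the \emph{function} $\Theta_f$ on $\Gamma\backslash G$, not merely on the scalars $\|f\|_{L^2}$ or $D(f)$. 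Equidistribution of horocycle pushes (Corollary~\ref{cor-thmJEQ}) tells you only that these pushforwards arise as limits along horocycles---it does not compare the pushforwards for two different test functions $\Theta_\chi$ and $\Theta_{\chi_{-\pi/2}}$. Concretely: $\Theta_{\chi_{-\pi/2}}(\Gamma g\Phi^{s_j})=\Theta_\chi(\Gamma g\Phi^{s_j}\tilde k_{-\pi/2})$, and since $\tilde k_{-\pi/2}$ does \emph{not} commute with $\Phi^s$, there is no single right-translation of $g$ that simultaneously removes the factor for all $j$. So the argument you sketch does not close the gap; a different idea is needed here (or one must accept, as the paper implicitly does, that the residual $\tilde k_{\pi/2}$ is harmless without further justification).
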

\begin{proof}
Observe that $g\Phi^s=gh\Phi^{-s}$, where
\be h=(i,\pi/2;\bm0,0)=\left(\sma{0}{-1}{1}{0}\!,\arg;\bm0,0\right)\ee
corresponds to the $s$-time-reversal symmetry for geodesics. The proposition then follows immediately by \eqref{formula-limiting-process} and the right-invariance of  the normalized Haar measure.
\end{proof}

Like in the case of Wiener process, time-inversion can be used to relate properties of sample paths in a neighborhood of time $t=0$ to properties at infinity. An example is the following
\begin{cor}[Law of large numbers]\label{cor-lln}
 Almost surely
\be \lim_{t\ti}\frac{X(t)}{t}=0.\ee
\end{cor}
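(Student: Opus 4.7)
The plan is to deduce the law of large numbers directly from the time inversion symmetry of Proposition \ref{prop-time-inversion}, exactly as one does for the Wiener process. The key observation is the tautological identity
\begin{equation}
\frac{X(t)}{t} = Y(1/t), \qquad t>0,
\end{equation}
where $Y(s):= s X(1/s)$ for $s>0$ and $Y(0):=0$.

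First, by Proposition \ref{prop-time-inversion}, the process $\{Y(s):s\geq 0\}$ is itself a theta process, in particular its sample paths almost surely lie in $\mathcal{C}_0([0,\infty),\mathbb{C})$; that is, they are continuous on $[0,\infty)$ and take the value $0$ at $s=0$. Consequently, almost surely,
\begin{equation}
\lim_{s\downarrow 0} Y(s) = Y(0) = 0.
\end{equation}

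Setting $s=1/t$ and letting $t\to\infty$ converts this limit into the desired statement: almost surely,
\begin{equation}
\lim_{t\to\infty} \frac{X(t)}{t} = \lim_{t\to\infty} Y(1/t) = 0.
\end{equation}
There is no real obstacle beyond invoking the two inputs already available, namely the continuity of theta process paths at $t=0$ (built into the realization \eqref{formula-limiting-process} as a measurable map to $\mathcal{C}_0$) and the time inversion invariance of Proposition \ref{prop-time-inversion}.
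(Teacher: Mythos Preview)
Your proof is correct and is essentially identical to the paper's own argument: both deduce the claim from the time-inversion invariance (Proposition \ref{prop-time-inversion}) via the identity $X(t)/t=Y(1/t)$ and the almost sure continuity of the theta process $Y$ at $0$.
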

\begin{proof}
 Let $Y$ be defined as in the proof of Proposition \ref{prop-time-inversion}. Then, applying this proposition, we have that $\lim_{t\ti}X(t)/t=\lim_{t\ti}Y(1/t)=Y(0)=0$ almost surely.
\end{proof}

We want to prove another basic property of the theta process, its \emph{stationarity}, i.e. the fact that any time-shift also leaves the distribution of the process unchanged.

\begin{theorem}[Stationarity]\label{thm-stationarity}
Fix $t_0\geq0$. Consider the process 
\be
Y(t)=X(t_0+t)-X(t_0).
\ee
Then $\{Y(t)\colon t\geq0 \}$ is also a theta process
\end{theorem}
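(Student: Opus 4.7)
The strategy is to show $Y_N(t) := X_N(t_0+t) - X_N(t_0)$ converges weakly on $\mathcal C_0$ to a process distributed as $X$. Since the shift $\phi\mapsto \phi(t_0+\cdot)-\phi(t_0)$ is continuous on $\mathcal C_0$, the mapping theorem combined with $X_N\Rightarrow X$ gives $Y_N\Rightarrow Y$, and tightness of $\{Y_N\}$ is inherited from Proposition \ref{thm-tightness}. Uniqueness of weak limits thus reduces the proposition to identifying the finite-dimensional limit distributions of $Y_N$ with those of $X$.

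Setting $M=\lfloor t_0 N\rfloor$ and substituting $n=m+M$, the identity $P(m+M)=\tfrac12 m^2+(M+c_1)m+P(M)$ gives
$$Y_N(t) = \frac{e(P(M)x+\alpha M)}{\sqrt N}\sum_{1\leq m\leq \widetilde N(t)} e\!\bigl(\tfrac12 m^2 x+m[(M+c_1)x+\alpha]\bigr)+O(N^{-1/2}),$$
with $\widetilde N(t)=\lfloor(t_0+t)N\rfloor-M=tN+O(1)$. Applying Theorem \ref{thm:2-rephrased} to the inner sum, absorbing the outer phase by right-multiplication by a central element of $G$, and using that the center commutes with $\Phi^s$ yields
$$Y_N(t) = \sqrt{t}\,\Theta_\chi\bigl(\Gamma\,g_N(x)\,h\,\Phi^{\tau+2\log t}\bigr)+o(1),\qquad \tau=2\log N,$$
where $g_N(x) = (1;\sve{(M+c_1)x+\alpha}{0},P(M)x+\alpha M)\Psi^x$ and $h\in P^x$ is chosen measurably in $x$. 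Convergence of the finite-dimensional distributions of $Y_N$ to those of $X$ thus reduces to showing, for every bounded continuous $F:\GamG\to\RR$,
$$\lim_{\tau\to\infty}\int_\R F(\Gamma g_N(x)\Phi^\tau)\,d\lambda(x)=\frac{1}{\mu(\GamG)}\int_{\GamG} F\,d\mu.$$

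The expression $g_N(x)\Phi^\tau$ fits the template of Corollary \ref{cor-thmJEQ} with parameters $(\alpha,\beta,\zeta,\gamma)=(\alpha,M+c_1,\alpha M,P(M))$, and the hypothesis $\sve{c_1}{\alpha}\notin\QQ^2$ guarantees $(\alpha,M+c_1)\notin\QQ^2$ for every integer $M$; the novelty is that these parameters grow with $\tau$ through $M=\lfloor t_0 e^{\tau/2}\rfloor$. Using the decomposition from the proof of that corollary,
$$g_N(x)\Phi^\tau = A_N\,\Psi^x\Phi^\tau\,B_N(x),$$
with $A_N = (1;\sve{\alpha}{-(M+c_1)},\alpha M)$ and $B_N(x)$ having $x$-independent Heisenberg coordinate $(0,e^{-\tau/2}(M+c_1))\to(0,t_0)$ and central coordinate $x[P(M)-(M+c_1)^2]-(M+c_1)\alpha$, affine in $x$ with slope $-\tfrac12 M^2+O(M)\to-\infty$. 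Reducing $A_N$ modulo $\Gamma$ via the generators $\gene_3,\gene_4,\gene_5$ brings its Heisenberg part to the fixed value $\sve{\{\alpha\}}{\{-c_1\}}$ at the cost of an $x$-independent shift of the central coordinate. Using the identity $F(g(1;\vecnull,c))=B(e(c)\Theta_\chi(g\Phi^{2\log t_1}),\ldots,e(c)\Theta_\chi(g\Phi^{2\log t_k}))$, the test function factorises as $F(\Gamma g_N(x)\Phi^\tau)=F\bigl(\Gamma g_\tau'(x)\,(1;\vecnull,\widetilde\theta_\tau(x))\bigr)$, where $g_\tau'(x)$ equidistributes on $\GamG$ by Theorem \ref{thmJEQ} and $\widetilde\theta_\tau(x)\in\RR/\ZZ$ is affine in $x$ with slope tending to $-\infty$.

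By the Riemann--Lebesgue lemma applied to the absolutely continuous measure $\lambda$, $\widetilde\theta_\tau(x)\bmod 1$ equidistributes uniformly on $[0,1)$ as $\tau\to\infty$; an integration by parts argument on the oscillatory integral promotes this to a joint equidistribution of $(\widetilde\theta_\tau(x),\Gamma g_\tau'(x))$ to the product of Lebesgue measure on $[0,1)$ and the Haar measure $\mu$ on $\GamG$. The resulting double integral factors accordingly, and right-invariance of $\mu$ under central elements of $G$ collapses it to $\frac{1}{\mu(\GamG)}\int F\,d\mu$, as claimed. The main technical obstacle is this last joint equidistribution step, which requires combining the Ratner--Shah equidistribution underlying Theorem \ref{thmJEQ} with Weyl's equidistribution for the rapidly oscillating phase; the decisive simplification is that the diverging coefficient of $\widetilde\theta_\tau(x)$ arises from a polynomial in the integer $M$, so the irrationality of either $c_1$ or $\alpha$ (guaranteed by our hypothesis) suffices to run the Weyl argument.
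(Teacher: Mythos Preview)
Your approach is workable in outline but the ``joint equidistribution'' step is a genuine gap, and the paper's argument is far more direct.

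The paper does not pass through the approximating processes $X_N$ at all. It works directly with the explicit representation $X(t)=\e^{s/4}\Theta_\chi(\Gamma g\Phi^s)$, $s=2\log t$, on the probability space $(\GamG,\tfrac{3}{\pi^2}\mu)$. The key identity is
\[
X(t_0+t)-X(t_0)=\Theta_{\bm 1_{(t_0,t_0+t)}}(\Gamma g)=\e^{s/4}\Theta_\chi(\Gamma g'\Phi^s),\qquad g'=g\,(i,0;\sve{0}{t_0},0),
\]
obtained from $\bm 1_{(t_0,t_0+t)}(w)=[W(\sve{0}{t_0},0)\bm 1_{(0,t)}](w)$ together with $\e^{s/4}[R(\Phi^s)\chi]=\bm 1_{(0,t)}$. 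Right-invariance of Haar measure then gives $Y\sim X$ in one line. No equidistribution beyond what was already used to construct $X$ is needed.

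Your route, by contrast, requires a new limit theorem for the finite-dimensional distributions of $Y_N$, with parameters $\beta=M+c_1$, $\gamma=P(M)$ in Corollary~\ref{cor-thmJEQ} that \emph{diverge} with $\tau$. The proof of that corollary proceeds by defining $\tilde F_t(u,g)=F_t(u,g(1;\sve{0}{e^{-t/2}\beta},u\gamma-\beta(\alpha+u\beta)))$ and invoking Theorem~\ref{thmJEQ2}, which needs $\tilde F_t\to\tilde F$ uniformly on compacta; with $\gamma=\gamma_\tau\to\infty$ this convergence fails. You attempt to repair this by separating the central coordinate and asserting joint equidistribution of $(\widetilde\theta_\tau(x),\Gamma g_\tau'(x))$. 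But the oscillation scale of $\widetilde\theta_\tau(x)$ in $x$ is $\asymp M^{-2}\asymp \e^{-\tau}$, which is the \emph{same} scale on which $x\mapsto\Gamma g_\tau'(x)$ varies (since $\Psi^x\Phi^\tau$ expands the $x$-direction by $\e^\tau$). Riemann--Lebesgue or integration by parts separates frequencies only when one component oscillates much faster than the other; here they are comparable, and the Ratner--Shah input is non-effective, so the asserted product structure of the limit is not justified by the argument you sketch. It may well be provable---e.g.\ by showing directly that the twisted curve $x\mapsto \Gamma A''\Psi^x\Phi^\tau(1;\vecnull,a_\tau x)$ equidistributes, perhaps via an expanding-horosphere argument in the larger unipotent group generated by $\Psi$ and the centre---but that is substantial additional work, not a Weyl-type triviality.

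In short: your reduction to FDDs via the continuous mapping theorem is fine, and the algebraic rewriting of $Y_N$ is correct, but the equidistribution you need is strictly stronger than anything proved in the paper, and your justification of it is the missing idea. The paper's proof sidesteps all of this by exploiting that stationarity is a statement about the \emph{limit} process, for which the group-theoretic representation makes the shift $t\mapsto t+t_0$ correspond to right multiplication by a fixed element of~$G$.
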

\begin{proof}
A crucial observation is that we can write the term $\frac{X(t_0+t)-X(t_0)}{\sqrt t}$ as a theta function. In fact, for $t>0$ and $s=2\log t$ we have
\be
\begin{split}\label{chi_(0,t]-transformation}
[R(\Phi^s)\chi](w)& =[R(i\e^{-s},0;\bm 0,0)\chi](w)=\e^{-s/4}[R(i,0;\bm 0,0)\chi](\e^{-s/2}w)\\
& =\e^{-s/4}\chi(\e^{-s/2}w)=\e^{-s/4}\chi(\tfrac{w}{t})=\e^{-s/4}\bm1_{(0,t)}(w).
\end{split}
\ee
This implies
\be
\begin{split}
X(t)& =\e^{s/4}\Theta_\chi(\Gamma g \Phi^s)=\e^{s/4}\sum_{n\in\Z}[R(g\Phi^s)\chi](n)=\e^{s/4}\sum_{n\in\Z}[R(g)R(\Phi^s)\chi](n)\\
& =\sum_{n\in\Z}[R(g)\bm1_{(0,t)}](n)=\Theta_{\bm1_{(0,t)}}(\Gamma g)
\end{split}
\ee
and
\begin{align}
X(t_0+t)-X(t_0)=\sum_{n\in\Z}[R(g)\bm1_{(t_0,t_0+t)}](n)=\Theta_{\bm1_{(t_0,t_0+t)}}(\Gamma g).
\end{align}
Now, by \eqref{chi_(0,t]-transformation}, 
\be
\begin{split}
\chi_{(t_0,t_0+t)}(w)&=\bm1_{(0,t)}(w-t_0)=\e^{s/4}[R(\Phi^s)\chi](w-t_0)\\
&=\sqrt t\left[W\!\left(\sve{0}{t_0},0\right)R(\Phi^s)\chi\right](w)\\
&=\sqrt t\left[R(i,0,\sve{0}{t_0},0)R(\Phi^s_0)\chi\right](w)
\end{split}
\ee
and therefore
\be\label{pf-mod-of-cont-1}
\begin{split}
Y(t)&=X(t_0+t)-X(t_0)=\sqrt t \sum_{n\in\Z}\left[R(g)R(i,0;\sve{0}{t_0},0)R(\Phi^s_0)\chi \right](n)\\
&=\e^{s/4}\Theta_\chi(\Gamma g'\Phi^s)
\end{split}
\ee
where $s=2\log t$ and $g'=g(i,0;\sve{0}{t_0},0)$. Using the right-invariance of the normalized Haar measure as before, we get the desired statement.
\end{proof}

\begin{theorem}[Rotational Invariance]\label{thm-rotational-invariance}
Fix $\theta\in\R$ and consider the process $Y(t)=\e^{2\pi i \theta}X(t)$. Then $Y\sim X$.
\end{theorem}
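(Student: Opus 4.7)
The plan is to realize the factor $e^{2\pi i\theta}$ as arising from right-multiplication on $\GamG$ by a central element of $G$, and then invoke right-invariance of Haar measure, exactly as in the proofs of scaling invariance (Lemma \ref{lem-scaling-invariance}), time inversion (Proposition \ref{prop-time-inversion}), and stationarity (Theorem \ref{thm-stationarity}).

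First I would identify the relevant central element as $h_\theta := (1;\vecnull,\theta) \in G$. A direct computation from the multiplication law \eqref{mult--univ-Jacobi} shows that right-multiplying $g=(z,\phi;\vecxi,\zeta)$ by $h_\theta$ only shifts the $\zeta$-coordinate: $g\,h_\theta = (z,\phi;\vecxi,\zeta+\theta)$. Inspecting the explicit formula \eqref{Jacobi-theta-sum-2} reveals that the $\zeta$-dependence of $\Theta_f$ is contained entirely in the prefactor $e(\zeta-\tha\xi_1\xi_2)$, so for any admissible $f$,
\begin{equation}\label{eq:central-twist}
\Theta_f(g\,h_\theta) = e(\theta)\,\Theta_f(g) = e^{2\pi i\theta}\,\Theta_f(g).
\end{equation}
By the dyadic construction \eqref{def-Theta-chi}, the same identity then propagates to $\Theta_\chi$ on the full-measure invariant set $D$; note that $h_\theta$ preserves $D$ since it commutes with $\Phi^s$ and does not affect the Diophantine conditions defining $P^x$.

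Next I would observe that $h_\theta$ commutes with the geodesic flow: $h_\theta \Phi^s = \Phi^s h_\theta$ for all $s\in\R$ (again immediate from \eqref{mult--univ-Jacobi}). Combining this with \eqref{eq:central-twist} and the representation \eqref{formula-limiting-process} yields, for every $t>0$ and $s=2\log t$,
\begin{equation}
Y(t) = e^{2\pi i\theta}X(t) = e^{s/4}\,e(\theta)\,\Theta_\chi(\Gamma g\Phi^s) = e^{s/4}\,\Theta_\chi(\Gamma g\Phi^s h_\theta) = e^{s/4}\,\Theta_\chi(\Gamma g'\Phi^s),
\end{equation}
where $g' := g\,h_\theta$; trivially $Y(0)=0=e^{s/4}\Theta_\chi(\Gamma g'\Phi^s)|_{t=0}$ by convention. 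Crucially, the single substitution $g\mapsto g'$ realizes the identity $Y(t)=X'(t)$ \emph{simultaneously} for all $t\geq 0$, where $X'(t):=e^{s/4}\Theta_\chi(\Gamma g'\Phi^s)$ is another copy of the theta process driven by $g'$.

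Finally, since $g$ is distributed according to the normalized Haar measure $\frac{3}{\pi^2}\mu$ on $\GamG$, right-invariance of $\mu$ on $G$ (descending to $\GamG$) implies that $g' = g\,h_\theta$ has the same distribution. Therefore the $\mathcal{C}_0$-valued random variables $\{Y(t)\}_{t\geq 0}$ and $\{X(t)\}_{t\geq 0}$ induce identical probability measures on the Wiener space, proving $Y\sim X$. There is no substantive obstacle here: once one has the explicit automorphic representation \eqref{formula-limiting-process} and the commutation $h_\theta\Phi^s = \Phi^s h_\theta$, rotational invariance is essentially forced by the $U(1)$-symmetry of the central direction in the Heisenberg fibre of $\GamG$.
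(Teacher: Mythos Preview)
Your proof is correct and is essentially identical to the paper's own argument: both multiply on the right by the central element $(1;\vecnull,\theta)=(i,0;\bm0,\theta)$, use that this shifts only the $\zeta$-coordinate so that $\Theta_\chi$ picks up the factor $e(\theta)$, and conclude via right-invariance of Haar measure. The paper compresses this into a single displayed equation, while you spell out the commutation with $\Phi^s$ and the propagation of \eqref{eq:central-twist} through the dyadic decomposition, but the core idea is the same.
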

\begin{proof}
Observe that
\begin{align}
Y(t)=e(\theta)\sqrt t\,\Theta_\chi(\Gamma g\Phi^{s})=\sqrt t\,\Theta_\chi(\Gamma g(i,0;\bm0,\theta)\Phi^{s})
\end{align}
and use the right-invariance of the normalized Haar measure as before.
\end{proof}

\subsection{Continuity properties}
\label{section:Holder-continuity}
In this section we prove parts (viii)-(x) of Theorem \ref{thm-2}.
By definition of the curves $t\mapsto X_N(t)$ and tightness, we already know that typical realizations of the theta process are continuous. In particular, for $0\leq t\leq 1$ (or any other compact interval) typical realizations are uniformly continuous, i.e. there exists some function $\varphi$ (depending on the realization) with $\lim_{ h\downarrow0}\varphi(t)=0$, called a \emph{modulus of continuity} of $X:[0,1]\to\C$, such that
\be\limsup_{h\downarrow0}\sup_{0\leq t\leq 1-h}\frac{|X(t+h)-X(t)|}{\varphi(h)}\leq 1\ee
If $X$ above is replaced by a Wiener process, then a classical theorem by L\'evy \cite{levy1937theorie} states that there exists a deterministic modulus of continuity, namely $\varphi(h)=\sqrt{2 h\log(1/h)}$, for almost every realization. For the theta process, a similar result is true, but with a smaller exponent on the logarithmic factor. This result follows from the representation of $\frac{X(t+h)-X(t)}{\sqrt h}$ as a theta function as before, and a logarithm law for the geodesic flow proved by Sullivan \cite{Sullivan-1982} and by Kleinbock and Margulis \cite{Kleinbock-Margulis-1999} in very general setting.

\begin{theorem}[Modulus of continuity]\label{thm-modulus-of-continuity}
For every $\varepsilon>0$ there exists a constant $C_\varepsilon>$ such that, almost surely, for every sufficiently small $h>0$ and all $0\leq t\leq 1-h$,
\be|X(t+h)-X(t)|\leq C\sqrt{h}(\log(1/h))^{1/4+\varepsilon}.\ee
\end{theorem}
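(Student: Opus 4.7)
The plan is to combine three ingredients, leveraging the geometric structure of $\GamG$. First, the stationarity representation \eqref{pf-mod-of-cont-1} from the proof of Theorem \ref{thm-stationarity} gives, for every $t,h\geq 0$ with $t+h\leq 1$,
\[
X(t+h)-X(t)=\sqrt{h}\,\Theta_\chi\!\left(\Gamma\, g\,(i,0;\sve{0}{t},0)\,\Phi^{2\log h}\right),
\]
where $g\in\GamG$ is the random point governing the theta process. So the task reduces to showing that, $\mu$-almost surely,
\[
\sup_{0\leq t\leq 1-h}\bigl|\Theta_\chi\!\left(\Gamma\, g\,(i,0;\sve{0}{t},0)\,\Phi^{s}\right)\bigr|\ll_\varepsilon (\log(1/h))^{1/4+\varepsilon}
\]
with $s=2\log h$, for all sufficiently small $h>0$. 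The key structural observation is that the Heisenberg factor $(i,0;\sve{0}{t},0)$ has trivial $\tsltr$-component, so right multiplication by it leaves the $\h$-coordinate of $g$ unchanged; consequently, the $\SL_2$-projection of $g(i,0;\sve{0}{t},0)\Phi^{\tau}$ coincides with that of $g\Phi^{\tau}$ for every $t\in\RR$ and $\tau\in\RR$.

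Second, using the dyadic decomposition \eqref{def-Theta-chi} together with Lemma \ref{lem-compare-|Theta_f|-and-H(z)} applied to $\Delta,\Delta_-\in\mathcal{S}_\eta(\RR)$, we obtain the pointwise cusp bound
\[
|\Theta_\chi(g')|\ll \sum_{j=0}^\infty 2^{-j/2}\,H(g'\Phi^{-(2\log 2)j}),
\]
where $H$ is the function defined in \eqref{def-H(z)}, which depends only on the $\h$-coordinate and satisfies $H(z)\asymp\bigl(\sup_{\gamma\in\sltz}\Im(\gamma z)\bigr)^{1/4}$ when this supremum is large. Crucially, by the observation above,
\[
H\!\left(g(i,0;\sve{0}{t},0)\Phi^{s-(2\log 2)j}\right)=H\!\left(g\Phi^{s-(2\log 2)j}\right)
\]
is independent of $t$, so the supremum over $t\in[0,1-h]$ is automatic once this cusp bound is in place---this is the crucial simplification enabled by our choice to bound $\Theta_\chi$ through cusp heights rather than through its tail distribution.

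Third, we invoke the logarithm law of Sullivan \cite{Sullivan-1982}, refined by Kleinbock-Margulis \cite{Kleinbock-Margulis-1999}. Since $\GamG$ is a compact bundle over $\sltz\backslash\h$ and $\Phi^\tau$ projects to the standard geodesic flow, the law descends to: for $\mu$-a.e.\ $g\in\GamG$,
\[
\limsup_{|\tau|\to\infty}\frac{\log\sup_{\gamma\in\sltz}\Im(\gamma z_{g,\tau})}{\log|\tau|}=1,
\]
where $z_{g,\tau}\in\h$ is the $\h$-coordinate of $g\Phi^{\tau}$. In particular, for every $\varepsilon'>0$ and almost every $g$, we have $H(g\Phi^\tau)\leq|\tau|^{1/4+\varepsilon'}$ for all $|\tau|$ sufficiently large. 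Combining with $|s|=2\log(1/h)$ and $|s-(2\log 2)j|\leq|s|+(2\log 2)j$,
\[
\sup_{0\leq t\leq 1-h}\bigl|\Theta_\chi\!\left(\Gamma g(i,0;\sve{0}{t},0)\Phi^{s}\right)\bigr|\ll \sum_{j=0}^\infty 2^{-j/2}(|s|+j)^{1/4+\varepsilon'}\ll|s|^{1/4+\varepsilon'}\ll(\log(1/h))^{1/4+\varepsilon'},
\]
where the sum is dominated by the terms with $j=O(1)$. Relabeling $\varepsilon'=\varepsilon$ yields the theorem.

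The main conceptual obstacle---or rather, pitfall to avoid---is trying to prove the uniformity over $t$ by a naive Borel-Cantelli argument on a dyadic grid together with the polynomial tail $R^{-6}$ from Theorem \ref{tail-asymptotics-for-Theta_chi}: at scale $h=2^{-m}$ one has $2^m$ increments, and $\sum_m 2^m\, m^{-3/2-6\varepsilon}$ diverges, so this route cannot reach a polylogarithmic rate. What rescues the argument is that the cusp height of $g(i,0;\sve{0}{t},0)\Phi^\tau$ is \emph{independent of $t$}, so the supremum over $t$ comes for free, and Sullivan's logarithm law supplies exactly the polylogarithmic exponent $\tfrac14+\varepsilon$ in $|s|\sim\log(1/h)$.
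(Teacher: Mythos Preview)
Your proof is correct and follows essentially the same approach as the paper: both use the stationarity representation \eqref{pf-mod-of-cont-1}, the dyadic decomposition \eqref{def-Theta-chi}, a cusp-height bound for $\Theta_\Delta,\Theta_{\Delta_-}$, and the Sullivan/Kleinbock--Margulis logarithm law. Your write-up is in fact clearer on one point: you make explicit that the Heisenberg factors $(i,0;\sve{0}{t},0)$ and $(1;\sve{0}{1},0)$ leave the $\h$-coordinate unchanged, so the cusp height $H(g(i,0;\sve{0}{t},0)\Phi^\tau)=H(g\Phi^\tau)$ is independent of $t$---this is precisely why the supremum over $t\in[0,1-h]$ comes for free, whereas the paper applies the log law ``for almost every $g$'' without flagging this $t$-independence.
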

\begin{proof}
Let us use the representation \eqref{pf-mod-of-cont-1} and write
\be
\frac{|X(t+h)-X(t)|}{\sqrt h}=\left|\Theta_{\chi}(\Gamma g\, (i,0;\sve{0}{t},0) \Phi^r)\right|\label{pf-mod-of-cont-1b}
\ee
where $r=2\log h$.
Theorem \ref{thm-modulus-of-continuity} thus reduces to a bound on the right hand side of \eqref{pf-mod-of-cont-1b} for almost every $\Gamma g\in\GamG$. We obtain this bound by using the dyadic decomposition \eqref{def-Theta-chi},
\bey \label{def-Theta-chi222}
\begin{split} 
\Theta_{\chi}(g\, (i,0;\sve{0}{t},0) \Phi^r) &=\sum_{j=0}^\infty 2^{-j/2}\Theta_\Delta(\Gamma g\, (i,0;\sve{0}{t},0) \Phi^{r-(2\log 2)j}) \\ 
& +
\sum_{j=0}^\infty2^{-j/2}\Theta_{\Delta_-}(\Gamma g\, (i,0;\sve{0}{t},0) \Phi^r (1;\sve{0}{1},0)\Phi^{-(2\log 2)j}), 
\end{split}
\eey
and estimating each summand. 
For $g$, $t$ fixed, let us set
\begin{equation}
(z_s,\phi_s;\vecxi_s,\zeta_s)=g\, (i,0;\sve{0}{t},0) \Phi^{s}
\end{equation}
and define 
\begin{equation}
\tilde y_s = \sup_{M\in\SL(2,\Z)} \Im(M z_s)
\end{equation}
as the height in the cusp of our trajectory at time $s$ (cf.~\eqref{3.39}). By Lemma \ref{lem-expansion-at-infinity}, we have
\begin{equation}
|\Theta_\Delta(\Gamma g\, (i,0;\sve{0}{t},0) \Phi^{r-(2\log 2)j})| \ll \tilde y_{r-(2\log 2)j}^{1/4},
\end{equation}
\begin{equation}
|\Theta_{\Delta_-}(\Gamma g\, (i,0;\sve{0}{t},0) \Phi^r (1;\sve{0}{1},0)\Phi^{-(2\log 2)j})| \ll \tilde y_{r-(2\log 2)j}^{1/4}.
\end{equation}
If $d_{\SL(2,\Z)}$ denotes the standard Riemannian metric on the modular surface $\SL(2,\Z)\backslash\frak H$, then
\begin{equation}
d_{\SL(2,\Z)}(i , x+i y) \sim \log y \qquad (y\to\infty).
\end{equation}
Kleinbock and Margulis \cite{Kleinbock-Margulis-1999} (cf.\ also Sullivan \cite{Sullivan-1982}) show that for almost every $g$ and every $\varepsilon>0$ ,
\begin{align}
d_{\SL(2,\Z)}(i,x_s+i y_s)\geq (1-\varepsilon)\log|s|&&\mbox{infinitely often}\\
d_{\SL(2,\Z)}(i,x_s+i y_s)\leq (1+\varepsilon)\log|s|&&\mbox{for all sufficienly large $|s|$}\label{log-law-suff-large} .
\end{align}
Thus
\begin{equation}
|\Theta_\Delta(\Gamma g\, (i,0;\sve{0}{t},0) \Phi^{r-(2\log 2)j})| \ll 
\max( |r-(2\log 2)j|^{\frac14(1+\epsilon)}, 1),
\end{equation}
\begin{equation}
|\Theta_{\Delta_-}(\Gamma g\, (i,0;\sve{0}{t},0) \Phi^r (1;\sve{0}{1},0)\Phi^{-(2\log 2)j})| \ll 
\max( |r-(2\log 2)j|^{\frac14(1+\epsilon)}, 1).
\end{equation}
In view of \eqref{def-Theta-chi222}, this yields
\be\left|\Theta_\chi(g(i,0;\sve{0}{t},0)\Phi^r)\right|\ll |r|^{\frac14(1+\varepsilon)}.\label{pf-mod-of-cont-6}\ee
By recalling that $r=2\log h$, the proof of Theorem \ref{thm-modulus-of-continuity} follows from \eqref{pf-mod-of-cont-1b}, and \eqref{pf-mod-of-cont-6}.
\end{proof}

Despite the  unusual modulus of continuity $\sqrt h(\log(1/h))^{1/4+\varepsilon}$, we can prove that typical realizations of the theta process are $\theta$-H\"older continuous for any $\theta<1/2$. This result is completely analogous to the one for Wiener process sample paths due to L\'evy \cite{levy1937theorie}.

\begin{cor}[H\"older continuity]\label{thm-Holder-continuity}
 If $\theta<1/2$, then, almost surely the theta process is everywhere locally $\theta$-H\"older continuous, i.e. for every $t\geq 0$ there exists $\delta>0$ and $C>0$ such that
\begin{align}
|X(t)-X(t')|\leq C|t-t'|^\theta\hspace{.5cm}\mbox{for all $t'\geq0$ with $|t-t'|<\delta$}.
\end{align}
\end{cor}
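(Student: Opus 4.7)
The plan is to derive Corollary \ref{thm-Holder-continuity} as a direct consequence of the modulus of continuity bound in Theorem \ref{thm-modulus-of-continuity} combined with the stationarity of the theta process (Theorem \ref{thm-stationarity}). The argument is essentially a two-step affair: first establish local H\"older continuity near $t=0$ on the unit interval; second, transport this to an arbitrary base point using stationarity.

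First I would fix $\theta\in[0,\tfrac12)$, pick any $\varepsilon>0$ (say $\varepsilon=\tfrac14$ for concreteness), and apply Theorem \ref{thm-modulus-of-continuity} to obtain a random constant $C_\varepsilon$ and, almost surely, an $h_0>0$ such that
\begin{equation}
|X(t'+h)-X(t')|\leq C_\varepsilon\,\sqrt h\,(\log(1/h))^{1/4+\varepsilon}
\end{equation}
for all $h\in(0,h_0]$ and all $t'\in[0,1-h]$. Since $1/2-\theta>0$, the auxiliary function $h\mapsto h^{1/2-\theta}(\log(1/h))^{1/4+\varepsilon}$ tends to $0$ as $h\downarrow 0$, so there exists $\delta_0\in(0,h_0]$ and a constant $C'$ such that $\sqrt h\,(\log(1/h))^{1/4+\varepsilon}\leq C' h^{\theta}$ whenever $0<h<\delta_0$. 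Combining these bounds produces $|X(t'+h)-X(t')|\leq C_\varepsilon C'\,h^{\theta}$ for all $t'\in[0,1-\delta_0]$ and $0<h<\delta_0$, which is local $\theta$-H\"older continuity at every $t\in[0,1-\delta_0]$.

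To upgrade this to every $t\geq 0$, I would use Theorem \ref{thm-stationarity}: for any $t_0\geq 0$, the shifted process $Y(s)=X(t_0+s)-X(t_0)$ has the same distribution as $X$, so almost surely it satisfies the same local H\"older bound near $s=0$, which translates back into local $\theta$-H\"older continuity of $X$ at $t_0$. A standard Fubini-type argument (applying the shift result on a countable dense set of $t_0$'s, together with the already-established continuity of sample paths) yields that, on a single event of full measure, the bound holds locally at every $t\geq 0$, giving the statement of the corollary with $\delta=\delta_0$ and $C=C_\varepsilon C'$.

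There is essentially no obstacle here beyond making the ``single event of full measure'' step precise: one has to make sure the random constants $\delta_0, C_\varepsilon, C'$ obtained on the unit interval can be combined with the stationary translates to yield a universal statement valid simultaneously at every $t$. This is handled by covering $[0,\infty)$ with countably many overlapping intervals $[k,k+1]$ ($k\in\NN$), applying Theorem \ref{thm-modulus-of-continuity} to the stationary translates $X(k+\,\cdot\,)-X(k)$ on each, and taking the intersection of the corresponding full-measure events. Since a countable intersection of full-measure sets has full measure, the resulting event simultaneously gives, on every $[k,k+1]$, a local $\theta$-H\"older bound with constants $(C_{\varepsilon,k}C'_k,\delta_{0,k})$ that may depend on $k$; but as required by the statement of the corollary we only need local constants at each fixed $t$, not globally uniform ones.
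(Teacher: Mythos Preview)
Your proposal is correct and follows essentially the same approach as the paper's proof: both derive H\"older continuity by combining the modulus-of-continuity bound (Theorem \ref{thm-modulus-of-continuity}) with stationarity (Theorem \ref{thm-stationarity}) applied to the countable family of translates $\{X(k+\cdot)-X(k):\cdot\in[0,1]\}$, $k\in\NN$, and then intersecting the resulting full-measure events. The only cosmetic difference is that the paper additionally invokes the time-reversed process $\{X(k+1-t)-X(k+1)\}$ on each interval to treat backward increments separately, whereas you (implicitly) note that a backward increment $|X(t)-X(t-h)|$ is already a forward increment based at $t-h$ and is therefore covered by the same bound; your version is slightly more economical. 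One small wording issue: the intervals $[k,k+1]$ are not genuinely overlapping, so near an integer endpoint you should either use the triangle inequality across the common point $k+1$ or instead cover by half-integer translates---this is routine, and the paper glosses over the analogous gluing as well.
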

\begin{proof}
 Let $C>0$ be as in Theorem \ref{thm-modulus-of-continuity}. Applying this theorem to the theta process $\{X(t)-X(k)\colon t\in[k,k+1]\}$ (recall Theorem \ref{thm-stationarity})  
 where $k$ is a nonnegative integer, we get that, almost surely, for every $k$ there exists $h_+(k)>0$ such that for all $t\in[k,k+1)$ and $0<h<(k+1-t)\wedge h_+(k)$,
\be |X(t+h)-X(t)|\leq C\sqrt h(\log(1/h))^{1/4+\varepsilon}\leq C h^\theta.\ee
On the other hand, by applying the same argument to the theta process $\{X(k+1-t)-X(k+1)\colon t\in[k,k+1]\}$ we obtain that, almost surely, for every $k$ there exists $h_-(k)>0$ such that for all $k<t\leq k+1$ and $0<h<(t-k)\wedge h_-(k)$
\be |X(t)-X(t-h)|\leq C\sqrt h(\log(1/h))^{1/4+\varepsilon}\leq C h^\theta.\ee
The desired result now follows immediately.
\end{proof}

Now that we know that typical realization of the theta process are $\theta$-H\"older continuous for every $\theta<1/2$, it is natural to ask whether they enjoy stronger regularity properties. In particular, we study differentiability at any fixed time. We will show that for every fixed $t_0\geq0$, typical realizations of $\{X(t)\colon t\geq0\}$ are not differentiable at $t_0$. By stationarity it is enough to consider differentiability at $t_0=0$. Then, by time inversion, we relate differentiability at $0$ to a long-term property. This property is parallel to the law of large numbers: whereas Corollary \ref{cor-lln} states that typical trajectories of the theta process grow less then linearly, the following proposition states that the limsup growth of $|X(t)|$ is almost surely faster than $\sqrt t$.

\begin{prop}\label{limsup-growth}
Almost surely
\be\limsup_{t\ti}\frac{|X(t)|}{\sqrt t}=+\infty .
\label{statement-limsup-growth}\ee
\end{prop}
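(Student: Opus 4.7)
My plan is to recast the claim in terms of the geodesic orbit on $\GamG$ and then combine the tail estimate of Theorem \ref{tail-asymptotics-for-Theta_chi} with an ergodicity argument. Using the representation \eqref{formula-limiting-process} with $s=2\log t$, we have $|X(t)|/\sqrt{t}=|\Theta_\chi(\Gamma g\Phi^s)|$, so \eqref{statement-limsup-growth} is equivalent to showing that, for $\mu$-almost every $g\in\GamG$,
\begin{equation}
\limsup_{s\to\infty}|\Theta_\chi(\Gamma g\Phi^s)|=+\infty.
\end{equation}
For each $R>0$, set $A_R=\{\Gamma g:|\Theta_\chi(\Gamma g)|>R\}$; Theorem \ref{tail-asymptotics-for-Theta_chi} gives $\mu(A_R)>0$.

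The first point to settle is that $\Phi^s$ is not ergodic on the full space $\GamG$: the central coordinate $\zeta$ is $\Phi^s$-invariant. However, from \eqref{Jacobi-theta-sum-2} the $\zeta$-dependence of $\Theta_\chi$ is only through the phase $e(\zeta-\tha\xi_1\xi_2)$, and the shift $\phi\mapsto\phi+2\pi$ alters $\Theta_\chi$ only by the sign coming from $e(-\sigma_\phi/8)$. Consequently $|\Theta_\chi|$ descends to a well-defined function on the quotient $(\GamG)/Z$, where $Z$ is the central subgroup \eqref{def-Z}. This quotient identifies with $\Gamma_0\backslash\ASLR$, a finite-volume homogeneous space that fibres over the modular surface with torus fibres. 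The geodesic flow induced by $\Phi^s$ on this space is ergodic (indeed mixing) by the standard Howe--Moore argument for the $\SL(2,\RR)$ factor, once one observes that $\Phi^s$ acts non-trivially on the $\RR^2$-fibres and therefore admits no non-constant invariant $L^2$-function supported on a fibre.

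Granted this ergodicity, Birkhoff's theorem applied to the indicator $\mathbf{1}_{A_R}$ on $(\GamG)/Z$ yields, for $\mu$-almost every $g$,
\begin{equation}
\lim_{T\to\infty}\frac{1}{T}\int_0^T\mathbf{1}_{A_R}(\Gamma g\Phi^s)\,\de s=\frac{\mu(A_R)}{\mu(\GamG)}>0,
\end{equation}
so the set $\{s\geq 0:\Gamma g\Phi^s\in A_R\}$ is unbounded. Thus $\limsup_{s\to\infty}|\Theta_\chi(\Gamma g\Phi^s)|\geq R$ almost surely. Since the exceptional null set depends on $R$, I would conclude by intersecting the full-measure events over $R\in\NN$, producing a single full-measure set on which the $\limsup$ exceeds every integer and hence equals $+\infty$.

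The main obstacle, as I see it, is purely bookkeeping: verifying cleanly that $|\Theta_\chi|$ is constant on $Z$-cosets (both the $\zeta$-invariance and the $2\pi$-periodicity in $\phi$, via the $\sigma_\phi$ cocycle), and that the flow on $\Gamma_0\backslash\ASLR$ is genuinely ergodic rather than admitting an extra invariant factor. Alternatively, one could bypass Birkhoff by invoking Theorem \ref{thmJEQ} to show that the push-forward of the horocycle $\{\Psi^u\Phi^s:u\in[0,1]\}$ under $g\mapsto|\Theta_\chi(\Gamma g)|$ weakly converges to the push-forward of $\mu$, whose tail by Theorem \ref{tail-asymptotics-for-Theta_chi} is non-zero at every level $R$; this would give the conclusion for Lebesgue-almost every initial point on each horocycle, which, by varying the horocycle and Fubini, gives the almost-sure statement.
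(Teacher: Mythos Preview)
Your argument follows the paper's exactly---recast $|X(t)|/\sqrt t$ as $|\Theta_\chi(\Gamma g\Phi^s)|$, use Theorem~\ref{tail-asymptotics-for-Theta_chi} to get $\mu(A_R)>0$, apply Birkhoff, then intersect over $R\in\NN$---except for the detour through $(\GamG)/Z\simeq\Gamma_0\backslash\ASL(2,\RR)$.

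That detour rests on a misconception. You assert $\Phi^s$ is not ergodic on $\GamG$ because ``the central coordinate $\zeta$ is $\Phi^s$-invariant''. It is true that right multiplication by $\Phi^s$ fixes the $(\vecxi,\zeta)$-coordinates in the parametrisation $(z,\phi;\vecxi,\zeta)$ of $G$ (cf.~\eqref{gPhi^s=g_s}), but these coordinates do \emph{not} descend to $\GamG$: left multiplication by $\gene_3\in\Gamma$, for instance, sends $\zeta\mapsto\zeta+\tfrac12\xi_2$, so there is no $\Gamma$-invariant function of $\zeta$ alone to obstruct ergodicity. In fact $\Phi^\RR$ \emph{is} ergodic on $\GamG$---this is precisely what the paper invokes via \cite[Prop.~2.2]{Kleinbock-1999}, and one can also see it from the Mautner phenomenon: a $\Phi^\RR$-invariant vector in $L^2(\GamG)$ is automatically $H_\pm$-invariant, and $H_+$, $H_-$, $\Phi^\RR$ together generate all of $G$ (the commutator of the Heisenberg pieces of $H_+$ and $H_-$ recovers the centre of $\Hei$, while on the $\tsltr$-side upper and lower unipotents already generate everything). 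Your workaround is not wrong---$|\Theta_\chi|$ does factor through $\Gamma_0\backslash\ASL(2,\RR)$ and the flow is ergodic there too---so the proof stands; it is simply longer than needed. The alternative you float at the end, via Theorem~\ref{thmJEQ}, is less apt: that result concerns spatial averages over a horocycle parameter $u$ as $t\to\infty$, not time averages along a single geodesic, and bridging the two would require a separate argument.
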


\begin{proof}
By \eqref{formula-limiting-process}, for $s=2\log t$
\be\frac{|X(t)|}{\sqrt t}=|\Theta_\chi(g\Phi^s)|\ee
with $g\in\GamG$.
Since $\Phi^\R$ is ergodic (cf.\ \cite[Prop.~2.2]{Kleinbock-1999}), we have by the Birkhoff ergodic theorem, for any $R>0$ and $\mu$-almost every $g\in\GamG$,
\begin{equation}
\lim_{T\to\infty} \frac1T\int_0^T {\mathbf 1}_{\{|\Theta_\chi(g\Phi^s)|>R\}} \,ds
= \frac{\mu(\{h\in\GamG:\: |\Theta_\chi(h)|>R\})}{\mu(\GamG)}.
\end{equation}
By Theorem \ref{tail-asymptotics-for-Theta_chi}, the right hand side is positive for any $R<\infty$. Hence the left hand side guarantees that there is an infinite sequence $s_1<s_2<\ldots\to \infty$ such that $|\Theta_\chi(g\Phi^{s_j})|>R$ for all $j\in\NN$. Since $R$ can be chosen arbitrarily large, $\limsup_{s\to\infty} |\Theta_\chi(g\Phi^s)|=+\infty$.
\end{proof}

Let us now prove that typical realizations of the theta process are not differentiable at $0$.
To this extent, let us introduce the upper derivative of a complex-valued $F:[0,\infty)\to\C$ at $t$ as 
\be D^*F(t)=\limsup_{h\downarrow0} \frac{|F(t+h)-F(t)|}{h}.\ee
We have the following 
\begin{theorem}
Fix $t_0\geq 0$. Then, almost surely, the theta process is not differentiable at $t_0$. Moreover, $D^* X(t_0)=+\infty$.
\end{theorem}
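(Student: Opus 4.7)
The plan is to reduce to the case $t_0=0$ by stationarity (Theorem \ref{thm-stationarity}), then reduce the question of (non-)differentiability at $0$ to the large-time growth of $|X(t)|$ by time inversion (Proposition \ref{prop-time-inversion}), and finally invoke Proposition \ref{limsup-growth} to conclude.

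First I would observe that, by Theorem \ref{thm-stationarity}, $Y(t) := X(t_0+t) - X(t_0)$ is again a theta process. Since $Y$ is differentiable at $0$ if and only if $X$ is differentiable at $t_0$, and $D^* Y(0) = D^* X(t_0)$, it is enough to prove that almost surely $D^* X(0) = +\infty$.

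Next, since $X(0) = 0$,
\begin{equation*}
D^* X(0) = \limsup_{h\downarrow 0} \frac{|X(h)|}{h}.
\end{equation*}
Let $Y(t) = tX(1/t)$ for $t>0$ and $Y(0)=0$. By Proposition \ref{prop-time-inversion}, $Y \sim X$. Setting $t = 1/h$, we have $X(h) = hY(1/h)$, so $|X(h)|/h = |Y(1/h)|$, and therefore
\begin{equation*}
D^* X(0) = \limsup_{h\downarrow 0} |Y(1/h)| = \limsup_{t\to\infty} |Y(t)|.
\end{equation*}
Since $Y\sim X$, this random variable has the same distribution as $\limsup_{t\to\infty}|X(t)|$.

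Finally, Proposition \ref{limsup-growth} gives $\limsup_{t\to\infty} |X(t)|/\sqrt{t} = +\infty$ almost surely, which trivially implies $\limsup_{t\to\infty}|X(t)| = +\infty$ almost surely. Thus $D^* X(0) = +\infty$ almost surely, so $X$ is not differentiable at $0$ almost surely, and by the reduction above the same holds at any fixed $t_0 \geq 0$. The main content of the argument is already packaged in Propositions \ref{prop-time-inversion} and \ref{limsup-growth}; the proof at this stage is a short assembly of those ingredients with stationarity, and I do not anticipate any serious obstacle.
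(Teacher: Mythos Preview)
Your proof is correct and follows essentially the same route as the paper: reduce to $t_0=0$ via stationarity, convert the difference quotient at $0$ into large-time behavior of a time-inverted process via Proposition \ref{prop-time-inversion}, and conclude with Proposition \ref{limsup-growth}. The only cosmetic difference is that the paper bounds $D^*Y(0)\geq \limsup_{t\to\infty}|X(t)|/\sqrt{t}$ directly, whereas you pass through $\limsup_{t\to\infty}|Y(t)|$ and then use $Y\sim X$; both variants are equally valid.
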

\begin{proof}
Let $Y$ be the theta process constructed by time inversion as in Proposition \ref{prop-time-inversion}. Then, by that proposition 
\begin{align}
D^*Y(0)=\limsup_{h\downarrow 0}\frac{|Y(h)-Y(0)|}{h}\geq\limsup_{t\uparrow\infty}\sqrt{t}\,|Y(\tfrac{1}{t})|=\limsup_{t\uparrow\infty}\frac{|X(t)|}{\sqrt t}
\end{align}
and the latter $\limsup$ is infinite by  Proposition \ref{limsup-growth}. Now let $t_0>0$ be arbitrary, Then $\tilde X(t)=X(t_0+t)-X(t_0)$ defines a  theta process by Theorem \ref{thm-stationarity} and differentiability of $X$ at $0$ is equivalent to differentiability of $\tilde X$ at $t_0$.
\end{proof}

\bibliographystyle{plain}
\bibliography{curlicue-bibliography}
\end{document}